\documentclass{article}



\usepackage[numbers]{natbib}

\usepackage[final]{neurips_2022}


\usepackage[utf8]{inputenc} 
\usepackage[T1]{fontenc}    
\usepackage{hyperref}       
\usepackage{url}            
\usepackage{booktabs}       
\usepackage{amsfonts}       
\usepackage{nicefrac}       
\usepackage{microtype}      
\usepackage{xcolor}         
\usepackage{amsmath}
\usepackage{amssymb}
\usepackage{mathtools}
\usepackage{amsthm}
\usepackage{makecell}
\usepackage[font=small]{caption}
\usepackage{multirow}
\usepackage{makecell}
\usepackage{algorithm}
\usepackage{algorithmic}
\usepackage{lscape}
\usepackage{wrapfig}

\newcommand{\algname}[1]{{\tt \sf \footnotesize #1}}

\allowdisplaybreaks
\usepackage{colortbl}
\definecolor{bgcolor}{rgb}{0.8,1,1}
\definecolor{bgcolor2}{rgb}{0.8,1,0.8}
\usepackage{threeparttable}
\newcommand{\myred}[1]{{\color{red}#1}}
\newcommand{\myblue}[1]{{\color{blue}#1}}

\usepackage[capitalize,noabbrev]{cleveref}

\theoremstyle{plain}
\newtheorem{theorem}{Theorem}[section]

\newtheorem{lemma}[theorem]{Lemma}
\newtheorem{corollary}[theorem]{Corollary}
\theoremstyle{definition}
\newtheorem{definition}[theorem]{Definition}
\newtheorem{assumption}[theorem]{Assumption}
\theoremstyle{remark}

\usepackage[textsize=tiny]{todonotes}

\def\R{\mathbb{R}}

\def\R{\mathbb R}
\def\N{\mathbb N}
\def\EE{\mathbb E}

\def\la{\langle}
\def\ra{\rangle}

\def\y{\mathbf{y}}

\def\x{\mathbf{x}}
\def\w{\mathbf{w}}
\def\z{\mathbf{z}}

\newcommand{\E}[1]{\mathbb{E}\left[#1\right]}
\newcommand{\Ek}[1]{\mathbb{E}_k\left[#1\right]}
\newcommand{\mI}{\mathbf{I}}
\newcommand{\mA}{\mathbf{A}}
\newcommand{\mB}{\mathbf{B}}
\newcommand{\eqdef}{\vcentcolon=}

\newcommand{\cO}{\mathcal{O}}

\newcommand{\norm}[1]{\left\| #1 \right\|}

\DeclareMathOperator{\dom}{\mathrm{dom}}
\newcommand{\prox}{\mathrm{prox}}
\newcommand{\g}{\mathbf{g}}
\newcommand{\F}{\mathbf{F}}
\newcommand{\sqnw}[1]{\sqn{#1}_{(\mWp\otimes\mI_d)}}
\newcommand{\Lavg}{\overline{L}}
\newcommand{\mW}{\mathbf{W}}
\newcommand{\cL}{\mathcal{L}}
\newcommand{\mWp}{\mathbf{W}^{\dagger}}
\newcommand{\mP}{\mathbf{P}}
\newcommand{\ones}{\mathbf{1}}

\def\<#1,#2>{\langle #1,#2\rangle}
\newcommand{\sqn}[1]{\norm{#1}^2}
\newcommand{\sqN}[1]{\norm{#1}^2}

\title{Optimal Algorithms for Decentralized Stochastic Variational Inequalities}

%

\author{%
  Dmitry Kovalev\\
  KAUST\thanks{King Abdullah University of Science and Technology}, Saudi Arabia\\
  \texttt{dakovalev1@gmail.com}
  \And
  Aleksandr Beznosikov \\
  MIPT\thanks{Moscow Institute of Physics and Technology}, HSE University and Yandex, Russia\\
  \texttt{anbeznosikov@gmail.com}
  \And
  Abdurakhmon Sadiev\\
  MIPT, Russia\\
  \texttt{sadiev.aa@phystech.edu}
  \And
  Michael Persiianov\\
  MIPT, Russia\\
  \texttt{persiianov.mi@phystech.edu}
  \And
  Peter Richt\'{a}rik \\
  KAUST, Saudi Arabia \\
  \texttt{peter.richtarik@kaust.edu.sa}
  \And
  Alexander Gasnikov \\
  MIPT, HSE University and IITP RAS\thanks{Institute for Information Transmission Problems RAS}, Russia\\
  \texttt{gasnikov@yandex.ru} 
}

\begin{document}

\maketitle

\begin{abstract}
Variational inequalities are a formalism that includes games, minimization, saddle point, and equilibrium problems as special cases. Methods for variational inequalities are therefore universal approaches for many applied tasks, including machine learning problems. This work concentrates on the decentralized setting, which is increasingly important but not well understood. In particular, we consider decentralized stochastic (sum-type) variational inequalities over fixed and time-varying networks. We present lower complexity bounds for both communication and local iterations and construct optimal algorithms that match these lower bounds. Our algorithms are the best among the available literature not only in the decentralized stochastic case, but also in the decentralized deterministic and non-distributed stochastic cases. Experimental results confirm the effectiveness of the presented algorithms.
\end{abstract}

\section{Introduction}

Variational inequalities are a broad and flexible class of problems that includes  minimization, saddle point, Nash equilibrium, and fixed point problems as special cases; see \citep{VIbook2003,Heinz} for an introduction. 
Over the long history of modern research on variational inequalities spanning at least half a century, the community developed their own methods and theory, differing from the approaches in their sister field, optimization. The \algname{ExtraGradient} / \algname{MirrorProx} methods  due to \cite{Korpelevich1976TheEM,Nemirovski2004,juditsky2008solving} have a similar foundational standing in the variational inequalities field that gradient descent occupies in the optimization literature.
As in the case of gradient descent, many modifications~\citep{hsieh2019convergence} and variants~\cite{doi:10.1137/S0363012998338806} of these methods were proposed and studied in the variational inequalities literature.

\subsection{Applications of variational inequalities} 
In recent years, there has been a significant increase of research activity in the study of variational inequalities due to new connections to reinforcement learning~\citep{Omidshafiei2017:rl,Jin2020:mdp},  adversarial training \citep{Madry2017:adv}, and GANs~\citep{goodfellow2014generative}. In particular, ~\cite{daskalakis2017training,gidel2018variational,mertikopoulos2018optimistic,chavdarova2019reducing,pmlr-v89-liang19b,peng2020training} show that even if one considers the classical (in the variational inequalities literature) regime involving monotone and strongly monotone inequalities, it is possible to obtain  insights, methods and recommendations useful  for the GAN community. 

In addition to the above modern applications, and besides their many classical applications in applied mathematics that include economics, equilibrium theory, game theory and optimal control \cite{facchinei2007finite}, variational inequalities remain popular in supervised learning (with non-separable loss \citep{Thorsten}; with non-separable regularizer~\citep{bach2011optimization}),  unsupervised learning (discriminative clustering \citep{NIPS2004_64036755};  matrix factorization \citep{bach2008convex}), image denoising \citep{esser2010general,chambolle2011first}, robust optimization \citep{BenTal2009:book}, and non-smooth optimization via smooth reformulations \citep{nesterov2005smooth,nemirovski2004prox}.

\subsection{Processing on the edge} With the proliferation of mobile phones, wearables, digital sensors, smart home appliances, and other devices capable of capturing, storing and processing data, there is an increased appetite to mine the richness  contained in these sources for the benefit of humanity. However, at the same time, the traditional centralized approach relying on moving the data into a single proprietary warehouse for  processing via suitable machine learning methods is problematic, and a new modus operandi is on the rise: processing the data at the source, on the edge, where it was first captured and where it is stored~\citep{FEDLEARN, FL2017-AISTATS, kairouz2019advances}, by the client's devices that own the data. There are many reasons for a gradual shift in this direction, including energy efficiency and data privacy. 

A key necessary characteristic for any viable algorithmic approach to work in such a massively decentralized regime is the ability to support decentralized processing reflecting the fact that the devices  are connected through a network of a potentially complicated topology, possibly varying in time. A central authority may be absent in such a system, and the methods need to rely on communication patters that correspond to the existing connection links.

\subsection{Decentralized algorithms for variational inequalities}

In this paper we study
\begingroup
\addtolength\leftmargini{-1cm}
 \vspace{-0.2cm}\begin{quote}{\em algorithms for solving variational inequalities over decentralized  communication networks}.\end{quote} \vspace{-0.2cm}
\endgroup
 In this regime, a  number of nodes (workers, devices, clients) are connected via a communication network, represented by a graph. Each node can perform computations using its local state and data, and is only allowed to communicate with its neighbors in the graph. 

Decentralized algorithms over fixed communication networks find their applications in  sensor networks
\cite{1307319}, network resource allocation \cite{beck20141}, cooperative control \cite{giselsson2013accelerated}, distributed spectrum sensing \cite{bazerque2009distributed}, power system control
\cite{gan2012optimal} and, of course, in machine learning \cite{scaman2017optimal}. 
Recently, decentralized methods over time-varying networks have gained particular popularity  due to their relevance to federated learning \cite{FEDLEARN, kairouz2019advances}, where communication failures between devices are a common problem.

Decentralized minimization methods are well studied \cite{9084356, koloskova2020unified}. In particular, lower bounds and optimal algorithms for such problems are known in the {\em fixed} \cite{scaman2017optimal,hendrikx2020optimal, kovalev2020optimal} and {\em time-varying} \cite{kovalev2021lower, li2021accelerated} network topology regimes.
\begingroup
\addtolength\leftmargini{-1cm}
\vspace{-0.2cm}\begin{quote}\em However, in significantly more general and hence potentially much more impactful formalism of variational inequalities, the question of optimal and efficient decentralized methods is still open. 
\end{quote}\vspace{-0.2cm}
\endgroup
Motivated by these considerations, our work is devoted to advancing the algorithmic and theoretical foundations of  decentralized variational inequalities, in both the fixed and time-varying network regimes.

\subsection{Our contributions and related work}

We now briefly summarize our main contributions.

\textbf{(a) Lower bounds} 

We present the {\bf first lower bounds for the communication and local computation complexities of decentralized variational inequalities in the stochastic (finite sum) case,   in both the  fixed and time-varying network topology regimes.} See Table \ref{tab:comparison0}.

Existing literature contains lower bounds for {\em non-distributed} finite-sum variational inequalities \citep{han2021lower}, which we recover as a special case. 
Existing literature also contains  lower bounds for {\em deterministic} decentralized variational inequalities in the fixed \citep{beznosikov2020distributed} and time-varying \citep{beznosikov2021} regimes. Our bounds covers these results too. See Table \ref{tab:comparison1} (Appendix \ref{sec:tables}).

\textbf{(b) Optimal decentralized algorithms}

We construct {\bf four new algorithms for stochastic (finite sum) decentralized variational inequalities: two for fixed networks, and two for time-varying networks. Two of these algorithms match our lower bounds, and are therefore  optimal in terms of communication and local iteration complexities.} These are the first algorithms for stochastic (finite sum) decentralized variational inequalities over fixed and time-varying networks. See Table \ref{tab:comparison0}.

Moreover, our results offer linear communication complexity for deterministic decentralized strongly monotone variational inequalities, which is an improvement upon the sublinear results of \cite{beznosikov2021distributed,beznosikov2020distributed,beznosikov2021decentralized,beznosikov2021}. Additionally, our algorithms have better guarantees on local computations than the methods developed by \cite{rogozin2021decentralized}. See Table \ref{tab:comparison1} (Appendix \ref{sec:tables}).

Let us also single out a number of works on decentralized saddle point problems or VIs which are not suitable for comparison with our results: \cite{NEURIPS2021_d994e372, BARAZANDEH2021108245} consider non-monotone(minty) problems, \cite{7403075} does not prove convergence, \cite{liu2019decentralized} assumes data homogeneity, and  \cite{6004889} considers a discrete problem.

\renewcommand{\arraystretch}{2}
\begin{table*}[h]
\vspace{-0.3cm}
    \centering
    \small
\captionof{table}{Summary of upper and lower bounds for communication and local computation complexities for finding an $\varepsilon$-solution for strongly monotone \textbf{stochastic (finite-sum)} \textbf{decentralized} variational inequality \eqref{eq:VI} over fixed and time-varying networks. Convergence is measured by the distance to the solution.}
\vspace{-0.2cm}
    \label{tab:comparison0}   
    \scriptsize
    \resizebox{\linewidth}{!}{
  \begin{threeparttable}
    \begin{tabular}{|c|c|c|c|c|c|}
    \cline{3-6}
    \multicolumn{2}{c|}{}
     & \textbf{\quad\quad\quad\quad\quad\quad Reference \quad\quad\quad\quad\quad\quad} & \textbf{Communication complexity} & \textbf{Local complexity}  & \textbf{Weaknesses} \\
    \hline
    \multirow{8}{*}{\rotatebox[origin=c]{90}{\textbf{Fixed \quad\quad}}} & \multirow{6}{*}{\rotatebox[origin=c]{90}{\textbf{Upper}\quad\quad}}
    & Mukherjee and Chakraborty \cite{Mukherjee2020:decentralizedminmax} \tnote{{\color{blue}(1,2)}}  & $\mathcal{O} \left( \myred{\chi^{\frac{4}{3}}} \frac{L^{\myred{\frac{4}{3}}}}{\mu^{\myred{\frac{4}{3}}}} \log \frac{1}{\varepsilon} \right)$ &  $\mathcal{O} \left( \myred{n\chi^{\frac{4}{3}}} \frac{L^{\myred{\frac{4}{3}}}}{\mu^{\myred{\frac{4}{3}}}} \log \frac{1}{\varepsilon} \right)$ & \makecell{{weak communication rates} \\ {weak local computation rates}}
    \\ \cline{3-6}
    && Beznosikov et al. \cite{beznosikov2021distributed}  \tnote{{\color{blue}(1,2)}}  & $\mathcal{O} \left( \sqrt{\chi} \frac{L}{\mu} \log^{\myred{2}} \frac{1}{\varepsilon} \right)$ &  $\mathcal{O} \left( \myred{n} \frac{L}{\mu} \myred{\log \frac{L+\mu}{\mu}} \log \frac{1}{\varepsilon}\right)$ & \makecell{{multiple gossip} \\ {no linear convergence}}
    \\ \cline{3-6}
    && Beznosikov et al. \cite{beznosikov2020distributed}  \tnote{{\color{blue}(1,3)}} & $\mathcal{O} \left( \sqrt{\chi} \frac{L}{\mu} \log^{\myred{2}} \frac{1}{\varepsilon} \right)$ &  $\mathcal{O} \left( \myred{n} \frac{L}{\mu}\log \frac{1}{\varepsilon}\right)$ & \makecell{{multiple gossip} \\ {no linear convergence}}
    \\ \cline{3-6}
    && Rogozin et al. \cite{rogozin2021decentralized}  \tnote{{\color{blue}(1,2,4)}} & $\mathcal{O} \left( \sqrt{\chi} \frac{L}{\mu} \log \frac{1}{\varepsilon} \right)$ &  $\mathcal{O} \left( \myred{n \sqrt{\chi}} \frac{L}{\mu}\log \frac{1}{\varepsilon}\right)$ & weak local computation rates
    \\ \cline{3-6}
    && \cellcolor{bgcolor2}{Alg. \ref{alg:vrvi} (this paper)}  & \cellcolor{bgcolor2}{$\mathcal{O} \left( \max[\sqrt{n}; \sqrt{\chi}] \frac{L}{\mu} \log \frac{1}{\varepsilon} \right)$ \tnote{{\color{blue}(6)}}}  &  \cellcolor{bgcolor2}{$\mathcal{O} \left( \max[\sqrt{n}; \sqrt{\chi}]\frac{L}{\mu} \log \frac{1}{\varepsilon} \right)$ \tnote{{\color{blue}(6)}} } & \cellcolor{bgcolor2}{}
    \\ \cline{3-6}
    && \cellcolor{bgcolor2}{Alg. \ref{alg:vrvi} + Alg. \ref{alg:chebyshev_gossip} (this paper)}  & \cellcolor{bgcolor2}{$\mathcal{O} \left( \sqrt{\chi} \frac{L}{\mu} \log \frac{1}{\varepsilon} \right)$ \tnote{{\color{blue}(6)}}}  &  \cellcolor{bgcolor2}{$\mathcal{O} \left( \sqrt{n}\frac{L}{\mu} \log \frac{1}{\varepsilon} \right)$ \tnote{{\color{blue}(6)}}} & \cellcolor{bgcolor2}{multiple gossip}
    \\ \cline{2-6} 
    & \multirow{2}{*}{\rotatebox[origin=c]{90}{\textbf{Lower}}} & Beznosikov et al. \cite{beznosikov2020distributed}  \tnote{{\color{blue}(3)}} & $\Omega \left( \sqrt{\chi} \frac{L}{\mu} \log \frac{1}{\varepsilon} \right)$  &  $\Omega \left( \frac{L}{\mu} \log \frac{1}{\varepsilon} \right)$ & 
    \\ \cline{3-6} 
    && \cellcolor{bgcolor2}{Thm. \ref{th:lower_fixed} + Cor. \ref{cor:lower_fixed} (this paper)} & \cellcolor{bgcolor2}{$\Omega \left( \sqrt{\chi} \frac{L}{\mu} \log \frac{1}{\varepsilon} \right)$}  &  \cellcolor{bgcolor2}{$\Omega \left( \myblue{\sqrt{n}}\frac{L}{\mu} \log \frac{1}{\varepsilon} \right)$} & \cellcolor{bgcolor2}{}
    \\\hline\hline 
    \multirow{6}{*}{\rotatebox[origin=c]{90}{\textbf{Time-varying}\quad}} & \multirow{4}{*}{\rotatebox[origin=c]{90}{\textbf{Upper}\quad \quad}} 
    & Beznosikov et al. \cite{beznosikov2021decentralized} \tnote{{\color{blue}(3)}} & $\mathcal{O} \left( \chi \frac{L}{\mu} \log \frac{1}{\varepsilon} + \myred{\chi \frac{L D }{\mu^2   \sqrt{\varepsilon}}} \right)$ \tnote{{\color{blue}(5)}} & $\mathcal{O} \left( \myred{n\chi} \frac{L}{\mu} \log \frac{1}{\varepsilon} + \myred{n\chi \frac{L D }{\mu^2   \sqrt{\varepsilon}} }\right)$ & \makecell{{$D$-homogeneity}  \\ {no linear convergence}}
    \\ \cline{3-6}
    && Beznosikov et al. \cite{beznosikov2021}  \tnote{{\color{blue}(1,2)}} & $\mathcal{O} \left( \chi \frac{L}{\mu} \log^{\myred{2}} \frac{1}{\varepsilon} \right)$  &  $\mathcal{O} \left( \myred{n}\frac{L}{\mu} \log \frac{1}{\varepsilon} \right)$ & \makecell{{multiple gossip} \\ {no linear convergence}}
    \\ \cline{3-6}
    && \cellcolor{bgcolor2}{ Alg. \ref{2dvi2:alg} (this paper)}  & \cellcolor{bgcolor2}{\hspace{-0.3cm}$\mathcal{O} \left( \max[\sqrt{n}; \chi] \frac{L}{\mu} \log \frac{1}{\varepsilon} \right)$ \tnote{{\color{blue}(5),(6)}}}  &  \cellcolor{bgcolor2}{$\mathcal{O} \left( \max[\sqrt{n}; \chi]\frac{L}{\mu} \log \frac{1}{\varepsilon} \right)$ \tnote{{\color{blue}(6)}} }  & \cellcolor{bgcolor2}{}
    \\ \cline{3-6}
    && \cellcolor{bgcolor2}{ Alg. \ref{2dvi2:alg} + Eq. \ref{eq:tv_gossip} (this paper)}  & \cellcolor{bgcolor2}{$\mathcal{O} \left( \chi \frac{L}{\mu} \log \frac{1}{\varepsilon} \right)$ \tnote{{\color{blue}(5), (6)}}}  &  \cellcolor{bgcolor2}{$\mathcal{O} \left( \sqrt{n}\frac{L}{\mu} \log \frac{1}{\varepsilon} \right)$ \tnote{{\color{blue}(6)}}} & \cellcolor{bgcolor2}{multiple gossip}
    \\ \cline{2-6}
    & \multirow{2}{*}{\rotatebox[origin=c]{90}{\textbf{Lower}}} 
    & Beznosikov et al. \cite{beznosikov2021} \tnote{{\color{blue}(2)}}  & $\Omega \left( \chi \frac{L}{\mu} \log \frac{1}{\varepsilon} \right)$  &  $\Omega \left( \frac{L}{\mu} \log \frac{1}{\varepsilon} \right)$ & 
    \\ \cline{3-6}
    && \cellcolor{bgcolor2}{Thm. \ref{th:lower_tv} + Cor. \ref{cor:lower_tv} (This paper)}  & \cellcolor{bgcolor2}{$\Omega \left( \chi \frac{L}{\mu} \log \frac{1}{\varepsilon} \right)$ \tnote{{\color{blue}(5)}}}   &  \cellcolor{bgcolor2}{$\Omega \left( \myblue{\sqrt{n}}\frac{L}{\mu} \log \frac{1}{\varepsilon} \right)$} & \cellcolor{bgcolor2}{}
    \\\hline 
    \end{tabular}   
    \begin{tablenotes}
    {\small   
    \item [] \tnote{{\color{blue}(1)}} for saddle point problems; \tnote{{\color{blue}(2)}} deterministic; \tnote{{\color{blue}(3)}} stochastic, but not finite sum; \tnote{{\color{blue}(4)}}  convex-concave (monotone) case (we re-analyzed for strongly monotone case);
    \tnote{{\color{blue}(5)}}$B$-connected graphs \cite{nedich2016geometrically} are also considered. For simplicity in comparison with other works, we put $B = 1$. To get estimates for $B \neq 1$, one need to change $\chi$ to $B \chi$; \tnote{{\color{blue}(6)}} can include additional factors such as $n \log \frac{1}{\varepsilon}$, $\chi \log \frac{1}{\varepsilon}$, for full complexities, see details in Section \ref{sec:opt_alg}.
 \item [] {\em Notation:} $\mu$ = constant of strong monotonicity of operator $F$, $L$ = Lipschitz constants of $L_{m,i}$, $\chi$ = characteristic number of the network (see Assumptions \ref{ass:fixed} and \ref{ass:tv}),  $n$ =  size of the local dataset.     
    }
\end{tablenotes}    
    \end{threeparttable}
    }
\vspace{-0.3cm}
\end{table*}

\textbf{(c) Optimal non-distributed/centralized algorithms} 

We believe it is notable that despite the generality of our setup and algorithms, {\bf our results, when specialized to handle this simpler case, improve upon the current state-of-the-art results in the non-distributed/centralized setting.} In particular, unlike existing methods, our algorithms support {\em batching}: while the complexity of the best available algorithms grows with the batch size, our algorithms are not sensitive to this. This property is of crucial importance when working in the large batch mode, which is used in the practice \cite{brock2018large,zhu2019freelb,you2019large}. See Table \ref{tab:comparison2} (Appendix \ref{sec:tables}).

\textbf{(d) Experiments}

Numerical experiments on bilinear problems and  robust regression problems confirm the practical efficiency of our methods, both in the non-distributed stochastic setup and in the decentralized deterministic one.

\section{Problem Setup and Assumptions}

We write $\la x,y \ra \eqdef \sum_{i=1}^nx_i y_i$ to denote the standard inner product of vectors $x,y\in\R^n$, where $x_i$ corresponds to the $i$-th component of $x$ in the standard basis in $\R^n$. This induces the standard $\ell_2$-norm in $\R^n$ in the following way: $\|x\| \eqdef \sqrt{\la x, x \ra}$. To denote the Kronecker product of two matrices $\mA\in\R^{m\times m}$ and $\mB\in\R^{n\times n}$, we use $A \otimes B \in \R^{nm \times nm}$. The identity matrix of size $n\times n$ is denoted by $\mI_n$. We write $[n]\eqdef \{1,2,\dots,n\}$. $\N$ is the set of positive integers.

\subsection{Variational inequality}

We study variational inequalities (VI) of the  form
\begin{equation}\begin{aligned}
    \label{eq:VI}
    \text{Find} \quad z^* &\in \R^d \quad \text{such that} \quad 
    \langle F(z^*), z - z^* \rangle + g(z) - g(z^*) \geq 0, \quad \forall z \in \R^d,
\end{aligned}\end{equation}
where $F: \R^d \to \R^d $ is an operator, and $g: \R^d \to \R \cup \{ + \infty\}$ is a proper lower semicontinuous convex function. We also assume that $g$ is proximal friendly, i.e. the computation of the operator $\text{prox}_{\rho g}(z) = \arg\min_{y \in \R^d} \{ \rho g(y) + \tfrac{1}{2}\|y-z \|^2\}$ (with $\rho > 0$) is done for free or costs very low.

To showcase the expressive power of the formalism \eqref{eq:VI}, we now give a  few examples of variational inequalities arising in machine learning.

\textbf{Example 1 [Convex minimization].} Consider the convex regularized minimization problem:
\begin{align}
\label{eq:min}
\min_{z \in \R^d} f(z) + g(z),
\end{align}
where $f$ is typically a smooth data-fidelity term, and $g$ a possibly nonsmooth regularizer. 
If we define $F(z) \eqdef \nabla f(z)$, then it can be proved that $z^* \in \dom g$ is a solution for \eqref{eq:VI} if and only if $z^* \in \dom g$ is a solution for \eqref{eq:min}. So, the regularized optimization problem \eqref{eq:min} can be cast as a VI \eqref{eq:VI}.

While minimization problems are widely studied in a separate literature, the next class of problems is much more strongly tied to variational inequalities.

\textbf{Example 2 [Convex-concave saddles].} Consider the convex-concave saddle point problem
\begin{align}
\label{eq:minmax}
\min_{x \in \R^{d_x}} \max_{y \in \R^{d_y}} f(x,y) + g_1 (x) - g_2(y),
\end{align}
where $g_1$ and $g_2$ can also be interpreted as regularizers. 
If we let $F(z) \eqdef F(x,y) = [\nabla_x f(x,y), -\nabla_y f(x,y)]$ and $g(z) = g(x,y) = g_1 (x) + g_2(y)$, then it can be proved that $z^* \in \dom g$ is a solution for \eqref{eq:VI} if and only if $z^* \in \dom g$ is a solution for \eqref{eq:minmax}. So, convex-concave saddle point problems  \eqref{eq:minmax} can be cast as a VI \eqref{eq:VI}.

Saddle point problems are strongly related to variational inequalities. In particular, lower bounds for the former are also valid for the latter. Moreover, upper bounds for variational inequalities are valid for saddle point problems. However, what is perhaps more important is  that these lower and upper bounds match. This is in contrast to minimization, where the lower bounds are weaker. 

\subsection{Decentralized variational inequalities}
We consider the decentralized case of  problem \eqref{eq:VI}, namely we assume that $F$ is distributed across $M$ workers,
\begin{equation}
    \label{eq:distr}
  \textstyle  
  F(z) \eqdef \sum\limits_{m=1}^M F_m(z),
\end{equation}
while each $F_m:\R^d \to \R^d$, $m \in [M]$, has the finite sum structure
\begin{equation}
    \label{eq:fs}
  \textstyle    
   F_m(z) \eqdef   \frac{1}{n}\sum\limits_{i=1}^n F_{m,i}(z).
\end{equation}
The data describing $F_{m}$ being stored on  worker $m$. For example, $F_{m,i}$ can correspond to  the value of the operator on the $i$th data point of the $m$-th dataset.

\subsection{Assumptions} \label{sec:as}

\begin{assumption}[Lipschitzness] \label{as:Lipsh}
Each operator $F_m$ is $L$-Lipschitz continuous, i.e. for all $u, v \in \R^d$ we have
$
\| F_m(u)-F_m(v) \|  \leq L\|u - v\|.
$

Further, the collection of operators is $\Lavg$-average Lipschitz continuous, i.e., for all $u, v \in \R^d$ it holds
$
  \textstyle    
\frac{1}{n}\sum_{i=1}^n \| F_{m,i}(u)-F_{m,i}(v)\|^2  \leq \Lavg^2 \|u - v\|^2.
$
\end{assumption}
In the context of \eqref{eq:min} and \eqref{eq:minmax},  $L$-Lipschitzness of the operator means that the functions $f(z)$ and $f(x,y)$ are $L$-smooth.

\begin{assumption}[Strong monotonicity]\label{as:strmon}
Each operator $F_m$ is $\mu$-strongly monotone, i.e., for all $u, v \in \R^d$ we have
$
\langle F_m(u) - F_m(v); u - v \rangle \geq \mu \| u-v\|^2.
$
\end{assumption}
 In the context of \eqref{eq:min} and \eqref{eq:minmax}, strong monotonicity of $F$ means strong convexity of $f(z)$ and strong convexity-strong concavity of $f(x,y)$.

\subsection{Communication and gossip} \label{sec:comm}

Typically, decentralized communication is realized via a {\em gossip protocol} \cite{XIAO200465,boyd2006randomized,nedic2009distributed}, which is merely  matrix-vector multiplication with a gossip matrix $\mW$, described below, which is different in the fixed and time-varying cases. Let 
$
	\textstyle \cL = \{\z = (z_1,\ldots,z_M)^\top  \in (\R^d)^M: z_1 = \ldots=z_M \}
$
be the {\em consensus space}. 

\begin{assumption}[Fixed network \cite{scaman2017optimal}] \label{ass:fixed}
For a fixed network, communication can be modeled via an undirected connected graph, $\mathcal{G} = (\mathcal{V}, \mathcal{E})$, where $\mathcal{V} = [n]$ are vertices (workers) and  $\mathcal{E} = \{(i,j) \, |\, i,j \in \mathcal{V} \}$ are edges. Note that  $(i,j) \in \mathcal{E}$ if and only if there exists a communication link between agents $i$ and $j$. 
The gossip matrix $\mW$ satisfies the following three assumptions: 1) $\mW$ is symmetric positive semi-definite; 2) $\text{ker}\mW \supset \cL$; 3) $\mW$ is supported on the vertices and edges of the  network only: $w_{i,j} \neq 0$ if and only if $i = j$ or $(i, j)\in \mathcal{E}$.

To characterize the matrix $\mW$, which captures the properties of the network, we denote $\lambda_{\max}(\mW) = 1$ as the maximum eigenvalue of $\mW$, $\lambda^+_{\min}(\mW)$ as the minimum positive eigenvalue of $\mW$, and the characteristic number $\chi = \lambda_{\max}(\mW)/\lambda^+_{\min}(\mW) = 1/\lambda^+_{\min}(\mW)$.
\end{assumption}

\begin{assumption}[Time-varying network \cite{nedich2016geometrically}]  \label{ass:tv}
For a time-varying network, at any moment $t$, communication network can be modeled as a directed $B$-connected graph, $\mathcal{G}(t) = (\mathcal{V}, \mathcal{E}(t))$, where $\mathcal{E}(t) = \{(i,j) \, |\, i,j \in \mathcal{V} \}$ are directed edges. $B$-connectedness means that for any time $t$, the graph $\mathcal{G}_B(t)$ with the set of edges $\bigcup^{t+B-1}_{\tau = t} \mathcal{E}(t)$ is connected. To describe the gossip protocol for time-varying case, we define the multi-consensus gossip matrix
\begin{equation}
    \label{eq:tv_gossip}
   \textstyle       \mW_T (t) = \mI_M - \prod\limits_{\tau = t}^{t+T-1} \mW(\tau).
\end{equation}
One can also observe that multiplication with the matrix $\mW_T$ requires to perform multiplication with $T$
gossip matrices $\mW(t), \ldots, \mW(t+T-1)$, i.e., it requires $T$ decentralized communications. We further assume that the gossip matrices $\mW(t)$ (for $\mathcal{G}(t)$) and $\mW_B(t)$ satisfy: 1) $\mW(t)$ is supported on the nodes and edges of the  network: $w_{i,j}(t) \neq 0$ if and only if $i = j$ or $(i, j)\in \mathcal{E}(t)$; 2) $\text{ker}\mW (t) \supset \cL$; 3) $\text{range}\mW (t) \subset \{\z \in (\R^d)^M: \sum_{m=1}^M z_m = 0\}$; 4) there exists a characteristic number $\chi \geq 1$ such that $\|\mW_B (t) z - z \|^2 \leq (1 - \chi^{-1})\|z\|^2$ for all $z \in \text{range}\mW_B(t) $.
\end{assumption}

\section{Lower Bounds}

Our lower bounds apply to a specific class of algorithms which are, loosely speaking, allowed to communicate with neighbors, and compute any local first-order information. We now give a formal definition.

\begin{definition}[Oracle] \label{def:proc}
    Each agent $m$ has its own local memory $\mathcal{M}_{m}$ with  initialization $\mathcal{M}_{m} = \{0\}$. $\mathcal{M}_{m}$ is updated as follows. At each iteration, the algorithm either performs local computations or communicates.\\
    $\bullet$ \textbf{Local computation:} At each local iteration, device $m$ can sample uniformly and independently batch $S_m$ of any size $b$ from $\{F_{m,i}\}$ and adds to its $\mathcal{M}_m$ a finite number of points $z$, satisfying 
    \begin{equation}\label{eq:oracle-opt-step}
       \textstyle       z  \in \text{span} \left\{z'~,~ \sum_{i_m \in S_m} F_{m, i_m}(z''), \text{prox}_{\rho g}\left(\text{span} \left\{z'''~,~ \sum_{i_m \in S_m} F_{m, i_m}(z'')\right\}\right)\right\}
\end{equation}
    for $z', z'', z''' \in \mathcal{M}_{m}$ and $\rho > 0$. Such a call needs $b$ local computations to collect the batch. Batch of  size $n$ represents  $F_m$;\\    
    $\bullet$ \textbf{Communication:} Upon communication rounds among the neighboring nodes, and at  communication time $t$, $\mathcal{M}_{m}$ is updated according to
    \begin{equation}\begin{aligned}\label{eq:oracle-comm}
     \textstyle         \mathcal{M}_{m} \eqdef \text{span}\left\{\bigcup _{(i,m) \in \mathcal{E}(t)} \mathcal{M}_{i} \right\}.
   \end{aligned}\end{equation}
    $\bullet$ \textbf{Output:} 
    The final global output is calculated as 
 $        \hat z \in \text{span}\left\{\bigcup _{m=1}^M \mathcal{M}_{m} \right\}.
$
\end{definition} 
The structure of the above definition is typical for distributed lower bounds \cite{scaman2017optimal} and for stochastic lower bounds \cite{hendrikx2020optimal}. In particular, Definition \ref{def:proc} includes all the approaches for working with stochastic  problems, such as \algname{SGD} or variance reduction techniques (\algname{SVRG}, \algname{SARAH}). Note that while our algorithm can invoke the  deterministic oracle (full $F_m$) in local computations, in the work on lower bounds in the non-distributed case \cite{han2021lower}, there is no such a possibility. This narrows the class of algorithms for which results of \cite{han2021lower} are valid. In particular, they can not do \algname{SVRG}-type updates.

\begin{theorem}[Lower bound - fixed network] \label{th:lower_fixed}
For any $\Lavg \geq \mu >0$ and $\chi \geq 1$, $n \in \N$ and
$K, N \in \N$, there exists a decentralized variational inequality  (satisfying Assumptions \ref{as:Lipsh} and \ref{as:strmon}) on ${\R}^{d}$ (where $d$ is sufficiently
large) with $z^* \neq 0$ over a fixed network (satisfying Assumption \ref{ass:fixed}) with a gossip matrix $\mW$ and characteristic number $\chi$, such that for any output $\hat z$ of any procedure (Definition \ref{def:proc}) with $K$ communication rounds and $N$ local computations,  it holds that $\EE[\|\hat z - z^*\|^2]$ is
\begin{equation*}
   \textstyle  \Omega\left(\exp\left(  -\frac{80}{1 +  \sqrt{\frac{2L^2}{\mu^2} + 1}} \cdot \frac{K}{\sqrt{\chi}}\right)  R_0^2\right)  ~~ \text{and} ~~  \Omega\left(\exp\left(-\frac{16}{n +  \sqrt{\frac{2n \Lavg^2}{\mu^2} + n^2}}\cdot N\right) R_0^2\right),
\end{equation*}
where $R_0^2 = \|z^0 - z^* \|^2$ and $L = \tfrac{\Lavg}{\sqrt{n}}$.
\end{theorem}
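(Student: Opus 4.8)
The plan is to prove the two exponential estimates via two hard instances that share the same affine ``zero-chain'' operator but stress different resources: the first stresses communication (and yields the $L,\chi$ bound), the second stresses local sampling (and yields the $\Lavg,n$ bound). In both I would take $F(z)=\mA z+\mu z-b$ with $\mA$ a positive semidefinite tridiagonal (second-difference) matrix on the coordinates of a high-dimensional $z$. Such an $\mA$ couples only neighbouring coordinates, so applying the operator at a point whose support is $\{1,\dots,k\}$ can produce a nonzero entry only in coordinate $k+1$; this is the Nesterov/Scaman ``one new coordinate per oracle call'' mechanism adapted to VIs. The exact solution $z^*$ has geometrically decaying coordinates $z^*_j\propto q^{\,j}$, with $q$ fixed by the conditioning of $\mA+\mu\mI$, and a direct computation gives $1/\log(1/q)=\Theta(\kappa_0)$ where $\kappa_0\eqdef 1+\sqrt{2L^2/\mu^2+1}$. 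This single factor $\kappa_0$ is the source of the common term in both exponents, once one uses the identity $\sqrt{2n\Lavg^2/\mu^2+n^2}=n\sqrt{2L^2/\mu^2+1}$ implied by $\Lavg=L\sqrt n$.

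For the communication bound I would make the finite-sum structure degenerate (all $F_{m,i}$ equal, so only the operator constant $L$ is active) and split the chain coordinates between two groups of workers at the extreme ends of a worst-case graph realizing the prescribed $\chi$, exactly as in the decentralized optimization lower bound of \cite{scaman2017optimal}: even coordinates are ``owned'' by one group and odd coordinates by the other. Under the oracle of Definition \ref{def:proc}, a local computation advances the support frontier only within a group, while crossing from even to odd coordinates forces a gossip step; the spectral gap $\lambda^+_{\min}(\mW)=1/\chi$ then throttles cross-graph propagation so that $K$ communication rounds advance the global frontier to only $k=O(K/\sqrt\chi)$ (the Chebyshev/accelerated-gossip barrier). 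Combining this with the tail estimate $\|\hat z-z^*\|^2\ge\sum_{j>k}(z^*_j)^2=\Theta(q^{2k})R_0^2$ yields the first claimed bound.

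For the local-computation bound I would put everything on a single group (so $\chi$ drops out) and use the finite-sum decomposition \eqref{eq:fs} in earnest, following the zero-chain technique of \cite{han2021lower,hendrikx2020optimal}: I spread the $n$ ``links'' of the chain so that each component $F_{m,i}$ carries exactly one link, which makes $F_m$ genuinely $\Lavg$-average-Lipschitz with $L=\Lavg/\sqrt n$ while keeping each $F_m$ $L$-Lipschitz and $\mu$-strongly monotone. The crucial probabilistic observation is that a uniform batch of size $b$ advances the frontier by one coordinate only if it contains the unique component holding the next link, so a coupon-collector argument shows that each advance costs $\Theta(n)$ component queries in expectation, while $\Theta(\kappa_0\log(R_0^2/\epsilon))$ advances are needed to push the residual below $\epsilon$. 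An optional-stopping/Wald argument then bounds the expected frontier $\EE[k]$ after $N$ computations, and Jensen's inequality applied to the convex map $k\mapsto\exp(-ck)$ gives $\EE[\|\hat z-z^*\|^2]=\Omega(\exp(-16N/(n+\sqrt{2n\Lavg^2/\mu^2+n^2}))R_0^2)$.

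The main obstacle is this stochastic frontier estimate: I must show that \emph{no} adaptive strategy allowed by \eqref{eq:oracle-opt-step} --- including arbitrary spans of all previously queried component operators together with inserted $\prox_{\rho g}$ steps --- can locate the next link faster than the coupon-collector rate in expectation. This requires checking that both $\mA$ and the prox of the (separable quadratic) $g$ preserve the coordinate filtration, so spans and prox steps never leak information about coordinates beyond the current frontier, and then controlling the random advancement by a supermartingale. A secondary but unavoidable burden is the bookkeeping confirming that the constructed instance meets Assumptions \ref{as:Lipsh} and \ref{as:strmon} with the exact constants $L,\Lavg,\mu$, has $z^*\neq0$, and lives in a dimension $d$ large enough that $K$ and $N$ oracle calls cannot exhaust the chain.
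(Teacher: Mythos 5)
Your overall architecture (a chain/zero-chain instance split between two extreme node groups of a Scaman-type worst-case graph for the communication bound, plus a finite-sum row-splitting with a binomial/coupon-collector argument and Jensen's inequality for the local bound) is exactly the skeleton of the paper's proof. But there is a fatal error in your choice of hard instance: you take $F(z)=\mA z+\mu z-b$ with $\mA$ a \emph{symmetric} PSD second-difference matrix, i.e.\ a potential operator --- the gradient of the strongly convex quadratic $\tfrac12 z^\top(\mA+\mu\mI)z-b^\top z$. For such an instance your claimed identity $1/\log(1/q)=\Theta\bigl(1+\sqrt{2L^2/\mu^2+1}\bigr)$ is false: the solution of $(\mA+\mu\mI)z^*=b$ satisfies the recurrence $-z^*_{j-1}+(2+\alpha)z^*_j-z^*_{j+1}=0$ with $\alpha=\Theta(\mu/L)$, so $1-q=\Theta(\sqrt{\mu/L})$ and $1/\log(1/q)=\Theta(\sqrt{L/\mu})$, not $\Theta(L/\mu)$. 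Consequently the best bound your construction can ever yield is $\exp\bigl(-cK/(\sqrt{\chi}\sqrt{L/\mu})\bigr)R_0^2$, which is quadratically weaker than the statement. This is not a fixable bookkeeping issue: no quadratic-minimization instance can certify the theorem, because Chebyshev/Krylov iterations are span-respecting and therefore lie inside the oracle class of Definition \ref{def:proc}, and they solve any such instance in $O\bigl(\sqrt{\chi}\sqrt{L/\mu}\log(1/\varepsilon)\bigr)$ communications --- i.e.\ for your instance the claimed conclusion is actually false once $K$ is large.

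The full $L/\mu$ dependence can only come from a \emph{non-potential} (rotational) operator, which is why the paper, following \cite{zhang2019lower}, uses bilinear saddle-point functions: $f_1=\frac{\Lavg}{4\sqrt n}x^\top A_1y+\frac{\mu}{6}\|x\|^2-\frac{\mu}{6}\|y\|^2+\frac{\Lavg^2}{2n\mu}e_1^\top y$ on one extreme group and $f_2=\frac{\Lavg}{4\sqrt n}x^\top A_2y+\dots$ on the other, where $A_1,A_2$ are the two alternating halves of the bidiagonal chain matrix $A$. The point of the bilinear coupling is that the optimality system for $y^*$ involves $A^\top A+\alpha\mI$ with $\alpha=\frac{2n\mu^2}{\Lavg^2}=2\mu^2/L^2$ --- the chain matrix enters \emph{squared} --- so the decay parameter $q$ solving $q^2-(2+\alpha)q+1=0$ satisfies $1/\log(1/q)\geq\frac18\bigl(1+\sqrt{2L^2/\mu^2+1}\bigr)$, which is precisely the factor in the exponent. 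With that instance in place, your remaining steps (frontier propagation needing one gossip per even/odd alternation, the Scaman linear-graph argument giving effective distance $l\geq\sqrt{\chi}/5$, and the expected-frontier bound for uniform batches followed by Jensen applied to $k\mapsto q^{2k}$) match the paper's Lemmas on communication rounds, local calls, and the final combination; the paper's treatment of the stochastic part is also lighter than your supermartingale plan, since it only needs the binomial distribution of frontier advances and convexity of $q^{2k}$.
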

\begin{corollary} \label{cor:lower_fixed}
In the setting of Theorem~\ref{th:lower_fixed}, the number of communication rounds and local computations required to obtain an $\varepsilon$-solution (in expectation) is lower bounded by
\begin{align*}
    \textstyle    \Omega\left( \sqrt{\chi}\left(1 + \frac{L}{\mu}\right) \cdot  \log \left(\frac{R_0^2}{\varepsilon}\right)\right) ~~\text{and} ~~
     \Omega\left(\left(n + \sqrt{n}\cdot \frac{\Lavg}{\mu}\right) \cdot  \log \left(\frac{R_0^2}{\varepsilon}\right)\right), ~~\text{respectively.}
\end{align*}
\end{corollary}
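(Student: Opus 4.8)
The plan is to derive Corollary~\ref{cor:lower_fixed} directly from the two exponential lower bounds of Theorem~\ref{th:lower_fixed} by inverting them. The Theorem produces a \emph{single} instance on which both lower bounds on $\EE[\norm{\hat z - z^*}^2]$ hold simultaneously for any procedure using $K$ communications and $N$ local computations. Hence any algorithm that returns an $\varepsilon$-solution in expectation, i.e.\ one with $\EE[\norm{\hat z - z^*}^2] \leq \varepsilon$, must drive each of the two exponential expressions below $\varepsilon$. This forces a lower bound on $K$ and, separately, a lower bound on $N$, which is precisely the structure of the claim; the whole argument is a routine inversion plus a cleanup of constants.

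For the communication bound I would start from
\[
  c_1 \exp\!\left( -\frac{80}{1 + \sqrt{\tfrac{2L^2}{\mu^2}+1}} \cdot \frac{K}{\sqrt{\chi}} \right) R_0^2 \;\leq\; \varepsilon,
\]
where $c_1>0$ is the absolute constant hidden in the first $\Omega(\cdot)$. Taking logarithms and rearranging gives
\[
  K \;\geq\; \frac{\sqrt{\chi}}{80}\left( 1 + \sqrt{\tfrac{2L^2}{\mu^2}+1}\right)\log\!\left( \frac{c_1 R_0^2}{\varepsilon}\right).
\]
It then remains to simplify the coefficient. Since a $\mu$-strongly monotone, $L$-Lipschitz operator necessarily satisfies $L \geq \mu$, we have $\sqrt{2}\,L/\mu \leq \sqrt{2L^2/\mu^2+1} \leq \sqrt{2}\,L/\mu + 1$, so $1 + \sqrt{2L^2/\mu^2+1} = \Theta(1 + L/\mu)$. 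Absorbing $c_1$ into the constant of the logarithm yields $K = \Omega\big(\sqrt{\chi}(1 + L/\mu)\log(R_0^2/\varepsilon)\big)$, as claimed.

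The local-computation bound is obtained identically from the second exponential: requiring $c_2 \exp\!\big(-\frac{16}{n + \sqrt{2n\Lavg^2/\mu^2 + n^2}}\, N\big) R_0^2 \leq \varepsilon$ and inverting gives $N \geq \frac{1}{16}\big(n + \sqrt{2n\Lavg^2/\mu^2 + n^2}\big)\log(c_2 R_0^2/\varepsilon)$. Using $\Lavg \geq \mu$ together with the elementary two-sided estimate $\max\{n,\, \sqrt{2n}\,\Lavg/\mu\} \leq \sqrt{2n\Lavg^2/\mu^2 + n^2} \leq n + \sqrt{2n}\,\Lavg/\mu$, I get $n + \sqrt{2n\Lavg^2/\mu^2+n^2} = \Theta(n + \sqrt{n}\,\Lavg/\mu)$, which delivers the stated $N = \Omega\big((n + \sqrt{n}\,\Lavg/\mu)\log(R_0^2/\varepsilon)\big)$.

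There is no genuine obstacle here, as all the difficulty is concentrated in Theorem~\ref{th:lower_fixed} itself; the Corollary is merely its corollary. The only points demanding care are (i) reading ``$\varepsilon$-solution in expectation'' as $\EE[\norm{\hat z - z^*}^2]\leq\varepsilon$, so that the squared radius $R_0^2$ correctly matches the $\log(R_0^2/\varepsilon)$ factor, and (ii) carrying out the coefficient simplifications via the two-sided $\Theta$ estimates above, invoking $L\geq\mu$ and $\Lavg\geq\mu$, so that the awkward square-root coefficients collapse to the clean condition numbers $1 + L/\mu$ and $n + \sqrt{n}\,\Lavg/\mu$.
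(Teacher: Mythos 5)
Your proposal is correct and matches the paper's (implicit) treatment: the corollary is obtained exactly by inverting the two exponential bounds of Theorem~\ref{th:lower_fixed} on the single hard instance, taking logarithms, and simplifying the coefficients via $1+\sqrt{2L^2/\mu^2+1}=\Theta(1+L/\mu)$ and $n+\sqrt{2n\Lavg^2/\mu^2+n^2}=\Theta\left(n+\sqrt{n}\,\Lavg/\mu\right)$, with the hidden $\Omega$-constants absorbed into the logarithm. The paper offers no separate proof precisely because this routine inversion is all that is needed, so your argument is essentially the intended one.
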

\begin{theorem}[Lower bound - time varying network] \label{th:lower_tv}
For any $\Lavg \geq \mu >0$ and $\chi \geq 3$, $n \in \N$ and
$K, N \in \N$, there exist a decentralized variational inequality  (satisfying Assumptions \ref{as:Lipsh} and \ref{as:strmon}) on $\R^{d}$ (where $d$ is sufficiently
large) with $z^* \neq 0$ over a time-varying network (satisfying Assumption \ref{ass:tv}) with a sequence of gossip matrices $\mW(t)$ and characteristic number $\chi$, such that for any output $\hat z$ of any procedure (Definition \ref{def:proc}) with $K$ communication rounds and $N$ local computations, it holds that $\EE[\|\hat z - z^*\|^2]$ is
\begin{equation*}
   \textstyle \Omega\left(\exp\left(  -\frac{64}{\left(1 +  \sqrt{\frac{2L^2}{\mu^2} + 1}\right)} \cdot \frac{K}{B\chi}\right)  R_0^2\right) ~~ \text{and} ~~
   \Omega\left(\exp\left(-\frac{16}{n +  \sqrt{\frac{2n \Lavg^2}{\mu^2} + n^2}}\cdot N\right) R_0^2\right),
\end{equation*}
where $R_0^2 = \|z^0 - z^* \|^2$ and $L = \tfrac{\Lavg}{\sqrt{n}}$.
\end{theorem}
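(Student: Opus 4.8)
The plan is to construct a lower-bound instance by combining two essentially orthogonal hard problems: one that forces many communication rounds (the "network" part, depending on $\chi$) and one that forces many local computations (the "finite-sum/stochastic" part, depending on $n$ and $\Lavg$). Since the two bounds in the statement are stated separately — one in terms of $K/(B\chi)$ and one in terms of $N$ — I would prove each by a tailored construction, and the final instance can be taken as a direct product (block-diagonal) of the two, so that progress on one subproblem does not help the other. For the time-varying case specifically, I expect to reuse the finite-sum construction verbatim from the fixed-network theorem (Theorem~\ref{th:lower_fixed}), since the local-computation lower bound $\Omega\!\left(\exp\left(-\tfrac{16}{n+\sqrt{2n\Lavg^2/\mu^2+n^2}}N\right)R_0^2\right)$ is \emph{identical} in both theorems and is purely a statement about a single worker's oracle access to $\{F_{m,i}\}$; hence the real new content here is the \emph{communication} bound with its $B\chi$ dependence replacing the $\sqrt{\chi}$ of the fixed case.

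\textbf{For the communication bound}, the standard approach (following \cite{scaman2017optimal,kovalev2021lower}) is a "chain" or "linear graph" construction. I would split the coordinates of $z^*$ across the workers so that information must physically propagate across the diameter of the network to be reconstructed, and choose the operator $F$ so that the solution couples distant coordinates — a bilinear/tridiagonal saddle-point operator is the natural candidate, since bilinear games achieve the worst-case condition-number dependence and cast cleanly as a VI via Example~2. The key difference from the fixed-network case is that for time-varying $B$-connected graphs the adversary may change the graph each round: I would design a schedule of graphs $\mathcal{G}(t)$ (and matching gossip matrices $\mW(t)$ satisfying all four parts of Assumption~\ref{ass:tv}) that repeatedly "disconnects and slowly reconnects" so that making the graph effectively connected costs $B\chi$ communications rather than $\sqrt{\chi}$. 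Concretely, one arranges that each round of genuine progress across the chain requires $\Theta(B\chi)$ gossip steps, which is where the $K/(B\chi)$ exponent — as opposed to $K/\sqrt{\chi}$ — originates; the loss of the square-root acceleration is exactly the price of time-variation, consistent with the decentralized-minimization results of \cite{kovalev2021lower,li2021accelerated}.

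\textbf{For the local-computation bound}, I would use a finite-sum construction on a single worker where the summands $F_{m,i}$ are designed so that, with the stochastic oracle of Definition~\ref{def:proc}, each query reveals information about at most one summand, and the components of $z^*$ are "hidden" across the $n$ summands in a way that forces $\Omega(n+\sqrt{n}\,\Lavg/\mu)$ queries to assemble an $\varepsilon$-accurate solution. This is where I would invoke, or adapt, the non-distributed finite-sum lower bound of \cite{han2021lower}; the subtlety flagged in the paper is that Definition~\ref{def:proc} also permits \emph{deterministic} full-$F_m$ calls and SVRG/SARAH-type updates, so the construction must defeat variance-reduced methods, not merely plain SGD — meaning the "hard" coupling between summands must survive even when the algorithm occasionally computes the full finite-sum gradient. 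The expectation $\EE[\|\hat z-z^*\|^2]$ appears precisely because of the randomized sampling, and the bound is obtained by a standard information-theoretic/zero-chain argument: after $N$ local computations only finitely many coordinates of $z^*$ can have been "activated," and the remaining mass contributes the stated exponential-in-$N$ residual.

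\textbf{The main obstacle} I anticipate is the careful design of the time-varying gossip schedule so that all four conditions of Assumption~\ref{ass:tv} hold simultaneously — in particular condition~4, the contraction $\|\mW_B(t)z-z\|^2\le(1-\chi^{-1})\|z\|^2$ on $\operatorname{range}\mW_B(t)$, must be verified for the \emph{product} $\prod_{\tau=t}^{t+B-1}\mW(\tau)$ while the individual $\mW(\tau)$ are chosen adversarially to slow propagation. Balancing "the network is $B$-connected with characteristic number $\chi$" against "information moves as slowly as possible" is the delicate quantitative heart of the argument, and it is what converts the geometry of the schedule into the exact constant $64$ and the $B\chi$ scaling in the exponent. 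The finite-sum part, by contrast, should transfer with only cosmetic changes from Theorem~\ref{th:lower_fixed}, and the block-diagonal product construction guarantees the two bounds hold for a single common instance without interference.
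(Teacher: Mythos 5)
Your plan is correct in its essentials and uses the same core machinery as the paper: a bilinear saddle-point zero-chain instance, an adversarial time-varying schedule made mostly of empty rounds, and a finite-sum splitting for the local bound that is carried over unchanged from the fixed-network case (the paper indeed reuses the construction \eqref{t2}--\eqref{f_m} verbatim for Theorem~\ref{th:lower_tv} and only re-derives the propagation distance $l$). The one place you genuinely diverge is how the two bounds are combined: you propose a block-diagonal product of two separate hard instances, whereas the paper builds a \emph{single} chain whose coordinates can only be activated by passing two gates simultaneously --- reaching the correct worker (communication, Lemma~\ref{l2}) and hitting the correct summand inside that worker's finite sum (local sampling, Lemma~\ref{l3}) --- and then merges the two counters in Lemma~\ref{l234} via $q^{2k}\ge q^{2\min(k_c,k_l)}$ together with Jensen's inequality applied to the binomially distributed local progress. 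Your product construction would also work (splitting $R_0^2$ between the blocks costs only a constant factor, and each block can be padded with $\mu$-regularizer terms so that every $F_m$ stays $\mu$-strongly monotone), and it is more modular; the paper's unified instance is more economical, requiring one verification of Assumptions~\ref{as:Lipsh}--\ref{as:strmon} and one approximation lemma. Two points in your sketch deserve sharpening against the paper. First, the concrete schedule: the paper takes $M=\lfloor\chi\rfloor$ vertices, leaves the graph empty except every $B$-th round, where it is a star whose center rotates through the $M-2$ ``neutral'' vertices owning only regularizer terms; because in a star round a leaf only receives what the center held before that round, transferring information from one endpoint to the other costs an entire rotation plus the interleaved empty rounds, giving $l\ge M-1+(B-1)(M-2)\ge B(\lfloor\chi\rfloor-2)\ge B\chi/4$, which is exactly where the $K/(B\chi)$ scaling and the constant $64=16\cdot 4$ come from --- this is the quantitative step your proposal leaves abstract. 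Second, you cannot literally invoke \cite{han2021lower} for the local bound: as the paper emphasizes, that result holds for a narrower oracle class that excludes full-batch ($F_m$) queries, so the expectation argument must be redone for Definition~\ref{def:proc} (this is the binomial counting in Lemma~\ref{l3}, where a full-batch call costs $n$ computations and a size-$b$ batch advances the chain only with probability $b/n$); you flag this correctly, but it is an adaptation rather than a citation.
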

\begin{corollary} \label{cor:lower_tv}
In the setting of Theorem~\ref{th:lower_tv}, the number of communication rounds and local computations required to obtain an $\varepsilon$-solution (in expectation) is lower bounded by
\begin{align*}
    \textstyle    \Omega\left( B\chi\left(1 + \frac{L}{\mu}\right) \cdot  \log \left(\frac{R_0^2}{\varepsilon}\right)\right) ~~\text{and}~~    \Omega\left(\left(n + \sqrt{n}\cdot \frac{\Lavg}{\mu}\right) \cdot  \log \left(\frac{R_0^2}{\varepsilon}\right)\right), ~~\text{respectively.}
\end{align*}
\end{corollary}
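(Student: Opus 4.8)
The plan is to derive the two claimed iteration complexities directly from the expectation lower bounds in Theorem~\ref{th:lower_tv} by inverting the exponential rates, exactly mirroring the argument behind Corollary~\ref{cor:lower_fixed} (with $\sqrt{\chi}$ replaced by $B\chi$ and the constant $80$ by $64$). To produce an $\varepsilon$-solution in expectation, the output $\hat z$ must satisfy $\EE[\|\hat z - z^*\|^2] \le \varepsilon$. Since Theorem~\ref{th:lower_tv} guarantees that this quantity is $\Omega$ of each of the two exponential expressions, a \emph{necessary} condition for $\varepsilon$-accuracy is that each expression be at most a constant multiple of $\varepsilon$. The first expression then constrains the number of communication rounds $K$, and the second constrains the number of local computations $N$.

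For the communication bound I would start from the requirement
\[
\exp\left(-\frac{64}{1+\sqrt{\tfrac{2L^2}{\mu^2}+1}}\cdot\frac{K}{B\chi}\right)R_0^2 \le c\varepsilon,
\]
where $c>0$ is the absolute constant hidden in the $\Omega$. Taking logarithms and solving for $K$ gives
\[
K \ge \frac{B\chi}{64}\left(1+\sqrt{\tfrac{2L^2}{\mu^2}+1}\right)\log\left(\frac{R_0^2}{c\varepsilon}\right).
\]
The final step is to simplify the prefactor using the elementary two-sided estimate $\max\{1,\sqrt{2}L/\mu\}\le \sqrt{2L^2/\mu^2+1}\le 1+\sqrt{2}L/\mu$, which yields $1+\sqrt{2L^2/\mu^2+1}=\Theta(1+L/\mu)$ and hence the stated communication complexity $\Omega\!\left(B\chi(1+L/\mu)\log(R_0^2/\varepsilon)\right)$.

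For the local-computation bound the argument is identical, starting from
\[
\exp\left(-\frac{16}{n+\sqrt{\tfrac{2n\Lavg^2}{\mu^2}+n^2}}\cdot N\right)R_0^2\le c\varepsilon,
\]
which after the same manipulation gives $N\ge\tfrac{1}{16}\bigl(n+\sqrt{2n\Lavg^2/\mu^2+n^2}\bigr)\log(R_0^2/(c\varepsilon))$. Here I would invoke $\max\{n,\sqrt{2n}\Lavg/\mu\}\le \sqrt{2n\Lavg^2/\mu^2+n^2}\le n+\sqrt{2n}\Lavg/\mu$ to conclude that $n+\sqrt{2n\Lavg^2/\mu^2+n^2}=\Theta(n+\sqrt{n}\Lavg/\mu)$, giving the stated local complexity $\Omega\!\left((n+\sqrt{n}\Lavg/\mu)\log(R_0^2/\varepsilon)\right)$.

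Since both reductions are elementary, there is no genuine obstacle; the only point requiring minor care is verifying that the constants absorbed into the $\Omega$-notation (and the additive $\log c$ term arising from $\log(R_0^2/(c\varepsilon))$) do not alter the asymptotic form $\log(R_0^2/\varepsilon)$. This holds because $c$ is an absolute constant independent of the problem parameters $L,\Lavg,\mu,\chi,B,n$, so $\log(R_0^2/(c\varepsilon))=\Theta(\log(R_0^2/\varepsilon))$ as $\varepsilon\to 0$.
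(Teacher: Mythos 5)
Your proposal is correct and takes essentially the same route the paper leaves implicit: the corollary follows from Theorem~\ref{th:lower_tv} by demanding that each exponential lower bound be at most a constant multiple of $\varepsilon$, taking logarithms to solve for $K$ and $N$, and then simplifying the prefactors via $1+\sqrt{2L^2/\mu^2+1}=\Theta\!\left(1+L/\mu\right)$ and $n+\sqrt{2n\Lavg^2/\mu^2+n^2}=\Theta\!\left(n+\sqrt{n}\,\Lavg/\mu\right)$. Your handling of the absorbed constant (the paper's hidden factor is $32$, from the inequality $\exp\bigl(\tfrac{64K}{B\chi(1+\sqrt{2L^2/\mu^2+1})}\bigr)\geq \tfrac{R_0^2}{32\,\EE[\|\hat z - z^*\|^2]}$ in Appendix~\ref{app:low_tv}) is also the right observation, so there is nothing to add.
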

See proofs for Theorems \ref{th:lower_fixed} and \ref{th:lower_tv} in Appendix \ref{sec:pr_lb}. The proof uses the idea (example of bad functions) of non-distributed deterministic lower bounds from \cite{zhang2019lower}. This idea is further extended to distributed stochastic VIs.

Note that in the time-varying case, the lower bounds for communication differ by the constant $B$ from the estimates that were previously encountered in the literature \cite{beznosikov2021}. This is due to the fact that we consider a more general setup with a $B$-connected graph (see Assumption \ref{ass:tv}), while the existing literature on lower bounds focuses on the simpler $B=1$ case.

\section{Optimal Algorithms} \label{sec:opt_alg}

Since in the decentralized gossip protocol each local worker $m$ stores its own $z_m$ vector, we consider the problem
\begin{align}
    \label{eq:VI_lift}
    \text{Find} \quad \z^* \in (\R^d)^M \quad & \text{such that} \quad 
    \langle \F(\z^*), \z - \z^* \rangle + \g(\z) - \g(\z^*) \geq 0, \forall \z \in (\R^d)^M,
\end{align}
where we use new notation: $\z = (z_1,\ldots,z_M)^\top$ and $\z^* = (z^*_1,\ldots,z^*_M)^\top$. Additionally, here we introduce the lifted operator $\F : (\R^d)^M \to (\R^d)^M $ given as
$
\textstyle \F(\z) = (F_1(z_1),\ldots,F_M(z_M))^\top,
$
and the lifted operator $\g : (\R^d)^M \to \R \cup \{ + \infty\}$ defined by
$
  \textstyle    \g(\z) = \frac{1}{M}\sum_{m=1}^M g(z_m).
$

One can note that \eqref{eq:VI_lift} is a set of $M$ unrelated variational inequalities with their own variables. 
But the original problem \eqref{eq:VI} + \eqref{eq:fs} is a sum of variational inequalities with the same variables:
$\sum_{m=1}^M \left[\langle F(z^*), z - z^* \rangle + \tfrac{1}{M}g(z) - \tfrac{1}{M}g(z^*)\right]$.
To eliminate this issue and move on to problem \eqref{eq:VI} + \eqref{eq:fs},  it is easy to get the following modification of \eqref{eq:VI_lift}
\begin{align}
    \label{eq:VI_new}
    \text{Find} \quad \z^* &\in \cL \quad \text{such that} \quad 
    \langle \F(\z^*), \z - \z^* \rangle + \g(\z) - \g(\z^*) \geq 0, ~~ \forall \z \in \cL,
\end{align}
where $\cL$ is the consensus space. Problem \eqref{eq:VI_new} is equivalent to  \eqref{eq:VI} + \eqref{eq:fs}.
Due to Assumptions \ref{as:Lipsh} and \ref{as:strmon}, $\F$ is $L$-Lipschitz continuous, 
$\Lavg$-average Lipschitz continuous and $\mu$-strongly monotone.

\subsection{Fixed networks}

We present Algorithm~\ref{alg:vrvi} for fixed networks. In Appendix \ref{sec:dis_int} we give a discussion and intuition. In particular, we give the deterministic variant as well as the non-distributed version of Algorithm~\ref{alg:vrvi}. 
The next result gives the iteration complexity of Algorithm~\ref{alg:vrvi}.
\begin{wrapfigure}[27]{r}{8.2cm}
\vspace{-0.7cm}
\begin{minipage}{0.6\textwidth}
\begin{algorithm}[H]
	\caption{}
	\label{alg:vrvi}
	\begin{algorithmic}[1]
		\STATE {\bf Parameters:}  Stepsizes $\eta, \theta>0$, momentums $\alpha, \beta, \gamma$, batchsize $b \in \{1,\ldots,n\}$, probability $p  \in (0,1)$
        \STATE {\bf Initialization:} Choose  $\z^0 = \w^0 \in (\dom g)^M$, $\y^0 \in \cL^\perp$. Put $\z^{-1} = \z^0, \w^{-1} = \w^0$, $\y^{-1} = \y^0$
		\FOR{$k=0,1,2\ldots$}
			\STATE Sample $j_{m,1}^k, \ldots,j_{m,b}^k$ independently from $[n]$
			\STATE $S^k = \{j_{m,1}^k, \ldots,j_{m,b}^k\}$
			\STATE Sample $j_{m,1}^{k+1/2}, \ldots,j_{m,b}^{k+1/2}$ independently from $[n]$
			\STATE $S^{k+1/2} = \{j_{m,1}^{k+1/2}, \ldots,j_{m,b}^{k+1/2}\}$
			\STATE $\delta^k = \frac{1}{b}\sum_{j\in S^k} \Big(\F_j(\z^k) - \F_j (\w^{k-1}) $ \\\hspace{1.8cm}  $+ \alpha[\F_j(\z^k) - \F_j(\z^{k-1})]\Big) + \F(\w^{k-1})$ \label{vrvi:line:Delta}
			\STATE $\Delta^k = \delta^k - (\y^k + \alpha(\y^k - \y^{k-1}))$ \label{vrvi:line:g}
			\STATE $\z^{k+1} = \prox_{\eta \g} (\z^k + \gamma (\w^k - \z^k)- \eta \Delta^k)$\label{vrvi:line:x}
			\STATE $\Delta^{k+1/2} = \frac{1}{b}\sum_{j\in S^{k+1/2}} \left(\F_j(\z^{k+1}) - \F_j (\w^{k})  \right)$\\\hspace{5.9cm} $+ \F(\w^{k})$  \label{vrvi:line:Delta1/2} 
			\STATE $\y^{k+1} = \y^k - \theta (\mW \otimes \mI_d )(\z^{k+1} - \beta(\Delta^{k+1/2} - \y^k))$\label{dvi:line:y}
			\STATE $\w^{k+1} = \begin{cases}
			\z^{k},& \text{with probability }p\\
			\w^{k},& \text{with probability }1-p
			\end{cases}$\label{vrvi:line:w}
		\ENDFOR
	\end{algorithmic}
\end{algorithm}
\vspace{-0.4cm}
$^*\F_j(\z) = (F_{1,j_{1, l}}(z_1), \ldots, F_{M,j_{M, l}} (z_M))^T$, $l \in \{1,\ldots, b\}$
\noindent\rule{8.4cm}{0.4pt}
\end{minipage}
\end{wrapfigure}

\begin{theorem}[Upper bound - fixed network] \label{th:ALg1_conv}
Consider the problem \eqref{eq:VI_new} (or \eqref{eq:VI} + \eqref{eq:fs}) under Assumptions~\ref{as:Lipsh} and \ref{as:strmon} over a fixed graph $\mathcal{G}$ (Assumption \ref{ass:fixed}) with a gossip matrix $\mW$. Let  $\{\z^k\}$ be the sequence generated by Algorithm~\ref{alg:vrvi} with tuning of $\eta, \theta, \alpha, \beta, \gamma$  as described in Appendix \ref{sec:pr_oa_fixed}. Then, given $\varepsilon>0$, the number of iterations for 
$\EE[\|\z^k - \z^*\|^2] \leq \varepsilon$ is 
\begin{equation*}
  \textstyle        \cO\left( \left[\frac{1}{p} +\chi + \frac{1}{\sqrt{pb}}\frac{\Lavg}{\mu}+  \sqrt{\chi} \frac{L}{\mu}\right] \log \frac{1}{\varepsilon} \right).
\end{equation*}
\end{theorem}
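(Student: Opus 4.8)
The plan is to recast the consensus-constrained problem \eqref{eq:VI_new} as a saddle-point monotone inclusion and to analyze Algorithm~\ref{alg:vrvi} as a variance-reduced primal-dual method of extragradient (forward-backward-forward) type, through a single Lyapunov contraction. First I would note that since $\cL$ is a subspace with $\cL^\perp = \mathrm{range}(\mW\otimes\mI_d)$, the VI on $\cL$ is equivalent to finding a pair $(\z^*,\y^*)$ with $\z^*\in\cL$, $\y^*\in\cL^\perp$, satisfying $0 \in \F(\z^*) + \partial\g(\z^*) + \y^*$ and $(\mW\otimes\mI_d)\z^* = 0$; here $\y^*$ is the dual certificate that the iterates $\y^k\in\cL^\perp$ are designed to track. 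By Assumptions~\ref{as:Lipsh} and \ref{as:strmon}, $\F$ is $\mu$-strongly monotone and $L$-Lipschitz, which drives the primal contraction, while the spectral gap of $\mW$ (encoded in $\chi$, with $\lambda^+_{\min}(\mW)=\chi^{-1}$) governs the dual contraction.

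Second, I would establish the stochastic properties of the two estimators. Conditioning on the history through step $k$, unbiasedness of uniform sampling gives $\EE_k[\delta^k] = \F(\z^k) + \alpha(\F(\z^k)-\F(\z^{k-1}))$ and $\EE_k[\Delta^{k+1/2}] = \F(\z^{k+1})$, so $\delta^k$ realizes an optimistic/extrapolated gradient and $\Delta^{k+1/2}$ the extragradient correction used in the dual step. The $\Lavg$-average-Lipschitz assumption then yields variance bounds of the form $\EE_k\sqn{\delta^k - \EE_k[\delta^k]} \lesssim \tfrac{1}{b}\Lavg^2\left(\sqn{\z^k - \w^{k-1}} + \alpha^2\sqn{\z^k-\z^{k-1}}\right)$, and similarly for $\Delta^{k+1/2}$; the factor $1/b$ from batching and the appearance of $\sqn{\z^k-\w^{k-1}}$ are exactly what variance reduction exploits.

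Third, I would design a Lyapunov function of the form
\[
\Psi^k = \sqn{\z^k - \z^*} + c_1\sqnw{\y^k - \y^*} + c_2\sqn{\z^k - \z^{k-1}} + c_3\sqn{\y^k-\y^{k-1}} + \tfrac{c_4}{p}\sqn{\z^k - \w^{k-1}},
\]
where the dual term is measured in the $\mWp$-metric and the last (variance-reduction) term carries the $1/p$ weight matching the probability-$p$ refresh of $\w$ in Line~\ref{vrvi:line:w}, so that its expected one-step change compensates the variance injected into each update. Using firm nonexpansiveness of the prox in Line~\ref{vrvi:line:x}, strong monotonicity to extract a $\mu\sqn{\z^{k+1}-\z^*}$ gain, the variance bounds above to absorb the stochastic error into the $c_2$ and $c_4$ terms, and the fact that $\y^k\in\cL^\perp$ together with the spectral gap of $\mW$ to contract the dual metric, I would aim to prove a one-step inequality $\EE_k[\Psi^{k+1}] \le (1-\rho)\Psi^k$.

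The main obstacle is the parameter tuning in this last step. Achieving the \emph{accelerated} rates -- $\sqrt{\chi}$ rather than $\chi$ in the network term and $1/\sqrt{pb}$ rather than $1/(pb)$ in the variance term -- forces the momentum parameters $\alpha,\beta,\gamma$ and the stepsizes $\eta,\theta$ onto the critical scaling (illustratively $\alpha\sim 1-\rho$, $\eta\sim\sqrt{pb}/\Lavg$, $\theta\sim 1/(\sqrt{\chi}\,L)$), so that the optimistic extrapolation and the negative-momentum dual update cancel the leading Lipschitz cross terms before they degrade to the non-accelerated scaling. Making all cross terms between the primal step, the gossip step, and the variance-reduction term be dominated simultaneously, while keeping the Lyapunov coefficients $c_1,\dots,c_4$ positive, is the delicate computation; once $\rho^{-1} = \cO\!\left(\tfrac1p + \chi + \tfrac{1}{\sqrt{pb}}\tfrac{\Lavg}{\mu} + \sqrt{\chi}\,\tfrac{L}{\mu}\right)$ is secured, unrolling $\EE[\Psi^k]\le(1-\rho)^k\Psi^0$ and noting $\sqn{\z^k-\z^*}\le\Psi^k$ immediately gives the claimed $\cO(\rho^{-1}\log\tfrac1\e)$ iteration complexity.
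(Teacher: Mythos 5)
Your proposal is correct in outline and follows essentially the same route as the paper's own proof in Appendix~\ref{sec:pr_oa_fixed}: the same unbiasedness and variance bounds for $\delta^k$ and $\Delta^{k+1/2}$ (Lemma~\ref{var_lem_fix}), a primal--dual Lyapunov function mixing $\sqn{\z-\z^*}$, $\sqnw{\y-\y^*}$, iterate-difference terms and variance-reduction anchor terms, a one-step contraction (Lemma~\ref{lem:fixed_key}), and the same parameter scalings assembled into the claimed iteration complexity. The only notable difference is bookkeeping: the paper's Lyapunov function \eqref{eq:Lf_fixed} additionally carries the cross term $2\langle \F(\z^k)-\F(\z^{k+1})-(\y^k-\y^{k+1}),\,\z^{k+1}-\z^*\rangle$ and the anchor-to-solution term $\tfrac{\gamma+\eta\mu/2}{p\eta}\sqn{\w^{k+1}-\z^*}$, which is precisely how the Lipschitz cross terms get telescoped with factor $\alpha$ (rather than Young-bounded, which would degrade $\sqrt{\chi}\,L/\mu$ to a non-accelerated rate) and how the contraction factor $1-\tfrac{p\eta\mu}{2\gamma+\eta\mu}$, i.e.\ the $1/p$ term in the complexity, actually arises.
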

See the proof in Appendix \ref{sec:pr_oa_fixed}.
Let us discuss the results of Theorem. First of all, we are interested in how to obtain the complexity of communications and local computations from iterative complexity. At each iteration we require (in average) $\mathcal{O}(b + p n)$ local computations, because we need to store batch $b$ twice and with probability $p$ we update the point $\w^{k+1}$ by $\z^k$, this requires calculating the full $\F$ in the next iteration. Then, as the optimal $p$, one can choose $p \sim \nicefrac{b}{n}$. Then with such choice of $p$ we have the following local and  communication complexities
\begin{equation*}
    \textstyle      \cO\left( \left[n + b\chi + \sqrt{n}\frac{\Lavg}{\mu}+  b\sqrt{\chi} \frac{L}{\mu} \right] \log \frac{1}{\varepsilon}\right) \quad \text{and} \quad 
  \textstyle        \cO\left( \left[\frac{n}{b} + \chi + \frac{\sqrt{n}}{b}\frac{\Lavg}{\mu}+  \sqrt{\chi} \frac{L}{\mu}\right] \log \frac{1}{\varepsilon} \right),
\end{equation*}
respectively (since at each iteration Algorithm~\ref{alg:vrvi} performs $\mathcal{O}(1)$ communications).

Hence, with $b = 1$ we have the complexities the same as in Table \ref{tab:comparison0}. Depending on $\max\{\sqrt{n}; \sqrt{\chi}\}$, we have the optimality of either local communications or decentralized communications. One can note that it is enough to take $b \geq \nicefrac{\Lavg \sqrt{n} }{L}$ and guarantee the optimal communication complexity (see Corollary \ref{cor:lower_fixed}), but we have non-optimality in local iterations.

To make the algorithm optimal both in terms of communications and local computations, we need to slightly modify it. One can make it using Chebyshev acceleration (see Algorithm~\ref{alg:chebyshev_gossip} in Appendix \ref{sec:cheb}). Following  \cite{scaman2017optimal}, we can construct a polynomial $P$ such that 1) $P(\mW)$ is a gossip matrix, 2) multiplication by $P(\mW) \otimes \mI_d$ requires $\sqrt{\chi(\mW)}$ multiplications by $\mW$ (i.e. $\sqrt{\chi(\mW)}$ communication rounds)  3) $\chi(P(\mW)) \leq 4$. Then we can modify Algorithm~\ref{alg:vrvi} by replacing $\mW$ by $P(\mW)$ and get
\begin{theorem} [Upper bound - fixed network] \label{th:ALg1_cheb_conv}
Consider the problem \eqref{eq:VI_new} (or \eqref{eq:VI} + \eqref{eq:fs}) under Assumptions~\ref{as:Lipsh} and \ref{as:strmon} over a fixed connected graph $\mathcal{G}$ (Assumption \ref{ass:fixed}) with a gossip matrix $\mW$. Let  $\{\z^k\}$ be the sequence generated by Algorithm~\ref{alg:vrvi} with Chebyshev polynomial $P(\mW)$ as a gossip matrix and with tuning of $\eta, \theta, \alpha, \beta, \gamma$  as described in  Appendix \ref{sec:pr_oa_fixed}. Then, given $\varepsilon>0$, the number of iterations for 
$\EE[\|\z^k - \z^*\|^2] \leq \varepsilon$ is 
\begin{equation*}
     \textstyle     \cO\left( \left[\frac{1}{p} + \frac{1}{\sqrt{pb}}\frac{\Lavg}{\mu}+  \frac{L}{\mu} \right] \log \frac{1}{\varepsilon} \right).
\end{equation*}
\end{theorem}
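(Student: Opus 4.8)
The plan is to treat Theorem~\ref{th:ALg1_conv} as a black box and derive Theorem~\ref{th:ALg1_cheb_conv} by a reduction through the Chebyshev polynomial $P$. The key observation is that running Algorithm~\ref{alg:vrvi} with the gossip matrix replaced by $\tilde{\mW} := P(\mW)$ is again an instance of Algorithm~\ref{alg:vrvi}, now driven by $\tilde{\mW}$. Hence, once I verify that $\tilde{\mW}$ is an admissible gossip matrix in the sense of Assumption~\ref{ass:fixed}, Theorem~\ref{th:ALg1_conv} applies verbatim with $\chi$ replaced by $\chi(\tilde{\mW})$.

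First I would check admissibility, leaning on property~1 of the construction of \cite{scaman2017optimal} (reproduced in Algorithm~\ref{alg:chebyshev_gossip}), which already guarantees that $\tilde{\mW}=P(\mW)$ is a gossip matrix: it is symmetric positive semidefinite, and since $P$ is chosen with $P(0)=0$ we have $P(\mW)\z = P(0)\z = 0$ for every $\z\in\cL\subset\ker\mW$, so $\ker\tilde{\mW}\supset\cL$. The only property of Assumption~\ref{ass:fixed} not literally inherited is the edge-support condition, but this is not used in the convergence analysis of Theorem~\ref{th:ALg1_conv}, which depends on the gossip matrix only through its spectral data; the support condition enters solely in the complexity bookkeeping, and there it is honored because every multiplication by $\tilde{\mW}\otimes\mI_d$ is realized as $\cO(\sqrt{\chi(\mW)})$ successive multiplications by $\mW\otimes\mI_d$ (property~2 of $P$), each a single communication round along the physical edges. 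I would also note that the tuning of $\eta,\theta,\alpha,\beta,\gamma$ from Appendix~\ref{sec:pr_oa_fixed} depends on the network only through $\chi$, so it transfers directly under $\chi\mapsto\chi(\tilde{\mW})$.

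With admissibility in hand, I would invoke Theorem~\ref{th:ALg1_conv} for $\tilde{\mW}$ to obtain the iteration complexity
\begin{equation*}
  \textstyle \cO\left( \left[\frac{1}{p} + \chi(\tilde{\mW}) + \frac{1}{\sqrt{pb}}\frac{\Lavg}{\mu} + \sqrt{\chi(\tilde{\mW})}\,\frac{L}{\mu}\right]\log\frac{1}{\varepsilon}\right),
\end{equation*}
and then substitute property~3 of the construction, namely $\chi(\tilde{\mW})=\chi(P(\mW))\leq 4$. Since $4=\cO(1)$ and $1/p\geq 1$ for $p\in(0,1)$, the additive $\chi(\tilde{\mW})$ term is absorbed, while $\sqrt{\chi(\tilde{\mW})}\,L/\mu\leq 2L/\mu=\cO(L/\mu)$, which yields exactly the claimed bound.

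The main obstacle is not any hard estimate but the careful justification that this reduction is legitimate: that $P(\mW)$ genuinely satisfies the algebraic hypotheses used \emph{inside} the proof of Theorem~\ref{th:ALg1_conv}, and that replacing $\mW$ by $P(\mW)$ does not alter the structure of Algorithm~\ref{alg:vrvi} — in particular, that the $\y$-update in line~\ref{dvi:line:y} still keeps the dual iterate in the range dictated by $\tilde{\mW}$. The one genuinely nontrivial ingredient is the existence of $P$ with $P(0)=0$, degree $\cO(\sqrt{\chi})$, and $\chi(P(\mW))\leq 4$; rather than rederive it, I would cite the classical Chebyshev acceleration of \cite{scaman2017optimal} as realized in Algorithm~\ref{alg:chebyshev_gossip}.
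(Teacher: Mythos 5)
Your proposal is correct and follows essentially the paper's own route: the paper likewise obtains Theorem \ref{th:ALg1_cheb_conv} by replacing $\mW$ with $P(\mW)$ in Algorithm \ref{alg:vrvi}, invoking Theorem \ref{th:ALg1_conv} with $\chi(P(\mW)) \leq 4$ so that the additive $\chi$ term is absorbed into $1/p$ and $\sqrt{\chi}\,\frac{L}{\mu}$ becomes $\cO\left(\frac{L}{\mu}\right)$, while each multiplication by $P(\mW)\otimes \mI_d$ costs $\cO(\sqrt{\chi(\mW)})$ communication rounds. Your explicit verification that $P(\mW)$ inherits the algebraic properties used inside the proof of Theorem \ref{th:ALg1_conv} (symmetry, positive semidefiniteness, $P(0)=0$ so $\ker P(\mW)\supset\cL$, and the irrelevance of the edge-support condition to the convergence analysis) is left implicit in the paper, but the underlying argument is the same.
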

In this case, the communication complexity of one iteration is $\chi$, and the local complexity (in average) is still $\mathcal{O}(b + p n)$. Then with $p = \nicefrac{b}{n}$ we get the following local and  communication complexities
\begin{equation*}
   \textstyle       \cO\left(\left[ n + \sqrt{n}\frac{\Lavg}{\mu}+  b\frac{L}{\mu} \right] \log \frac{1}{\varepsilon}\right) \quad \text{and} \quad 
  \textstyle        \cO\left(\left[\sqrt{\chi} \frac{n}{b}  + \sqrt{\chi}\frac{\sqrt{n}}{b}\frac{\Lavg}{\mu}+  \sqrt{\chi} \frac{L}{\mu} \right] \log \frac{1}{\varepsilon} \right),
\end{equation*}
respectively. To get optimal results from Table \ref{tab:comparison0} we just need to take $b = \nicefrac{\Lavg \sqrt{n} }{L}$.
\\
\begin{wrapfigure}[34]{r}{8.2cm}
\vspace{-1.1cm}
\begin{minipage}{0.6\textwidth}
\begin{algorithm}[H]
	\caption{}
	\label{2dvi2:alg}
	\begin{algorithmic}[1]
		\STATE {\bf Parameters:}  Stepsizes $\eta_z, \eta_y, \eta_x, \theta>0$, momentums $\alpha, \gamma, \omega, \tau$, parameters $\nu, \beta$, batchsize $b \in \{1,\ldots,n\}$, probability $p  \in (0,1)$
        \STATE {\bf Initialization:} Choose  $\z^0 = \w^0 \in (\dom g)^M$, $\y^0 \in (\R^d)^M$, $\x^0 \in \cL^\perp$. Put $\z^{-1} = \z^0, \w^{-1} = \w^0$, $\y_f = \y^{-1} = \y^0$, $\x_f = \x^{-1} = \x^0$, $m_0 = \textbf{0}^{dM}$
		\FOR{$k=0,1,2,\ldots$}
		\STATE Sample $j_{m,1}^k, \ldots,j_{m,b}^k$ independently from $[n]$
		\STATE $S^k = \{j_{m,1}^k, \ldots,j_{m,b}^k\}$
		\STATE Sample $j_{m,1}^{k+1/2}, \ldots,j_{m,b}^{k+1/2}$ independently from $[n]$
		\STATE $S^{k+1/2} = \{j_{m,1}^{k+1/2}, \ldots,j_{m,b}^{k+1/2}\}$
		\STATE $\delta^k = \frac{1}{b}\sum_{j\in S^k} \Big(\F_j(\z^k) - \F_j (\w^{k-1}) $ \\\hspace{1.8cm}  $+ \alpha[\F_j(\z^k) - \F_j(\z^{k-1})]\Big) + \F(\w^{k-1})$
		\STATE $\Delta_z^k = \delta^k - \nu \z^k - \y^k - \alpha(\y^k - \y^{k-1})$
		\STATE $\z^{k+1} = \prox_{\eta_z \g}(\z^k + \omega (\w^k - \z^k) - \eta_z \Delta_z^k)$
		\STATE $\y_c^k = \tau \y^k + (1-\tau)\y_f^k$
		\STATE $\x_c^k = \tau \x^k + (1-\tau)\x_f^k$
		\STATE $\Delta_y^k = \nu^{-1} (\y_c^k + \x_c^k) + \z^{k+1} + \gamma (\y^k + \x^k + \nu \z^k)$
		\STATE $\delta^{k+1/2} = \frac{1}{b}\sum_{j\in S^{k+1/2}} \left(\F_j(\z^{k+1}) - \F_j (\w^{k})  \right)$\\\hspace{5.9cm} $+ \F(\w^{k})$
		\STATE $\Delta_x^k = \nu^{-1} (\y_c^k + \x_c^k) + \beta(\x^k + \delta^{k+1/2})$
		\STATE $\y^{k+1} = \y^k - \eta_y \Delta_y^k$
		\STATE $\x^{k+1} = \x^k - (\mW_T(Tk) \otimes \mI_d) (\eta_x\Delta_x^k + m^k)$
		\STATE $m^{k+1} = \eta_x\Delta_x^k + m^k$\\\hspace{1.7cm} $- (\mW_T(Tk) \otimes \mI_d) (\eta_x\Delta_x^k + m^k)$
		\STATE $\y_f^{k+1} = \y_c^k + \tau(\y^{k+1} - \y^k)$
		\STATE $\x_f^{k+1} = \x_c^k - \theta(\mW_T(Tk) \otimes \mI_d)(\y_c^k + \x_c^k)$
		\STATE $\w^{k+1} = \begin{cases}
			\z^{k},& \text{with probability }p\\
			\w^{k},& \text{with probability }1-p
			\end{cases}$
		\ENDFOR
	\end{algorithmic}
\end{algorithm}
\end{minipage}
\end{wrapfigure}
In contrast to algorithms of \cite{beznosikov2021distributed, beznosikov2020distributed} (the closest papers in theoretical convergence), our Algorithm~\ref{alg:vrvi} needs  multi-consensus/Chebyshev acceleration for both optimal rates, but can work without these additional procedures. Algorithms \cite{beznosikov2021distributed, beznosikov2020distributed} requires $\mathcal{O}(\sqrt{\chi} \log \varepsilon^{-1})$ iterations for Chebyshev acceleration, which makes the algorithms less practical.

\subsection{Time-varying networks}
We present Algorithm \ref{2dvi2:alg} for time-varying networks. It needs to compute $\mW_T$ using \eqref{eq:tv_gossip}, it requires $T$ communications. In Appendix \ref{sec:dis_int} we give a discussion and intuition of this algorithm. 
The next result gives the iteration complexity of Algorithm~\ref{2dvi2:alg}.

\begin{theorem} [Upper bound - time varying network] \label{th:Alg2_conv}
Consider the problem \eqref{eq:VI_new} (or \eqref{eq:VI} + \eqref{eq:fs}) under Assumptions~\ref{as:Lipsh} and \ref{as:strmon} over a sequence of time-varying graphs $\mathcal{G}(k)$ (Assumption \ref{ass:tv}) with gossip matrices $\mW(k)$. Let  $\{\z^k\}$ be the sequence generated by Algorithm~\ref{2dvi2:alg} with $T \geq B$ and tuning of parameters  as described in Appendix \ref{sec:pr_oa_tv}. Let the choice of $T$ guarantees contraction property (Assumption \ref{ass:tv} point 4) with $\chi(T)$. Then , given $\varepsilon>0$, the number of iterations for 
$\EE[\|\z^k - \z^*\|^2] \leq \varepsilon$ is 
\begin{align*}
   \textstyle \mathcal{\tilde O}\left( \chi^2(T) + \frac{1}{p} + \chi(T)\frac{L}{\mu} + \frac{1}{\sqrt{bp}}\frac{\Lavg}{\mu}\right).
 \end{align*}
\end{theorem}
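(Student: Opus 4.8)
Let me understand what this theorem is claiming and how I would prove it.\textbf{Plan.} The plan is to establish Theorem~\ref{th:Alg2_conv} by exhibiting a Lyapunov (potential) function that contracts geometrically at each iteration of Algorithm~\ref{2dvi2:alg}, exactly as one does for the fixed-network case in Theorem~\ref{th:ALg1_conv}, but now carefully tracking the additional dual variables $\y,\x$ and the auxiliary ``forward'' iterates $\y_f,\x_f$ together with the accumulator $m^k$ introduced to handle the time-varying gossip $\mW_T(Tk)$. First I would set up the lifted saddle-point reformulation: following the transition from \eqref{eq:VI_lift} to \eqref{eq:VI_new}, I would encode the consensus constraint $\z\in\cL$ via the Lagrange multiplier $\x$ (living in $\cL^\perp$) and treat the quadratic penalty parameter $\nu$ and the regularization $-\nu\z^k$ appearing in $\Delta_z^k$ as a proximal-point / extrapolation mechanism. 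The operator $\F$ being $\mu$-strongly monotone, $L$-Lipschitz and $\Lavg$-average-Lipschitz (as noted right after \eqref{eq:VI_new}) supplies the one-point monotonicity and variance bounds I need.

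The key steps, in order, are: (i) derive a one-iteration descent inequality for the \emph{deterministic} skeleton of the method (set $b=n$, $p=1$), showing that a potential of the form $\Phi^k = a\|\z^k-\z^*\|^2 + \tfrac{1}{\eta_y}\|\y^k-\y^*\|^2_{\text{(suitable norm)}} + \tfrac{1}{\eta_x}\|\x^k-\x^*\|^2 + (\text{terms in }\y_f,\x_f,m^k)$ satisfies $\EE[\Phi^{k+1}]\le (1-\rho)\Phi^k$ for a contraction factor $\rho$; here the $\y_c,\x_c,\y_f,\x_f$ updates implement a Nesterov-type acceleration on the gossip operator, yielding the $\chi(T)$ (rather than $\chi^2(T)$) dependence in the $\tfrac{L}{\mu}$ term; (ii) bound the stochastic variance introduced in $\delta^k,\delta^{k+1/2}$ using $\Lavg$-average-Lipschitzness and the SVRG-style reference point $\w^{k-1}$, so that the variance is controlled by $\|\z^k-\z^*\|^2$, $\|\w^{k-1}-\z^*\|^2$ plus a $\tfrac{1}{b}$ factor from the batch of size $b$; (iii) track the reference point $\w^k$ via its random update (line for $\w^{k+1}$), showing $\EE[\|\w^{k+1}-\z^*\|^2] = (1-p)\|\w^k-\z^*\|^2 + p\|\z^k-\z^*\|^2$, and fold a multiple of $\|\w^k-\z^*\|^2$ into $\Phi^k$ so the variance terms are absorbed; (iv) verify the gossip contraction via point 4 of Assumption~\ref{ass:tv} applied to $\mW_B$ through the choice $T\ge B$, translating $\|\mW_B(t)z-z\|^2\le(1-\chi^{-1})\|z\|^2$ into a spectral bound on the consensus error driving the $\x,m$ dynamics; (v) assemble (i)--(iv), choose the stepsizes $\eta_z,\eta_y,\eta_x,\theta$ and momentums $\alpha,\gamma,\omega,\tau$ (and $\nu,\beta,p$) to balance all negative terms, and read off $\rho = \tilde\Omega\big(\min\{\chi^{-2}(T), p, \tfrac{\mu}{\chi(T)L}, \tfrac{\mu\sqrt{bp}}{\Lavg}\}\big)$, whose reciprocal is the claimed iteration count $\tilde{\mathcal O}\big(\chi^2(T) + \tfrac1p + \chi(T)\tfrac{L}{\mu} + \tfrac{1}{\sqrt{bp}}\tfrac{\Lavg}{\mu}\big)$.

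\textbf{Main obstacle.} I expect the hard part to be step (i) combined with the accelerated gossip coupling in (iv): unlike the fixed-network analysis of Theorem~\ref{th:ALg1_conv}, here the multiplier $\x$ is driven by the \emph{time-varying} operator $\mW_T(Tk)$, which need not be the same at every step and only satisfies a cumulative $B$-connected contraction. Designing a potential that simultaneously (a) is compatible with the Nesterov-momentum pair $(\x,\x_f)$ and its averaged point $\x_c$, (b) handles the error-feedback accumulator $m^k$ so that no consensus error is lost across iterations, and (c) still contracts under only the $\mW_B$-level contraction rather than a per-step one, is delicate — this is precisely where the $\chi^2(T)$ term (from the accelerated multi-consensus sub-solver, which costs $\chi(T)$ inner gossip rounds and appears squared in the outer analysis) enters, and getting the momentum parameters to make the cross terms telescope rather than accumulate is the crux. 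The remaining pieces (ii)--(iii), the variance-reduction bookkeeping, are standard SVRG/SARAH-type estimates and should go through routinely once the deterministic potential is in hand.
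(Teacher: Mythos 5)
Your plan follows the same route as the paper's proof in Appendix~\ref{sec:pr_oa_tv}: a variance lemma for $\delta^k$ and $\delta^{k+1/2}$, a Lyapunov function combining the primal distance, the dual distances, momentum terms in $\y_f,\x_f$, the accumulator $m^k$, and the SVRG reference distance $\sqn{\w^k-\z^*}$; then geometric contraction of that potential and a final substitution of parameters. However, what you submit is a plan, and it stops precisely where the proof's content lies: you yourself flag the construction of a potential that is simultaneously compatible with the momentum pair $(\x,\x_f)$, the error-feedback accumulator, and the merely cumulative ($B$-connected) gossip contraction as ``delicate'' and ``the crux,'' and you do not carry it out. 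Asserting that such a potential exists is exactly the statement to be proved, so as a proof the attempt has a genuine gap.

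Concretely, the paper's Lemma~\ref{lem:key_tv} supplies two devices your sketch is missing. First, the change of variables $\hat{\x}^k = \x^k - \mP m^k$: with it, the updates for $\x^{k+1}$ and $m^{k+1}$ collapse to $\hat{\x}^{k+1} = \hat{\x}^k - \eta_x\mP\Delta_x^k$, so the accumulator disappears from the main dual recursion and is controlled by a separate inequality $\sqn{m^{k+1}}_\mP \le \left(1-(2\chi(T))^{-1}\right)\sqn{m^k}_\mP + 2\eta_x^2\chi(T)\sqn{\Delta_x^k}_\mP$, with $\sqn{m^k}_\mP$ entering the Lyapunov function with weight $2/\eta_x$; this is what realizes points (b) and (c) of your ``main obstacle.'' Second, the explicit dual optima $\y^*=\mP\F(\z^*)-\nu\z^*$ and $\x^*=-\mP\F(\z^*)$, together with the cross term $-2\langle \F(\z^k)-\F(\z^{k-1})-(\y^k-\y^{k-1}),\,\z^k-\z^*\rangle$ carried inside the potential (inherited from the ``optimistic'' estimator), which is what makes the cross terms telescope rather than accumulate. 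A smaller inaccuracy: your attribution of the $\chi^2(T)$ term to an accelerated multi-consensus sub-solver whose $\chi(T)$ inner gossip rounds ``appear squared'' is not what happens — the theorem counts outer iterations, and $\chi^2(T)$ enters through the stepsize restriction $\eta_x \le \nu/(36\tau\chi^2(T))$ combined with the $1/(\eta_x\gamma)$-type terms in the rate, after $\tau$, $\gamma$ and $\eta_z$ are substituted. In short: right architecture, consistent with the paper, but the object whose contraction constitutes the theorem is named rather than built.
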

Note that an important detail of the method is that $T \geq B$. This limitation is due to the fact that network is $B$-connected. In particular, for $B >1$ it can happen that in some communications we use empty graphs. Therefore, the requirement $T \geq B$ is natural to guarantee the contraction property (Assumption \ref{ass:tv}, point 4).
This means that if $B > 1$ we have to use multi-consensus \eqref{eq:tv_gossip}. 

But with $B=T=1$, we can avoid multi-consensus, let us this case first. The same way as in fixed graph case we choose $p = \nicefrac{b}{n}$. Then we get the estimates on communications and local calls
\begin{align*}
  \textstyle     \cO\left( \left[\chi^2 + \frac{n}{b} + \chi\frac{L}{\mu} + \frac{\sqrt{n}}{b}\frac{\Lavg}{\mu}\right] \log \frac{1}{\varepsilon} \right).
\end{align*}
If we put $b=1$, we have the same estimates as in Table~\ref{tab:comparison0}.

Now we consider general case with any $B$. We use a multi-gossip step and take $T > 1$. In particular, let us choose $T = B \cdot \lceil \chi \ln 2\rceil$. Than using \eqref{eq:tv_gossip} and  point 4 of Assumption~\ref{ass:tv}, we can guarantee that
$   \|\mW_T (t) z - z \|^2 \leq \frac{1}{2}\|z\|^2.
$
Therefore, $\chi(T) = 2$, but we need $T$ communications per iteration. With $p=\nicefrac{b}{n}$ and $b = \nicefrac{\Lavg \sqrt{n} }{L}$, the iteration complexity from Theorem \ref{th:Alg2_conv} can be rewritten as follows
\begin{align*}
  \textstyle    \cO\left( \left[1 + \frac{\sqrt{n} L}{\Lavg} + \frac{L}{\mu}\right] \log \frac{1}{\varepsilon} \right). 
\end{align*}
Using that per iteration we make $\mathcal{O}(B \cdot \lceil \chi \ln 2\rceil)$ communications and $\cO\left(\nicefrac{\Lavg \sqrt{n} }{L}\right)$ local computations, we get 
\begin{align*}
  \textstyle    \cO\left( \left[B \chi + B\chi \frac{\sqrt{n} L}{\Lavg} + B\chi \frac{L}{\mu}\right] \log \frac{1}{\varepsilon} \right) ~\text{ commn-s and }~  \cO\left( \left[n + \frac{\Lavg \sqrt{n} }{L} + \sqrt{n}\frac{\Lavg}{\mu}\right] \log \frac{1}{\varepsilon} \right) ~~\text{local calls.}
\end{align*}
These results are reflected in Table \ref{tab:comparison0}.

\section{Experiments}

We now perform several experiments with the goal of corroborating our theoretical results. 
Note though that we are the first who consider the decentralized stochastic (finite-sum) setting for VIs, and hence there are no competing methods. Therefore, we compare the non-distributed finite sum setting and decentralized deterministic setting separately.

\subsection{Variance reduction} \label{sec:vr_exp}

In this section, we compare the main methods for solving strongly monotone non-distributed stochastic (finite-sum) variational inequalities with non-distributed version of our Algorithm \ref{alg:vrvi}.

\textbf{Problem.} We first consider bilinear problem:
\begin{align}
    \label{bilinear}
     \textstyle  \min\limits_{x\in \triangle^d} \max\limits_{y\in \triangle^d} \frac{1}{n} \sum \limits_{i=1}^n x^\top \mA_i y,
\end{align}
where $\triangle^d$ is the unit simplex in $\R^d$. We use the same experimental setup as in \cite{alacaoglu2021stochastic}, in particular we consider 
policeman and burglar matrix from \cite{nemirovski2013mini} and two test matrices from \cite{nemirovski2009robust}.

\begin{wrapfigure}[14]{r}{7cm}
\centering
\vspace{-0.5cm}
\captionof{figure}{Comparison epoch complexities of Algorithm~\ref{alg:vrvi}, \algname{EG-Alc-Alg1}, \algname{EG-Alc-Alg2} and \algname{EG-Car} on \eqref{bilinear} with matrix from \cite{nemirovski2013mini}. Dashed lines give convergence with theoretical parameters, solid lines --  with tuned parameters.}
\label{fig:comp1}
\vspace{-0.3cm}
\begin{minipage}[][][b]{0.5\textwidth}
\centering
\includegraphics[width=0.48\textwidth]{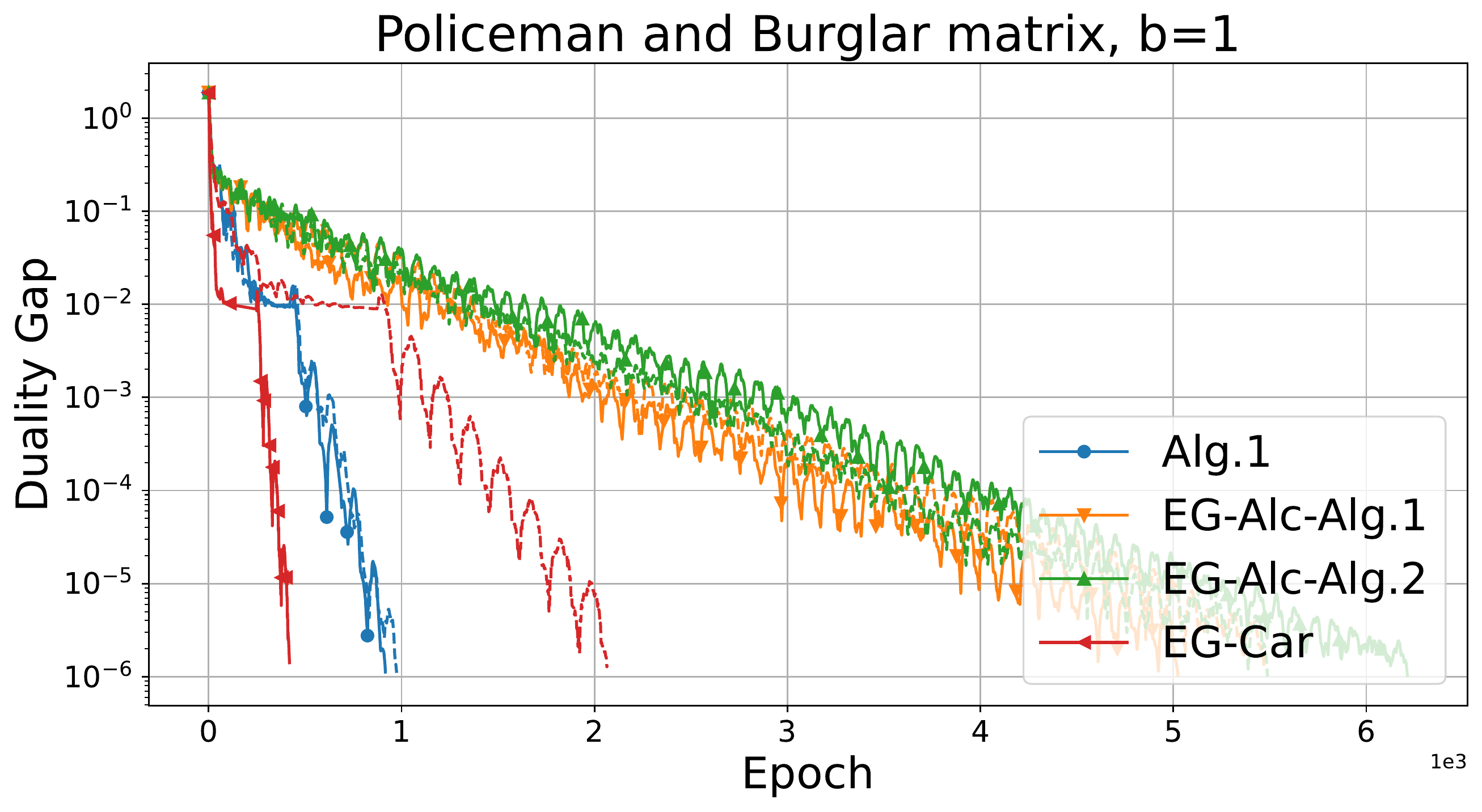}
\includegraphics[width=0.48\textwidth]{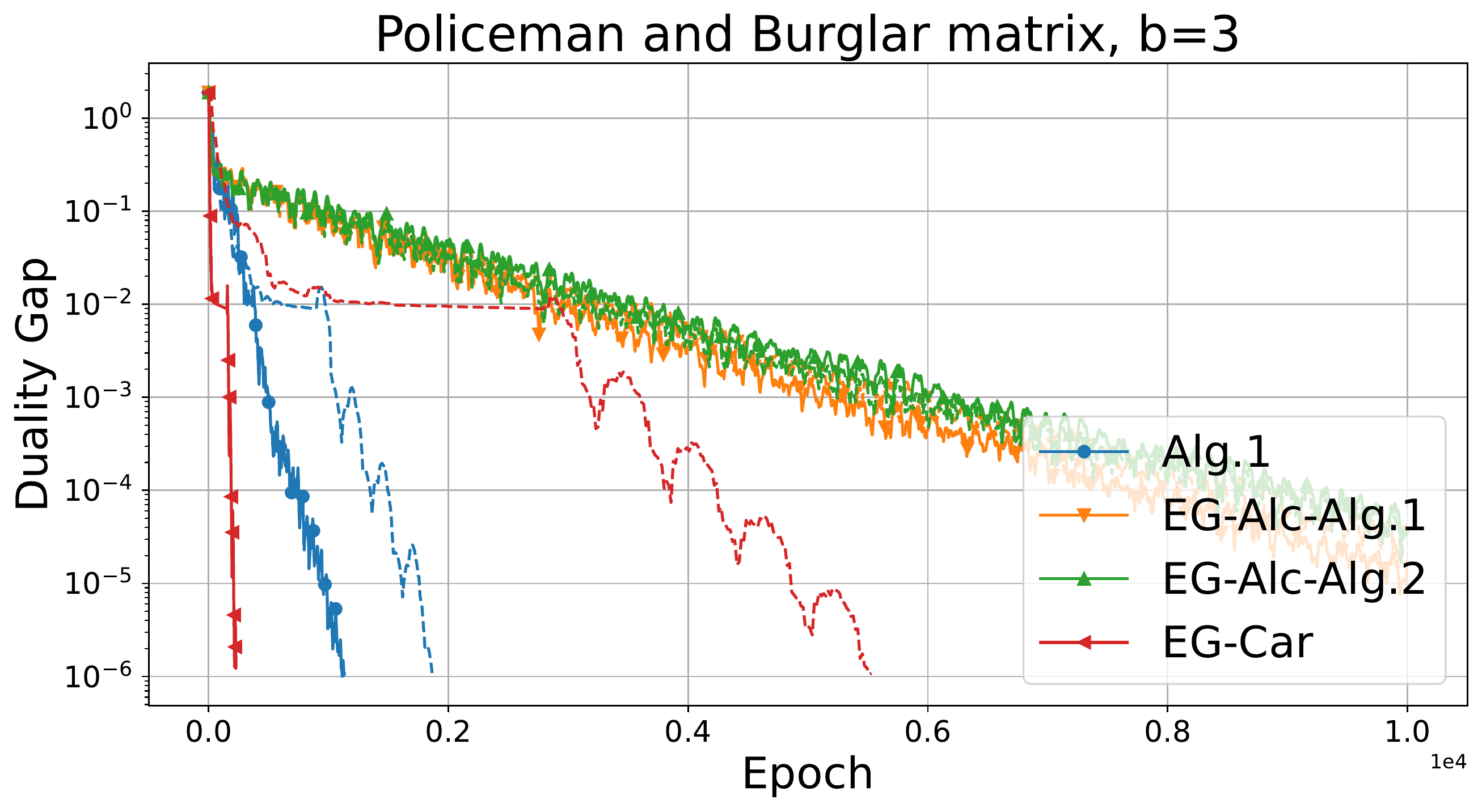}
\includegraphics[width=0.48\textwidth]{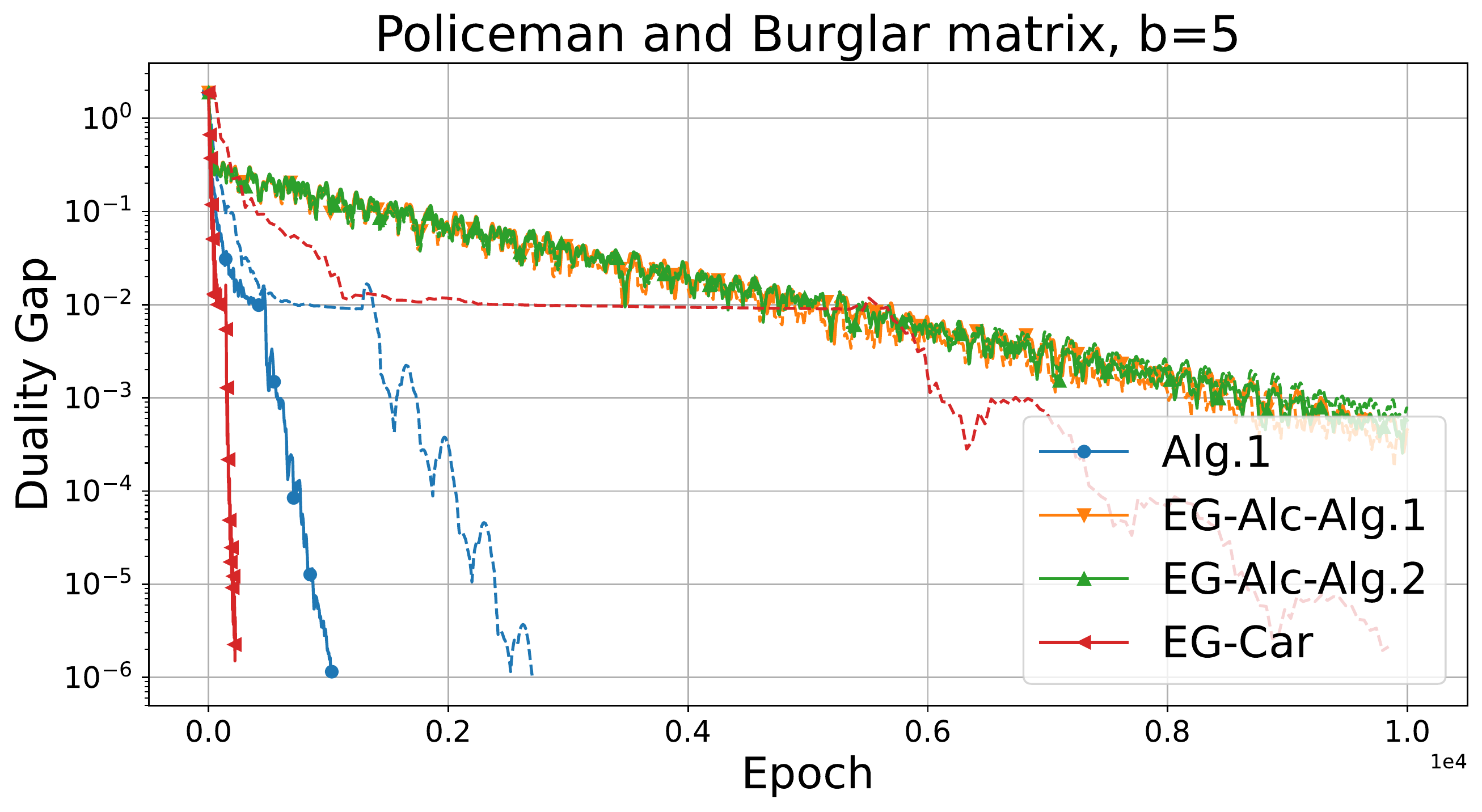}
\includegraphics[width=0.48\textwidth]{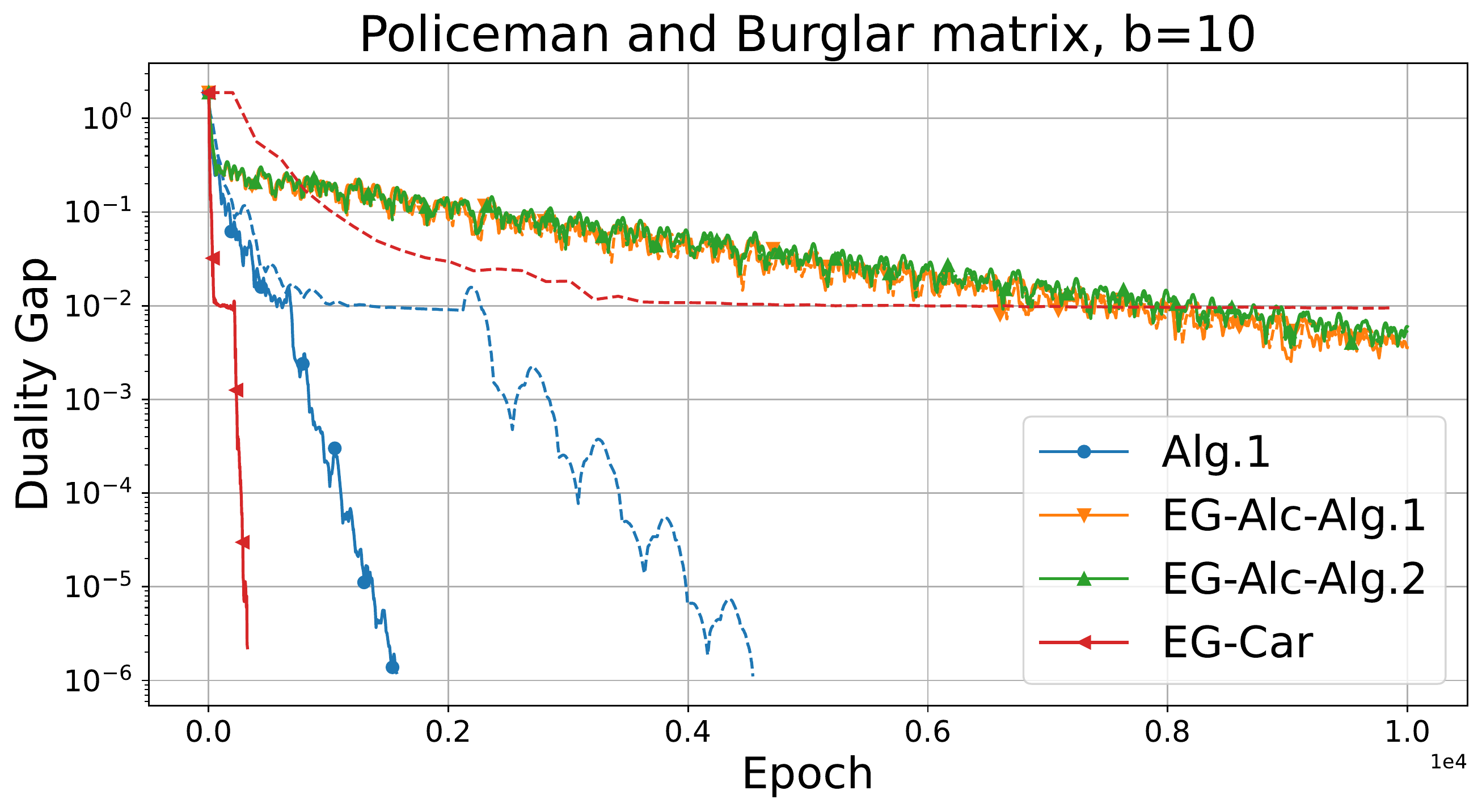}
\end{minipage}
\end{wrapfigure}

\textbf{Setting.} For comparison, we took methods from Table \ref{tab:comparison2}. In particular, we chose \algname{EG-Alc-Alg1} and \algname{EG-Alc-Alg2}  from \cite{alacaoglu2021stochastic}, \algname{EG-Car} from \cite{carmon2019variance}. 
The parameters of all methods are selected in two ways: 1) as described in the theory of the corresponding papers, and 2) tuned for the best convergence. We run all methods with different batch sizes. The comparison criterion is the number of epochs (one full gradient = epoch). 

\textbf{Results.} The plots from Figure \ref{fig:comp1} show that in the case of a theoretical choice of parameters, our Algorithm \ref{alg:vrvi} is ahead of other methods for any batch size (including $b=1$). In the case of tuning parameters, the specialized method from \cite{carmon2019variance} is better than our algorithm. See more experiments with other matrices in Appendix \ref{sec:app_vr_exp}.

\subsection{Decentralized methods}

In this section, we compare the state-of-the-art methods for solving strongly monotone decentralized variational inequalities over fixed and time-varying networks with our Algorithms~\ref{alg:vrvi} and \ref{2dvi2:alg}.

\textbf{Problem.} We now consider  robust linear regression:  
\begin{align}\label{eq:regression}
   \textstyle       \min\limits_{w} \max \limits_{\|r\|\leq e} \frac{1}{N} \sum\limits_{i=1}^N (w^\top (x_i + r_i) - y_i)^2   + \frac{\lambda}{2} \| w\|^2 - \frac{\beta}{2} \|r \|^{2},
\end{align}
where $w$ are model weights, $\{(x_i, y_i)\}_{i=1}^N$ are pairs of the training data,  $r_i$ are noise vectors, and   $\lambda$ and $\beta$ are  regularization parameters. The noises $r_i$ resist training the model, thereby inducing more robustness and stability. 

\textbf{Setting.} For comparison, we took methods from Table~\ref{tab:comparison0} for decentralized problems over fixed and time-varying networks. In particular, we choose \algname{EGD-GT} from \cite{Mukherjee2020:decentralizedminmax}, \algname{EGD-Con} from \cite{beznosikov2020distributed, beznosikov2021} and \algname{Sliding} from \cite{beznosikov2021distributed}. Note that only \algname{EGD-Con} has a theory for fixed and time-varying networks, despite this, we use all methods in both cases. 

For a fair comparison, we consider the deterministic setup, i.e., each worker can compute  full gradients. We take datasets from LiBSVM \cite{chang2011libsvm} and divided unevenly across $M=25$ workers. For communication networks we chose the star, the ring and the grid topologies. For time-varying networks, the topologies remain the same, but the locations of the vertices in them change randomly. 
All methods are tuned for the best convergence. The comparison criterion is the number of communication rounds. 

\subsubsection{Fixed networks} \label{sec:dec_f_exp}

\begin{wrapfigure}[8]{r}{7cm}
\centering
\vspace{-1.3cm}
\captionof{figure}{Comparison communication complexities of Algorithm \ref{alg:vrvi}, \algname{EGD-GT}, \algname{EGD-Con} and \algname{Sliding} on \eqref{eq:regression} over fixed networks.}
\label{fig:comp2}
\vspace{-0.3cm}
\begin{minipage}[][][b]{0.5\textwidth}
\centering
\includegraphics[width=0.48\textwidth]{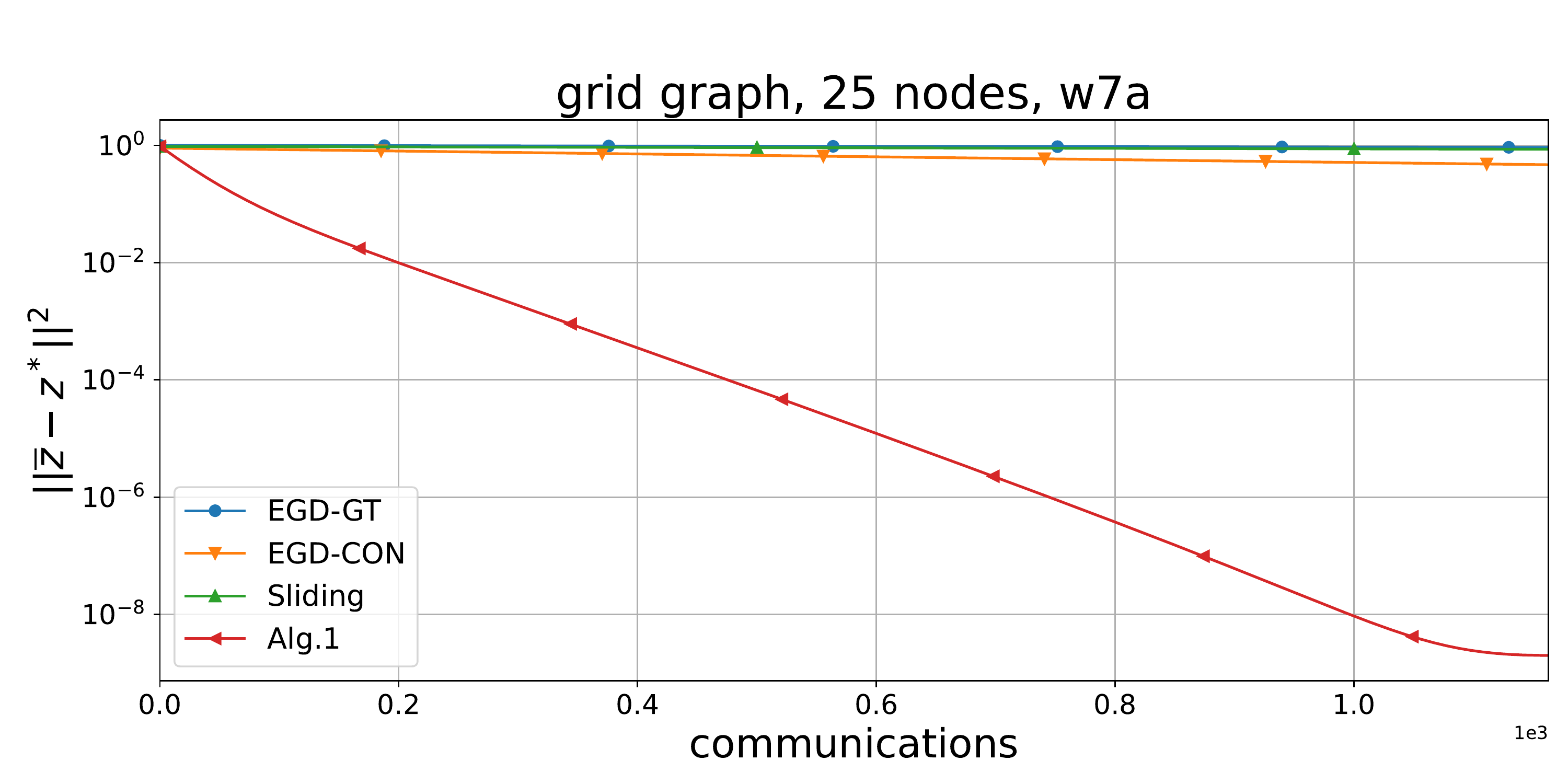}
\includegraphics[width=0.48\textwidth]{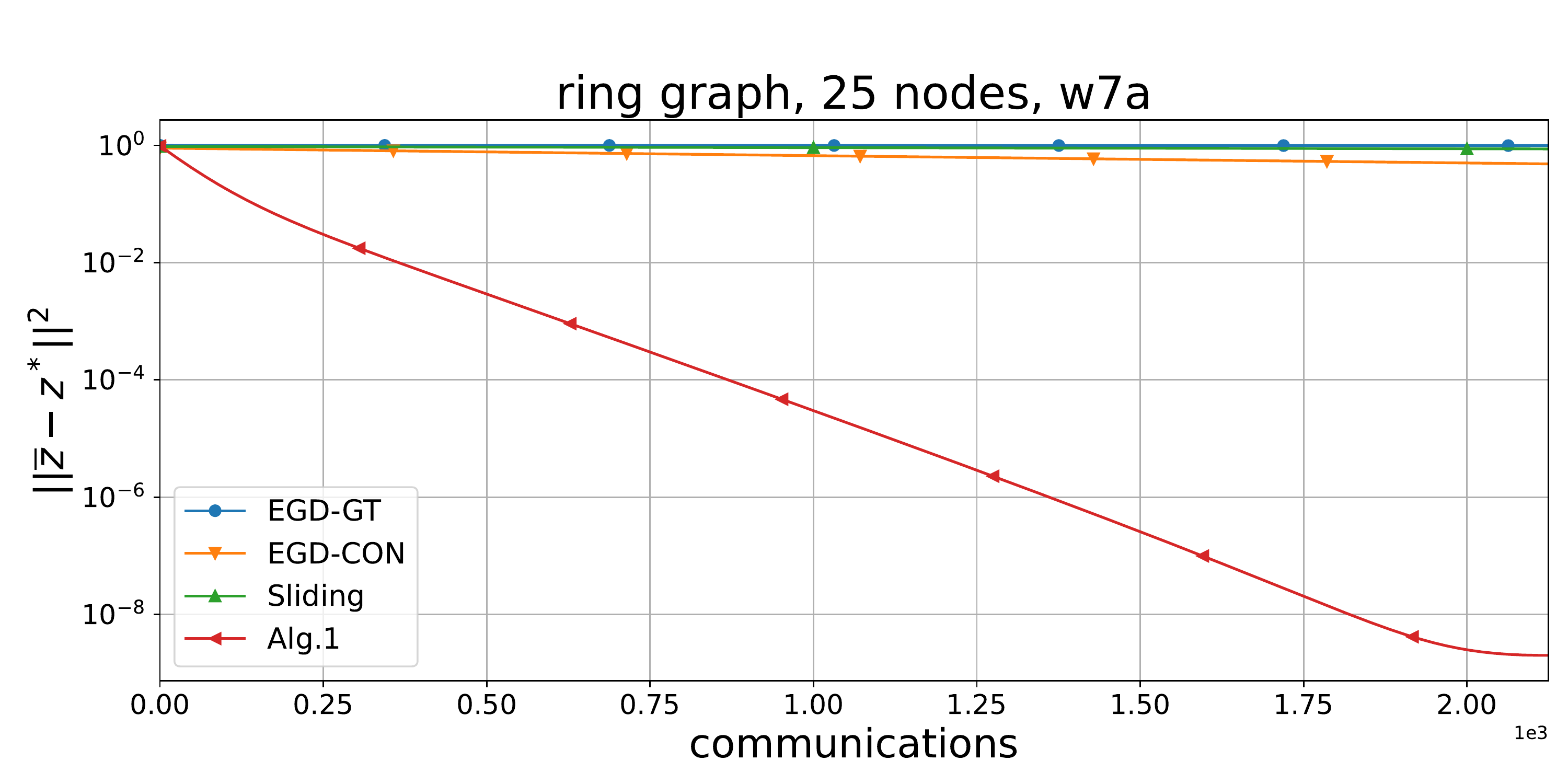}
\includegraphics[width=0.48\textwidth]{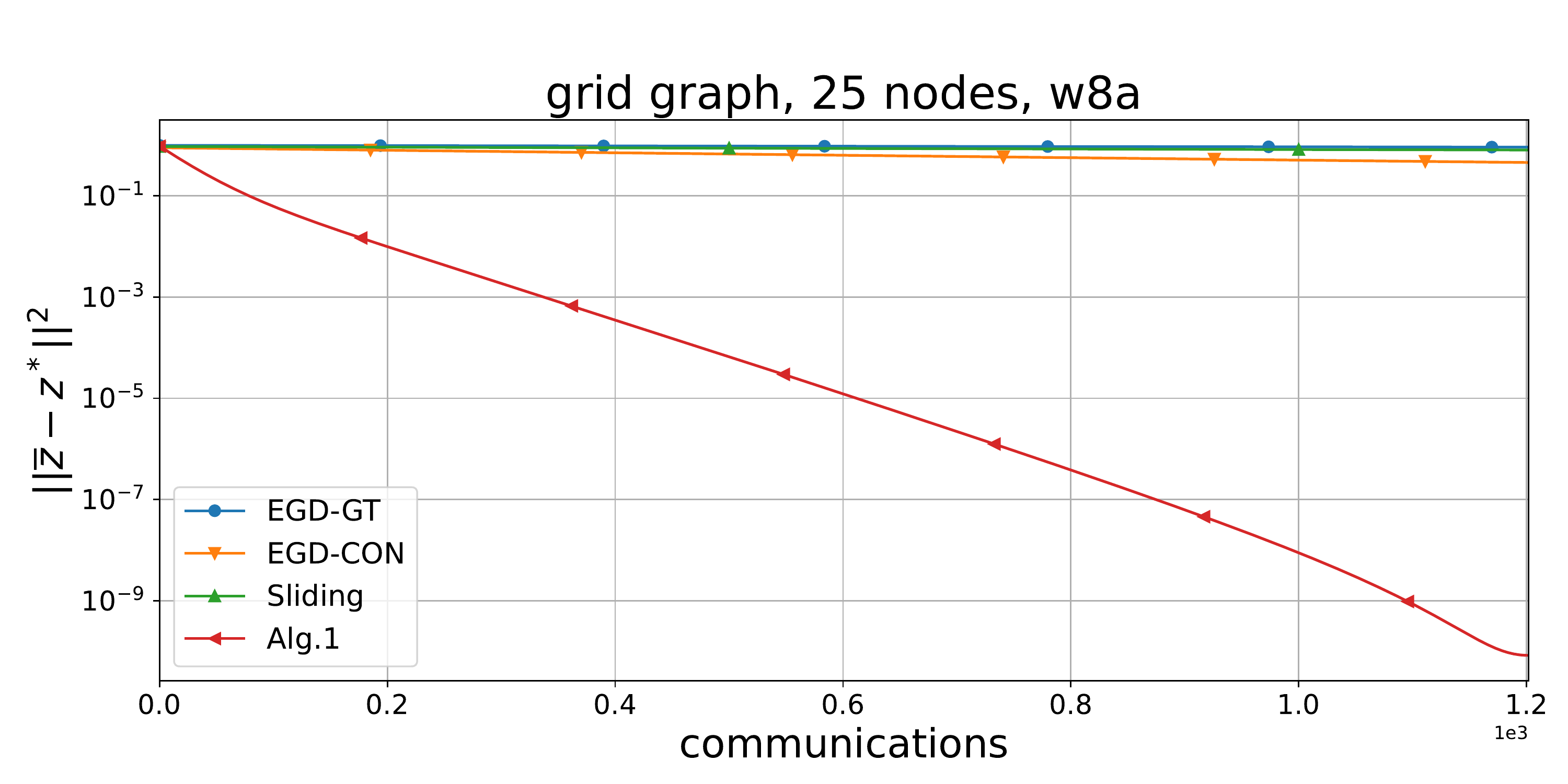}
\includegraphics[width=0.48\textwidth]{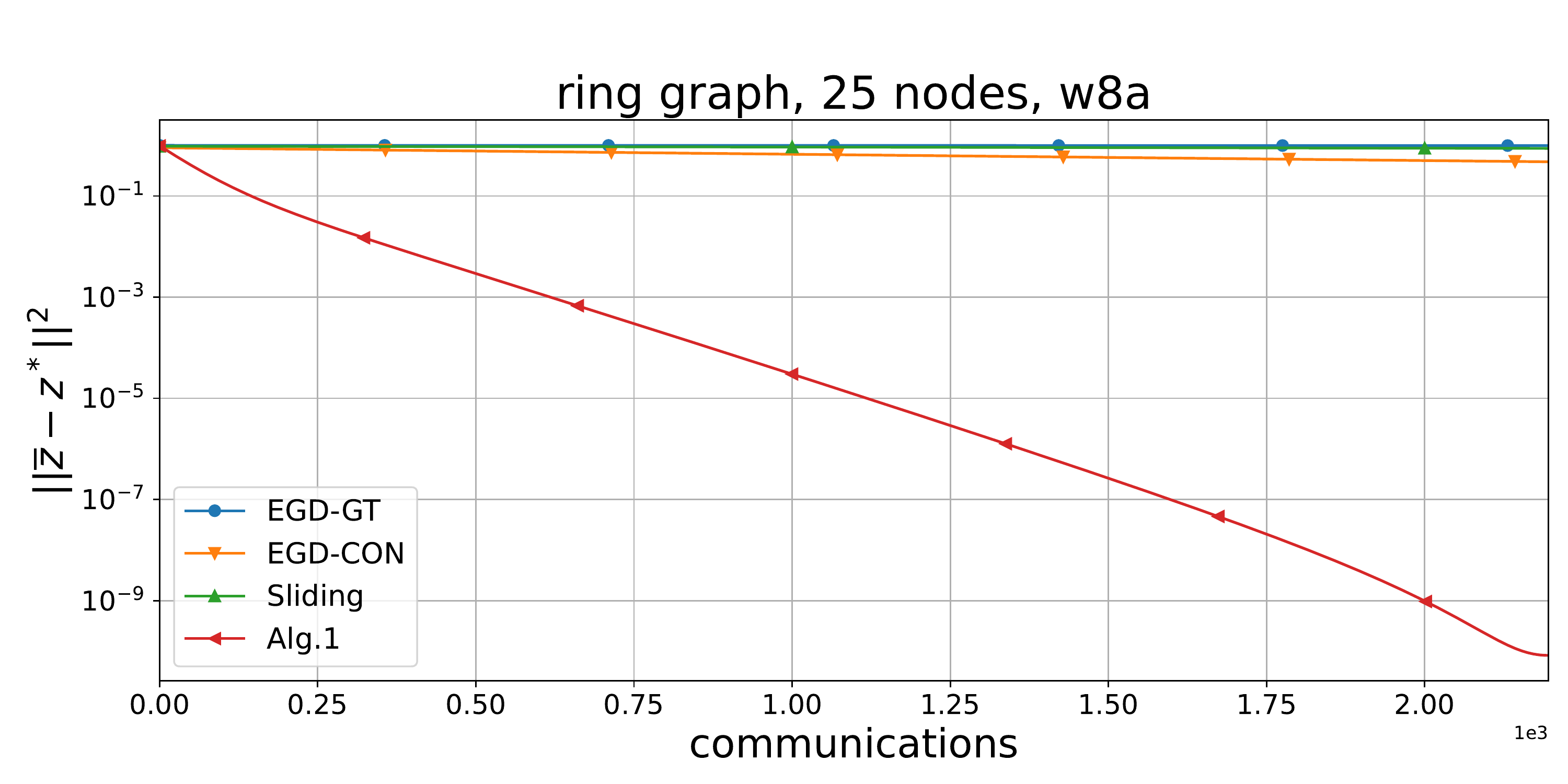}
\end{minipage}
\end{wrapfigure}

\textbf{Results.} The plots from Figure~\ref{fig:comp2} show that our Algorithm~\ref{alg:vrvi} is ahead of other methods. Among other things, it is ahead of \algname{Sliding} from \cite{beznosikov2021distributed}, which has a fast theoretical communication complexity. However, this happens when the dataset is relatively homogeneous and uniformly divided across the devices. In our setting, this is not the case. See more experiments with other datasets in Appendix \ref{sec:app_dec_f_exp}.

\subsubsection{Time-varying networks} \label{sec:dec_tv_exp}

\begin{wrapfigure}[9]{r}{7cm}
\centering
\vspace{-0.7cm}
\captionof{figure}{Comparison communication complexities of Algorithm \ref{2dvi2:alg}, \algname{EGD-GT}, \algname{EGD-Con} and \algname{Sliding} on \eqref{eq:regression} over time-varying networks.}
\label{fig:comp3}
\vspace{-0.3cm}
\begin{minipage}[][][b]{0.5\textwidth}
\centering
\includegraphics[width=0.48\textwidth]{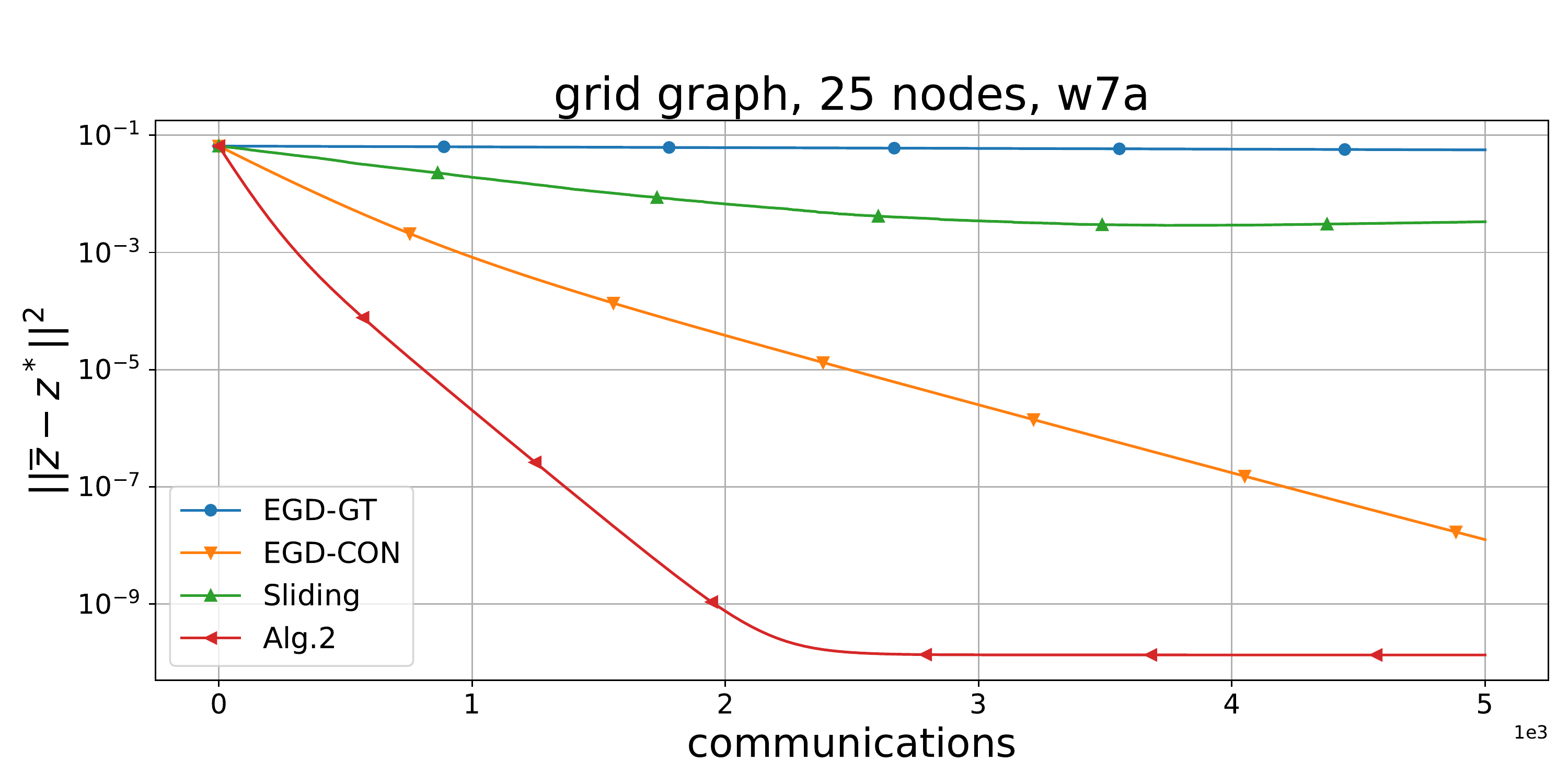}
\includegraphics[width=0.48\textwidth]{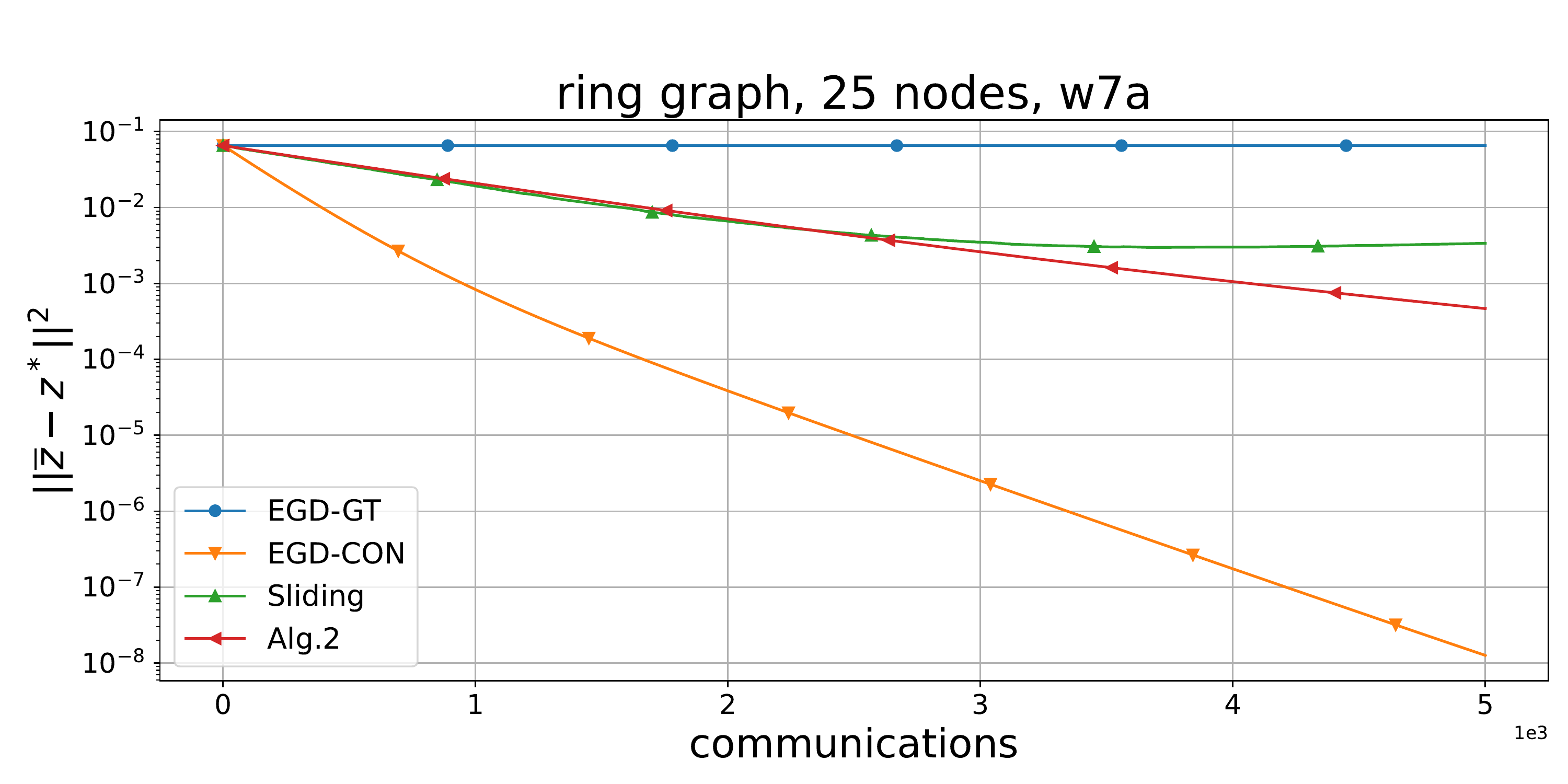}
\includegraphics[width=0.48\textwidth]{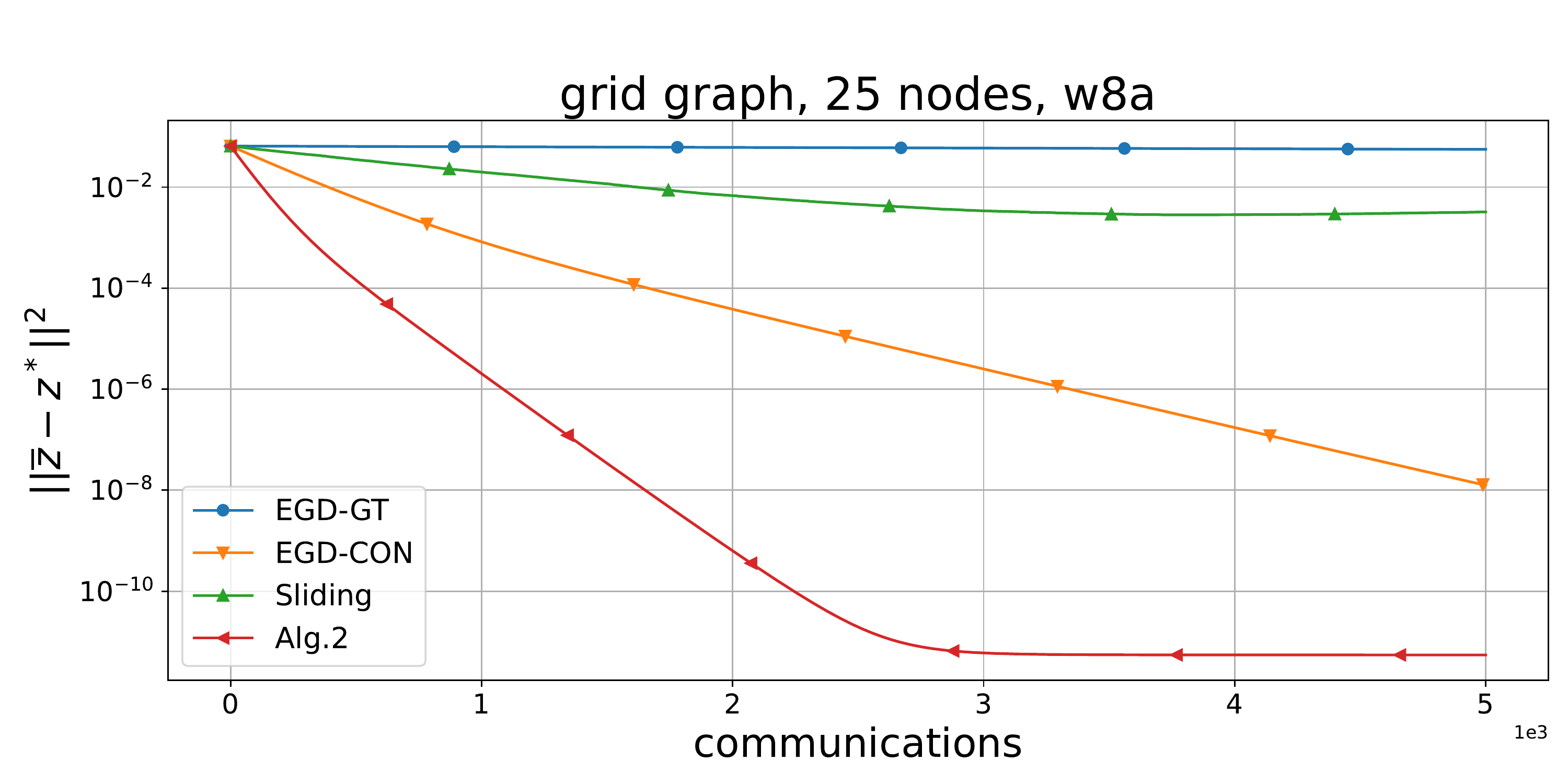}
\includegraphics[width=0.48\textwidth]{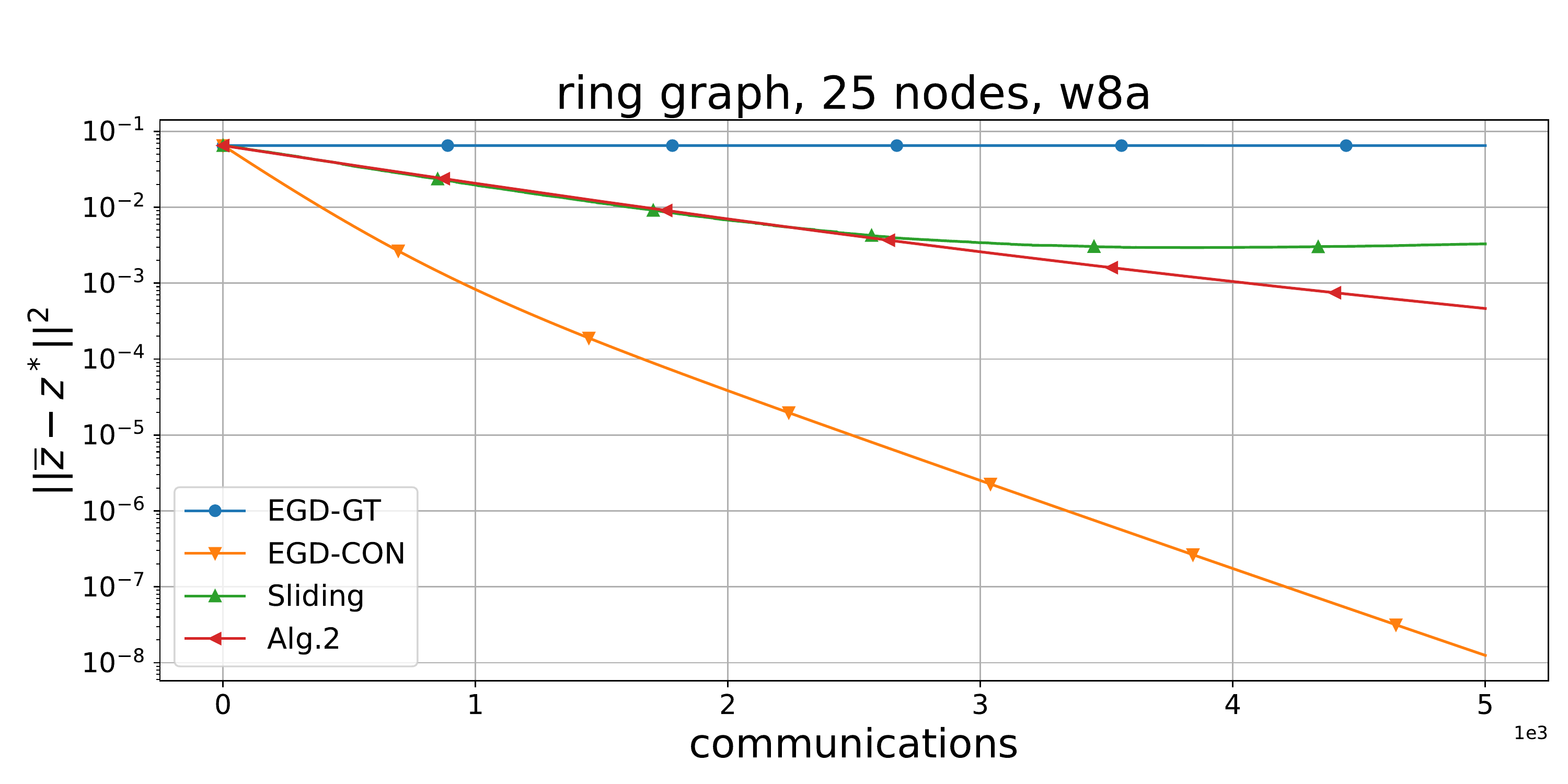}
\end{minipage}
\end{wrapfigure}

\textbf{Results.} The plots from Figure~\ref{fig:comp3} show that our Algorithm~\ref{2dvi2:alg} is ahead of other methods in the case of the grid network. In the case of the ring topology, \algname{EGD-Con} shows the best results. Indeed, such an algorithm in fact implements centralized communications via a decentralized protocol and this approach is not always the fastest, but reliable. See more experiments with other datasets in Appendix \ref{sec:app_dec_tv_exp}.

\section*{Acknowledgments}

This work was supported by a grant for research centers in the field of artificial intelligence, provided by the Analytical Center for the Government of the Russian Federation in accordance with the subsidy agreement (agreement identifier 000000D730321P5Q0002) and the agreement with the Moscow Institute of Physics and Technology dated November 1, 2021 No. 70-2021-00138.


\bibliography{ltr}
\bibliographystyle{plain}

\newpage
\appendix
\onecolumn
\small{
\tableofcontents
}

\newpage

\part*{Supplementary Materials}

\section{Chebyshev acceleration} \label{sec:cheb}

\begin{algorithm}[H]
    \caption{Chebyshev gossip subroutine \cite{scaman2017optimal}}
    \label{alg:chebyshev_gossip}
    \begin{algorithmic}[1]
        \STATE {\bf Input:} $\z, \mW$
        \STATE $c_2 = \frac{\chi + 1}{\chi - 1}$, $ a_0 = 1,~ a_1 = c_2$, $c_3 = \frac{2}{\lambda_{\max}(\mW) + \lambda_{\min}^+(\mW)}$.
        \STATE{$\z^0 = \z,~ \z^1 = c_2(\mI - c_3\mW)\z$}.
        \FOR{$k = 1, \ldots, K - 1$}
            \STATE{$a_{k+1} = 2c_2 a_k - a_{k-1}$}.
            \STATE{$\z^{k+1} = 2c_2(\mI - c_3\mW)\z^k - \z^{k-1}$}.
        \ENDFOR
        \STATE {\bf Output:}{$\z^0 - \frac{\z^K}{a_K}$}. 
    \end{algorithmic}
\end{algorithm}

\section{Discussion and intuition of Algorithms \ref{alg:vrvi} and \ref{2dvi2:alg}} \label{sec:dis_int}

The design of \Cref{alg:vrvi} and \Cref{2dvi2:alg} is based on a combination of two optimization ideas: variance reduction for variational inequalities (Algorithm \ref{alg_non_distr}) and optimal algorithms for decentralized distributed optimization on fixed (Algorithm \ref{alg_fixed}) and time-varying (Algorithm \ref{alg_tv}) networks.

{\bf Variance reduction for variational inequalities.} The variance reduction technique used in the design of \Cref{alg:vrvi,2dvi2:alg} is based on the FoRB with variance reduction \cite{alacaoglu2021stochastic,Yura2021}. However, there are crucial differences with the FoRB algorithm that we now describe. Consider the variance reduced ``gradient'' estimator of FoRB in the case of batch size being equal to 1 ($b=1$). It can be written as follows:
\begin{equation}
	\delta^k = F_j(z^k) - F_j(w^{k-1}) + F(w^k),
\end{equation}
where $j$ is sampled form $\{1,\ldots,n\}$ uniformly at random. This step in some sense combines the variance reduction technique and so-called ``optimistic'' step, which leads to a bad convergence rate $\cO\left(n + \sqrt{bn}\frac{L}{\mu}\log\frac{1}{\varepsilon}\right)$ in the single node setting (see \Cref{tab:comparison2}). Our Algorithm \ref{alg_non_distr} (basic for \Cref{alg:vrvi} and \Cref{2dvi2:alg}) is different. They use the following ``gradient'' estimator (see line~6 of \Cref{alg_non_distr}):
\begin{equation}\label{huj}
	\delta^k  = \underbrace{F_j(z^k) - F_j (w^{k-1}) +  F(w^{k-1})}_{\text{variance reduction}} + \alpha\underbrace{\left(F_j(z^k) - F_j(z^{k-1})\right)}_{\text{``optimistic'' step}}.
\end{equation}
One can observe that our gradient estimator in some sense separates the variance reduction step and the ``optimistic'' step. This leads to the better convergence rate $\cO\left(n + \sqrt{n}\frac{L}{\mu}\log\frac{1}{\varepsilon}\right)$ in the single node setting (see \Cref{tab:comparison2}) and allows to develop the optimal algorithms in the decentralized distributed setting.

{\bf Decentralized distributed optimization.} To provide the optimal algorithms for decentralized stochastic variational inequalities, we use the optimal algorithms for solving decentralized distributed minimization problems for fixed networks \cite{kovalev2020optimal} and for time-varying networks \cite{kovalev2021lower}. These algorithms use Nesterov acceleration \cite{nesterov1983method} which we replace with our ``gradient'' estimator \eqref{huj}. Another important difference is that we extend our results to the composite case, i.e., $g(z)\neq 0$ in the main problem~\eqref{eq:VI} with the help of the proximal operator $\prox_{g}(\cdot)$. This is one of the important contributions of our paper. To the best of our knowledge, optimal algorithms in the decentralized distributed setting exist neither for minimization problems nor for the variational inequalities.

\begin{algorithm}[H]
	\caption{}
	\label{alg_non_distr}
	\begin{algorithmic}[1]
	    \STATE {\bf Parameters:}  Stepsizes $\eta>0$, momentums $\alpha, \gamma$, batchsize $b \in \{1,\ldots,n\}$, probability $p  \in (0,1)$
        \STATE {\bf Initialization:} Choose  $z^0 = w^0 \in \dom g$. Put $z^{-1} = z^0, w^{-1} = w^0$
		\FOR{$k=0,1,2\ldots$}
			\STATE Sample $j_1^k, \ldots,j_b^k$ independently from $\{1,\ldots,m\}$ uniformly at random
			\STATE $S^k = \{j_1^k, \ldots,j_b^k\}$
			\STATE $\Delta^k = \frac{1}{b}\sum_{j\in S^k} \left(F_j(x^k) - F_j (w^{k-1}) + \alpha(F_j(x^k) - F_j(x^{k-1}))\right) + F(w^{k-1})$
			\STATE $x^{k+1} = \prox_{\eta g} (x^k + \gamma (w^k - x^k)- \eta \Delta^k)$
			\STATE $w^{k+1} = \begin{cases}
			x^{k+1},& \text{with probability }p\\
			w^{k},& \text{with probability }1-p
			\end{cases}$
		\ENDFOR
	\end{algorithmic}
\end{algorithm}

\begin{algorithm}[H]
	\caption{}
	\label{alg_fixed}
	\begin{algorithmic}
		\STATE {\bf Parameters:}  Stepsizes $\eta, \theta>0$, momentums $\alpha, \beta, \gamma$
        \STATE {\bf Initialization:} Choose  $\z^0 = \w^0 \in (\dom g)^M$, $\y^0 \in \cL^\perp$. Put $\z^{-1} = \z^0, \w^{-1} = \w^0$, $\y^{-1} = \y^0$
		\FOR{$k=0,1,2,\ldots$}
			\STATE $\Delta^k = \F(x^k) + \alpha(\F(x^k) - \F(x^{k-1})) - (y^k + \alpha(y^k - y^{k-1}))$
			\STATE $x^{k+1} = \prox_{\eta\g}(x^k -\eta\Delta^k)$
			\STATE $y^{k+1} = y^k - \theta (\mW \otimes \mI_d )(x^{k+1} - \beta(\F(x^{k+1}) - y^k))$
		\ENDFOR
	\end{algorithmic}
\end{algorithm}

\begin{algorithm}[H]
	\caption{}
	\label{alg_tv}
	\begin{algorithmic}[1]
		\STATE {\bf Parameters:}  Stepsizes $\eta_z, \eta_y, \eta_x, \theta>0$, momentums $\alpha, \gamma, \omega, \tau$, parameters $\nu, \beta$
        \STATE {\bf Initialization:} Choose  $\z^0 = \w^0 \in (\dom g)^M$, $\y^0 \in (\R^d)^M$, $\x^0 \in \cL^\perp$. Put $\z^{-1} = \z^0, \w^{-1} = \w^0$, $\y_f = \y^{-1} = \y^0$, $\x_f = \x^{-1} = \x^0$, $m_0 = \textbf{0}^{dM}$
		\FOR{$k=0,1,2,\ldots$}
		\STATE $\Delta_z^k = \F(\z^k) + \alpha(\F(\z^k) - \F(\z^{k-1})) - \nu \z^k - \y^k - \alpha(\y^k - \y^{k-1})$
		\STATE $\z^{k+1} = \prox_{\eta_z \g}(\z^k - \eta_z \Delta_z^k)$
		\STATE $\y_c^k = \tau \y^k + (1-\tau)\y_f^k$
		\STATE $\x_c^k = \tau \x^k + (1-\tau)\x_f^k$
		\STATE $\Delta_y^k = \nu^{-1} (\y_c^k + \x_c^k) + \z^{k+1} + \gamma (\y^k + \x^k + \nu \z^k)$
		\STATE $\Delta_x^k = \nu^{-1} (\y_c^k + \x_c^k) + \beta(\x^k + \F(\z^k))$
		\STATE $\y^{k+1} = \y^k - \eta_y \Delta_y^k$
		\STATE $\x^{k+1} = \x^k - (\mW_T(Tk) \otimes \mI_d) (\eta_x\Delta_x^k + m^k)$
		\STATE $m^{k+1} = \eta_x\Delta_x^k + m^k - (\mW_T(Tk) \otimes \mI_d) (\eta_x\Delta_x^k + m^k)$
		\STATE $\y_f^{k+1} = \y_c^k + \tau(\y^{k+1} - \y^k)$
		\STATE $\x_f^{k+1} = \x_c^k - \theta(\mW_T(Tk) \otimes \mI_d)(\y_c^k + \x_c^k)$
		\ENDFOR
	\end{algorithmic}
\end{algorithm}

\newpage

\section{Proof of Theorems \ref{th:lower_fixed} and \ref{th:lower_tv}} \label{sec:pr_lb}

The idea of obtaining lower bounds is to find some "bad" example in the class of problems. In particular, we consider saddle point problems (as a special case of VIs) and look for the "bad" function among them. The paper \cite{zhang2019lower} gives such an example for non-distributed deterministic problems. To get a distributed bounds, this problem needs to be divided between computing devices, which are connected in some kind of "bad" computing network \cite{scaman2017optimal,kovalev2021lower}. Then on each of the devices the function must be further divided in order to obtain a stochastic sum type problem \cite{hendrikx2020optimal,han2021lower}.

As stated above, to obtain lower bounds, we consider a particular case of the variational inequality \eqref{eq:VI}+\eqref{eq:fs}, the saddle point problem:
\begin{equation}
    \label{app:lb_problem}
    \min_{x \in \R^d}\max_{y \in \R^d} f(x,y) \eqdef \sum\limits_{m=1}^M f_m (x,y) \eqdef \sum\limits_{m=1}^M \frac{1}{n}\sum_{i=1}^n f_{m, i} (x, y).
\end{equation}
In this case $F(z) = F(x,y) = [\nabla_x f(x,y), -\nabla_y f(x,y)]$ and $g \equiv 0$. Next we rewrite Assumptions \ref{as:Lipsh} and \ref{as:strmon} for \eqref{app:lb_problem}.

\begin{assumption} Suppose that 
\label{app:lb_as1} \\
$\bullet$ each $f_m$ is $\Lavg$-average smooth, on $\R^d \times \R^d$, i.e. for all $x_1, x_2 \in \R^d$, $y_1, y_2 \in \R^d$ it holds that
\begin{equation}
\label{app:avr_smooth}
    \begin{split}
        \frac{1}{n}\sum_{i=1}^n (\|\nabla_x f_{m, i} (x_1, y_1) - \nabla_x f_{m, i} (x_2, y_2) \|^2 + \|\nabla_y & f_{m, i} (x_1, y_1) - \nabla_y f_{m i} (x_2, y_2) \|^2 ) \\
     &\leq \Lavg^2 \left( \|x_1 - x_2 \|^2 + \|y_1 - y_2 \|^2\right);
    \end{split}
\end{equation}
$\bullet$ each $f_m$ is $L$-smooth, on $\R^d \times \R^d$, i.e. for all $x_1, x_2 \in \R^d$, $y_1, y_2 \in \R^d$ it holds that
\begin{equation}
\label{app:smooth}
    \begin{split}
   \|\nabla_x f_m (x_1, y_1) - \nabla_x f_m (x_2, y_2) \|^2 + \|\nabla_y f_m (x_1, & y_1) - \nabla_y f_m (x_2, y_2) \|^2 \\
     &\leq L^2 \left( \|x_1 - x_2 \|^2 + \|y_1 - y_2 \|^2\right);
     \end{split}
\end{equation}
$\bullet$ $f$ is $\mu$ - strongly-convex-strongly-concave on $\R^d \times \R^d$, i.e. for all $x_1, x_2 \in \R^d$, $y_1, y_2 \in \R^d$ it holds that
\begin{equation}
\label{app:strcc}
    \begin{split}
    \langle \nabla_x f(x_1, y_1) - \nabla_x f (x_2, y_2) ; x_1 - x_2\rangle - \langle \nabla_y f & (x_1, y_1) - \nabla_y f (x_2, y_2) ; y_1 - y_2 \rangle \\
    &\geq \mu \left( \|x_1 - x_2 \|^2 + \|y_1 - y_2 \|^2\right).
    \end{split}
\end{equation}
\end{assumption}

Next we rewrite Definition \ref{def:proc} for \eqref{app:lb_problem}. Since $g \equiv 0$, then $\text{prox}_{\rho g}(z) = z$.
\begin{definition}[Oracle] \label{app:proc}
    Each agent $m$ has its own local memories $\mathcal{M}^x_{m}$ and $\mathcal{M}^y_{m}$ for the $x$- and $y$-variables, respectively--with  initialization $\mathcal{M}_{m}^x = \mathcal{M}_{m}^y= \{0\}$.    $\mathcal{M}_{m}^x$ and $\mathcal{M}_{m}^x$ are updated as follows.\\
   
    $\bullet$ \textbf{Local computation:} At each local iteration device $m$ can sample uniformly and independently batch $S_m$ of any size $b$ from $\{f_{m,i}\}$ and adds to its $\mathcal{M}^x_{m}$ and $\mathcal{M}^y_{m}$ a finite number of points $x,y$, satisfying 
    \begin{equation}\begin{aligned}\label{app:oracle-opt-step}
        x \in \text{span} \big\{&x'~,~\sum_{i_m \in S_m} \nabla_x f_{m, i_m}(x'',y'') \big\},\\
        y \in \text{span} \big\{&y'~,~\sum_{i_m \in S_m}\nabla_y f_{m, i_m}(x'',y'')\big\}, 
    \end{aligned}\end{equation}
    for given $x', x'' \in \mathcal{M}^x_{m}$ and  $y', y'' \in \mathcal{M}^y_{m}$. Such call needs $b$ local computations to collect the batch. Batch of the size $n$ is equal to the full $f_m$;

    $\bullet$ \textbf{Communication:} Based upon communication rounds  among neighbouring nodes, at the communication with the number $t$,  $\mathcal{M}^x_{m}$ and $\mathcal{M}^y_{m}$ are updated according to
    \begin{equation}\label{app:oracle-comm}
        \mathcal{M}^x_{m} := \text{span}\left\{\bigcup_{(i,m) \in \mathcal{E}(t)} \mathcal{M}^x_{i} \right\}, \quad 
        \mathcal{M}^{y}_{m} := \text{span}\left\{\bigcup_{(i,m) \in \mathcal{E}(t)} \mathcal{M}^y_{i} \right\}.
    \end{equation}

    $\bullet$ \textbf{Output:} 
    The final global output is calculated as: 
    \begin{align*}
        \hat x \in \text{span}\left\{\bigcup_{m=1}^M \mathcal{M}^x_{m} \right\},~~\hat y \in \text{span}\left\{\bigcup_{m=1}^M \mathcal{M}^y_{m} \right\}.
    \end{align*}
\end{definition}

We construct the following bilinearly functions with $\Lavg, \mu$. Let us consider a communication graph $G$ with vertexes $\{1, \ldots M\}$. Define $B = \{1\}$  and  $\bar B = \{M\}$, with  $|B| = |\bar B| = 1$. We then construct the following   bilinear functions on the graph:
\begin{eqnarray}
\label{t2}
f_m (x,y) = 
\begin{cases}
f_1 (x,y) =  \frac{\Lavg}{4\sqrt{n}}  x^T A_1 y + \frac{\mu}{6}\|x\|^2 - \frac{\mu}{6}\|y\|^2 + \frac{\Lavg^2}{2n\mu}e_1^T y, & m \in B;\\
f_2 (x,y) =  \frac{\Lavg}{4\sqrt{n}} x^T A_2 y + \frac{\mu}{6}\|x\|^2 - \frac{\mu}{6}\|y\|^2, & m \in \bar B;\\
f_3 (x,y) = \frac{1}{M-2}\left(\frac{\mu}{6}\|x\|^2 - \frac{\mu}{6}\|y\|^2\right), & \text{otherwise};
\end{cases}
\end{eqnarray}
where $e_1 = (1,0 \ldots, 0)$ and
\begin{eqnarray*}
A_1 = \left(
\begin{array}{cccccccc}
1&0 & & & & & &  \\
&1 &-2 & & & & &  \\
& &1 &0 & & & & \\
& & &1 &-2 & & & \\
& & & &\ldots &\ldots & & \\
& & & & &1  &-2   & \\
& & &   & & &1 &0 \\
& & &  & & & &1 \\
\end{array}
\right), \quad
A_2 = \left(
\begin{array}{cccccccc}
1&-2 & & & & & &  \\
&1 &0 & & & & &  \\
& &1 &-2 & & & & \\
& & &1 &0 & & & \\
& & & &\ldots &\ldots & & \\
& & & & &1  &0   & \\
& & &   & & &1 &-2 \\
& & &  & & & &1 \\
\end{array}
\right).
\end{eqnarray*}
Then we construct functions $f_{m,i}$. Let define $a_{1, q}$ $q$th row of the matrix $A_1$. Then we split function $f_1$ to finite sum $\frac{1}{n}\sum_{i=1}^n f_{1,i}$ in following way:
\begin{align}
    \label{f_m}
    f_{1,i} =& \sqrt{n} \cdot \frac{\Lavg}{4} x^T \left[ \sum\limits_{j \equiv (i-1) ~\text{mod}~ n}\left(e_{2j+1} a^T_{1, 2j+1} + e_{2j+2} a^T_{1, 2j+2}\right) \right]y \notag\\
    &+ \frac{\mu}{6}\|x\|^2 - \frac{\mu}{6}\|y\|^2 + \frac{\Lavg^2}{2n\mu}e_1^T y
\end{align}

Let define $A_{1, i} = \sum_{j \equiv (i-1) ~\text{mod}~ n}\left(e_{2j+1} a^T_{1, 2j+1} + e_{2j+2} a^T_{1, 2j+2}\right)$. The same way we can construct $f_{2,i}$ with $A_{2,i}$ and $f_{3,i}$ with $A_{3,i} = 0$. Consider the global objective function:
\begin{align}
    \label{t144}
    f(x,y) &= \sum\limits_{m=1}^M f_m(x,y) =  |B| \cdot f_1(x,y) + |\bar B| \cdot f_2(x,y) + (M - |B| - |\bar B|) \cdot f_3(x,y) \nonumber\\
    &= \frac{\Lavg}{2\sqrt{n}} x^T A y + \frac{\mu}{2}\|x\|^2 - \frac{\mu}{2}\|y\|^2 +  \frac{\Lavg^2}{2n\mu} e_1^T y,
\end{align}
with $A = \frac{1}{2}(A_1 + A_2)$. 

\begin{lemma}
Problem \eqref{t2} with $f_m$ from \eqref{f_m} satisfies Assumption \ref{app:lb_as1} with $\Lavg, L = \tfrac{\Lavg}{\sqrt{n}}, \mu$.
\end{lemma}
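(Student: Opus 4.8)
The plan is to reduce all three conditions of Assumption~\ref{app:lb_as1} to spectral-norm estimates for the (constant) Jacobian of the gradient map of each bilinear-plus-quadratic $f_m$. Writing $z=(x,y)$, the gradient operator of $f_m$ in \eqref{t2} has Jacobian
\begin{equation*}
J_m = \frac{\mu}{3}\mI_{2d} + \frac{\Lavg}{4\sqrt{n}}K_m, \qquad K_m = \begin{pmatrix} 0 & A_m \\ -A_m^\top & 0\end{pmatrix},
\end{equation*}
where $K_m$ is skew-symmetric, hence normal with purely imaginary eigenvalues $\pm i\,\sigma_\ell(A_m)$, so $J_m$ is normal with eigenvalues $\tfrac{\mu}{3}\pm i\tfrac{\Lavg}{4\sqrt n}\sigma_\ell(A_m)$ and $\|J_m\|=\sqrt{(\mu/3)^2+(\Lavg/(4\sqrt n))^2\|A_m\|^2}$. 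Before the three verifications I would record the consistency of the finite-sum split \eqref{f_m}: the row index sets $\{j\equiv(i-1)\bmod n\}$ partition all rows of $A_m$, so $\sum_{i=1}^n A_{m,i}=A_m$, and combined with the prefactor $\sqrt n\cdot\tfrac14$ against the averaging $\tfrac1n$ this gives $\tfrac1n\sum_i f_{m,i}=f_m$ (the quadratic and linear pieces are identical across summands and pass through the average unchanged).

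The strong-monotonicity condition \eqref{app:strcc} is the cleanest and I would do it first. In the monotonicity pairing the bilinear (skew) contributions cancel exactly because $\langle A(\Delta y),\Delta x\rangle-\langle A^\top\Delta x,\Delta y\rangle=0$, leaving only the diagonal quadratic parts. Summing the $\|x\|^2$ (resp. $\|y\|^2$) coefficients $\mu/6$ over $m\in B$, over $m\in\bar B$, and over the $M-2$ remaining nodes (each carrying $\tfrac{1}{M-2}\cdot\tfrac{\mu}{6}$) yields exactly $\mu/2$, matching \eqref{t144}, so $f$ is $\mu$-strongly-convex-strongly-concave with equality.

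For $L$-smoothness \eqref{app:smooth} I would use the norm formula above. Writing $A_m=\mI+N_m$, the off-diagonal $-2$ entries sit in disjoint rows and columns, so $N_m$ is $-2$ times a partial permutation and $\|N_m\|=2$, giving $\|A_m\|\le 3$. Hence $\|J_m\|^2\le (\mu/3)^2+\tfrac{9}{16}\tfrac{\Lavg^2}{n}$, and invoking $\mu\le L=\Lavg/\sqrt n$ (the relation that holds whenever the instance is a genuine strongly-monotone $L$-Lipschitz VI) gives $\|J_m\|^2\le\big(\tfrac19+\tfrac{9}{16}\big)\tfrac{\Lavg^2}{n}=\tfrac{97}{144}L^2<L^2$. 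For the $M-2$ remaining nodes $A_m=0$ and smoothness is immediate.

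The average-smoothness condition \eqref{app:avr_smooth} I expect to be the main obstacle, and the key point is that skew-symmetry again kills the cross terms, so no factor-of-two is lost: $J_{f_{m,i}}^\top J_{f_{m,i}}=\tfrac{\mu^2}{9}\mI+n\tfrac{\Lavg^2}{16}K_i^\top K_i$ with $K_i^\top K_i=\mathrm{diag}(A_{m,i}A_{m,i}^\top,\,A_{m,i}^\top A_{m,i})$. Averaging over $i$,
\begin{equation*}
\frac1n\sum_{i=1}^n J_{f_{m,i}}^\top J_{f_{m,i}}=\frac{\mu^2}{9}\mI+\frac{\Lavg^2}{16}\begin{pmatrix}\sum_i A_{m,i}A_{m,i}^\top & 0\\ 0 & \sum_i A_{m,i}^\top A_{m,i}\end{pmatrix}.
\end{equation*}
The disjointness of the row supports yields $\sum_i A_{m,i}^\top A_{m,i}=A_m^\top A_m$ and makes $\sum_i A_{m,i}A_{m,i}^\top$ block-diagonal with $2\times2$ Gram blocks of the selected row pairs, whose eigenvalues I would compute to be at most $3+2\sqrt2<9$. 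Thus the operator norm of the average is at most $\tfrac{\mu^2}{9}+\tfrac{\Lavg^2}{16}\max\{9,\|A_m\|^2\}\le\big(\tfrac19+\tfrac{9}{16}\big)\Lavg^2<\Lavg^2$, using only $\mu\le\Lavg$. The delicate work — establishing the partition/support structure of the $A_{m,i}$, exploiting the skew-symmetry to avoid constant losses, and bounding the spectra of the sparse bidiagonal matrices and their $2\times2$ sub-blocks — is exactly where the proof concentrates; the monotonicity and $L$-smoothness parts are essentially immediate once $J_m$ is diagonalized.
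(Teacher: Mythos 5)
Your proof is correct, and it rests on the same structural facts as the paper's own proof: $\|A_m\|\le 3$ from the partial-permutation structure of the off-diagonal $-2$ entries, disjointness of the row supports of the $A_{m,i}$ (hence $\sum_i A_{m,i}^\top A_{m,i}=A_m^\top A_m$), and bounds on the row Grams. Where you genuinely depart from the paper is in how the estimates are executed. The paper expands the gradient differences as vectors and applies Young's inequality, paying a factor of two (its constants are $\tfrac{2\mu^2}{9}$ and $\tfrac{\Lavg^2}{8}$ rather than your $\tfrac{\mu^2}{9}$ and $\tfrac{\Lavg^2}{16}$, and it closes with $\tfrac{3}{4}+\tfrac{2}{9}=\tfrac{35}{36}<1$), and it only writes out the computation for $A_1$. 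You instead observe that each $f_{m,i}$ has a constant Jacobian $\tfrac{\mu}{3}\mI+cK_i$ with $K_i$ skew-symmetric, so the Gram $J_i^\top J_i=\tfrac{\mu^2}{9}\mI+c^2\,\mathrm{diag}\bigl(A_{m,i}A_{m,i}^\top,\;A_{m,i}^\top A_{m,i}\bigr)$ is exactly block-diagonal: the cross terms vanish identically instead of being estimated, which yields the sharper constant $\tfrac{1}{9}+\tfrac{9}{16}=\tfrac{97}{144}<1$. Two further points in your favor: (i) the paper merely asserts the $L$-smoothness bullet \eqref{app:smooth} ("with $\|A_1\|,\|A_2\|\le3$ we get..."), whereas your normality argument actually proves it, and a proof is needed here, since the naive triangle inequality only gives $\tfrac{\mu}{3}+\tfrac{3\Lavg}{4\sqrt{n}}$, which can be as large as $\tfrac{13}{12}L>L$, so the orthogonality of the symmetric and skew parts is essential; (ii) your Gram-block bound $3+2\sqrt{2}$ is exactly what the $A_2$ split requires, because the selected row pairs of $A_2$ are \emph{not} orthogonal (for $A_1$, the only case the paper details, the blocks degenerate to $\mathrm{diag}(1,5)$). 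One caveat: your parenthetical justification of $\mu\le L=\Lavg/\sqrt{n}$ should be anchored to the explicit hypothesis $\Lavg/\mu\ge\sqrt{n}$ assumed in the appendix version of Theorem \ref{th:lower_fixed}; without some such restriction the lemma is simply false, since the term $\tfrac{\mu}{6}\|x\|^2$ alone already violates $L$-smoothness when $\mu\gg\Lavg/\sqrt{n}$ (the paper's proof is silent on this point, invoking only $\mu\le\Lavg$ for the average-smoothness bullet).
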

\begin{proof}
It is easy to verify that $f$ is $\mu$ - strongly-convex-strongly-concave. Also with $\|A_1 \|, \|A_2 \| \leq 3$, we get that $f_m(x,y)$ is $L = \frac{\Lavg}{\sqrt{n}}$ - smooth. Then we need to check that $f_m$ is $\Lavg$ - average smooth:
\begin{align*}
    \frac{1}{n} \sum\limits_{i=1}^n& \left[\|\nabla_x f_{1,i} (x_1 , y_1) -  \nabla_x f_{1,i} (x_2, y_2)\|^2 + \|\nabla_y f_{1,i} (x_1 , y_1) -  \nabla_y f_{1,i} (x_2, y_2)\|^2 \right]\nonumber \\
    &= \frac{1}{n} \sum\limits_{i=1}^n \left[\left\| \sqrt{n} \cdot \frac{\Lavg}{4} A_{1,i}(y_1 - y_2) + \frac{\mu}{3} (x_1 - x_2)\right\|^2 + \left\| \sqrt{n} \cdot \frac{\Lavg}{4} A^T_{1,i}(x_2 - x_1) + \frac{\mu}{3} (y_1 - y_2)\right\|^2 \right]\nonumber \\
    &\leq \frac{1}{n} \sum\limits_{i=1}^n \left[\frac{nL^2}{8} \left\|  A_{1,i}(y_1 - y_2)\right\|^2 + \frac{2\mu^2}{9} \left\| x_1 - x_2\right\|^2 + \frac{nL^2}{8} \left\|  A^T_{1,i}(x_1 - x_2)\right\|^2 + \frac{2\mu^2}{9} \left\| y_1 - y_2\right\|^2 \right] \nonumber \\
    &= \frac{\Lavg^2}{8} \sum\limits_{i=1}^n \left[(y_1 - y_2)^T A_{1,i}^T A_{1,i}(y_1 - y_2)  +  (x_1 - x_2)^T A_{1,i}A^T_{1,i}(x_1 - x_2)  \right] \nonumber \\
    &\hspace{0.4cm}+ \frac{2\mu^2}{9} \left\| x_1 - x_2\right\|^2 +\frac{2\mu^2}{9} \left\| y_1 - y_2\right\|^2 \nonumber \\
    &= \frac{\Lavg^2}{8} \sum\limits_{i=1}^n (y_1 - y_2)^T \left[\sum_{j \equiv (i-1) ~\text{mod}~ n}\left( a_{1, 2j+1} e_{2j+1}^T +  a_{1, 2j+2} e_{2j+2}^T\right)\right] \\
    &\hspace{3.4cm} \left[\sum_{j \equiv (i-1) ~\text{mod}~ n}\left(e_{2j+1} a^T_{1, 2j+1} + e_{2j+2} a^T_{1, 2j+2}\right)\right] (y_1 - y_2)  
    \nonumber \\
    &\hspace{0.4cm} +  \frac{\Lavg^2}{8} \sum\limits_{i=1}^n (x_1 - x_2)^T \left[\sum_{j \equiv (i-1) ~\text{mod}~ n}\left(e_{2j+1} a^T_{1, 2j+1} + e_{2j+2} a^T_{1, 2j+2}\right)\right] \\
    &\hspace{3.8cm} \left[\sum_{j \equiv (i-1) ~\text{mod}~ n}\left( a_{1, 2j+1} e^T_{2j+1}+  a_{1, 2j+2} e^T_{2j+2}\right)\right] (x_1 - x_2)  
    \nonumber \\
    &\hspace{0.4cm}+ \frac{2\mu^2}{9} \left\| x_1 - x_2\right\|^2 +\frac{2\mu^2}{9} \left\| y_1 - y_2\right\|^2 
    \nonumber \\
    &= \frac{\Lavg^2}{8} \sum\limits_{i=1}^n (y_1 - y_2)^T \left[\sum_{j \equiv (i-1) ~\text{mod}~ n}\left( a_{1, 2j+1} a^T_{1, 2j+1} +  a_{1, 2j+2} a^T_{1, 2j+2}\right)\right]  (y_1 - y_2)  
    \nonumber \\
    &\hspace{0.4cm} +  \frac{\Lavg^2}{8} \sum\limits_{i=1}^n (x_1 - x_2)^T \Bigg[\sum_{j \equiv (i-1) ~\text{mod}~ n}\bigg(e_{2j+1}  e^T_{2j+1} + 5e_{2j+2} e^T_{2j+2} \bigg)\Bigg]^T (x_1 - x_2)  
    \nonumber \\
    &\hspace{0.4cm}+ \frac{2\mu^2}{9} \left\| x_1 - x_2\right\|^2 +\frac{2\mu^2}{9} \left\| y_1 - y_2\right\|^2 \nonumber \\
    &\leq \frac{\Lavg^2}{8} \sum\limits_{j=1}^d (y_1 - y_2)^T \left( a_{1, j} a^T_{1, j}\right)  (y_1 - y_2) +  \frac{5\Lavg^2}{8} \sum\limits_{j=1}^d (x_1 - x_2)^T \left(e_{j}  e^T_{j}\right) (x_1 - x_2)  
    \nonumber \\
    &\hspace{0.4cm}+ \frac{2\mu^2}{9} \left\| x_1 - x_2\right\|^2 +\frac{2\mu^2}{9} \left\| y_1 - y_2\right\|^2 \nonumber \\
    &\leq \frac{\Lavg^2}{8} \sum\limits_{j=1}^d (y_1 - y_2)^T \left( a_{1, j} a^T_{1, j}\right)  (y_1 - y_2) +  \frac{5\Lavg^2}{8} \left\| x_1 - x_2\right\|^2 \\
    &\hspace{0.4cm} + \frac{2\mu^2}{9} \left\| x_1 - x_2\right\|^2 +\frac{2\mu^2}{9} \left\| y_1 - y_2\right\|^2 \nonumber \\
    &\leq \frac{3\Lavg^2}{4} \left\| y_1 - y_2\right\|^2 +  \frac{5\Lavg^2}{8} \left\| x_1 - x_2\right\|^2 + \frac{2\mu^2}{9} \left\| x_1 - x_2\right\|^2 +\frac{2\mu^2}{9} \left\| y_1 - y_2\right\|^2.
\end{align*}
The last inequality follows from $\lambda_{\max} \left(\sum\limits_{j=1}^d \left( a_{1, j} a^T_{1, j}\right) \right) \leq 6$. Finally, with $\mu \leq \Lavg$ we get
\begin{align*}
    \frac{1}{n} \sum\limits_{i=1}^n& \left[\|\nabla_x f_{1,i} (x_1 , y_1) -  \nabla_x f_{1,i} (x_2, y_2)\|^2 + \|\nabla_y f_{1,i} (x_1 , y_1) -  \nabla_y f_{1,i} (x_2, y_2)\|^2 \right]\nonumber \\
    &\leq \Lavg^2 \left(\left\| x_1 - x_2\right\|^2 + \left\| y_1 - y_2\right\|^2\right).
\end{align*}
\end{proof}

The next two lemmas give an idea of how quickly we approximate the solution of \eqref{t144} depending on the number of communications and local iterations. For simplicity, we divide a situation in two parts: one part is devoted to communications (taking into account the fact that we are not limited in the number of local iterations), the second - on the contrary (we concentrate on local computations and assume that communications cost nothing).

\begin{lemma} \label{l2}
Let Problem~\eqref{t2} be solved by any method  that satisfies Definition \ref{app:proc}. Then after $K$ communication rounds,  only the first $\left\lfloor \frac{K}{l} \right\rfloor$ coordinates of the   global output can be non-zero while  the rest of the $d-\left\lfloor \frac{K}{l} \right\rfloor$ coordinates are strictly equal to zero. Here $l$ is "distance" between $B$ and $\bar B$ (how quickly can we transfer information from $B$ to $\bar B$).
\end{lemma}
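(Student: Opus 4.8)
The plan is to track, for each agent $m$, the coordinate support of its local memories $\mathcal{M}_m^x,\mathcal{M}_m^y$, and to show that the bilinear blocks $A_1$ and $A_2$ from \eqref{t2} form a \emph{zero chain with alternation}: activating one further coordinate requires alternately invoking the operator stored at $B$ and the one stored at $\bar B$. Since these two agents sit at graph distance $l$, every new active coordinate (past the first) costs at least $l$ communication rounds, which yields the claimed $\lfloor K/l\rfloor$ bound. Formally, I would maintain integers $T_m^x,T_m^y$ with $\mathcal{M}_m^x\subseteq\mathrm{span}\{e_1,\dots,e_{T_m^x}\}$ and $\mathcal{M}_m^y\subseteq\mathrm{span}\{e_1,\dots,e_{T_m^y}\}$ (with $T=0$ meaning $\{0\}$), and analyse how a local step \eqref{app:oracle-opt-step} and a communication step \eqref{app:oracle-comm} transform these supports.

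\textbf{Support dynamics.} First, a communication step replaces each $\mathcal{M}_m$ by the span of the union over its neighbours, hence it sets $T_m^\bullet\leftarrow\max_{(i,m)\in\mathcal{E}(t)}T_i^\bullet$ and never creates a coordinate absent from \emph{all} neighbours. Second, a local step at a type-$3$ agent is inert: since $f_{3,i}$ has no bilinear part, its gradients equal $\tfrac{\mu}{3(M-2)}x$ and $-\tfrac{\mu}{3(M-2)}y$, which stay inside the current support. Third, and crucially, I would compute the row/column supports of $A_1,A_2$: because $A_1$ carries $1$ on the diagonal with a $-2$ only in the even rows (and $A_2$ only in the odd rows), one checks that $A_1^\top e_k\in\mathrm{span}\{e_k,e_{k+1}\}$ precisely when $k$ is even (otherwise $A_1^\top e_k=e_k$), while $A_1 e_k\in\mathrm{span}\{e_{k-1},e_k\}$ never advances; the situation for $A_2$ is symmetric with the roles of even and odd swapped. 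Consequently an arbitrarily long burst of local steps at $B$ can raise $T_B^x$ by at most one, and only from an even value to the next odd one, after which $B$ is stuck; at $\bar B$ it can raise $T_{\bar B}^x$ by one only from an odd value to the next even one.

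\textbf{Alternation and counting.} The seed term $\tfrac{\Lavg^2}{2n\mu}e_1^\top y$ in $f_1$ lets $B$ activate coordinate $1$ with local steps alone, so $T^x$ reaches $1$ with no communication. Thereafter the global maximal support $j=\max_m T_m^x$ can only grow by one at a time, and the agent able to perform the increment alternates: reaching an even index needs $\bar B$ (odd $\to$ even), reaching the next odd index needs $B$ (even $\to$ odd). Thus each unit of progress forces the currently active coordinate to travel from one of $B,\bar B$ to the other, i.e. across at least $l$ edges, which by the communication rule \eqref{app:oracle-comm} costs at least $l$ rounds. A straightforward induction on $K$ then shows that after $K$ communication rounds the support of every $\mathcal{M}_m$, and therefore of the output $\hat x,\hat y$, lies in $\mathrm{span}\{e_1,\dots,e_{\lfloor K/l\rfloor}\}$, with all higher coordinates identically zero.

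\textbf{Main obstacle.} The delicate point is the parity-sensitive single-step advancement: I must verify that no combination of \emph{unlimited} local first-order queries at one special agent can push its support forward by more than one index, and that the increment is available only for the correct parity. This is exactly where the specific $\{0,-2\}$ off-diagonal patterns of $A_1$ and $A_2$ are essential, and it is what forces alternation between $B$ and $\bar B$ and hence the factor $l$; the communication and type-$3$ bookkeeping is then routine.
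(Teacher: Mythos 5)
Your proposal is correct and takes essentially the same route as the paper's own proof: the paper likewise tracks the coordinate span $E_K=\mathrm{span}\{e_1,\dots,e_K\}$ of each local memory, uses the parity structure of $A_1$ and $A_2$ to show that machines owning $f_1$ can only advance an even support to the next odd index (and machines owning $f_2$ the reverse), and concludes that each new coordinate forces the progress to travel the distance $l$ between $B$ and $\bar B$. The only cosmetic difference is that you verify the one-step, parity-restricted advancement through explicit computations of $A_1^\top e_k$ and $A_1 e_k$, whereas the paper argues directly on the spans in \eqref{update_lower1}; the alternation-plus-counting conclusion is identical.
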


\begin{proof}
We begin introducing some notation for our proof. Let
\begin{align*}
    E_{0} := \{ 0\}, \quad E_{K} := \text{span} \{ e_1, \ldots, e_K\}.
\end{align*}
Note that, the initialization gives $\mathcal{M}^x_{m} = E_0$, $\mathcal{M}^y_{m} = E_0$.

Suppose that, for some $m$,  $\mathcal{M}^x_{m} = E_K$ and $\mathcal{M}^y_{m} = E_K$, at some given time. Let us analyze how $\mathcal{M}^x_{m}, \mathcal{M}^y_{m}$ can change by performing only local computations. 

Firstly, we consider the case when 
$K$ odd. After one local update, we have the following: 

$\bullet$ For   machines $m$  which own $f_1$, it holds
\begin{equation}\begin{aligned}
\label{update_lower1}
        x \in \text{span} \big\{&e_1~,~ x'~,~A_{1} y'\big\} = E_K,\\
        y \in \text{span} \big\{&e_1~,~ y'~,~A_1^T x'\big\} = E_K, 
\end{aligned}\end{equation}
for given $x' \in \mathcal{M}^x_{m}$ and  $y' \in \mathcal{M}^y_{m}$. In details, each local iteration uses matrices $A_{1,i}$ (in the stochastic) or $A_1$ (in the deterministic). But here we talks only about communications and do not pay attention to the number of local iterations. Therefore, without loss of generality, we can immediately assume that all local calculations change our output according to \eqref{update_lower1}.
Since $A_1$ has a block diagonal structure,   after local computations, we have  $\mathcal{M}^x_{m} = E_K$ and $\mathcal{M}^y_{m} = E_K$. The situation does not change, no matter how many local computations one  does.

$\bullet$ For machines $m$ which own $f_2$, it holds
\begin{equation*}\begin{aligned}
        x \in \text{span} \big\{&x'~,~A_2 y'\big\} = E_{K+1},\\
        y  \in \text{span} \big\{&y'~,~A_2^T x'\big\} = E_{K+1}, 
\end{aligned}\end{equation*}
for given $x' \in \mathcal{M}^x_{m}$ and  $y' \in \mathcal{M}^y_{m}$. It means that, after local computations, one has  $\mathcal{M}^x_{m} = E_{K+1}$ and $\mathcal{M}^y_{m} = E_{K+1}$. Therefore,  machines with function $f_2$ can progress by one new non-zero coordinate.

This means that we constantly have to transfer progress from the group of machines with $f_1$ to the group of machines with $f_2$ and back. Initially, all devices have zero coordinates. Further, machines with $f_1$ can receive the first nonzero coordinate (but only the first, the second is not), and the rest of the devices are left with all zeros. Next, we pass the first non-zero coordinate to machines with $f_2$. To do this,    $l$ communication rounds are needed. By doing so,  they can make the second coordinate non-zero, and then transfer this progress to the machines with $f_1$. Then the process continues in the same way. This  completes the proof.
\end{proof}

In the next lemma, we will give an understanding of how local progress towards a solution occurs. For this, we will assume that communications cost nothing.

\begin{lemma} \label{l3}
Let Problem~\eqref{t2} be solved by any method  that satisfies Definition \ref{app:proc}. Then after $N$ local calls (for each node), in expectation only the first $\left\lfloor \frac{N}{n} \right\rfloor$ coordinates of the   global output can be non-zero while  the rest of the $d-\left\lfloor \frac{N}{n} \right\rfloor$ coordinates are strictly equal to zero. 
\end{lemma}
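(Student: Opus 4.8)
The plan is to follow the template of the proof of \Cref{l2}, but to trade its deterministic ``distance'' bookkeeping for a probabilistic argument that exploits the uniform sampling of the batches. Because communications are now assumed to be free, I would first merge all the local memories and track a single \emph{frontier}: let $K_t$ be the largest $K$ such that $e_K$ lies in the merged memory $\bigcup_m(\mathcal{M}^x_m\cup\mathcal{M}^y_m)$ after $t$ local iterations, where $E_K=\text{span}\{e_1,\dots,e_K\}$ as in the proof of \Cref{l2}. The linear term $\tfrac{\Lavg^2}{2n\mu}e_1^\top y$ in \eqref{f_m} unlocks the first coordinate for free, so $K_0=1$, and the whole goal is to bound $\EE[K_t]$ from above.

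\textbf{Structural step.} Exactly as in \Cref{l2}, I would examine how one local computation can enlarge the span, reading off the behaviour from the explicit matrices. The key point is that $A_{1,i}$ and $A_{2,i}$ are supported only on the block of rows and columns indexed by $\{j : j\equiv i-1\ (\mathrm{mod}\ n)\}$, and that inside such a block the \emph{only} entry coupling a new coordinate to the current span is the single off-diagonal $-2$. Consequently: (i) a single batch operator $\tfrac1b\sum_{i\in S}A_{\cdot,i}$ applied to $E_K$ produces at most $E_{K+1}$, so the frontier advances by at most one per local iteration; and (ii) the jump $E_K\to E_{K+1}$ is possible only if the sampled batch $S$ contains the one specific index $i_K$ whose block carries the coupling into coordinate $K+1$, with the alternation between $A_1$- and $A_2$-type couplings handled precisely as the $f_1$/$f_2$ alternation of \Cref{l2}. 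Since the block pattern is $n$-periodic, $i_K$ is a deterministic function of $K$.

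\textbf{Probabilistic step.} Because the batches $S^t$ are sampled uniformly and independently from $[n]$, and $i_{K_{t-1}}$ depends only on the past $\mathcal{F}_{t-1}$, the conditional probability of advancing at iteration $t$ satisfies
\[
\PP\big(i_{K_{t-1}}\in S^t \,\big|\, \mathcal{F}_{t-1}\big)=1-\left(1-\tfrac1n\right)^{b}\le \tfrac{b}{n}.
\]
Hence the increments $K_t-K_{t-1}\in\{0,1\}$ are stochastically dominated by i.i.d.\ $\mathrm{Bernoulli}(b/n)$ variables; since $N$ local computations amount to $N/b$ batch iterations and each advance of the frontier costs, in expectation, $\Theta(n)$ local computations, a one-line optional-stopping (Wald) argument gives $\EE[K]=O(N/n)$. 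Up to an absolute constant this is the claimed $\lfloor N/n\rfloor$, so in expectation every coordinate of the global output beyond index $\lfloor N/n\rfloor$ is zero.

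The main obstacle is the structural step: one must verify carefully, from the explicit form of $A_{1,i}$ and $A_{2,i}$ entering \eqref{f_m}, that advancing the frontier by one genuinely forces the one correct index $i_K$ to be freshly sampled — ruling out shortcuts through other indices or through linear combinations of directions already present — and that the interplay between the $x$/$y$ variables and the $A_1$/$A_2$ blocks cannot open up an extra coordinate within a single iteration. Once this ``one new coordinate per correct sample'' dichotomy is pinned down, the probabilistic counting above is routine.
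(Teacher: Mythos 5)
Your proposal is correct and takes essentially the same route as the paper's own proof: the paper likewise leans on the span analysis from \Cref{l2} to establish that the frontier $E_K \to E_{K+1}$ can advance only when the sampled batch contains the single suitable index among the $n$ (e.g.\ $j \equiv k \ \mathrm{mod}\ n$ for $K=2k$), and then counts advances as a sum of Bernoulli/binomial variables with success probability proportional to $(\text{batch size})/n$, concluding $\EE[k_l] \le N/n$ by taking expectations. The only cosmetic differences are that you fix a single batch size $b$ and invoke stochastic domination plus a Wald/optional-stopping step yielding $O(N/n)$ with an unspecified constant, whereas the paper allows varying batch sizes ($s_j$ calls of size $j$ with $\sum_j j s_j = N$) and gets the exact bound $N/n$ from plain linearity of expectation, which is what the downstream Jensen argument in \Cref{l234} actually uses.
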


\begin{proof} As is clear from the previous lemma, communications make sense if an update ($E_K \to E_{K+1}$) is reached on one of the nodes. Depending on $K$, this happens on the nodes with $f_1$ or $f_2$ (but not simultaneously). The question is how many local calls should be made to get this update. One can understand it by looking at the structure of matrices $A_{1,i}$ and $A_{2,i}$ from \eqref{f_m}. Only one of $n$ matrices is suitable for us. For example, in case of $K = 2k$, we need $f_{1,j}$ with $j \equiv k ~\text{mod}~ n$. 

Suppose that $s_1, s_2\ldots$ times call stochastic oracle with batchsize $1, 2 \ldots$. Then $\sum_{j=1}^n j s_j + N$. Due to the fact that the choice of batches is random and uniform, the random variable responsible for the total number of updates during the operation of the algorithm has the sum of binomial distribution with pairs of parameters $\left(s_j ; \frac{j}{n}\right)$. It remains only to take the mathematical expectation and the lemma is proved.
\end{proof}

The next lemma is   devoted to provide an approximate solution of   problem \eqref{t144}, and shows that this approximation is close to a real solution. 
The proof of the lemma follows closely that of  Lemma 3.3 from \cite{zhang2019lower}, and is reported for the sake of completeness.

\begin{lemma}[Lemma 3.3 from \cite{zhang2019lower}]\label{lemma2}
Let $\alpha = \frac{2n\mu^2}{\Lavg^2}$ and $q = \frac{1}{2}\left(2 + \alpha - \sqrt{\alpha^2 + 4\alpha} \right) \in (0;1)$--the smallest root of $q^2 - (2 + \alpha) q + 1 = 0$; and let define
\begin{equation*}
    \bar y^*_i = \frac{q^i}{1-q},\quad i\in [d].
\end{equation*}
The following bound holds when   $\bar y^*:=[y^*_1,\ldots y^*_d]^\top$ is used to  approximate   the solution $ y^*$: 
\begin{equation*}
    \|\bar y^* - y^*\| \leq \frac{q^{d+1}}{\alpha(1-q)}.
\end{equation*}
\end{lemma}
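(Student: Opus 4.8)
The plan is to reduce the claim to the study of an explicit linear system for $y^*$, recognize it as a constant-coefficient second-order recurrence whose decaying fundamental solution is exactly geometric, and then bound the gap $\|\bar y^* - y^*\|$ by a single boundary (truncation) residual.

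First I would write the first-order optimality conditions for the unique saddle point $(x^*,y^*)$ of the strongly-convex--strongly-concave objective $f$ in \eqref{t144}: namely $\nabla_x f = \frac{\Lavg}{2\sqrt n}Ay^* + \mu x^* = 0$ and $\nabla_y f = \frac{\Lavg}{2\sqrt n}A^\top x^* - \mu y^* + \frac{\Lavg^2}{2n\mu}e_1 = 0$. Eliminating $x^* = -\frac{\Lavg}{2\sqrt n\,\mu}Ay^*$ via the first relation reduces these to a symmetric positive definite linear system of the form $(A^\top A + \alpha\mI)y^* = e_1$, with $\alpha = \frac{2n\mu^2}{\Lavg^2}$. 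Since $A = \tfrac12(A_1+A_2)$ is upper-bidiagonal with unit diagonal and $-1$ on the superdiagonal, $A^\top A$ is tridiagonal with diagonal $(1,2,\dots,2)$ and off-diagonals $-1$; hence in the interior the system is precisely the recurrence $-y_{i-1} + (2+\alpha)y_i - y_{i+1} = 0$, whose characteristic polynomial is $q^2 - (2+\alpha)q + 1 = 0$, with decaying root $q\in(0,1)$ exactly as in the statement.

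Next I would verify that the candidate $\bar y^*_i = q^i/(1-q)$ satisfies the system exactly in every row but the last. Substituting into any interior row returns $0$ because $q$ is a root of the characteristic polynomial; substituting into the first row and using the identity $1+\alpha - q = (1-q)/q$ (a rearrangement of $q^2-(2+\alpha)q+1=0$) reproduces the right-hand side $e_1$ exactly, which is also what pins down the normalization $1/(1-q)$. The only mismatch is in row $d$: the finite matrix lacks the coupling to the nonexistent index $d+1$, so the residual $r \eqdef (A^\top A + \alpha\mI)\bar y^* - e_1$ has all entries zero except $r_d = \bar y^*_{d+1} = q^{d+1}/(1-q)$, giving $\|r\| = q^{d+1}/(1-q)$.

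Finally I would convert this residual into a solution error. Writing $\bar y^* - y^* = (A^\top A + \alpha\mI)^{-1}r$ and using $A^\top A \succeq 0$, so that $A^\top A + \alpha\mI \succeq \alpha\mI$ and therefore $\|(A^\top A + \alpha\mI)^{-1}\| \le 1/\alpha$, I obtain $\|\bar y^* - y^*\| \le \tfrac{1}{\alpha}\|r\| = \tfrac{q^{d+1}}{\alpha(1-q)}$, which is the claimed bound. The main obstacle is the careful boundary bookkeeping: one must confirm that the residual is supported on the single last coordinate, which requires checking simultaneously that $\bar y^*$ reproduces the first-row right-hand side exactly, annihilates every interior row, and fails only through the truncation at index $d$. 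The eigenvalue estimate $A^\top A + \alpha\mI \succeq \alpha\mI$ is what produces the $1/\alpha$ factor, so it is precisely the strong monotonicity (encoded in $\alpha$) that controls the quality of the approximation.
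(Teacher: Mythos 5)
Your proof is correct and takes essentially the same route as the paper's: your elimination of $x^*$ from the joint first-order conditions is the same computation as the paper's reduction via the dual function $h(y)=\min_x f(x,y)$, and both arguments then observe that $\bar y^*$ solves the tridiagonal system $(A^\top A + \alpha \mI)\bar y^* = e_1 + \frac{q^{d+1}}{1-q}e_d$ exactly, so that the error is $(A^\top A + \alpha \mI)^{-1}\frac{q^{d+1}}{1-q}e_d$, bounded using $(A^\top A + \alpha \mI)^{-1} \preceq \alpha^{-1}\mI$. The only cosmetic difference is that you verify the row-by-row cancellations (interior rows, first row, truncated last row) via the characteristic-root identities, which the paper presents by simply listing the two coordinate systems side by side.
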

\begin{proof} Let us write the dual function for \eqref{t144}:
\begin{equation*}
    h(y) = -\frac{1}{2}y^T \left(\frac{\Lavg^2}{2n\mu}A^T A + \mu I \right)y + \frac{\Lavg^2}{2n\mu} e_1^T y,
\end{equation*}
where it is not difficult to check that 
\begin{eqnarray*}
A A^T = \left(
\begin{array}{cccccccc}
1&-1 & & & & & &  \\
-1&2 &-1 & & & & &  \\
&-1 &2 & -1 & & & & \\
& & -1&2 &-1 & & & \\
& & &-1 &2 &-1 & & \\
& & & & &\ldots & & \\
& & & & &-1 &2 &-1 \\
& & & & & &-1 &2 \\
\end{array}
\right).
\end{eqnarray*}
The optimality of dual problem $\nabla h(y^*) = 0$ gives
\begin{equation*}
    \left(\frac{\Lavg^2}{2n\mu}A^T A + \mu I \right)y^* = \frac{\Lavg^2}{2n\mu} e_1,
\end{equation*}
or
\begin{equation*}
    \left(A^T A + \alpha I \right)y^* = e_1.
\end{equation*}
Equivalently, we can write 
\begin{eqnarray*}
\left\{
\begin{array}{l}
(1+\alpha)y_1^* - y_2^* = 1, \\
-y_1^* + (2 + \alpha) y^*_2 - y^*_3 = 0,\\
\ldots \\
-y_{d-2}^* + (2 + \alpha) y^*_{d-1} - y^*_d = 0,\\
-y^*_{d-1} + (2+\alpha)y^*_d = 0.
\end{array}
\right .
\end{eqnarray*}
On the other hand, the    approximation $\bar y^*$ satisfies the following set of equations:
\begin{eqnarray*}
\left\{
\begin{array}{l}
(1+\alpha)\bar y_1^* - \bar y_2^* = 1, \\
-\bar y_1^* + (2 + \alpha) \bar y^*_2 - \bar y^*_3 = 0,\\
\ldots \\
-\bar y_{d-2}^* + (2 + \alpha) \bar y^*_{d-1} - \bar y^*_d = 0,\\
-\bar y^*_{d-1} + (2+\alpha) \bar y^*_d = \frac{q^{d+1}}{1-q},
\end{array}
\right .
\end{eqnarray*}
or equivalently
\begin{equation*}
    \left(A^T A + \alpha I \right)\bar y^* = e_1 + \frac{q^{d+1}}{1-q}e_d.
\end{equation*}
Therefore,  the difference between  $\bar y^*$ and   $y^*$   reads
\begin{equation*}
    \bar y^* - y^* = \left(A^T A + \alpha I \right)^{-1}\frac{q^{d+1}}{1-q}e_d.
\end{equation*}
The statement of the lemma follow from the above equality and $\alpha^{-1} I \succeq \left(A^T A + \alpha I \right)^{-1} \succ 0$.  
\end{proof}

The next lemma 
provides a lower bound for the solution of \eqref{t144} in the distributed case \eqref{t2}. The proof follows closely that of Lemma 3.4 from \cite{zhang2019lower} and is reported for the sake of completeness. 

\begin{lemma} \label{l234}
Consider  a distributed saddle-point problem with objective function given by \eqref{t144}. For any $K, N$, choose any  problem size  $d \geq \max \left\{ 2 \log_q \left( \frac{\alpha}{4\sqrt{2}}\right), 2K, 2N\right\}$, where $\alpha = \frac{2n\mu^2}{\Lavg^2}$ and $q = \frac{1}{2}\left(2 + \alpha - \sqrt{\alpha^2 + 4\alpha} \right) \in (0;1)$. Then, any output $\hat x, \hat y$ produced by  any method satisfying Definition \ref{app:proc} after $K$ communications and $N$ local calls, is such that 
\begin{equation*}
    \EE\left[\|\hat x - x^*\|^2 + \|\hat y - y^*\|^2 \right]\geq \left(q^{\frac{2K}{l}} + q^{\frac{2N}{n}}\right) \frac{\| y_0 - y^*\|^2}{16}.
\end{equation*}
\end{lemma}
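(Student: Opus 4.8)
\emph{Plan.} Since $\|\hat x - x^*\|^2\ge 0$, it suffices to lower bound $\EE[\|\hat y - y^*\|^2]$; and because the memories are initialized at $0$ we have $y_0 = 0$, so $\|y_0 - y^*\| = \|y^*\|$. The engine of the proof is the ``zero tail'' structure of any admissible output. By Lemma~\ref{l2}, after $K$ communication rounds every coordinate of $\hat y$ with index exceeding $\lfloor K/l\rfloor$ vanishes identically; by Lemma~\ref{l3}, the number $T_\ell$ of coordinates the local oracle can activate satisfies $\EE[T_\ell]\le \lfloor N/n\rfloor$. Both restrictions apply simultaneously, so writing $g(T) \eqdef \sum_{i>T}(y^*_i)^2$ for the tail energy of $y^*$ (which is nonincreasing and, extending it piecewise-linearly, convex on $[0,d]$, the monotone decay of $|y^*_i|$ being read off the explicit near-geometric form of Lemma~\ref{lemma2}), we obtain the pointwise bound $\|\hat y - y^*\|^2 \ge g\big(\min(\lfloor K/l\rfloor,\,T_\ell)\big)$.

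First I would isolate the single inequality that drives everything: for every $0\le T\le d/2$,
\[
   \textstyle g(T) \;=\; \sum_{i>T}(y^*_i)^2 \;\ge\; \tfrac18\, q^{2T}\,\|y^*\|^2. \qquad (\star)
\]
Granting $(\star)$, the conclusion follows by elementary bookkeeping. As $\min(\lfloor K/l\rfloor, T_\ell)$ is at most each of its arguments and $g$ is nonincreasing, $g\big(\min(\lfloor K/l\rfloor, T_\ell)\big)\ge \max\{g(\lfloor K/l\rfloor),\, g(T_\ell)\}\ge \tfrac12\big(g(\lfloor K/l\rfloor)+g(T_\ell)\big)$. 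Taking expectations (the first term is deterministic) and then, for the random term, using convexity of $g$ with Jensen's inequality together with $\EE[T_\ell]\le N/n$ and monotonicity of $g$, yields $\EE[\|\hat y - y^*\|^2]\ge \tfrac12\big(g(\lfloor K/l\rfloor)+g(N/n)\big)$. The hypotheses $d\ge 2K$ and $d\ge 2N$ keep both thresholds below $d/2$, so $(\star)$ applies to each; using $q<1$ to replace $\lfloor K/l\rfloor$ by $K/l$ produces exactly $\tfrac1{16}\big(q^{2K/l}+q^{2N/n}\big)\|y^*\|^2$, which is the claimed bound since $\|y^*\|=\|y_0-y^*\|$.

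The substantive step, and the one I expect to fight with, is $(\star)$. Here I would pass to the explicit geometric surrogate $\bar y^*_i = q^i/(1-q)$ of Lemma~\ref{lemma2}, for which the tail-to-total ratio is an exact geometric computation, $\sum_{i>T}(\bar y^*_i)^2/\|\bar y^*\|^2 = q^{2T}\,(1-q^{2(d-T)})/(1-q^{2d})$; for $T\le d/2$ this is at least $q^{2T}(1-q^{d})$, hence essentially $q^{2T}$. The delicate part is transferring this clean ratio from $\bar y^*$ to the true solution $y^*$ using the perturbation estimate $\|\bar y^* - y^*\|\le q^{d+1}/(\alpha(1-q))$ of Lemma~\ref{lemma2}: I would bound $\big(\sum_{i>T}(y^*_i)^2\big)^{1/2}$ from below and $\|y^*\|$ from above by the triangle inequality, and then invoke the hypothesis $d\ge 2\log_q\!\big(\alpha/(4\sqrt2)\big)$ \emph{precisely} to force the perturbation terms small enough that the constant in $(\star)$ degrades only from $\approx 1$ down to $\tfrac18$. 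Everything outside this error analysis is routine; the only genuine obstacle is threading the lower bound on $d$ through the triangle-inequality estimates so that the numerical constant lands at the advertised value.
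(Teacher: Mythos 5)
Your proposal follows the paper's own route in all essentials: cap the number of nonzero coordinates by $\min(\lfloor K/l\rfloor, T_\ell)$ via Lemmas \ref{l2} and \ref{l3}, lower-bound the resulting tail energy through the geometric surrogate $\bar y^*$ of Lemma \ref{lemma2} (your inequality $(\star)$, with the restriction $T\le d/2$ and the role of $d\ge 2\log_q(\alpha/(4\sqrt2))$, is precisely the step the paper performs, and your sketch of its proof is in fact more explicit than the paper's, which outsources the transfer from $\bar y^*$ to $y^*$ to \cite{zhang2019lower}), split the minimum by comparing a maximum with an average, and finish with Jensen's inequality. Your constant bookkeeping even lands on the stated $\tfrac1{16}$, whereas the paper's own chain actually ends at $\tfrac1{32}$.

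The gap is in your Jensen step. You apply Jensen to the tail-energy function $g(T)=\sum_{i>T}(y_i^*)^2$ itself, which requires its piecewise-linear extension to be convex, i.e.\ requires $(y_i^*)^2$ to be nonincreasing in $i$. Your justification --- that the monotone decay of $|y_i^*|$ can be ``read off'' Lemma \ref{lemma2} --- does not hold up: that lemma gives only the aggregate bound $\|\bar y^*-y^*\|\le q^{d+1}/(\alpha(1-q))$, and an $\ell_2$ perturbation of that size need not preserve coordinatewise monotonicity. Quantitatively, the consecutive gaps of the surrogate are $\bar y_i^*-\bar y_{i+1}^*=q^i$, and for $i$ near $d/2$ (the range your argument needs) the permitted perturbation exceeds these gaps whenever $q>2\sqrt2/(2\sqrt2+1)\approx 0.74$ and $d$ sits at its minimal admissible value --- exactly the ill-conditioned regime $\alpha\ll1$ that the lower bound is about. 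The monotonicity is in fact true, but proving it requires returning to the defining tridiagonal system, whose solution is $y_i^*=c\,(q^i-q^{2(d+1)-i})$ with $c>0$, visibly positive and decreasing; this computation appears nowhere in your argument and cannot be replaced by the citation you give. The paper sidesteps the issue by reversing the order of operations: it applies the pointwise bound $\|\hat y-y^*\|^2\ge \tfrac1{16}q^{2\min(k_c,k_l)}\|y_0-y^*\|^2$ to the random coordinate counts first, and only then invokes Jensen for the map $t\mapsto q^{2t}$, whose convexity is free. Either fix --- prove the monotonicity from the recurrence, or reorder the steps as the paper does --- closes your proof; as written, the convexity of $g$ is unsupported.
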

\begin{proof} Let us assume that in output we have $k$ non-zero coordinates. By definition of $\bar y^*$, with $q < 1$ and $k \leq \frac{d}{2}$, we have
\begin{eqnarray*}
    \|\hat y - \bar y^*\|^2 &\geq& \sqrt{\sum\limits_{j=k+1}^d  (\bar y^*_j)^2} = \frac{q^k}{1-q} \sqrt{q^2 + q^4 + \ldots + q^{2(d-k)}} \\
    &\geq& \frac{q^k}{\sqrt{2}(1-q)} \sqrt{q^2 + q^4 + \ldots + q^{2d}} = \frac{q^k}{\sqrt{2}} \| \bar y^*\|^2 = \frac{q^k}{\sqrt{2}} \| y_0 - \bar y^*\|^2.
\end{eqnarray*}
Using Lemma \ref{lemma2} for $d \geq 2 \log_q \left(\frac{\alpha}{4\sqrt{2}} \right)$ we can guarantee that $\bar y^* \approx y^*$ (for more detailed proof see \cite{zhang2019lower}) and
\begin{equation*}
    \|\hat x - x^*\|^2 + \|\hat y - y^*\|^2 \geq \|\hat y - y^*\|^2 \geq \frac{q^{2k}}{16} \| y_0 - y^*\|^2.
\end{equation*}
It remains only to note that $k$ depends on the number of nonzero coordinates from communications $k_c$ and local computations $k_l$. For this we use Lemmas \ref{l2} and \ref{l3}:
\begin{equation*}
    \|\hat x - x^*\|^2 + \|\hat y - y^*\|^2 \geq \frac{q^{2k}}{16} \| y_0 - y^*\|^2 \geq  \frac{q^{2 \min (k_c, k_l)}}{16} \| y_0 - y^*\|^2 \geq \frac{q^{k_c + k_l}}{16} \| y_0 - y^*\|^2.
\end{equation*}
By Lemma \ref{l2} we have $k_c \leq \left\lfloor \frac{K}{l} \right\rfloor$, where $l$ is "distance" between $B$ and $\bar B$. By Lemma \ref{l3} we get that $k_l$ has binomial distribution with parameters $N$ and $\frac{1}{n}$.
\begin{align*}
    \EE\left[\|\hat x - x^*\|^2 + \|\hat y - y^*\|^2\right] &\geq \frac{q^{2 \left\lfloor \frac{K}{l} \right\rfloor}}{32} \| y_0 - y^*\|^2 + \EE\left[\frac{q^{2 k_l}}{32} \| y_0 - y^*\|^2\right] \\
    &\geq \frac{q^{\frac{2K}{l}}}{32} \| y_0 - y^*\|^2 + \frac{\EE\left[q^{2 k_l}\right]}{32} \| y_0 - y^*\|^2 \\
    &\geq \left(q^{\frac{2K}{l}} + q^{2 \EE[k_l]}\right)\cdot \frac{1}{32} \| y_0 - y^*\|^2 \\
    &\geq \left(q^{\frac{2K}{l}} + q^{\frac{2N}{n}}\right)\cdot \frac{1}{32} \| y_0 - y^*\|^2.
\end{align*}
Here we use Jensen's inequality.  
\end{proof}

It remains to get an estimate on $l$ ("distance" between $B$ and $\bar B$). For the fixed network it is real distance, for time-varying -- rate how fast information transmits in the network from  $B$ to $\bar B$.

\subsection{Fixed network} \label{app:low_fixed}
\begin{theorem}[Theorem \ref{th:lower_fixed}]
Let $\Lavg > \mu > 0$, $n \in \N$ (with $\Lavg/\mu \geq \sqrt{n}$), $\chi \geq 1$ and $K, N \in \N$. There exists a distributed saddle-point problem over fixed network (Assumption \ref{ass:fixed}). For which the following statements are true:

$\bullet$ a gossip matrix $\mW$ has $\chi(\mW) = \chi$,

$\bullet$ $f = \sum\limits_{m=1}^M \frac{1}{n}\sum\limits_{i=1}^n f_{m.i} : \R^d \times \R^d \to \R$ is $\mu$ -- strongly-convex-strongly-concave,

$\bullet$ $f_m$ are $\Lavg$-average smooth and $L = \tfrac{\Lavg}{\sqrt{n}}$ - smooth,

$\bullet$ size $d \geq \max \left\{ 2 \log_q \left( \frac{\alpha}{4\sqrt{2}}\right), 2K, 2N\right\}$, where  $\alpha = \frac{2n\mu^2}{L^2}$ and $q = \frac{1}{2}\left(2 + \alpha - \sqrt{\alpha^2 + 4\alpha} \right) \in (0;1)$,

$\bullet$ the solution of the problem is non-zero: $x^* \neq 0$, $y^* \neq 0$.

Then for any output $\hat z$ of any procedure (Definition \ref{app:proc}) with $K$ communication rounds and $N$ local computations, one can obtain the following estimate:
\begin{equation*}
    \|\hat z - z^*\|^2 = \Omega\left(\exp\left(  -\frac{80}{1 +  \sqrt{\frac{2 L^2}{\mu^2} + 1}} \cdot \frac{K}{\sqrt{\chi}}\right)  \| y_0 - y^*\|^2\right);
\end{equation*}
\begin{equation*}
    \|\hat z - z^*\|^2 = \Omega\left(\exp\left(-\frac{16}{n +  \sqrt{\frac{2n \Lavg^2}{\mu^2} + n^2}}\cdot N\right) \| y_0 - y^*\|^2\right).
\end{equation*}

\end{theorem}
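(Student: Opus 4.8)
The plan is to instantiate the generic hard instance \eqref{t2}--\eqref{t144} on the worst-case fixed topology and then combine the already-established chain of lemmas with elementary estimates on the parameter $q$. First I would fix the network to be a linear (path) graph on $M$ vertices with $B=\{1\}$ and $\bar B=\{M\}$, so that the ``distance'' appearing in Lemma \ref{l2} is exactly $l=M-1$. Following \cite{scaman2017optimal}, I would equip this path with a gossip matrix $\mW$ (a suitably weighted Laplacian, normalized so that $\lambda_{\max}(\mW)=1$) and record the spectral estimate relating its characteristic number to the diameter: for the path one has $\lambda_{\min}^+(\mW)=\Theta(M^{-2})$, hence $\chi=1/\lambda_{\min}^+(\mW)=\Theta(M^2)$ and in particular $l=M-1\ge \tfrac15\sqrt{\chi}$. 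Given an arbitrary target $\chi\ge 1$, choosing $M$ and the edge weights accordingly produces a graph with (up to constants) the prescribed characteristic number and with $l$ of order $\sqrt{\chi}$. This matching of $\chi$ and $l$ is, I expect, the main obstacle, since it requires exhibiting the weighted gossip matrix explicitly and controlling $\lambda_{\max}$ and $\lambda_{\min}^+$ simultaneously for every admissible $\chi$.

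With the network in place, I would invoke Lemma \ref{l234}, which already packages the stochastic finite-sum argument (Lemmas \ref{l2}, \ref{l3} and Jensen's inequality), to obtain
\begin{equation*}
\EE\!\left[\|\hat z - z^*\|^2\right] \;\ge\; \left(q^{2K/l}+q^{2N/n}\right)\frac{\|y_0-y^*\|^2}{16}.
\end{equation*}
Dropping the $q^{2N/n}$ term yields the communication bound and dropping $q^{2K/l}$ yields the local-computation bound, so it remains only to convert each surviving power of $q$ into the advertised exponential.

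For the algebra I would use that $q$ is the smaller root of $q^2-(2+\alpha)q+1=0$ with $\alpha=\tfrac{2n\mu^2}{\Lavg^2}$, whence $1/q=\tfrac12\big(2+\alpha+\sqrt{\alpha^2+4\alpha}\big)$ and therefore, by $\ln(1+x)\le x$,
\begin{equation*}
-\ln q \;=\; \ln\!\frac1q \;\le\; \frac{\alpha+\sqrt{\alpha^2+4\alpha}}{2} \;=\; \frac{\sqrt{\alpha}\,\big(\sqrt{\alpha}+\sqrt{4+\alpha}\big)}{2}.
\end{equation*}
Since the hypothesis $\Lavg\ge\mu\sqrt{n}$ gives $L=\Lavg/\sqrt n\ge\mu$ and hence $\alpha=2\mu^2/L^2\le 2$, I obtain the uniform bound $(\sqrt\alpha+\sqrt{4+\alpha})^2\le 16$. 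Writing $\tfrac{1}{1+\sqrt{2L^2/\mu^2+1}}=\tfrac{\sqrt\alpha}{\sqrt\alpha+\sqrt{4+\alpha}}$, the communication estimate reduces to checking $\tfrac{(\sqrt\alpha+\sqrt{4+\alpha})^2}{2}\le \tfrac{40\,l}{\sqrt\chi}$, which holds because $l\ge\tfrac15\sqrt\chi$; substituting into $q^{2K/l}=\exp\!\big(-\tfrac{2K}{l}(-\ln q)\big)$ then produces exactly the claimed $\exp\!\big(-\tfrac{80}{1+\sqrt{2L^2/\mu^2+1}}\tfrac{K}{\sqrt\chi}\big)$.

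The local bound is entirely analogous: the same estimate on $-\ln q$ together with $q^{2N/n}=\exp\!\big(-\tfrac{2N}{n}(-\ln q)\big)$ and the identity $n+\sqrt{2n\Lavg^2/\mu^2+n^2}=n\,\tfrac{\sqrt\alpha+\sqrt{4+\alpha}}{\sqrt\alpha}$ reproduces the second exponential, again using $(\sqrt\alpha+\sqrt{4+\alpha})^2\le 16$. These final manipulations are routine and the multiplicative constants are absorbed into the $\Omega$; the only genuine work is the spectral construction of the gossip matrix relating the diameter $l$ to $\chi$.
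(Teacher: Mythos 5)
Your proposal is correct and is essentially the paper's own argument: Lemma \ref{l234}, plus the bound $\ln(1/q)\le (1-q)/q=\tfrac{1}{2}\sqrt{\alpha}\bigl(\sqrt{\alpha}+\sqrt{4+\alpha}\bigr)$ (algebraically identical to the paper's lower bound $q/(1-q)\ge\tfrac{1}{8}\bigl(1+\sqrt{2\Lavg^2/(n\mu^2)+1}\bigr)$, both reducing via $\alpha\le 2$ to the same check $(\sqrt{\alpha}+\sqrt{4+\alpha})^2\le 16$), combined with the weighted-path construction of \cite{scaman2017optimal} giving $l\ge\sqrt{\chi}/5$. The only detail you leave loose is exact attainment of $\chi(\mW)=\chi$ rather than "up to constants": the paper fixes this by an intermediate-value argument in the weight of a single edge, and for $1\le\chi<3$ — values that no weighted path on $M\ge 3$ vertices can realize — it switches to a weighted triangle, a case your path-only construction would miss.
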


\begin{proof}
Applying Lemma \ref{l234}, we have
\begin{equation*}
    \left(\frac{1}{q}\right)^{\frac{2K}{l}} \geq  \frac{\| y_0 - y^*\|^2}{32\EE\left[\|\hat x - x^*\|^2 + \|\hat y - y^*\|^2\right]} ~~\text{and}~~\left(\frac{1}{q}\right)^{\frac{2N}{n}} \geq  \frac{\| y_0 - y^*\|^2}{32\EE\left[\|\hat x - x^*\|^2 + \|\hat y - y^*\|^2\right]}.
\end{equation*}
Taking the logarithm on both sides, we get
\begin{equation*}
    \frac{2K}{l}  \geq  \ln\left(\frac{\| y_0 - y^*\|^2}{32\EE\left[\|\hat x - x^*\|^2 + \|\hat y - y^*\|^2\right]}\right) \frac{1}{\ln(q^{-1})}.
\end{equation*}
Next, we work with
\begin{eqnarray*}
    \frac{1}{\ln (q^{-1})} &=& \frac{1}{\ln (1+ (1-q)/q))} \geq \frac{q}{1-q} = \frac{1 + \frac{n\mu^2}{\Lavg^2} - \sqrt{\frac{2n\mu^2}{\Lavg^2} + \left(\frac{n\mu^2}{\Lavg^2}\right)^2}}{\sqrt{\frac{2n\mu^2}{\Lavg^2} + \left(\frac{n\mu^2}{\Lavg^2}\right)^2} - \frac{n\mu^2}{\Lavg^2}} \nonumber\\ 
    &\geq& \frac{\sqrt{\frac{2n\mu^2}{\Lavg^2} + \left(\frac{n\mu^2}{\Lavg^2}\right)^2} + \frac{n\mu^2}{\Lavg^2}}{\frac{8n\mu^2}{\Lavg^2}} = \frac{1}{8}\left( 1 +  \sqrt{\frac{2\Lavg^2}{n\mu^2} + 1}\right).
\end{eqnarray*}
One can then obtain 
\begin{equation*}
    \frac{2K}{l}  \geq  \ln\left(\frac{\| y_0 - y^*\|^2}{32\EE\left[\|\hat x - x^*\|^2 + \|\hat y - y^*\|^2\right]}\right) \cdot \frac{1}{8}\left(  1 +  \sqrt{\frac{2\Lavg^2}{n\mu^2} + 1}\right) ,
\end{equation*}
and 
\begin{equation*}
    \exp\left(\frac{1}{1 +  \sqrt{\frac{2\Lavg^2}{n\mu^2} + 1}}\frac{16 K}{l}\right)  \geq  \frac{\| y_0 - y^*\|^2}{32\EE\left[\|\hat x - x^*\|^2 + \|\hat y - y^*\|^2\right]}.
\end{equation*}

The next proof follow similar steps as in the  proof of Theorem 2 from \cite{scaman2017optimal}. 
Let $\gamma_M = \frac{1 - \cos \frac{\pi}{M}}{1 + \cos \frac{\pi}{M}}$ be a  decreasing sequence of positive numbers. Since $\gamma_2 = 1$ and $\lim_m \gamma_M = 0$, there exists $M \geq 2$ such that $\gamma_M \geq \chi^{-1} > \gamma_{M+1}$.

$\bullet$ If $M \geq 3$, let us consider linear graph of size $M$ with vertexes $v_1, \ldots v_M$, and weighted with $w_{1,2} = 1 - a$ and  $w_{i,i+1} = 1$ for $i \geq 2$. Then we applied Lemmas 1 and 3 and get:
\begin{equation*}
    \|\hat x - x^*\|^2 + \|\hat y - y^*\|^2 \geq q^{\frac{2K}{l} } \frac{\| y_0 - y^*\|^2}{32}.
\end{equation*}
If $\mW_a$ is the normalized Laplacian of the weighted graph $\mathcal G$, one can note that with $a = 0$, $\chi^{-1}(W_a) = \gamma_M$, with $a = 1$ -- $\chi^{-1}(\mW_a) = 0$. Hence, there exists $a \in (0;1]$ such that $\chi^{-1}(\mW_a) = \chi^{-1}$. Then $\chi^{-1} \geq \gamma_{M+1} \geq \frac{2}{(M+1)^2}$, and $M \geq \sqrt{2\chi} - 1 \geq \frac{\sqrt{\chi}}{4 }$. Finally, $l = M - 1 \geq \frac{15M}{16} - 1 \geq \frac{15}{16} \left(\sqrt{2\chi} - 1\right) - 1 \geq \frac{\sqrt{\chi}}{5}$ since $\chi^{-1} \leq \gamma_3 = \frac{1}{3}$. Hence,
\begin{equation}
    \label{r509}
    \exp\left(80\frac{K}{\sqrt{\chi}} \cdot \frac{1}{ 1 +  \sqrt{\frac{2\Lavg^2}{n\mu^2} + 1}} \right)  \geq  \frac{\| y_0 - y^*\|^2}{32(\|\hat x - x^*\|^2 + \|\hat y - y^*\|^2)}.
\end{equation}
$\bullet$ If $M = 2$, we construct a totally connected network with 3 nodes with weight $w_{1,3} = a \in [0;1]$. Let $W_a$ is the normalized Laplacian. If $a = 0$, then the network is a linear graph and $\chi^{-1}(\mW_a) = \gamma_3 =\frac{1}{3}$. Hence, there exists $a \in [0;1]$ such that $\chi^{-1}(\mW_a) = \chi^{-1}$. Finally, $B = \{v_1\}$, $\bar B = \{v_3\}$ and $l \geq 1 \geq \frac{1}{2\sqrt{\chi^{-1}}}$. Whence it follows that in this case \eqref{r509} is also valid.

The same way we can work with (but without considering graph):
\begin{equation*}
    \left(\frac{1}{q}\right)^{\frac{2N}{n}} \geq  \frac{\| y_0 - y^*\|^2}{32\EE\left[\|\hat x - x^*\|^2 + \|\hat y - y^*\|^2\right]}.
\end{equation*}
\end{proof}

\subsection{Time-varying network} \label{app:low_tv}

\begin{theorem}[Theorem \ref{th:lower_tv}]
Let $\Lavg > \mu > 0$, $n \in \N$ (with $\Lavg/\mu \geq \sqrt{n}$), $\hat \chi \geq 3$ and $K, N \in \N$. There exists a distributed saddle-point problem over time-varying network (Assumption \ref{ass:tv}). For which the following statements are true:

$\bullet$ Assumption \ref{ass:tv} holds with $\chi = \hat \chi$,

$\bullet$ $f = \sum\limits_{m=1}^M \frac{1}{n}\sum\limits_{i=1}^n f_{m.i} : \R^d \times \R^d \to \R$ is $\mu$ -- strongly-convex-strongly-concave,

$\bullet$ $f_m$ are $\Lavg$-average smooth and $L = \tfrac{\Lavg}{\sqrt{n}}$ - smooth,

$\bullet$ size $d \geq \max \left\{ 2 \log_q \left( \frac{\alpha}{4\sqrt{2}}\right), 2K, 2N\right\}$, where  $\alpha = \frac{2n\mu^2}{L^2}$ and $q = \frac{1}{2}\left(2 + \alpha - \sqrt{\alpha^2 + 4\alpha} \right) \in (0;1)$,

$\bullet$ the solution of the problem is non-zero: $x^* \neq 0$, $y^* \neq 0$.

Then for any output $\hat z$ of any procedure (Definition \ref{app:proc}) with $K$ communication rounds and $N$ local computations, one can obtain the following estimate:
\begin{equation*}
    \|\hat z - z^*\|^2 = \Omega\left(\exp\left(  -\frac{64}{1 +  \sqrt{\frac{2 L^2}{\mu^2} + 1}} \cdot \frac{K}{B\hat \chi}\right)  \| y_0 - y^*\|^2\right);
\end{equation*}
\begin{equation*}
    \|\hat z - z^*\|^2 = \Omega\left(\exp\left(-\frac{16}{n +  \sqrt{\frac{2n \Lavg^2}{\mu^2} + n^2}}\cdot N\right) \| y_0 - y^*\|^2\right).
\end{equation*}
\end{theorem}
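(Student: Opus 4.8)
The plan is to follow the proof of Theorem \ref{th:lower_fixed} almost verbatim, reusing the bad saddle-point instance \eqref{t2}--\eqref{t144} together with Lemma \ref{l234} and the elementary estimate $\frac{1}{\ln(q^{-1})} \geq \frac{1}{8}\left(1 + \sqrt{2\Lavg^2/(n\mu^2) + 1}\right)$. The single ingredient that must be replaced is the bound on the propagation distance $l$ between the source set $B$ and the sink set $\bar B$: for a fixed graph one obtains $l \gtrsim \sqrt{\chi}$, whereas over a time-varying network I will force information to crawl at the slower rate $l \gtrsim B\hat\chi$.

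I would first observe that the local-computation bound requires no new work. Lemma \ref{l3} is purely combinatorial and concerns only the finite-sum structure \eqref{f_m}; it is insensitive to the communication topology. Hence the estimate $\EE[\|\hat x - x^*\|^2 + \|\hat y - y^*\|^2] \geq q^{2N/n}\tfrac{\|y_0-y^*\|^2}{32}$ carries over unchanged, and the identical logarithmic manipulation produces the second (local) estimate of the theorem with the stated constant $16/(n + \sqrt{2n\Lavg^2/\mu^2 + n^2})$.

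For the communication bound I would again start from Lemma \ref{l234}, use $\EE[\ldots] \geq q^{2K/l}\tfrac{\|y_0-y^*\|^2}{32}$, take logarithms, and substitute the bound on $1/\ln(q^{-1})$ (recalling $L = \Lavg/\sqrt{n}$) to reach
\[
\exp\!\left(\frac{16K}{l}\cdot\frac{1}{1 + \sqrt{2L^2/\mu^2 + 1}}\right) \geq \frac{\|y_0-y^*\|^2}{32\,\EE[\|\hat x - x^*\|^2 + \|\hat y - y^*\|^2]}.
\]
It then suffices to exhibit a sequence of gossip matrices $\mW(t)$, supported on the edges of a $B$-connected time-varying graph and obeying all four points of Assumption \ref{ass:tv} with characteristic number $\chi = \hat\chi$, whose induced propagation distance satisfies $l \geq \tfrac{B\hat\chi}{4}$. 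Since a larger $l$ means slower information transfer and hence a stronger lower bound, the inequality $\tfrac{16}{l} \leq \tfrac{64}{B\hat\chi}$ turns the factor $16/l$ in the display into $64/(B\hat\chi)$, giving exactly the claimed communication rate.

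The heart of the proof, and the main obstacle, is the construction of this time-varying network, for which I would adapt the technique of \cite{kovalev2021lower}. Placing $B = \{1\}$ and $\bar B = \{M\}$ at the two ends of a chain, I would schedule the active edge sets $\mathcal{E}(t)$ so that (i) every window of $B$ consecutive rounds yields a connected union graph $\mathcal{G}_B(t)$, securing $B$-connectedness, while (ii) the activation order is arranged so that a single combinatorial hop toward $\bar B$ becomes available only once per window, forcing the frontier advance $E_k \to E_{k+1}$ in the notation of Lemma \ref{l2} to cost on the order of $B\hat\chi$ rounds. Simultaneously, the gossip weights on the chain must be tuned so that the induced operator $\mW_B(t)$ satisfies point 4 of Assumption \ref{ass:tv} with characteristic number exactly $\hat\chi$; the \emph{linear} dependence on $\hat\chi$ here, as opposed to the $\sqrt{\chi}$ of the fixed case, reflects the impossibility of polynomial (Chebyshev) acceleration over a varying topology. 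The condition $\hat\chi \geq 3$ guarantees the chain is long enough for such a schedule to exist. The genuinely delicate step is certifying that one sequence $\mW(t)$ can meet the support, kernel, and range conditions (points 1--3) while realizing both the prescribed contraction and the slow combinatorial propagation, and that the edge-by-edge argument of Lemma \ref{l2} remains valid under the changing topology; granting this, re-running the display above with $l \geq B\hat\chi/4$ closes the argument.
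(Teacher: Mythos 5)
Your overall scaffolding coincides with the paper's: you reuse the hard instance \eqref{t2}--\eqref{f_m}, Lemma~\ref{l234}, and the estimate $1/\ln(q^{-1}) \geq \tfrac{1}{8}\bigl(1+\sqrt{2\Lavg^2/(n\mu^2)+1}\bigr)$; you correctly observe that the local-computation bound (Lemma~\ref{l3}) is topology-independent and carries over verbatim; and you correctly reduce the communication bound to exhibiting a time-varying network satisfying Assumption~\ref{ass:tv} with characteristic number $\hat\chi$ whose propagation distance obeys $l \geq B\hat\chi/4$. The gap is precisely in the step you flag as delicate: a chain (path) topology cannot realize this pair of properties, so the construction at the heart of your argument fails.

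Concretely, two things go wrong on a chain of $M$ vertices. First, your requirements (i) and (ii) are in tension: $B$-connectedness forces the union of \emph{every} window of $B$ rounds to be connected, hence to contain all $M-1$ path edges, which is impossible with sparse per-round activations once $B < M-1$; and if (essentially) all edges are active each round, information advances one hop per \emph{round}, so $l \leq M-1$. Second, and fatally, no tuning of weights rescues the spectral side: any gossip matrix supported on a path with $\ker \supset \cL$ has $\lambda_{\max}/\lambda^+_{\min} = \Omega(M^2)$ (test with the zero-sum linear vector $v_i = i - (M+1)/2$), and even allowing time-varying, Chebyshev-like weights inside a window of length $B$ the window contraction is at best $1 - O(B^2/M^2)$, i.e.\ $\hat\chi = \Omega(M^2/B^2)$. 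Since connectivity of each window forces at least one hop of progress per window, $l \leq B(M-1)$, so on a chain one always has $l = O\bigl(B^2\sqrt{\hat\chi}\bigr)$ (and $l = O(\sqrt{\hat\chi})$ in the canonical case $B=1$). Plugging such an $l$ into Lemma~\ref{l234} yields only $\EE[\cdot] \geq \tfrac{1}{32}q^{2K/l}$ with $l \ll B\hat\chi$, which is a \emph{weaker} statement than the claimed $\Omega\bigl(\exp(-64K/((1+\sqrt{2L^2/\mu^2+1})B\hat\chi))\bigr)$ because $q^{2K/l}$ decreases as $l$ decreases; the theorem does not follow.

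What the paper does instead (following \cite{kovalev2021lower}) is abandon the chain entirely: take $M = \lfloor\hat\chi\rfloor$ and use a sequence of \emph{star} graphs whose center cycles through the neutral vertices $2,\ldots,M-1$ (those holding $f_3$), separated by $B-1$ empty rounds. The star's Laplacian, normalized by its largest eigenvalue, has characteristic number $M$ -- its spectral badness comes from degree heterogeneity, not from diameter (the diameter is $2$) -- so Assumption~\ref{ass:tv} holds with $\hat\chi$ while a single graph poses no combinatorial barrier. The barrier is created by the rotation: information leaving vertex $1$ reaches the current center only at the \emph{end} of that star round, whereas $\bar B = \{M\}$ (never a center) can only receive it from a center that already holds it at the \emph{start} of a round; since each new round's center is a fresh vertex, the information front stays one step behind the schedule for an entire cycle, giving $l \geq M-1+(B-1)(M-2) \geq B(\lfloor\hat\chi\rfloor-2) \geq B\hat\chi/4$ for $\hat\chi \geq 3$. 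This decoupling -- $\chi$ linear in $M$, propagation limited by time variation rather than by diameter -- is exactly what a path cannot provide, which is why your intuition about "no Chebyshev acceleration over varying topology" is right but your choice of graph cannot implement it.
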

\begin{proof}
The same way as in the previous Theorem we can obtain 
\begin{equation*}
    \exp\left(\frac{1}{1 +  \sqrt{\frac{2\Lavg^2}{n\mu^2} + 1}}\frac{16 K}{l}\right)  \geq  \frac{\| y_0 - y^*\|^2}{32\EE\left[\|\hat x - x^*\|^2 + \|\hat y - y^*\|^2\right]}.
\end{equation*}
Following \cite{kovalev2021lower}, we can consider the next sequences of graphs. Let us choose $M = \lfloor\hat \chi\rfloor$. Each communication $t$ such that $t\neq 8\tau$ (for any $\tau \in \N$) we consider empty network. Each communication $t$ such that $t=8\tau$ (for some $\tau \in \N$) we construct star graph with vertex $[(\tau-1) \mod (M-2)] + 2$ in the center of this star. It means that in the 8th communication vertex $2$ is in the center; in the 16th communication vertex $3$ is in the center etc. One can note that only vertexes with $f_3$ are in the center and they change sequentially. As matrices $\mW(t)$ we consider the normalized Laplacians. Then it holds $\chi = M = \lfloor\hat \chi\rfloor$. It holds that Assumption \ref{ass:tv} is valid with $\hat \chi$.

It is left to estimate $l$. Suppose we need to transfer information from $B$ to $\bar B$. At best, the following will happen:

$\bullet$ vertex $j$ in the center, it means that we can transfer information to $j$;

$\bullet$ $B-1$ empty graphs -- "empty" communications;

$\bullet$ vertex $j+1$ in the center, it means that we can transfer information to $j+1$;

\ldots

$\bullet$ vertex $j-1$ in the center, it means that we can transfer information to $j-1$ (still there is no information in $\bar B$);

$\bullet$ $B-1$ empty graphs -- "empty" communications;

$\bullet$ vertex $j$ in the center, now we can transfer from $j$ to $\bar B$.

In this (the best variant) we spend $M-1 + (B-1)(M-2)$ communication rounds. It means that $l \geq M-1 + (B-1)(M-2) \geq B(M-2) = B(\lfloor\hat \chi\rfloor - 2) \geq \tfrac{B\hat \chi}{4}$ (for $\hat \chi \geq 3$). Then we get
\begin{equation*}
    \exp\left(\frac{1}{1 +  \sqrt{\frac{2\Lavg^2}{n\mu^2} + 1}}\frac{64 K}{B\hat \chi}\right)  \geq  \frac{\| y_0 - y^*\|^2}{32\EE\left[\|\hat x - x^*\|^2 + \|\hat y - y^*\|^2\right]}.
\end{equation*}

\end{proof}

\section{Proof of Theorem \ref{th:ALg1_conv}} \label{sec:pr_oa_fixed}

We start the proof from the following lemma on $\delta^k$ and $\Delta^{k+1/2}$ from Algorithm \ref{alg:vrvi}.

\begin{lemma} \label{var_lem_fix}
	The following inequality holds:
	\begin{equation}\label{vrvi:eq:1}
		\Ek{\sqn{\delta^k - \Ek{\delta^k}}} \leq \frac{2\Lavg^2}{b}\E{\sqn{\z^{k} - \w^{k-1}} + \alpha^2\sqn{\z^k- \z^{k-1}}}.
	\end{equation}
	\begin{equation}\label{vrvi:eq:2}
		\Ek{\sqn{\Delta^{k+1/2} - \F(\z^*)}} \leq \frac{2\Lavg^2}{b} \E{\sqn{\z^{k+1} - \w^{k}}} + 2L^2\E{\sqn{\z^{k+1} - \z^*}}.
	\end{equation}
	where $\Ek{\delta^k}$ is equal to
	\begin{equation}\label{vrvi:eq:3}
		\Ek{\delta^k} = F(\z^k) + \alpha(F(\z^k) - F(\z^{k-1})).
	\end{equation}
\end{lemma}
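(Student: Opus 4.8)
The plan is to derive all three claims from two elementary facts about a single uniformly drawn mini-batch index and then exploit independence within the batch to obtain the $1/b$ variance reduction. Throughout, $\Ek{\cdot}$ denotes the conditional expectation given everything computed before $S^k$ and $S^{k+1/2}$ are sampled, so that $\z^{k-1},\z^k,\z^{k+1},\w^{k-1},\w^k$ are all treated as fixed.

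First I would establish the single-sample identities. Since $\F_j(\u)=(F_{1,j_1}(u_1),\ldots,F_{M,j_M}(u_M))^\top$ with each block index $j_m$ drawn uniformly and independently from $[n]$, linearity together with the finite-sum structure \eqref{eq:fs} yields the unbiasedness $\Ek{\F_j(\u)}=\F(\u)$ for every fixed $\u$. Moreover, from $\sqn{\F_j(\u)-\F_j(\v)}=\sum_{m=1}^M\sqn{F_{m,j_m}(u_m)-F_{m,j_m}(v_m)}$, taking expectation over the independent uniform indices and applying the average-Lipschitz part of Assumption~\ref{as:Lipsh} blockwise gives
\begin{equation*}
\Ek{\sqn{\F_j(\u)-\F_j(\v)}}=\sum_{m=1}^M\frac1n\sum_{i=1}^n\sqn{F_{m,i}(u_m)-F_{m,i}(v_m)}\le \Lavg^2\sqn{\u-\v}.
\end{equation*}

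With these in hand, \eqref{vrvi:eq:3} is immediate: applying $\Ek{\cdot}$ termwise to line~\ref{vrvi:line:Delta}, each sampled term averages to its full-operator counterpart and the two copies of $\F(\w^{k-1})$ cancel, leaving $\F(\z^k)+\alpha(\F(\z^k)-\F(\z^{k-1}))$. For \eqref{vrvi:eq:1}, I would write $\delta^k-\Ek{\delta^k}=\tfrac1b\sum_{j\in S^k}(Y_j-\Ek{Y_j})$ with $Y_j=\F_j(\z^k)-\F_j(\w^{k-1})+\alpha(\F_j(\z^k)-\F_j(\z^{k-1}))$ i.i.d.\ across $j\in S^k$; independence annihilates the cross terms, so the second moment equals $\tfrac1b\Ek{\sqn{Y_j-\Ek{Y_j}}}\le\tfrac1b\Ek{\sqn{Y_j}}$, and the elementary bound $\sqn{\xi+\zeta}\le2\sqn\xi+2\sqn\zeta$ combined with the single-sample variance estimate produces exactly $\tfrac{2\Lavg^2}{b}(\sqn{\z^k-\w^{k-1}}+\alpha^2\sqn{\z^k-\z^{k-1}})$.

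For \eqref{vrvi:eq:2}, unbiasedness applied to line~\ref{vrvi:line:Delta1/2} gives $\Ek{\Delta^{k+1/2}}=\F(\z^{k+1})$. I would split $\Delta^{k+1/2}-\F(\z^*)=(\Delta^{k+1/2}-\F(\z^{k+1}))+(\F(\z^{k+1})-\F(\z^*))$ and apply $\sqn{\xi+\zeta}\le2\sqn\xi+2\sqn\zeta$. The first, mean-zero, term is once more an average of $b$ independent contributions, so its second moment is at most $\tfrac{\Lavg^2}{b}\sqn{\z^{k+1}-\w^k}$ by the single-sample variance estimate; the second term is deterministic and bounded by $L^2\sqn{\z^{k+1}-\z^*}$ using the $L$-Lipschitzness of $\F$ noted after \eqref{eq:VI_new}. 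Collecting the two factors of $2$ yields the stated bound. The computations are routine; the only point requiring genuine care is the mini-batch bookkeeping, namely checking that the average-Lipschitz constant applies blockwise and sums to $\Lavg^2\sqn{\u-\v}$ over the $M$ workers, and that it is independence within the batch (not mere unbiasedness) that supplies the decisive $1/b$ factor.
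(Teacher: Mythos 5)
Your proof is correct and takes essentially the same route as the paper's: unbiasedness of the mini-batch estimator, independence of the sampled indices to obtain the $1/b$ factor (cross terms vanishing), the bound $\|a+b\|^2 \le 2\|a\|^2 + 2\|b\|^2$ to separate the variance-reduction and ``optimistic'' (resp.\ Lipschitz) parts, the second-moment identity $\mathbb{E}\|\xi-\mathbb{E}\xi\|^2 \le \mathbb{E}\|\xi\|^2$, and the blockwise application of the $\Lavg$-average-Lipschitz assumption, with $L$-Lipschitzness of $\F$ handling the deterministic term in \eqref{vrvi:eq:2}. The only difference is the order of operations (you apply independence to the whole single-sample term $Y_j$ and split afterwards, while the paper splits into the two parts first and then applies independence to each), which yields the identical bound.
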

\begin{proof}
    Due to the fact that the batch $S^k$ is generated uniformly and independently for all workers, we can make sure that \eqref{vrvi:eq:3} is correct. Then using definition of $\Delta^k$ from Algorithm~\ref{alg:vrvi}, we get
	\begin{align*}
		\Ek{\sqn{\delta^k - \Ek{\delta^k}}}&\leq
		2\Ek{\sqN{\frac{1}{b}\sum_{j\in S^k} \left(\F_j(\z^k) - \F_j (\w^{k-1}) \right) - \left(\F(\z^k) - \F(\w^{k-1})\right)}}
		\\&\quad+
		2\Ek{\sqN{\frac{\alpha}{b}\sum_{j\in S^k} \left(\F_j(\z^k) - \F_j (\z^{k-1}) \right) - \left(\F(\z^k) - \F(\z^{k-1})\right)}}
	\end{align*}
By randomness and independence of indexes in $S^k$, we obtain	
	\begin{align*}
		\Ek{\sqn{\delta^k - \Ek{\delta^k}}} &=
		\frac{2}{b^2}\Ek{\sum_{j\in S^k}\sqn{\left(\F_j(\z^k) - \F_j (\w^{k-1}) \right) - \left(\F(\z^k) - \F(\w^{k-1})\right)}}
		\\&\quad+
		\frac{2\alpha^2}{b^2}\Ek{\sum_{j\in S^k}\sqn{\left(\F_j(\z^k) - \F_j (\z^{k-1}) \right) - \left(\F(\z^k) - \F(\z^{k-1})\right)}}
	\end{align*}
The property of the second moment $\EE\|\xi - \EE \xi \|^2 = \EE\|\xi\|^2 - \|\EE \xi \|^2$ gives
	\begin{align*}	
		\Ek{\sqn{\delta^k - \Ek{\delta^k}}} \leq&
		\frac{2}{b^2}\Ek{\sum_{j\in S^k}\sqn{\F_j(\z^k) - \F_j (\w^{k-1})}} \\
		&+ \frac{2\alpha^2}{b^2}\Ek{\sum_{j\in S^k}\sqn{\F_j(\z^k) - F_j (\z^{k-1})}}
    \end{align*}
Again from the fact that $S^k$ is generated uniformly and independently for all samples and workers, we can obtain that for each worker $m$ indexes $j_{m,1}^k, \ldots j_{m,b}^k$ have the same uniform distribution, that means
    \begin{align*}
    \Ek{\sqn{\delta^k - \Ek{\delta^k}}} \leq&
		\frac{2}{b^2}\Ek{\sum_{j\in S^k}\sqn{\F_j(\z^k) - \F_j (\w^{k-1})}} \\
		&+ \frac{2\alpha^2}{b^2}\Ek{\sum_{j\in S^k}\sqn{\F_j(\z^k) - F_j (\z^{k-1})}}
		\\=&
		\frac{2}{mb}\sum_{j=1}^m \left(\sqn{\F_j(\z^k) - \F_j(\w^{k-1})} + \alpha^2\sqn{\F_j(\z^k) - \F_j(\z^{k-1})}\right).
	\end{align*}
Using Assumption \ref{as:Lipsh}, we get
    \begin{align*}
	\Ek{\sqn{\delta^k - \Ek{\delta^k}}}&\leq
	\frac{2\Lavg^2}{b}\left(\sqn{\z^{k} - \w^{k-1}} + \alpha^2\sqn{\z^k- \z^{k-1}}\right).
	\end{align*}
This concludes the proof of \eqref{vrvi:eq:1}.	

The proof chain for \eqref{vrvi:eq:2} is very similar.	
	\begin{align*}
		\E{\sqn{\Delta^{k+1/2} - \F(\z^*)}}&\leq
		2\E{\sqN{\Delta^{k+1/2} - \F(\z^{k+1})}} + 2\E{\sqN{\F(\z^{k+1}) - \F(\z^*)}}
		\\&=
		\E{\frac{2}{b^2}\mathbb{E}_{k+1/2}{\sum_{j\in S^{k+1/2}}\sqn{\left(\F_j(\z^{k+1}) - \F_j (\w^{k}) \right) - \left(\F(\z^{k+1}) - \F(\w^{k})\right)}}}
		\\&\quad+
		2L^2\E{\sqN{\z^{k+1} - \z^*}}
		\\&\leq
		\E{\frac{2}{b^2}\mathbb{E}_{k+1/2}{\sum_{j\in S^{k+1/2}}\sqn{\F_j(\z^{k+1}) - \F_j (\w^{k})}}} + 2L^2\E{\sqN{\z^{k+1} - \z^*}}
		\\&\leq 
		\frac{2\Lavg^2}{b} \E{\sqn{\z^{k+1} - \w^{k}}} + 2L^2\E{\sqN{\z^{k+1} - \z^*}}.
	\end{align*}
Here, we used Assumption \ref{as:Lipsh}.
\end{proof}
Before proving the main lemma of this section, let us introduce an auxiliary notation. Throughout the proof, we denote $\mW^{\dagger}: \text{range} \mW \to \text{range} \mW$ the inverse of the map $\mW: \text{range} \mW \to \text{range} \mW$. And we denote $\sqnw{\y} = \langle (\mW^{\dagger} \otimes \mI_d)y ; y \rangle$.

We define the following Lyapunov function:
	\begin{align}
	\label{eq:Lf_fixed}
		\Psi^k&= \bigg(\frac{1}{\eta} + \frac{3\mu}{2}\bigg) \sqn{\z^{k+1} - \z^*} + \frac{1}{\theta}\sqnw{\y^{k+1} - \y^*} 
	    \notag\\&\quad + 2\<\F(\z^k) - \F(\z^{k+1}) - (\y^k - \y^{k+1}), \z^{k+1} - \z^*>
	    +\frac{1}{2\theta}\sqn{\y^{k+1} - \y^k}  \\&\quad  +\frac{1}{8\eta}\sqn{\z^{k+1} - \z^k}
	    + \frac{\gamma + \frac{1}{2}\eta\mu}{p\eta}\sqn{\w^{k+1} - \z^*} + \frac{\gamma}{2\eta}\sqn{\w^k - \z^{k+1}} \notag.
	\end{align}
Here we also use
    \begin{equation}
    \label{eq:ystar_fixed}
		\y^* = \mP\F(\z^*)
    \end{equation}
with $\mP \in \R^{nd\times nd}$, an orthogonal projection matrix onto  the subspace $\cL^\perp$, given as
\begin{equation*}
	\mP = (\mI_M - \tfrac{1}{M}\ones_M\ones_M^\top) \otimes \mI_d,
\end{equation*}
where $\ones_M = (1,\ldots,1)^\top \in \R^M $.

\begin{lemma}\label{lem:fixed_key}
	Consider the problem \eqref{eq:VI_new} (or \eqref{eq:VI} + \eqref{eq:fs}) under Assumptions~\ref{as:Lipsh} and \ref{as:strmon} over a fixed connected graph $\mathcal{G}$ with a gossip matrix $\mW$. Let  $\{\z^k\}$ be the sequence generated by Algorithm~\ref{alg:vrvi} with parameters 
	\begin{align*}
		&\gamma \leq \frac{1}{8}, \quad \eta \leq \min\left\{\frac{\sqrt{\alpha\gamma b}}{\sqrt{8} \cdot \Lavg}, \frac{1}{16L}\right\}, \quad
		\beta \leq \min\left\{\frac{\mu}{4L^2};\frac{b\gamma}{4 \eta \Lavg^2}\right\}, \quad \theta \leq \min\left\{\frac{1}{2\beta};\frac{1}{16 \eta}\right\},
		\\
		&\quad \quad \quad \quad \quad \alpha = \max \left[\bigg(1 - \frac{\mu\eta}{4}\bigg);  \left(1 - \beta\theta\chi^{-1}\right) ; \left( 1 - \frac{p\eta\mu }{2\gamma + \eta\mu} \right)\right].
	\end{align*}
	Then, after $k$ iterations we get
\begin{align*}
	\E{\frac{1}{2\eta} \|\z^k - \z^* \|^2} \leq \max \left[\bigg(1 - \frac{\mu\eta}{4}\bigg);  \left(1 - \beta\theta\chi^{-1}\right) ; \left( 1 - \frac{p\eta\mu }{2\gamma + \eta\mu} \right)\right]^k \cdot \Psi^0.
\end{align*}
\end{lemma}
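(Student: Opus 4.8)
The plan is to prove a single one-step inequality $\Ek{\Psi^k}\le\rho\,\Psi^{k-1}$ with $\rho=\max\bigl[(1-\tfrac{\mu\eta}{4});(1-\beta\theta\chi^{-1});(1-\tfrac{p\eta\mu}{2\gamma+\eta\mu})\bigr]$, where $\Ek{\cdot}$ conditions on the state $\z^k,\y^k,\w^{k-1}$ entering iteration $k$, and then to unroll it. Each factor in $\rho$ is meant to come from a distinct source: $1-\tfrac{\mu\eta}{4}$ from $\mu$-strong monotonicity of $\F$ (the primal $\z$-distance), $1-\beta\theta\chi^{-1}$ from the spectral gap $\lambda^+_{\min}(\mW)=\chi^{-1}$ of the gossip matrix (the dual $\y$-error, measured in the $\mWp$-norm), and $1-\tfrac{p\eta\mu}{2\gamma+\eta\mu}$ from the Bernoulli refresh of the variance-reduction anchor $\w$ in Line~\ref{vrvi:line:w}. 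Since $\Psi^{k-1}$ dominates $\tfrac{1}{2\eta}\sqn{\z^k-\z^*}$, taking total expectations and telescoping then gives the stated geometric decay.

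\textbf{Primal block.} First I would write the first-order optimality of the proximal step in Line~\ref{vrvi:line:x}: for some $\g'\in\partial\g(\z^{k+1})$ one has $\z^{k+1}-\z^k-\gamma(\w^k-\z^k)+\eta\Delta^k+\eta\g'=0$. Pairing with $\z^{k+1}-\z^*$, invoking convexity of $\g$ together with the defining inequality \eqref{eq:VI_new} at $\z^*$ (to discard the $\g$-terms, leaving a $\F(\z^*)$ inner product), and substituting $\Delta^k=\Ek{\delta^k}-(\y^k+\alpha(\y^k-\y^{k-1}))+(\delta^k-\Ek{\delta^k})$ with $\Ek{\delta^k}=\F(\z^k)+\alpha(\F(\z^k)-\F(\z^{k-1}))$ from \eqref{vrvi:eq:3}, isolates the term $\la\F(\z^k)-\F(\z^*),\z^{k+1}-\z^*\ra$. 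Then Assumption~\ref{as:strmon} extracts the contraction $-\mu\sqn{\z^{k+1}-\z^*}$, Assumption~\ref{as:Lipsh} controls the $\F$-differences, and the three-point identity $\la a-b,c-a\ra=\tfrac12(\sqn{b-c}-\sqn{a-b}-\sqn{a-c})$ converts the remaining inner products into the squared-distance terms of $\Psi$. Crucially, the optimistic/momentum part $\alpha(\F(\z^k)-\F(\z^{k-1}))-\alpha(\y^k-\y^{k-1})$ is precisely what reproduces, at the next index, the indefinite cross term $2\la\F(\z^k)-\F(\z^{k+1})-(\y^k-\y^{k+1}),\z^{k+1}-\z^*\ra$ carried inside $\Psi^k$, so that this term is telescoped rather than bounded directly.

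\textbf{Dual block, anchor, and the obstacle.} For the $\y$-block I would expand $\sqnw{\y^{k+1}-\y^*}$ from Line~\ref{dvi:line:y}, using $\y^*=\mP\F(\z^*)$ and the fact that the iterates $\y^k-\y^*$ stay in $\text{range}\,\mW$, on which $\mWp$ is the genuine inverse of $\mW$; the contraction of the consensus error at rate $\beta\theta\chi^{-1}$ then follows from $\lambda^+_{\min}(\mW)=\chi^{-1}$. The stochastic discrepancies $\delta^k-\Ek{\delta^k}$ and $\Delta^{k+1/2}-\F(\z^*)$ are bounded by \eqref{vrvi:eq:1} and \eqref{vrvi:eq:2} of Lemma~\ref{var_lem_fix}, and the resulting $\tfrac{\Lavg^2}{b}$-terms in $\sqn{\z^k-\w^{k-1}}$ and $\sqn{\z^{k+1}-\w^k}$ are absorbed by the anchor term $\tfrac{\gamma}{2\eta}\sqn{\w^k-\z^{k+1}}$ once the stepsize restrictions $\eta\le\sqrt{\alpha\gamma b}/(\sqrt8\,\Lavg)$ and $\beta\le b\gamma/(4\eta\Lavg^2)$ are used. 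Finally, $\w^{k+1}$ in Line~\ref{vrvi:line:w} gives $\Ek{\sqn{\w^{k+1}-\z^*}}=p\sqn{\z^k-\z^*}+(1-p)\sqn{\w^k-\z^*}$, which yields the anchor contraction. The main obstacle is the simultaneous absorption in the combining step: $\eta,\theta,\beta,\gamma$ and, above all, $\alpha$ (taken equal to the maximum of the three individual rates) must be chosen so that every error term --- variance, Lipschitz coupling, and the optimistic discrepancy --- is dominated by one of the three negative terms with a common factor $\rho$ across all seven summands of $\Psi$. The sign-indefinite cross term is the delicate piece, since it is only controllable after pairing across consecutive iterations; once the one-step bound is assembled, iterating and retaining $\tfrac{1}{2\eta}\sqn{\z^k-\z^*}\le\Psi^{k-1}$ finishes the proof.
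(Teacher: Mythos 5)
Your proposal is correct and takes essentially the same route as the paper's own proof: the identical Lyapunov function with its sign-indefinite cross term handled by telescoping across consecutive iterates, the same three contraction sources (strong monotonicity for the $\z$-block, the spectral gap of $\mW$ measured in the $\mWp$-norm for the $\y$-block, and the Bernoulli refresh for the anchor $\w$), the same use of Lemma~\ref{var_lem_fix} together with the stepsize restrictions to absorb the $\Lavg^2/b$ variance terms into the anchor term, and the same closing bound $\Psi \geq \frac{1}{2\eta}\sqn{\z - \z^*}$. There are no genuine differences or gaps to report.
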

\begin{proof} 
\textbf{Part 1.} We start the proof from considering update of $\y^{k+1}$ in Algorithm~\ref{alg:vrvi}.
	\begin{align*}
	\frac{1}{\theta}\sqnw{\y^{k+1} - \y^*}
	&=
	\frac{1}{\theta}\sqnw{\y^{k} - \y^*}
	-
	\frac{1}{\theta}\sqnw{\y^{k+1} - \y^k}
	\\&-
	2\<(\mW \otimes \mI_d )(\z^{k+1} - \beta(\Delta^{k+1/2} - \y^k)),(\mWp\otimes \mI_d )(\y^{k+1} - \y^*)>.
	\end{align*}
	Next, we use the fact that $(\mW \otimes \mI_d )(\mWp \otimes \mI_d )=\mP$ and obtain
	\begin{align*}
	\frac{1}{\theta}\sqnw{\y^{k+1} - \y^*}
	&=
	\frac{1}{\theta}\sqnw{\y^{k} - \y^*}
	-
	\frac{1}{\theta}\sqnw{\y^{k+1} - \y^k}
	\\& \quad
	-
	2\<\mP(\z^{k+1} - \beta(\Delta^{k+1/2} - \y^k)),\y^{k+1} - \y^*>.
	\end{align*}
One can observe, that $\z^* \in \cL$ and then $\mP\z^* = 0$. Additionally, using update $\y^{k+1}$, we can note that $\y^k \in \cL^\perp$ for all $k = 0,1,2,\ldots$. Hence,
\begin{align*}
	\frac{1}{\theta}\sqnw{\y^{k+1} - \y^*}
	&=
	\frac{1}{\theta}\sqnw{\y^{k} - \y^*}
	-
	\frac{1}{\theta}\sqnw{\y^{k+1} - \y^k}
	\\&\quad
	- 2\<\mP(\z^{k+1} - \z^*),\y^{k+1} - \y^*>
	+ 2\beta\<\mP(\Delta^{k+1/2} - \F(\z^*)),\y^{k+1} - \y^*>
	\\&\quad
	- 2\beta\<\mP\y^k - \y^*,\y^{k+1} - \y^*> \\
	&=
	\frac{1}{\theta}\sqnw{\y^{k} - \y^*}
	-
	\frac{1}{\theta}\sqnw{\y^{k+1} - \y^k}
	\\&\quad
	-
	2\<\mP(\z^{k+1} - \z^*),\y^{k+1} - \y^*>
	+
	2\beta\<\mP(\Delta^{k+1/2} - \F(\z^*)),\y^{k+1} - \y^*>
	\\&\quad
	-
	2\beta\<\mP(\y^k - \y^*),\y^{k+1} - \y^*>
	\\
	&=
	\frac{1}{\theta}\sqnw{\y^{k} - \y^*}
	-
	\frac{1}{\theta}\sqnw{\y^{k+1} - \y^k}
	\\&\quad
	-
	2\<\z^{k+1} - \z^*,\y^{k+1} - \y^*>
	+
	2\beta\<\Delta^{k+1/2} - \F(\z^*),\y^{k+1} - \y^*>
	\\&\quad
	-
	2\beta\<\y^k - \y^*,\y^{k+1} - \y^*>
\end{align*}	
By the simple fact $\| a + b\|^2 = \|a\|^2 + 2\langle a; b \rangle + \|b\|^2$, we get	
\begin{align*}	
	\frac{1}{\theta}\sqnw{\y^{k+1} - \y^*}&\leq
	\frac{1}{\theta}\sqnw{\y^{k} - \y^*}
	-
	\frac{1}{\theta}\sqnw{\y^{k+1} - \y^k}
	\\&\quad
	-
	2\<\z^{k+1} - \z^*,\y^{k+1} - \y^*>
	+
	\beta\sqn{\Delta^{k+1/2} - \F(\z^*)}
	+
	\beta\sqn{\y^{k+1}-\y^*}
	\\&\quad -
	\beta\sqn{\y^k - \y^*}
	-
	\beta\sqn{\y^{k+1} - \y^*}
	+
	\beta\sqn{\y^{k+1} - \y^k}
	\\&=
	\frac{1}{\theta}\sqnw{\y^{k} - \y^*}
	-
	\frac{1}{\theta}\sqnw{\y^{k+1} - \y^k}
	\\&\quad
	-
	2\<\z^{k+1} - \z^*,\y^{k+1} - \y^*>
	+
	\beta\sqn{\Delta^{k+1/2} - \F(\z^*)}
	-
	\beta\sqn{\y^k - \y^*}
	\\&\quad
	+
	\beta\sqn{\y^{k+1} - \y^k}.
\end{align*}
Using the fact, that $\lambda^+_{\min}(\mW) = \chi^{-1} $ and $\lambda_{\max}(\mW) = 1$, we can note 
	$-\lambda^+_{\min}(\mWp) \leq -1$ and $\chi^{-1}\lambda_{\max}(\mWp) \leq 1$ and then get
	\begin{align*}
	\frac{1}{\theta}\sqnw{\y^{k+1} - \y^*}
	&\leq
	\frac{1}{\theta}\sqnw{\y^{k} - \y^*}
	-
	\frac{1}{\theta}\sqn{\y^{k+1} - \y^k}
	-
	2\<\z^{k+1} - \z^*,\y^{k+1} - \y^*>
	\\&\quad+
	\beta\sqn{\Delta^{k+1/2} - \F(\z^*)}
	-
	\beta\chi^{-1}\sqnw{\y^k - \y^*}
	+
	\beta\sqn{\y^{k+1} - \y^k}
	\\&=
	\left(\frac{1}{\theta} - \beta\chi^{-1}\right)\sqnw{\y^{k} - \y^*}
	-
	\left(\frac{1}{\theta}-\beta\right)\sqn{\y^{k+1} - \y^k}
	\\&\quad-
	2\<\z^{k+1} - \z^*, \y^{k+1} - \y^*>
	+
	\beta\sqn{\Delta^{k+1/2} - \F(\z^*)}.
	\end{align*}
Then we take a full expectation and use \eqref{vrvi:eq:2}:
	\begin{equation}
	\begin{split}\label{dvi:eq:1}
	\E{\frac{1}{\theta}\sqnw{\y^{k+1} - \y^*}}
	&\leq
	\left(\frac{1}{\theta} - \beta\chi^{-1}\right)\E{\sqnw{\y^{k} - \y^*}}
	-
	\left(\frac{1}{\theta}-\beta\right)\E{\sqn{\y^{k+1} - \y^k}}
	\\&\quad-
	2\E{\<\z^{k+1} - \z^*,\y^{k+1} - \y^*>}
	+
	2\beta L^2\E{\sqn{\z^{k+1} - \z^*}} 
	\\&\quad+ 
	\frac{2 \beta \Lavg^2}{b}\E{\sqn{\z^{k+1} - \w^{k}}}.
	\end{split}
	\end{equation}
Here we stop and put aside \eqref{dvi:eq:1}. We will not return to it later. 

\textbf{Part 2.} Now we work with
\begin{align*}
		\frac{1}{\eta}\sqn{\z^{k+1} - \z^*}
		=&
		\frac{1}{\eta}\sqn{\z^k - \z^*} + \frac{2}{\eta}\<\z^{k+1} - \z^k,\z^{k+1} - \z^*> - \frac{1}{\eta}\sqn{\z^{k+1} - \z^k}
		\\=&
		\frac{1}{\eta}\sqn{\z^k - \z^*} + \frac{2\gamma}{\eta} \<\w^k - \z^k, \z^{k+1} - \z^*> - 2\<\Delta^k - (\F(\z^*) - \y^*),\z^{k+1} - \z^*> 
		\\&- \frac{1}{\eta}\sqn{\z^{k+1} - \z^k} 
		\\& -
		2\<\tfrac{1}{\eta}(\z^k + \gamma (\w^k - \z^k) - \eta \Delta^k - \z^{k+1}) + (\F(\z^*) - \y^*),\z^{k+1} - \z^*>.
\end{align*}
Optimality condition for \eqref{eq:VI}+\eqref{eq:fs} it follows, that
	\begin{equation*}
		-F(z^*) \in \partial g(z^*).
	\end{equation*}
Let us define $\Delta^* \in (\R^d)^M$ as
	\begin{equation*}
	\Delta^*= \tfrac{1}{M}[F(z^*), \ldots F(z^*)]^T.
	\end{equation*}
It is to note that $-\Delta^* \in M \partial\g(\z^*)$. On the other hand 
\begin{equation*}
	\Delta^*= (\ones_M\ones_M^\top \otimes \mI_d) \F(\z^*) = M (\F(\z^*) - \mP \F(\z^*)) = M (\F(\z^*) - \y^*))
\end{equation*}
This means that $-(\F(\z^*) - \y^*) \in \partial\g(\z^*)$. From update for $\z^{k+1}$ of Algorithm~\ref{alg:vrvi} it follows, that
	\begin{equation*}
		\z^k + \gamma(\w^k - \z^k) - \eta \Delta^k - \z^{k+1} \in \partial (\eta \g)(\z^{k+1}).
	\end{equation*}
Hence, from  monotonicity of $\partial \g(\cdot)$ we get
	\begin{align*}
	\frac{1}{\eta}\E{\sqn{\z^{k+1} - \z^*}}
	&\leq
	\frac{1}{\eta} \E{\sqn{\z^k - \z^*}} + \frac{2\gamma}{\eta} \E{\<\w^k - \z^k, \z^{k+1} - \z^*>}
	\\&\quad -2\E{\<\Delta^k - (\F(\z^*) - \y^*), \z^{k+1} - \z^*>}  - \frac{1}{\eta}\E{\sqn{\z^{k+1} - \z^k}}
	\\&= 
	\frac{1}{\eta} \E{\sqn{\z^k - \z^*}} + \frac{2\gamma}{\eta} \E{\<\w^k - \z^*, \z^{k+1} - \z^*>}
	\\&\quad - \frac{2\gamma}{\eta}\E{\<\z^k - \z^*, \z^{k+1} - \z^*>}
	-2\E{\<\Delta^k - (\F(\z^*) - \y^*), \z^{k+1} - \z^*>} 
	\\&\quad - \frac{1}{\eta}\E{\sqn{\z^{k+1} - \z^k}}
	\\&=
	\frac{1}{\eta} \E{\sqn{\z^k - \z^*}} + \frac{\gamma}{\eta} \E{\sqn{\w^k - \z^*} + \sqn{\z^{k+1} - \z^*} - \sqn{\z^{k+1} - \w^k}} 
	\\&\quad- \frac{\gamma}{\eta}\E{\sqn{\z^{k+1} - \z^*} + \sqn{\z^k - \z^*} - \sqn{\z^{k+1} - \z^k}}
	\\&\quad-2\E{\<\Delta^k - (\F(\z^*) - \y^*), \z^{k+1} - \z^*>}  - \frac{1}{\eta}\E{\sqn{\z^{k+1} - \z^k}}
	\\&=
	\frac{1}{\eta} \E{\sqn{\z^k - \z^*}} + \frac{\gamma}{\eta}\E{\sqn{\w^k - \z^*}} - \frac{\gamma}{\eta}\E{\sqn{\z^k - \z^*}} 
	\\&\quad
	- \frac{\gamma}{\eta}\E{\sqn{\w^k - \z^{k+1}}}
	-
	2\E{\<\Delta^k - (\F(\z^*) - \y^*), \z^{k+1} - \z^*>}
	\\&\quad
	 - \frac{1-\gamma}{\eta}\E{\sqn{\z^{k+1} - \z^k}}.
	\end{align*}
In previous we also use the simple fact $\| a + b\|^2 = \|a\|^2 + 2\langle a; b \rangle + \|b\|^2$ twice. Small rearrangement gives
	\begin{align*}
	\frac{1}{\eta} \E{\sqn{\z^{k+1} - \z^*}}
	&\leq
	\frac{1}{\eta}\E{\sqn{\z^k - \z^*}} + \frac{\gamma}{\eta}\E{\sqn{\w^k - \z^*}} - \frac{\gamma}{\eta}\E{\sqn{\z^k - \z^*}} 
	\\&\quad
	- \frac{\gamma}{\eta}\E{\sqn{\w^k - \z^{k+1}}}  - \frac{1-\gamma}{\eta}\E{\sqn{\z^{k+1} - \z^k}} 
	\\&\quad
	- 2\E{\<\Ek{\delta^k} - (\y^k + \alpha(\y^k - \y^{k-1})) - (\F(\z^*) - \y^*), \z^{k+1} - \z^*>}
	\\&\quad
	- 2\E{\<\delta^k - \Ek{\delta^k}, \z^{k+1} - \z^k>} - 2\E{\<\delta^k - \Ek{\delta^k}, \z^{k} - \z^*>}.
	\end{align*}
Using the tower property of expectation we can obtain the following:
	\begin{align*}
		\E{\<\Ek{\delta^k} - \delta^k, \z^k - \z^*>} &= \E{\Ek{\<\Ek{\delta^k} - \delta^k, \z^k - \z^*>}}
		\\&=
		\E{{\<\Ek{\Ek{\delta^k} - \delta^k}, \z^k - \z^*>}}
		\\&=
		\E{{\<\Ek{\delta^k} - \Ek{\delta^k}, \z^k - \z^*>}} = 0.
	\end{align*}
Hence, with \eqref{vrvi:eq:3}
	\begin{align*}
	\frac{1}{\eta} \E{\sqn{\z^{k+1} - \z^*}}
	&\leq
	\frac{1}{\eta}\E{\sqn{\z^k - \z^*}} + \frac{\gamma}{\eta}\E{\sqn{\z^k - \z*}} - \frac{\gamma}{\eta}\E{\sqn{\z^k - \z^*}}  
	\\&\quad
	- \frac{\gamma}{\eta}\E{\sqn{\w^k - \z^{k+1}}} - \frac{1-\gamma}{\eta}\E{\sqn{\z^{k+1} - \z^k}} 
	\\&\quad - 2\E{\<\F(\z^k) + \alpha(\F(\z^k) - \F(\z^{k-1})) - \F(\z^*) - (\y^k + \alpha(\y^k - \y^{k-1}) - \y^*), \z^{k+1} - \z^*>}
	\\&\quad+ 2\E{\<\Ek{\delta^k} - \delta^k , \z^{k+1} - \z^k>}.
	\end{align*}
Using the Young's inequality we get
	\begin{align*}
	\frac{1}{\eta} \E{\sqn{\z^{k+1} - \z^*}}
	&\leq
	\frac{1}{\eta}\E{\sqn{\z^k - \z^*}} + \frac{\gamma}{\eta}\E{\sqn{\w^k - \z^*}} - \frac{\gamma}{\eta}\E{\sqn{\z^k - \z^*}} 
	\\& \quad
	- \frac{\gamma}{\eta}\E{\sqn{\w^k - \z^{k+1}}} - \frac{1-\gamma}{\eta}\E{\sqn{\z^{k+1} - \z^k}} 
	\\& \quad - 2\E{\<\F(\z^k) + \alpha(\F(\z^k) - \F(\z^{k-1})) - \F(\z^*) - (\y^k + \alpha(\y^k - \y^{k-1}) - \y^*), \z^{k+1} - \z^*>}
	\\& \quad+ 2\eta\E{\sqn{\Ek{\delta^k} - \delta^k}}  + \frac{1}{2\eta}\E{\sqn{\z^{k+1} - \z^k}}
    \\&= \frac{1}{\eta}\E{\sqn{\z^k - \z^*}} + \frac{\gamma}{\eta}\E{\sqn{\w^k - \z^*}} - \frac{\gamma}{\eta}\E{\sqn{\z^k - \z^*}}  
	\\& \quad
	- \frac{\gamma}{\eta}\E{\sqn{\w^k - \z^{k+1}}} - \frac{1/2-\gamma}{\eta}\E{\sqn{\z^{k+1} - \z^k}}  + 2\eta\E{\sqn{\Ek{\delta^k} - \delta^k}}
	\\&\quad - 2\E{\<\F(\z^k) + \alpha(\F(\z^k) - \F(\z^{k-1})) - \F(\z^*) - (\y^k + \alpha(\y^k - \y^{k-1}) - \y^*), \z^{k+1} - \z^*>}.
	\end{align*}
By \eqref{vrvi:eq:1} we get
	\begin{align*}
	\frac{1}{\eta} \E{\sqn{\z^{k+1} - \z^*}}
	&\leq
	\frac{1}{\eta}\E{\sqn{\z^k - \z^*}} + \frac{\gamma}{\eta}\E{\sqn{\w^k - \z^*}} - \frac{\gamma}{\eta}\E{\sqn{\z^k - \z^*}}  
	\\&\quad
	- \frac{\gamma}{\eta}\E{\sqn{\w^k - \z^{k+1}}}
	- \frac{1/2-\gamma}{\eta}\E{\sqn{\z^{k+1} - \z^k}}  
	\\&\quad + \frac{4\Lavg^2\eta}{b}\E{\sqn{\z^{k} - \w^{k-1}}} + \frac{4\alpha^2\Lavg^2\eta}{b} \E{\sqn{\z^k- \z^{k-1}}}
	\\&\quad - 2\E{\<\F(\z^k) + \alpha(\F(\z^k) - \F(\z^{k-1})) - \F(\z^*) - (\y^k + \alpha(\y^k - \y^{k-1}) - \y^*), \z^{k+1} - \z^*>}
	\\&=\frac{1}{\eta}\E{\sqn{\z^k - \z^*}} - 2\E{\<\F(\z^{k+1}) - \F(\z^*), \z^{k+1} - \z^*>} + \frac{\gamma}{\eta}\E{\sqn{\w^k - \z^*}} 
	\\& \quad
	- \frac{\gamma}{\eta}\E{\sqn{\z^k - \z^*}} - \frac{\gamma}{\eta}\E{\sqn{\w^k - \z^{k+1}}}  - \frac{1/2-\gamma}{\eta}\E{\sqn{\z^{k+1} - \z^k}}  
	\\& \quad  + \frac{4\Lavg^2\eta}{b}\E{\sqn{\z^{k} - \w^{k-1}}} + \frac{4\alpha^2\Lavg^2\eta}{b} \E{\sqn{\z^k- \z^{k-1}}} 
	\\& \quad - 2\E{\<\F(\z^k) - \F(\z^{k+1}) + \alpha(\F(\z^k) - \F(\z^{k-1})) - (\y^k + \alpha(\y^k - \y^{k-1}) - \y^*), \z^{k+1} - \z^*>}.
	\end{align*}
Assumption \ref{as:strmon} about $\mu$-strong monotonicity of $F$ gives
	\begin{align*}
	\frac{1}{\eta} &\E{\sqn{\z^{k+1} - \z^*}} 
	\\&\leq
	\frac{1}{\eta}\E{\sqn{\z^k - \z^*}} - 2\mu\E{\sqn{\z^{k+1} - \z^*}} + \frac{\gamma}{\eta}\E{\sqn{\w^k - \z^*}} - \frac{\gamma}{\eta}\E{\sqn{\z^k - \z^*}} 
	\\&\quad
	- \frac{\gamma}{\eta}\E{\sqn{\w^k - \z^{k+1}}}  - \frac{1/2-\gamma}{\eta}\E{\sqn{\z^{k+1} - \z^k}}
	\\&\quad
	+ \frac{4\Lavg^2\eta}{b}\E{\sqn{\z^{k} - \w^{k-1}}} + \frac{4\alpha^2\Lavg^2\eta}{b} \E{\sqn{\z^k- \z^{k-1}}}
	\\&\quad - 2\E{\<\F(\z^k) - \F(\z^{k+1}) + \alpha(\F(\z^k) - \F(\z^{k-1})) - (\y^k + \alpha(\y^k - \y^{k-1}) - \y^*), \z^{k+1} - \z^*>}
	\\&=\frac{1}{\eta}\E{\sqn{\z^k - \z^*}} - 2\mu\E{\sqn{\z^{k+1} - \z^*}} + \frac{\gamma}{\eta}\E{\sqn{\w^k - \z^*}} - \frac{\gamma}{\eta}\E{\sqn{\z^k - \z^*}} 
	\\& \quad
	- \frac{\gamma}{\eta}\E{\sqn{\w^k - \z^{k+1}}}  - \frac{1/2-\gamma}{\eta}\E{\sqn{\z^{k+1} - \z^k}} 
	\\&\quad
	+ \frac{4\Lavg^2\eta}{b}\E{\sqn{\z^{k} - \w^{k-1}}} + \frac{4\alpha^2\Lavg^2\eta}{b} \E{\sqn{\z^k- \z^{k-1}}}
	\\&\quad -2\E{\<\F(\z^k) - \F(\z^{k+1}) - (\y^k - \y^{k+1}), \z^{k+1} - \z^*>} + 2\E{\<\y^{k+1} - \y^*, \z^{k+1} - \z^*>}
	\\&\quad -2\alpha\E{\<\F(\z^k) - \F(\z^{k-1}) - (\y^{k} - \y^{k-1}), \z^{k} - \z^*>}
	\\&\quad -2\alpha\E{\<\F(\z^k) - \F(\z^{k-1}) - (\y^{k} - \y^{k-1}), \z^{k+1} - \z^k>}.
	\end{align*}
\textbf{Part 3.} Now we are ready to consider \eqref{dvi:eq:1}. We sum up previous expression and \eqref{dvi:eq:1}. After small rearrangement, we have
	\begin{align*}
	\frac{1}{\eta} &\E{\sqn{\z^{k+1} - \z^*}} + \frac{1}{\theta}\sqnw{\y^{k+1} - \y^*} + 2\E{\<\F(\z^k) - \F(\z^{k+1}) - (\y^k - \y^{k+1}), \z^{k+1} - \z^*>} 
	\\&\leq
	\frac{1}{\eta}\E{\sqn{\z^k - \z^*}} - (2\mu - 2\beta L^2)\E{\sqn{\z^{k+1} - \z^*}} +\left(1 - \beta\theta\chi^{-1}\right)\cdot \frac{1}{\theta}\sqnw{\y^{k} - \y^*} 
	\\&\quad-2\alpha\E{\<\F(\z^k) - \F(\z^{k-1}) - (\y^{k} - \y^{k-1}), \z^{k} - \z^*>} + \frac{\gamma}{\eta}\E{\sqn{\w^k - \z^*}} - \frac{\gamma}{\eta}\E{\sqn{\z^k - \z^*}} 
	\\&\quad
	- \left(\frac{\gamma}{\eta} - \frac{2 \beta \Lavg^2}{b}\right)\E{\sqn{\w^k - \z^{k+1}}}  - \frac{1/2-\gamma}{\eta}\E{\sqn{\z^{k+1} - \z^k}}  + \frac{4\Lavg^2\eta}{b}\E{\sqn{\z^{k} - \w^{k-1}}} 
	\\&\quad + \frac{4\alpha^2\Lavg^2\eta}{b} \E{\sqn{\z^k- \z^{k-1}}} -2\alpha\E{\<\F(\z^k) - \F(\z^{k-1}), \z^{k+1} - \z^k>}
	\\&\quad
	+2\alpha\E{\<\y^{k} - \y^{k-1}, \z^{k+1} - \z^k>} - \left(\frac{1}{\theta}-\beta\right)\E{\sqn{\y^{k+1} - \y^k}}.
	\end{align*}
With $\theta \leq \frac{1}{2\beta}$ (or $\beta \leq \tfrac{1}{2\theta}$) and Young's inequality we get
	\begin{align*}
	\frac{1}{\eta} &\E{\sqn{\z^{k+1} - \z^*}} + \frac{1}{\theta}\sqnw{\y^{k+1} - \y^*} + 2\E{\<\F(\z^k) - \F(\z^{k+1}) - (\y^k - \y^{k+1}), \z^{k+1} - \z^*>} 
	\\&\leq
	\frac{1}{\eta}\E{\sqn{\z^k - \z^*}} - (2\mu - 2\beta L^2)\E{\sqn{\z^{k+1} - \z^*}} +\left(1 - \beta\theta\chi^{-1}\right)\cdot \frac{1}{\theta}\sqnw{\y^{k} - \y^*} 
	\\&\quad-2\alpha\E{\<\F(\z^k) - \F(\z^{k-1}) - (\y^{k} - \y^{k-1}), \z^{k} - \z^*>} + \frac{\gamma}{\eta}\E{\sqn{\w^k - \z^*}} - \frac{\gamma}{\eta}\E{\sqn{\z^k - \z^*}} 
	\\&\quad
	- \left(\frac{\gamma}{\eta} - \frac{2 \beta \Lavg^2}{b}\right)\E{\sqn{\w^k - \z^{k+1}}}  - \frac{1/2-\gamma}{\eta}\E{\sqn{\z^{k+1} - \z^k}}  + \frac{4\Lavg^2\eta}{b}\E{\sqn{\z^{k} - \w^{k-1}}} 
	\\&\quad + \frac{4\alpha^2\Lavg^2\eta}{b} \E{\sqn{\z^k- \z^{k-1}}} +\frac{1}{8\eta}\E{\sqn{\z^{k+1} - \z^k}} + 8\eta\alpha^2\E{\sqn{\F(\z^k) - \F(\z^{k-1})}} 
	\\&\quad+ 2\theta\alpha\E{\sqn{\z^{k+1} - \z^k}} +\frac{\alpha}{2\theta}\E{\sqn{\y^{k} - \y^{k-1}}} -\frac{1}{2\theta}\E{\sqn{\y^{k+1} - \y^k}}.
	\end{align*}
By choosing $\theta \leq \frac{1}{16\eta}$ and $\alpha \leq 1$
    \begin{align*}
	\frac{1}{\eta} &\E{\sqn{\z^{k+1} - \z^*}} + \frac{1}{\theta}\sqnw{\y^{k+1} - \y^*} + 2\E{\<\F(\z^k) - \F(\z^{k+1}) - (\y^k - \y^{k+1}), \z^{k+1} - \z^*>} 
	\\&\leq
	\frac{1}{\eta}\E{\sqn{\z^k - \z^*}} - (2\mu - 2\beta L^2)\E{\sqn{\z^{k+1} - \z^*}} +\left(1 - \beta\theta\chi^{-1}\right)\cdot \frac{1}{\theta}\sqnw{\y^{k} - \y^*} 
	\\&\quad-2\alpha\E{\<\F(\z^k) - \F(\z^{k-1}) - (\y^{k} - \y^{k-1}), \z^{k} - \z^*>} + \frac{\gamma}{\eta}\E{\sqn{\w^k - \z^*}} - \frac{\gamma}{\eta}\E{\sqn{\z^k - \z^*}} 
	\\&\quad
	- \left(\frac{\gamma}{\eta} - \frac{2 \beta \Lavg^2}{b}\right)\E{\sqn{\w^k - \z^{k+1}}}  - \frac{1/4-\gamma}{\eta}\E{\sqn{\z^{k+1} - \z^k}}  + \frac{4\Lavg^2\eta}{b}\E{\sqn{\z^{k} - \w^{k-1}}} 
	\\&\quad + \frac{4\alpha^2\Lavg^2\eta}{b} \E{\sqn{\z^k- \z^{k-1}}} + 8\eta\alpha^2\E{\sqn{\F(\z^k) - \F(\z^{k-1})}}+\frac{\alpha}{2\theta}\E{\sqn{\y^{k} - \y^{k-1}}} 
	\\&\quad -\frac{1}{2\theta}\E{\sqn{\y^{k+1} - \y^k}}.
	\end{align*}
With $L$-Lipshitzness of $\F$ (Assumption \ref{as:Lipsh}) and $\gamma \leq \frac{1}{8}$, we have
    \begin{align*}
	\frac{1}{\eta} &\E{\sqn{\z^{k+1} - \z^*}} + \frac{1}{\theta}\sqnw{\y^{k+1} - \y^*} + 2\E{\<\F(\z^k) - \F(\z^{k+1}) - (\y^k - \y^{k+1}), \z^{k+1} - \z^*>} 
	\\&\leq
	\frac{1}{\eta}\E{\sqn{\z^k - \z^*}} - (2\mu - 2\beta L^2)\E{\sqn{\z^{k+1} - \z^*}} +\left(1 - \beta\theta\chi^{-1}\right)\cdot \frac{1}{\theta}\sqnw{\y^{k} - \y^*} 
	\\&\quad-2\alpha\E{\<\F(\z^k) - \F(\z^{k-1}) - (\y^{k} - \y^{k-1}), \z^{k} - \z^*>} + \frac{\gamma}{\eta}\E{\sqn{\w^k - \z^*}} - \frac{\gamma}{\eta}\E{\sqn{\z^k - \z^*}} 
	\\&\quad
	- \left(\frac{\gamma}{\eta} - \frac{2 \beta \Lavg^2}{b}\right)\E{\sqn{\w^k - \z^{k+1}}}  - \frac{1}{8\eta}\E{\sqn{\z^{k+1} - \z^k}}  + \frac{4\Lavg^2\eta}{b}\E{\sqn{\z^{k} - \w^{k-1}}} 
	\\&\quad + \left(\frac{32\alpha^2\Lavg^2\eta^2}{b} 
	 + 64\eta^2\alpha^2L^2\right)  \cdot \frac{1}{8\eta} \E{\sqn{\z^k- \z^{k-1}}} +\frac{\alpha}{2\theta}\E{\sqn{\y^{k} - \y^{k-1}}}
	 \\&\quad-\frac{1}{2\theta}\E{\sqn{\y^{k+1} - \y^k}}.
	\end{align*}
Small rearrangement gives
	\begin{align*}
	\frac{1}{\eta} &\E{\sqn{\z^{k+1} - \z^*}} + \frac{1}{\theta}\sqnw{\y^{k+1} - \y^*} + 2\E{\<\F(\z^k) - \F(\z^{k+1}) - (\y^k - \y^{k+1}), \z^{k+1} - \z^*>} 
	\\&\quad + \frac{1}{2\theta}\E{\sqn{\y^{k+1} - \y^k}} +\frac{1}{8\eta}\E{\sqn{\z^{k+1} - \z^k}}
	\\&\leq
	\frac{1}{\eta}\E{\sqn{\z^k - \z^*}} - (2\mu - 2\beta L^2)\E{\sqn{\z^{k+1} - \z^*}} +\left(1 - \beta\theta\chi^{-1}\right)\cdot \frac{1}{\theta}\sqnw{\y^{k} - \y^*} 
	\\&\quad-2\alpha\E{\<\F(\z^k) - \F(\z^{k-1}) - (\y^{k} - \y^{k-1}), \z^{k} - \z^*>} + \frac{\gamma}{\eta}\E{\sqn{\w^k - \z^*}} - \frac{\gamma}{\eta}\E{\sqn{\z^k - \z^*}} 
	\\&\quad
	- \left(\frac{\gamma}{\eta} - \frac{2 \beta \Lavg^2}{b}\right)\E{\sqn{\w^k - \z^{k+1}}}  + \frac{4\Lavg^2\eta}{b}\E{\sqn{\z^{k} - \w^{k-1}}} 
	\\&\quad + \left(\frac{32\alpha^2\Lavg^2\eta^2}{b} 
	 + 64\eta^2\alpha^2L^2\right)  \cdot \frac{1}{8\eta} \E{\sqn{\z^k- \z^{k-1}}} +\frac{\alpha}{2\theta}\E{\sqn{\y^{k} - \y^{k-1}}}.
	\end{align*}
With our choice of $\eta \leq \min\left\{\frac{\sqrt{\alpha\gamma b}}{\sqrt{8} \cdot \Lavg}, \frac{1}{16L}\right\}$ and $\alpha < 1$, we get
	\begin{align*}
	\frac{1}{\eta} &\E{\sqn{\z^{k+1} - \z^*}} + \frac{1}{\theta}\sqnw{\y^{k+1} - \y^*} + 2\E{\<\F(\z^k) - \F(\z^{k+1}) - (\y^k - \y^{k+1}), \z^{k+1} - \z^*>}
	\\&\quad +\frac{1}{2\theta}\sqn{\y^{k+1} - \y^k} +\frac{1}{8\eta}\E{\sqn{\z^{k+1} - \z^k}}
	\\&\leq
	\frac{1}{\eta}\E{\sqn{\z^k - \z^*}} - (2\mu - 2\beta L^2)\E{\sqn{\z^{k+1} - \z^*}} +\left(1 - \beta\theta\chi^{-1}\right)\cdot \frac{1}{\theta}\sqnw{\y^{k} - \y^*} 
	\\&\quad-\alpha \cdot 2\E{\<\F(\z^k) - \F(\z^{k-1}) - (\y^{k} - \y^{k-1}), \z^{k} - \z^*>} +\alpha \cdot\frac{1}{2\theta}\E{\sqn{\y^{k} - \y^{k-1}}}
	\\&\quad
	+ \alpha \cdot \frac{1}{8\eta}\E{\sqn{\z^k - \z^{k-1}}}
	\\&\quad
	+ \frac{\gamma}{\eta}\E{\sqn{\w^k - \z^*}} - \frac{\gamma}{\eta}\E{\sqn{\z^k - \z^*}} 
	\\&\quad
	- \left(\frac{\gamma}{\eta} - \frac{2 \beta \Lavg^2}{b}\right)\E{\sqn{\w^k - \z^{k+1}}}   + \frac{4\Lavg^2\eta}{b}\E{\sqn{\z^{k} - \w^{k-1}}} .
	\end{align*}
Now, we add $\tfrac{\mu}{2} \E{\sqn{\z^{k+1} - \z^*}} + \tfrac{\gamma + \tfrac{1}{2}\eta\mu}{p\eta}\E{\sqn{\w^{k+1} - \z^*}}$ to both sides and use update for $w^{k+1}$  of Algorithm~\ref{alg:vrvi} 
	\begin{align*}
	\frac{\gamma + \tfrac{1}{2}\eta\mu}{p\eta}\E{\mathbb{E}_{w^{k+1}}{\sqn{\w^{k+1} - \z^*}}} = \frac{\gamma + \tfrac{1}{2}\eta\mu}{\eta}\E{\sqn{\z^{k} - \z^*}} + \frac{(\gamma + \tfrac{1}{2}\eta\mu)(1-p)}{\eta p}\E{\sqn{\w^{k} - \z^*}},
	\end{align*}
and get 
	\begin{align*}
	\bigg(\frac{1}{\eta} &+ \frac{\mu}{2}\bigg) \E{\sqn{\z^{k+1} - \z^*}} + \frac{1}{\theta}\sqnw{\y^{k+1} - \y^*}
	\\&\quad
	+ 2\E{\<\F(\z^k) - \F(\z^{k+1}) - (\y^k - \y^{k+1}), \z^{k+1} - \z^*>} + \frac{\gamma + \tfrac{1}{2}\eta\mu}{p\eta}\E{\sqn{\w^{k+1} - \z^*}}
	\\&\quad  +\frac{1}{2\theta}\sqn{\y^{k+1} - \y^k} +\frac{1}{8\eta}\E{\sqn{\z^{k+1} - \z^k}} + \left(\frac{\gamma}{\eta} - \frac{2 \beta \Lavg^2}{b}\right)\E{\sqn{\w^k - \z^{k+1}}}
	\\&\leq
	\left(\frac{1}{\eta} + \frac{\mu}{2}\right)\E{\sqn{\z^k - \z^*}} - \left(\frac{3}{2}\mu - 2\beta L^2\right)\E{\sqn{\z^{k+1} - \z^*}} 
	\\&\quad+\left(1 - \beta\theta\chi^{-1}\right)\cdot \frac{1}{\theta}\sqnw{\y^{k} - \y^*} -\alpha \cdot 2\E{\<\F(\z^k) - \F(\z^{k-1}) - (\y^{k} - \y^{k-1}), \z^{k} - \z^*>} 
	\\&\quad
	+\alpha \cdot\frac{1}{2\theta}\E{\sqn{\y^{k} - \y^{k-1}}}
	+ \alpha \cdot \frac{1}{8\eta}\E{\sqn{\z^k - \z^{k-1}}}
	\\&\quad
	+ \left(1-p +  \frac{p\gamma}{\gamma + \tfrac{1}{2}\eta\mu}\right) \frac{(\gamma + \tfrac{1}{2}\eta\mu)}{\eta p} \E{\sqn{\w^k - \z^*}}  + \frac{4\Lavg^2\eta}{b}\E{\sqn{\z^{k} - \w^{k-1}}} .
	\end{align*}
Note that $\eta \leq \min\left\{\frac{\sqrt{\alpha\gamma b}}{\sqrt{8} \cdot \Lavg}, \frac{1}{16L}\right\}$, $\beta \leq \min\left\{\frac{\mu}{4L^2};\frac{b\gamma}{4 \eta \Lavg^2}\right\}$, then we get that
$$
\left(\frac{\gamma}{\eta} - \frac{2 \beta \Lavg^2}{b}\right) \geq \frac{\gamma}{2\eta}; \quad -\left(\frac{3}{2}\mu - 2\beta L^2\right) \leq -\mu;\quad \frac{4\Lavg^2\eta}{b} \leq \alpha \cdot \frac{\gamma}{2\eta}.
$$
Hence, it holds
\begin{align*}
	\bigg(\frac{1}{\eta} &+ \frac{3\mu}{2}\bigg) \E{\sqn{\z^{k+1} - \z^*}} + \frac{1}{\theta}\sqnw{\y^{k+1} - \y^*} 
	\\&\quad + 2\E{\<\F(\z^k) - \F(\z^{k+1}) - (\y^k - \y^{k+1}), \z^{k+1} - \z^*>} + \frac{\gamma + \tfrac{1}{2}\eta\mu}{p\eta}\E{\sqn{\w^{k+1} - \z^*}} 
	\\&\quad
	+\frac{1}{2\theta}\sqn{\y^{k+1} - \y^k} +\frac{1}{8\eta}\E{\sqn{\z^{k+1} - \z^k}} + \frac{\gamma}{2\eta}\E{\sqn{\w^k - \z^{k+1}}}
	\\&\leq
	\left(\frac{1}{\eta} + \frac{\mu}{2}\right)\E{\sqn{\z^k - \z^*}} +\left(1 - \beta\theta\chi^{-1}\right)\cdot \frac{1}{\theta}\sqnw{\y^{k} - \y^*} 
	\\&\quad-\alpha \cdot 2\E{\<\F(\z^k) - \F(\z^{k-1}) - (\y^{k} - \y^{k-1}), \z^{k} - \z^*>} 
	\\&\quad
	+\alpha \cdot\frac{1}{2\theta}\E{\sqn{\y^{k} - \y^{k-1}}}
	+ \alpha \cdot \frac{1}{8\eta}\E{\sqn{\z^k - \z^{k-1}}}
	\\&\quad
	+ \left( 1 - \frac{p\eta\mu }{2\gamma + \eta\mu} \right) \cdot \frac{\gamma + \tfrac{1}{2}\eta\mu}{p\eta} \E{\sqn{\w^k - \z^*}}  + \alpha \cdot \frac{\gamma}{2\eta}\E{\sqn{\x^{k} - \w^{k-1}}} .
	\end{align*}
With our $\eta$ we have $\eta \mu \leq 1$ and then
$$
\bigg(\frac{1}{\eta} + \frac{\mu}{2}\bigg) \leq \bigg(\frac{1}{\eta} + \frac{3\mu}{2}\bigg) \bigg(1 - \frac{\mu\eta}{4}\bigg). 
$$
It gives
\begin{align*}
	\bigg(\frac{1}{\eta} &+ \frac{3\mu}{2}\bigg) \E{\sqn{\z^{k+1} - \z^*}} + \frac{1}{\theta}\sqnw{\y^{k+1} - \y^*} 
	\\&\quad + 2\E{\<\F(\z^k) - \F(\z^{k+1}) - (\y^k - \y^{k+1}), \z^{k+1} - \z^*>} + \frac{\gamma + \tfrac{1}{2}\eta\mu}{p\eta}\E{\sqn{\w^{k+1} - \z^*}}
	\\&\quad
	+\frac{1}{2\theta}\sqn{\y^{k+1} - \y^k} +\frac{1}{8\eta}\E{\sqn{\z^{k+1} - \z^k}} + \frac{\gamma}{2\eta}\E{\sqn{\w^k - \z^{k+1}}}
	\\&\leq
	\bigg(1 - \frac{\mu\eta}{4}\bigg) \cdot \bigg(\frac{1}{\eta} + \frac{3\mu}{2}\bigg)\E{\sqn{\z^k - \z^*}} +\left(1 - \beta\theta\chi^{-1}\right)\cdot \frac{1}{\theta}\sqnw{\y^{k} - \y^*} 
	\\&\quad-\alpha \cdot 2\E{\<\F(\z^k) - \F(\z^{k-1}) - (\y^{k} - \y^{k-1}), \z^{k} - \z^*>} 
	\\&\quad
	+\alpha \cdot\frac{1}{2\theta}\E{\sqn{\y^{k} - \y^{k-1}}}
	+ \alpha \cdot \frac{1}{8\eta}\E{\sqn{\z^k - \z^{k-1}}}
	\\&\quad
	+ \left( 1 - \frac{p\eta\mu }{2\gamma + \eta\mu} \right) \cdot \frac{\gamma + \tfrac{1}{2}\eta\mu}{p\eta} \E{\sqn{\w^k - \z^*}}  + \alpha \cdot \frac{\gamma}{2\eta}\E{\sqn{x^{k} - w^{k-1}}} .
	\end{align*}
Definition \eqref{eq:Lf_fixed} of the Lyapunov function and the choice $\alpha = \max \left[\left(1 - \tfrac{\mu\eta}{4}\right);  \left(1 - \beta\theta\chi^{-1}\right) ; \left( 1 - \tfrac{p\eta\mu }{2\gamma + \eta\mu} \right)\right] $ move us to
	\begin{align*}
	\EE{\Psi^{k+1}} \leq \max \left[\bigg(1 - \frac{\mu\eta}{4}\bigg);  \left(1 - \beta\theta\chi^{-1}\right) ; \left( 1 - \frac{p\eta\mu }{2\gamma + \eta\mu} \right)\right]\EE{\Psi^k}.
	\end{align*}
	It remains to show, that $$\Psi^k \geq \frac{1}{2\eta}\sqn{\z^{k} - \z^*}.$$
	\begin{align*}
		\Psi^k&\geq \frac{1}{\eta}\sqn{\z^{k} - \z^*}
		+
		\frac{1}{8\eta}\sqn{\z^{k} - \z^{k-1}}
		+
		\frac{1}{2\theta}\sqn{\y^{k} - \y^{k-1}}
		\\&\quad
		+
		2\<\F(\z^{k}) -\F(\z^{k-1}) - (\y^{k} - \y^{k-1}),\z^*-\z^k>
		\\&\geq 
		\frac{1}{\eta}\sqn{\z^{k} - \z^*}
		+
		\frac{1}{8\eta}\sqn{\z^{k} - \z^{k-1}}
		+
		\frac{1}{2\theta}\sqn{\y^{k} - \y^{k-1}}
		\\&\quad 
		-
		\frac{1}{2\theta}\sqn{\y^k - \y^{k-1}}
		-
		2\theta\sqn{\z^k - \z^*}
		-
		\frac{1}{8\eta L^2}\sqn{\F(\z^k) - \F(\z^{k-1})}
		-
		8\eta L^2\sqn{\z^k - \z^*}.
	\end{align*}
	Using $L$-Lipschitzness of $\F$ we get 
	\begin{align*}
	\Psi^k&\geq \frac{1}{2\eta} \left( 2 - 8\eta^2 L^2 - 4 \theta \eta\right)\sqn{\z^{k} - \z^*}.
	\end{align*}
	With $\eta \leq \frac{1}{16 L}$ and $\theta \leq \frac{1}{16 \eta}$ we get $$\Psi^k \geq \frac{1}{2\eta}\sqn{\z^{k} - \z^*}.$$
\end{proof}

\begin{theorem}[Theorem \ref{th:ALg1_conv}]\label{th:app_fixed}
	Consider the problem \eqref{eq:VI_new} (or \eqref{eq:VI} + \eqref{eq:fs}) under Assumptions~\ref{as:Lipsh} and \ref{as:strmon} over a fixed connected graph $\mathcal{G}$ with a gossip matrix $\mW$. Let  $\{\z^k\}$ be the sequence generated by Algorithm~\ref{alg:vrvi} with parameters 
	\begin{align*}
		&\gamma = p \leq \frac{1}{8}, \quad \eta = \min\left\{\frac{\sqrt{\gamma b}}{4 \Lavg}, \frac{1}{16L \sqrt{\chi}}\right\}, \quad \beta = \min\left\{\frac{\mu}{4L^2};\frac{b\gamma}{4 \eta \Lavg^2}\right\}, \quad \theta = \min\left\{\frac{1}{2\beta};\frac{1}{16 \eta}\right\}, 
		\\
		&\hspace{2cm} \alpha = \max \left[\bigg(1 - \frac{\mu\eta}{4}\bigg);  \left(1 - \beta\theta\chi^{-1}\right) ; \left( 1 - \frac{p\eta\mu }{2\gamma + \eta\mu} \right)\right].
	\end{align*}
	Then, given $\varepsilon>0$, the number of iterations for 
$\EE[\|\z^k - \z^*\|^2] \leq \varepsilon$ is 
\begin{equation*}
    O\left( \left[\frac{1}{p} +\chi + \frac{1}{\sqrt{pb}}\frac{\Lavg}{\mu}+  \sqrt{\chi} \frac{L}{\mu}\right] \log \frac{1}{\varepsilon} \right).
\end{equation*}
\end{theorem}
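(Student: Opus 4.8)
The plan is to extract everything from Lemma~\ref{lem:fixed_key} and convert its geometric contraction into an iteration count. Lemma~\ref{lem:fixed_key} gives $\EE[\Psi^k] \le \rho^k\Psi^0$ with $\rho = \max\!\left[1 - \tfrac{\mu\eta}{4};\, 1 - \beta\theta\chi^{-1};\, 1 - \tfrac{p\eta\mu}{2\gamma+\eta\mu}\right]$, together with its closing estimate $\Psi^k \ge \tfrac{1}{2\eta}\sqn{\z^k-\z^*}$. Chaining these, I would first record $\EE[\sqn{\z^k - \z^*}] \le 2\eta\,\rho^k\,\Psi^0$. Writing $\rho = 1 - r$ with $r = \min(r_1,r_2,r_3)$, $r_1 = \tfrac{\mu\eta}{4}$, $r_2 = \beta\theta\chi^{-1}$, $r_3 = \tfrac{p\eta\mu}{2\gamma+\eta\mu}$, and using $\log(1/(1-r)) \ge r$, the target $\EE[\sqn{\z^k-\z^*}] \le \varepsilon$ is reached once $k = O\!\left(\tfrac1r\log\tfrac{2\eta\Psi^0}{\varepsilon}\right)$; since $2\eta\Psi^0$ is polynomial in the data it is swallowed by $\log\tfrac1\varepsilon$. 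So the whole proof reduces to showing that $\tfrac1r = \max(1/r_1,1/r_2,1/r_3)$ equals the advertised expression.

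Next I would dispatch the two cheap rates. From $\eta = \min\{\sqrt{\gamma b}/(4\Lavg),\,1/(16L\sqrt\chi)\}$ one has $1/\eta = \max\{4\Lavg/\sqrt{\gamma b},\,16L\sqrt\chi\}$, so with $\gamma = p$,
\[
\tfrac{1}{r_1} = \tfrac{4}{\mu\eta} = \max\!\left\{\tfrac{16\Lavg}{\mu\sqrt{pb}},\,\tfrac{64L\sqrt\chi}{\mu}\right\} = O\!\left(\tfrac{1}{\sqrt{pb}}\tfrac{\Lavg}{\mu} + \sqrt\chi\tfrac{L}{\mu}\right),
\]
and $\tfrac{1}{r_3} = \tfrac{2\gamma+\eta\mu}{p\eta\mu} = \tfrac{2}{\eta\mu} + \tfrac1p$, whose first summand matches $1/r_1$ in order. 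These two already supply $\tfrac1p$, $\tfrac{1}{\sqrt{pb}}\tfrac{\Lavg}{\mu}$, and $\sqrt\chi\tfrac{L}{\mu}$; note that the extra $\sqrt\chi$ in the definition of $\eta$ is exactly what keeps the communication contribution of $r_1$ at $\sqrt\chi L/\mu$ rather than $\chi L/\mu$.

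The hard part will be $1/r_2 = \chi/(\beta\theta)$, since $\theta$ and $\beta$ are nested minima. I would peel them one at a time: $\theta = \min\{1/(2\beta),\,1/(16\eta)\}$ gives $\beta\theta = \min\{1/2,\,\beta/(16\eta)\}$, hence $1/r_2 = \max\{2\chi,\,16\eta\chi/\beta\}$; then $1/\beta = \max\{4L^2/\mu,\,4\eta\Lavg^2/(b\gamma)\}$ breaks the second branch into $64\eta\chi L^2/\mu$ and $64\eta^2\chi\Lavg^2/(b\gamma)$. The cancellations are driven by the two caps on $\eta$: the bound $\eta \le 1/(16L\sqrt\chi)$ rewrites $64\eta\chi L^2/\mu = \tfrac{64L}{\mu}(\eta\chi L)$ as $O(\sqrt\chi L/\mu)$, while $\eta \le \sqrt{\gamma b}/(4\Lavg)$ collapses $64\eta^2\chi\Lavg^2/(b\gamma)$ to $O(\chi)$. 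Thus $1/r_2 = O(\chi + \sqrt\chi\,L/\mu)$, and taking the maximum over the three inverse rates produces $O\!\left(\tfrac1p + \chi + \tfrac{1}{\sqrt{pb}}\tfrac{\Lavg}{\mu} + \sqrt\chi\tfrac{L}{\mu}\right)$; multiplying by $\log\tfrac1\varepsilon$ finishes the argument. The only real obstacle is the bookkeeping in $1/r_2$: tracking the nested minima correctly and verifying that the two $\eta$-caps remove precisely the spurious $\chi$ and $\eta$ factors.
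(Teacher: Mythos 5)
Your proposal is correct and follows essentially the same route as the paper: the paper likewise converts the contraction factor of Lemma~\ref{lem:fixed_key} into the sum of inverse rates $\tfrac{1}{p} + \tfrac{1}{\eta\mu} + \tfrac{\chi}{\beta\theta}$ and then unwinds the nested minima for $\eta$, $\beta$, $\theta$ using exactly the two cancellations you identify ($\eta \leq \tfrac{1}{16L\sqrt{\chi}}$ taming the $\chi\eta L^2/\mu$ term and $\eta \leq \tfrac{\sqrt{\gamma b}}{4\Lavg}$ collapsing the $\chi\eta^2\Lavg^2/(b\gamma)$ term to $O(\chi)$). The only step you leave implicit---which the paper also dispatches in a single sentence---is checking that the theorem's parameter choices satisfy the hypotheses of Lemma~\ref{lem:fixed_key}, in particular that $\alpha \geq \tfrac{1}{2}$ so that $\eta = \min\left\{\tfrac{\sqrt{\gamma b}}{4\Lavg}, \tfrac{1}{16L\sqrt{\chi}}\right\}$ indeed obeys the lemma's requirement $\eta \leq \min\left\{\tfrac{\sqrt{\alpha\gamma b}}{\sqrt{8}\Lavg}, \tfrac{1}{16L}\right\}$.
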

\begin{proof}
It is easy to check that $\alpha \geq \tfrac{1}{2}$, then also one can verify that the choice of $\gamma$, $\eta$, $\beta$, $\theta$, $\alpha$  satisfies the conditions of Lemma \ref{lem:fixed_key}. We can get that the iteration complexity of Algorithm~\ref{alg:vrvi}:
\begin{align*}
O\left( \left[1 + \frac{1}{\eta \mu}+ \frac{1}{\beta\theta\chi^{-1}}+  \frac{\gamma + \eta\mu}{p\eta\mu }\right] \log \frac{1}{\varepsilon} \right) &= O\left( \left[\frac{1}{p} + \frac{1}{\eta \mu}+ \frac{1}{\beta\theta\chi^{-1}}\right] \log \frac{1}{\varepsilon} \right) \\
 &= O\left( \left[\frac{1}{p} + \chi + \frac{1}{\eta \mu}+ \frac{\chi\eta}{\beta}\right] \log \frac{1}{\varepsilon} \right)\\
 &= O\left( \left[\frac{1}{p} + \chi + \frac{1}{\eta \mu}+ \frac{\chi \eta^2 \Lavg^2}{b p} + \frac{\chi \eta L^2}{\mu}\right] \log \frac{1}{\varepsilon} \right) \\
 &= O\left( \left[\frac{1}{p} +\chi + \frac{1}{\sqrt{pb}}\frac{\Lavg}{\mu}+  \sqrt{\chi} \frac{L}{\mu}\right] \log \frac{1}{\varepsilon} \right).
\end{align*}
\end{proof}

\newpage

\section{Proof of Theorem \ref{th:Alg2_conv}} \label{sec:pr_oa_tv}

We start the proof from the following lemma on $\delta^k$ and $\delta^{k+1/2}$ from Algorithm~\ref{2dvi2:alg}.

\begin{lemma}
	The following inequality holds:
	\begin{equation}\label{2vrvi:eq:1}
		\Ek{\sqn{\delta^k - \Ek{\delta^k}}} \leq \frac{2\Lavg^2}{b}\E{\sqn{\z^{k} - \w^{k-1}} + \alpha^2\sqn{\z^k- \z^{k-1}}}.
	\end{equation}
	\begin{equation}\label{2vrvi:eq:2}
		\Ek{\sqn{\delta^{k+1/2} - \F(\z^*)}} \leq \frac{2\Lavg^2}{b} \E{\sqn{\z^{k+1} - \w^{k}}} + 2L^2\E{\sqn{\z^{k+1} - \z^*}}.
	\end{equation}
	where $\Ek{\delta^k}$ is equal to
	\begin{equation}\label{2vrvi:eq:3}
		\Ek{\delta^k} = F(\z^k) + \alpha(F(\z^k) - F(\z^{k-1})).
	\end{equation}
\end{lemma}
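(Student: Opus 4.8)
The plan is to follow the argument of Lemma~\ref{var_lem_fix} essentially verbatim, since the estimators $\delta^k$ and $\delta^{k+1/2}$ appearing in Algorithm~\ref{2dvi2:alg} have exactly the same algebraic form as $\delta^k$ and $\Delta^{k+1/2}$ in Algorithm~\ref{alg:vrvi}: the same variance-reduction term $\tfrac{1}{b}\sum_{j\in S^k}(\F_j(\z^k)-\F_j(\w^{k-1}))+\F(\w^{k-1})$, the same optimistic correction $\tfrac{\alpha}{b}\sum_{j\in S^k}(\F_j(\z^k)-\F_j(\z^{k-1}))$, and the same half-step estimator. First I would establish the unbiasedness identity \eqref{2vrvi:eq:3}: because the indices in $S^k$ are drawn uniformly and independently over $[n]$ for every worker, each $\Ek{\F_j(\cdot)}$ equals the full $\F(\cdot)$, so the two correction blocks average to $\F(\z^k)-\F(\w^{k-1})$ and $\alpha(\F(\z^k)-\F(\z^{k-1}))$, while $\F(\w^{k-1})$ is measurable with respect to the current history. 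Adding these yields $\Ek{\delta^k}=\F(\z^k)+\alpha(\F(\z^k)-\F(\z^{k-1}))$.

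For the variance bound \eqref{2vrvi:eq:1}, the key steps in order are: (i) decompose $\delta^k-\Ek{\delta^k}$ into the centered variance-reduction piece and the centered optimistic piece, and apply $\|a+b\|^2\le 2\|a\|^2+2\|b\|^2$; (ii) use independence of the $b$ sampled indices to pass from the variance of a sum to $\tfrac{1}{b^2}$ times the sum of variances; (iii) drop the subtracted mean via $\EE\|\xi-\EE\xi\|^2=\EE\|\xi\|^2-\|\EE\xi\|^2\le\EE\|\xi\|^2$; and (iv) invoke the $\Lavg$-average Lipschitz continuity from Assumption~\ref{as:Lipsh} to obtain the factor $\tfrac{2\Lavg^2}{b}$ multiplying $\sqn{\z^k-\w^{k-1}}+\alpha^2\sqn{\z^k-\z^{k-1}}$.

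For \eqref{2vrvi:eq:2} I would split $\delta^{k+1/2}-\F(\z^*)=(\delta^{k+1/2}-\F(\z^{k+1}))+(\F(\z^{k+1})-\F(\z^*))$, again use $\|a+b\|^2\le 2\|a\|^2+2\|b\|^2$, bound the first (centered, zero-mean) term through steps (ii)--(iv) above with the average-Lipschitz constant to get $\tfrac{2\Lavg^2}{b}\sqn{\z^{k+1}-\w^k}$, and bound the second deterministic term by the $L$-Lipschitz property of $\F$ to get $2L^2\sqn{\z^{k+1}-\z^*}$.

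The honest remark is that there is no real obstacle here: the statement is an exact copy of Lemma~\ref{var_lem_fix}, and the only thing to verify is that the definitions of $\delta^k$ and $\delta^{k+1/2}$ in Algorithm~\ref{2dvi2:alg} coincide with those in Algorithm~\ref{alg:vrvi}, which they do. Consequently the entire chain of inequalities transfers with no change in constants, and the proof reduces to rewriting the computation of Lemma~\ref{var_lem_fix} with the new algorithm's notation.
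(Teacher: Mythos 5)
Your proposal is correct and coincides with the paper's treatment: the paper proves this lemma by simply stating that the proof is identical to that of Lemma~\ref{var_lem_fix}, exactly as you argue, since $\delta^k$ and $\delta^{k+1/2}$ in Algorithm~\ref{2dvi2:alg} have the same form as $\delta^k$ and $\Delta^{k+1/2}$ in Algorithm~\ref{alg:vrvi}. Your step-by-step outline (unbiasedness via uniform independent sampling, the decomposition with $\|a+b\|^2\le 2\|a\|^2+2\|b\|^2$, independence to get the $1/b$ factor, dropping the mean, and Assumption~\ref{as:Lipsh}) is precisely the chain used in that earlier proof.
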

\begin{proof}
    The proof is the same as the proof for Lemma \ref{var_lem_fix}.
\end{proof}
Before proving the main lemma of this section, let us introduce an auxiliary notation. Let $\hat z^k$ be defined for all $k= 0,1,2,\ldots$ as follows:
	\begin{equation}
	\label{eq:hatz}
		 \hat \x^k = \x^k - \mP m^k,
	\end{equation}
and the Lyapunov function $\Psi^k$ be denoted by
	\begin{equation}
	\label{eq:lyap_tv}
	\begin{split}
		\Psi^{k} 
		&= 
		\frac{1}{\eta_y}\sqn{\y^{k} - \y^*} +\frac{1}{\eta_x}\sqn{\hat \x^{k} - \x^*} + \frac{2}{\eta_x}\sqn{m^{k}}_\mP + \frac{1}{2\eta_y}\sqn{\y^{k} - \y^{k-1}}
		\\&\quad
		+\frac{\nu^{-1}}{\tau}\sqn{\y_f^{k} + \x_f^{k} - \y^* - \x^*} - 2\<\F(\z^{k}) - \F(\z^{k-1}) - (\y^{k} - \y^{k-1}), \z^{k} - \z^*>
		\\&\quad + \frac{1}{4\eta_z}\sqn{\z^{k} - \z^{k-1}} + \left(1 + \frac{3\mu \eta_z}{2}\right) \cdot\frac{1}{\eta_z}\sqn{\z^{k} - \z^*} + \frac{\omega}{2\eta_z}\E{\sqn{\z^{k} - \w^{k-1}}}.
	\end{split}
	\end{equation}
Here we also use the next notation
	\begin{equation}
	\label{eq:tv_ystar}
		\y^* = \mP \F(\z^*) - \nu \z^*,
	\end{equation}
and:
	\begin{equation}
	\label{eq:tv_xstar}
		\x^* = -\mP\F(\z^*).
	\end{equation}

\begin{lemma} \label{lem:key_tv}
	Consider the problem \eqref{eq:VI_new} (or \eqref{eq:VI} + \eqref{eq:fs}) under Assumptions~\ref{as:Lipsh} and \ref{as:strmon} over a sequence of time-varying graphs $\mathcal{G}(k)$ with gossip matrices $\mW(k)$. Let  $\{\z^k\}$ be the sequence generated by Algorithm~\ref{2dvi2:alg} with  
$$
	T \geq B, \quad \omega = p \leq \frac{1}{16}; \quad \theta = \frac{1}{2}; \quad \beta = 5\gamma; \quad \nu \leq \frac{\mu}{4}; \quad \tau \in (0;1);
$$
$$
    \eta_z \leq \min\left\{\frac{1}{8L}, \frac{1}{32\eta_y}, \frac{\sqrt{\alpha b \omega}}{8 \Lavg}\right\}; ~~ \eta_y \leq \min\left\{\frac{1}{4\gamma}, \frac{\nu}{8\tau}\right\}; ~~ \eta_x \leq \min\left\{ \frac{1}{900\chi(T)\gamma}, \frac{\nu}{36\tau\chi^2(T)}\right\}; 
$$
$$
    \gamma \leq \min\left\{ \frac{\mu}{16L^2}; \frac{b \omega}{24 \Lavg^2 \eta_z}\right\}, \quad 
$$
$$
    \alpha = \max\left[
	\left(1 - \frac{ p\eta_z \nu }{p + \eta_z \nu}\right);
	1 - \eta_y\gamma;
	1-\eta_x\gamma;
	1 - \frac{\mu \eta_z}{8};
	1-\tau;
	1-\frac{1}{4\chi(T)}
	\right].
$$
Let the choice of $T$ guarantees contraction property (Assumption \ref{ass:tv} point 4) with $\chi(T)$. Then, after $k$ iterations we get
\begin{align*}
	&\E{\frac{1}{4\eta_z}\sqn{\z^{k} - \z^*}}
	    \\
	    &\quad \leq \max\left[
		\left(1 - \frac{ p\eta_z \nu }{\omega + \eta_z \nu}\right);
		1 - \eta_y\gamma;
		1-\eta_x\gamma, \left(1 - \frac{\mu \eta_z}{8}\right);
		1-\tau;
		\left(1-\frac{1}{4\chi(T)}\right)
		\right]^k \cdot \Psi^0.
\end{align*}
\end{lemma}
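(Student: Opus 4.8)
The plan is to mirror the three-part structure of the fixed-network argument in Lemma~\ref{lem:fixed_key}, replacing its single gossip step by the accelerated time-varying gossip scheme of Algorithm~\ref{2dvi2:alg}. The stochastic ingredients are unchanged: the variance-reduction bounds \eqref{2vrvi:eq:1}--\eqref{2vrvi:eq:3} (proved exactly as in Lemma~\ref{var_lem_fix}) control the noise of $\delta^k$ and $\delta^{k+1/2}$, and I would defer their absorption to the very end, where the constraints $\eta_z\le\frac{\sqrt{\alpha b\omega}}{8\Lavg}$ and $\gamma\le\frac{b\omega}{24\Lavg^2\eta_z}$ are consumed to dominate the residual terms $\frac{\Lavg^2}{b}\sqn{\z^k-\w^{k-1}}$ and $\frac{\Lavg^2}{b}\sqn{\z^k-\z^{k-1}}$ against the snapshot and difference terms of \eqref{eq:lyap_tv}.

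First I would carry out the \emph{primal descent} for $\z^{k+1}$ from its prox-update $\z^{k+1}=\prox_{\eta_z\g}(\z^k+\omega(\w^k-\z^k)-\eta_z\Delta_z^k)$. Expanding $\frac{1}{\eta_z}\sqn{\z^{k+1}-\z^*}$ and invoking the optimality condition $-(\F(\z^*)-\y^*-\nu\z^*)\in\partial\g(\z^*)$ --- the regularized analogue of the identity $-(\F(\z^*)-\y^*)\in\partial\g(\z^*)$ used in the fixed case, consistent with the shifted definition \eqref{eq:tv_ystar} --- together with monotonicity of $\partial\g$, $\mu$-strong monotonicity of $\F$ (Assumption~\ref{as:strmon}), and Young's inequality on $\<\delta^k-\Ek{\delta^k},\z^{k+1}-\z^k>$ yields a recursion in $\sqn{\z^{k+1}-\z^*}$. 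Substituting \eqref{2vrvi:eq:3} and bounding the martingale residual by \eqref{2vrvi:eq:1} produces the telescoping cross term $-2\<\F(\z^k)-\F(\z^{k+1})-(\y^k-\y^{k+1}),\z^{k+1}-\z^*>$ together with an $\alpha$-discounted copy from the previous iterate, precisely as in Part~2 of Lemma~\ref{lem:fixed_key}.

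The central difficulty is the \emph{consensus descent} for the coupled variables $(\y,\x,m,\y_f,\x_f)$, which I would handle following the accelerated time-varying gossip analysis of \cite{kovalev2021lower}. Treating $m^k$ as an error-feedback accumulator and using the shifted iterate $\hat\x^k=\x^k-\mP m^k$ from \eqref{eq:hatz}, I would track $\frac{1}{\eta_x}\sqn{\hat\x^k-\x^*}+\frac{2}{\eta_x}\sqn{m^k}_\mP$ through the updates $\x^{k+1}=\x^k-(\mW_T(Tk)\otimes\mI_d)(\eta_x\Delta_x^k+m^k)$ and $m^{k+1}=(\mI-\mW_T(Tk)\otimes\mI_d)(\eta_x\Delta_x^k+m^k)$, extracting the factor $\bigl(1-\tfrac{1}{4\chi(T)}\bigr)$ from the contraction property (Assumption~\ref{ass:tv}, point~4) with constant $\chi(T)$. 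Simultaneously the averaging/momentum variables $\y_c^k=\tau\y^k+(1-\tau)\y_f^k$, $\x_c^k=\tau\x^k+(1-\tau)\x_f^k$ and the fast variables $\y_f,\x_f$ contribute the term $\frac{\nu^{-1}}{\tau}\sqn{\y_f^k+\x_f^k-\y^*-\x^*}$, contracting at rate $1-\tau$, and couple back to $(\y,\x)$ through these definitions. Balancing this acceleration against the gossip error is where the step-size bounds $\eta_x\le\min\{(900\chi(T)\gamma)^{-1},\nu(36\tau\chi^2(T))^{-1}\}$ and $\eta_y\le\min\{(4\gamma)^{-1},\nu(8\tau)^{-1}\}$ are spent; I expect this bookkeeping, not the primal or stochastic parts, to be the main obstacle.

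Finally I would add the two inequalities, introduce the extra $\frac{\mu}{2}\sqn{\z^{k+1}-\z^*}$ from strong monotonicity and a snapshot term $\propto\sqn{\w^{k+1}-\z^*}$, and take expectation over the Bernoulli update of $\w^{k+1}$ to obtain the factor $\bigl(1-\tfrac{p\eta_z\nu}{\omega+\eta_z\nu}\bigr)$, as in the fixed case. Collecting every discount factor under a single maximum gives $\E{\Psi^{k+1}}\le\rho\,\E{\Psi^k}$ with the stated $\rho$, and iterating yields the claim once one checks $\Psi^k\ge\frac{1}{4\eta_z}\sqn{\z^k-\z^*}$. For the latter I would drop the manifestly nonnegative terms of \eqref{eq:lyap_tv}, then split the cross term $-2\<\F(\z^k)-\F(\z^{k-1})-(\y^k-\y^{k-1}),\z^k-\z^*>$ by Young's inequality so that its $\F$-part cancels $\frac{1}{4\eta_z}\sqn{\z^k-\z^{k-1}}$ via $L$-Lipschitzness and its $\y$-part cancels $\frac{1}{2\eta_y}\sqn{\y^k-\y^{k-1}}$; the constraints $\eta_z\le\frac{1}{8L}$ and $\eta_z\le\frac{1}{32\eta_y}$ then leave a positive multiple of $\sqn{\z^k-\z^*}$ bounded below by $\frac{1}{4\eta_z}\sqn{\z^k-\z^*}$.
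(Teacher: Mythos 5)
Your proposal follows the paper's proof essentially step for step: the same variance-reduction bounds, the same primal prox-descent with the shifted optimality condition $-(\F(\z^*)-\y^*-\nu\z^*)\in\partial\g(\z^*)$, the same error-feedback treatment of $(\hat\x^k, m^k)$ under the contraction property, the same Bernoulli snapshot argument producing $\bigl(1-\tfrac{p\eta_z\nu}{\omega+\eta_z\nu}\bigr)$, and the same final lower bound $\Psi^k\ge\frac{1}{4\eta_z}\sqn{\z^k-\z^*}$ via Young's inequality and Lipschitzness. You also correctly identify where each parameter constraint is consumed, so this is the paper's argument in outline form.
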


\begin{proof}
	
{\bf Part 1.} 
We start from
	\begin{align*}
		\frac{1}{\eta_z}\sqn{\z^{k+1} - \z^*} &= 
		\frac{1}{\eta_z}\sqn{\z^k - \z^*} + \frac{2}{\eta_z}\<\z^{k+1} - \z^k, \z^{k+1} - \z^*> - \frac{1}{\eta_z}\sqn{\z^{k+1} - \z^k}
		\\&= \frac{1}{\eta_z}\sqn{\z^k - \z^*} + \frac{2 \omega}{\eta_z}\<\w^{k} - \z^k, \z^{k+1} - \z^*>  
		\\&\quad
		- 2\<\Delta_z^k - (\F(\z^*) - \y^* - \nu \z^*), \z^{k+1} - \z^*> 
		-
		\frac{1}{\eta_z}\sqn{\z^{k+1} - \z^k} 
		\\&\quad- 
		2\<\tfrac{1}{\eta_z}(\z^k + \omega (\w^k - \z^k) - \eta_z \Delta^k_z - \z^{k+1})  - (- (\F(\z^*) - \y^* - \nu \z^*)) ,\z^{k+1} - \z^*>.
	\end{align*}
Optimality condition for \eqref{eq:VI}+\eqref{eq:fs} it follows, that
	\begin{equation*}
		-F(z^*) \in \partial g(z^*).
	\end{equation*}
Let us define $\Delta^* \in (\R^d)^M$ as
	\begin{equation*}
	\Delta^*= \tfrac{1}{M}[F(z^*), \ldots F(z^*)]^T.
	\end{equation*}
It is to note that $-\Delta^* \in M \partial\g(\z^*)$. On the other hand with notation \eqref{eq:tv_ystar}, we get
\begin{equation*}
	\Delta^*= (\ones_M\ones_M^\top \otimes \mI_d) \F(\z^*) = M (\F(\z^*) - \mP \F(\z^*)) = M (\F(\z^*) - \y^* - \nu \z^*)
\end{equation*}
It means that $-(\F(\z^*) - \y^* - \nu \z^*) \in \partial\g(\z^*)$.  
From update for $\z^{k+1}$ of Algorithm~\ref{2dvi2:alg} it follows, that	$ \tfrac{1}{\eta_z}(\z^k + \omega (\w^k - \z^k)  -  \z^{k+1}- \eta_z \Delta_z^k) \in \partial \g(\z^{k+1})$. This together with $-(\F(\z^*) - \y^* - \nu \z^*) \in \partial\g(\z^*)$ and monotonicity of $\partial \g(\cdot)$ implies
	\begin{align*}
		\frac{1}{\eta_z}\E{\sqn{\z^{k+1} - \z^*}} &\leq 
		\frac{1}{\eta_z}\E{\sqn{\z^k - \z^*}} + \frac{2 \omega}{\eta_z}\E{\<\w^{k} - \z^k, \z^{k+1} - \z^*>}
		\\&\quad- 
		2\E{\<\Delta_z^k - (\F(\z^*) - \y^* - \nu \z^*), \z^{k+1} - \z^*>} - \frac{1}{\eta_z}\E{\sqn{\z^{k+1} - \z^k}}
		\\&=
		\frac{1}{\eta_z}\E{\sqn{\z^k - \z^*}}  + \frac{2 \omega}{\eta_z}\E{\<\w^{k} - \z^*, \z^{k+1} - \z^*>}  
		\\&\quad- \frac{2 \omega}{\eta_z}\E{\<\z^{k} - \z^*, \z^{k+1} - \z^*>} - 2\E{\<\Delta_z^k - (\F(\z^*) - \y^* - \nu \z^*), \z^{k+1} - \z^*>} 
		\\&\quad
		-\frac{1}{\eta_z}\E{\sqn{\z^{k+1} - \z^k}}
		\\&=
		\frac{1}{\eta_z}\E{\sqn{\z^k - \z^*}}  + \frac{\omega}{\eta_z} \E{\sqn{\w^k - \z^*} + \sqn{\z^{k+1} - \z^*} - \sqn{\z^{k+1} - \w^k}}
		\\&\quad
		- \frac{\omega}{\eta_z}\E{\sqn{\z^{k+1} - \z^*} + \sqn{\z^k - \z^*} - \sqn{\z^{k+1} - \z^k}} 
		\\&\quad
		- 2\E{\<\Delta_z^k - (\F(\z^*) - \y^* - \nu \z^*), \z^{k+1} - \z^*>} -\frac{1}{\eta_z}\E{\sqn{\z^{k+1} - \z^k}}
		\\&=
		\frac{1}{\eta_z}\E{\sqn{\z^k - \z^*}}  + \frac{\omega}{\eta_z} \E{\sqn{\w^k - \z^*}} - \frac{\omega}{\eta_z} \E{\sqn{\z^{k+1} - \w^k}}
		\\&\quad
		- \frac{\omega}{\eta_z}\E{\sqn{\z^k - \z^*}} - 2\E{\<\Delta_z^k - (\F(\z^*) - \y^* - \nu \z^*), \z^{k+1} - \z^*>} 
		\\&\quad
		-\frac{1-\omega}{\eta_z}\E{\sqn{\z^{k+1} - \z^k}}.
	\end{align*}
In previous we also use the simple fact $\| a + b\|^2 = \|a\|^2 + 2\langle a; b \rangle + \|b\|^2$ twice. Definition $\Delta_z^k$ and small rearrangement gives	
	\begin{align*}
		\frac{1}{\eta_z}\E{\sqn{\z^{k+1} - \z^*}} &\leq 
		\frac{1}{\eta_z}\E{\sqn{\z^k - \z^*}}  + \frac{\omega}{\eta_z} \E{\sqn{\w^k - \z^*}} - \frac{\omega}{\eta_z} \E{\sqn{\z^{k+1} - \w^k}}
		\\&\quad
		- \frac{\omega}{\eta_z}\E{\sqn{\z^k - \z^*}} -\frac{1-\omega}{\eta_z}\E{\sqn{\z^{k+1} - \z^k}}
		\\&\quad
		- 2\E{\<\E{\delta^k} - \nu \z^k - y^k - \alpha(\y^k - \y^{k-1}) - (\F(\z^*) - \y^* - \nu \z^*), \z^{k+1} - \z^*>}
		\\&\quad
		+ 2\E{\<\E{\delta^k} - \delta^k, \z^{k+1} - \z^k>} + 2\E{\<\E{\delta^k} - \delta^k, \z^{k} - \z^*>}.
	\end{align*}
Using the tower property of expectation we can obtain the following:
	\begin{align*}
		\E{\<\Ek{\delta^k} - \delta^k, \z^k - \z^*>} &= \E{\Ek{\<\Ek{\delta^k} - \delta^k, \z^k - \z^*>}}
		\\&=
		\E{{\<\Ek{\Ek{\delta^k} - \delta^k}, \z^k - \z^*>}}
		\\&=
		\E{{\<\Ek{\delta^k} - \Ek{\delta^k}, \z^k - \z^*>}} = 0.
	\end{align*}	
Then we have the following	
    \begin{align*}
		\frac{1}{\eta_z}\E{\sqn{\z^{k+1} - \z^*}} &\leq 
		\frac{1}{\eta_z}\E{\sqn{\z^k - \z^*}}  + \frac{\omega}{\eta_z} \E{\sqn{\w^k - \z^*}} - \frac{\omega}{\eta_z} \E{\sqn{\z^{k+1} - \w^k}}
		\\&\quad
		- \frac{\omega}{\eta_z}\E{\sqn{\z^k - \z^*}} -\frac{1-\omega}{\eta_z}\E{\sqn{\z^{k+1} - \z^k}}
		\\&\quad
		- 2\E{\<\E{\delta^k} - \nu \z^k - \y^k - \alpha(\y^k - \y^{k-1}) - (\F(\z^*) - \y^* - \nu \z^*), \z^{k+1} - \z^*>}
		\\&\quad
		+ 2\E{\<\E{\delta^k} - \delta^k, \z^{k+1} - \z^k>}.
	\end{align*}	
Using the Young's inequality we get
    \begin{align*}
		\frac{1}{\eta_z}\E{\sqn{\z^{k+1} - \z^*}} &\leq 
		\frac{1}{\eta_z}\E{\sqn{\z^k - \z^*}}  + \frac{\omega}{\eta_z} \E{\sqn{w^k - \z^*}} - \frac{\omega}{\eta_z} \E{\sqn{\z^{k+1} - \w^k}}
		\\&\quad
		- \frac{\omega}{\eta_z}\E{\sqn{\z^k - \z^*}} -\frac{1-\omega}{\eta_z}\E{\sqn{\z^{k+1} - \z^k}}
		\\&\quad
		- 2\E{\<\E{\delta^k} - \nu \z^k - \y^k - \alpha(\y^k - \y^{k-1}) - (\F(\z^*) - \y^* - \nu \z^*), \z^{k+1} - \z^*>}
		\\&\quad
		+ 2\eta_z\E{\sqn{\E{\delta^k} - \delta^k}} + \frac{1}{2\eta_z} \E{\sqn{\z^{k+1} - \z^k}}
		\\& = 
		\frac{1}{\eta_z}\E{\sqn{\z^k - \z^*}}  + \frac{\omega}{\eta_z} \E{\sqn{\w^k - \z^*}} - \frac{\omega}{\eta_z} \E{\sqn{\z^{k+1} - \w^k}}
		\\&\quad
		- \frac{\omega}{\eta_z}\E{\sqn{\z^k - \z^*}} -\frac{1/2-\omega}{\eta_z}\E{\sqn{\z^{k+1} - \z^k}}
		\\&\quad
		- 2\E{\<\E{\delta^k} - \nu \z^k - \y^k - \alpha(\y^k - \y^{k-1}) - (\F(\z^*) - \y^* - \nu \z^*), \z^{k+1} - \z^*>}
		\\&\quad
		+ 2\eta_z\E{\sqn{\E{\delta^k} - \delta^k}}.
	\end{align*}
With \eqref{2vrvi:eq:1} and \eqref{2vrvi:eq:3} we have
    \begin{align*}
		\frac{1}{\eta_z}\E{\sqn{\z^{k+1} - \z^*}} &\leq 
		\frac{1}{\eta_z}\E{\sqn{\z^k - \z^*}}  + \frac{\omega}{\eta_z} \E{\sqn{w^k - \z^*}} - \frac{\omega}{\eta_z} \E{\sqn{\z^{k+1} - \w^k}}
		\\&\quad
		- \frac{\omega}{\eta_z}\E{\sqn{\z^k - \z^*}} -\frac{1/2-\omega}{\eta_z}\E{\sqn{\z^{k+1} - \z^k}}
		\\&\quad
		- 2\E{\<\F(\z^k) + \alpha(\F(\z^k) - \F(\z^{k-1})) - \nu \z^k - \y^k, \z^{k+1} - \z^*>}
		\\&\quad
		- 2\E{\<- \alpha(\y^k - \y^{k-1}) - (\F(\z^*) - \y^* - \nu \z^*), \z^{k+1} - \z^*>}
		\\&\quad
		+ \frac{4\eta_z\Lavg^2}{b}\E{\sqn{\z^{k} - \w^{k-1}}} + \frac{4\eta_z\alpha^2\Lavg^2}{b} \E{\sqn{\z^k- \z^{k-1}}}.
	\end{align*}
Small rearrangement gives
    \begin{align*}
		\frac{1}{\eta_z}\E{\sqn{\z^{k+1} - \z^*}} &\leq 
		\frac{1}{\eta_z}\E{\sqn{\z^k - \z^*}}  + \frac{\omega}{\eta_z} \E{\sqn{\w^k - \z^*}} - \frac{\omega}{\eta_z} \E{\sqn{\z^{k+1} - \w^k}}
		\\&\quad
		- \frac{\omega}{\eta_z}\E{\sqn{\z^k - \z^*}} -\frac{1/2-\omega}{\eta_z}\E{\sqn{\z^{k+1} - \z^k}}
		\\&\quad-
		2\E{\<\F(\z^{k+1}) - \F(\z^*), \z^{k+1} - \z^*>}
		+
		2 \nu\E{\<\z^k - \z^*, \z^{k+1} - \z^*>}
		\\&\quad+
		2\E{\<\y^{k+1} - \y^*,\z^{k+1} - \z^*>}
		\\&\quad+
		2\E{\<\F(\z^{k+1}) - \F(\z^k) - (\y^{k+1} - \y^k), \z^{k+1} - \z^*>}
		\\&\quad-
		2\alpha\E{\<\F(\z^k) - \F(\z^{k-1}) - (\y^k - \y^{k-1}), \z^k - \z^*>}
		\\&\quad-
		2\alpha\E{\<\F(\z^k) - \F(\z^{k-1}) - (\y^k - \y^{k-1}), \z^{k+1} - \z^k>}
		\\&\quad
		+ \frac{4\eta_z\Lavg^2}{b}\E{\sqn{\z^{k} - \w^{k-1}}} + \frac{4\eta_z\alpha^2\Lavg^2}{b} \E{\sqn{\z^k- \z^{k-1}}}.
	\end{align*}
Using Assumption \ref{as:strmon} of $\mu$-strong monotonicity and Young's inequality we get
	\begin{align*}
		\frac{1}{\eta_z}\E{\sqn{\z^{k+1} - \z^*}} &\leq 
		\frac{1}{\eta_z}\E{\sqn{\z^k - \z^*}}  + \frac{\omega}{\eta_z} \E{\sqn{\w^k - \z^*}} - \frac{\omega}{\eta_z} \E{\sqn{\z^{k+1} - \w^k}}
		\\&\quad
		- \frac{\omega}{\eta_z}\E{\sqn{\z^k - \z^*}} -\frac{1/2-\omega}{\eta_z}\E{\sqn{\z^{k+1} - \z^k}}
		\\&\quad-
		2\mu\E{\sqn{\z^{k+1} - \z^*}}
		+
		2 \nu \E{\<\z^k - \z^*, \z^{k+1} - \z^*>}
		\\&\quad+
		2\E{\<\y^{k+1} - \y^*,\z^{k+1} - \z^*>}
		\\&\quad+
		2\E{\<\F(\z^{k+1}) - \F(\z^k) - (\y^{k+1} - \y^k), \z^{k+1} - \z^*>}
		\\&	\quad-
		2\alpha\E{\<\F(\z^k) - \F(\z^{k-1}) - (\y^k - \y^{k-1}), \z^k - \z^*>}
		\\&\quad-
		2\alpha\E{\<\F(\z^k) - \F(\z^{k-1}) - (\y^k - \y^{k-1}), \z^{k+1} - \z^k>}
		\\&\quad
		+ \frac{4\eta_z\Lavg^2}{b}\E{\sqn{\z^{k} - \w^{k-1}}} + \frac{4\eta_z\alpha^2\Lavg^2}{b} \E{\sqn{\z^k- \z^{k-1}}}
		\\&\leq 
		\frac{1}{\eta_z}\E{\sqn{\z^k - \z^*}}  + \frac{\omega}{\eta_z} \E{\sqn{w^k - \z^*}} - \frac{\omega}{\eta_z} \E{\sqn{\z^{k+1} - \w^k}}
		\\&\quad
		- \frac{\omega}{\eta_z}\E{\sqn{\z^k - \z^*}} -\frac{1/2-\omega}{\eta_z}\E{\sqn{\z^{k+1} - \z^k}}
		\\&\quad-
		2\mu\E{\sqn{\z^{k+1} - \z^*}}
		+
		\nu \E{\sqn{\z^k - \z^*}} + \nu \E{\sqn{\z^{k+1} - \z^*}}
		\\&\quad+
		2\E{\<\y^{k+1} - \y^*,\z^{k+1} - \z^*>}
		\\&\quad+
		2\E{\<\F(\z^{k+1}) - \F(\z^k) - (\y^{k+1} - \y^k), \z^{k+1} - \z^*>}
	    \\&\quad-
		2\alpha\E{\<\F(\z^k) - \F(\z^{k-1}) - (\y^k - \y^{k-1}), \z^k - \z^*>}
		\\&\quad+
		2\alpha\E{\|\F(\z^k) - \F(\z^{k-1}) \| \| \z^{k+1} - \z^k \|} + 2\alpha \E{\|\y^k - \y^{k-1}\| \| \z^{k+1} - \z^k\|}
		\\&\quad
		+ \frac{4\eta_z\Lavg^2}{b}\E{\sqn{\z^{k} - \w^{k-1}}} + \frac{4\eta_z\alpha^2\Lavg^2}{b} \E{\sqn{\z^k- \z^{k-1}}}.
	\end{align*}
With $L$-Lipschitzness of $\F(\cdot)$ (Assumption \ref{as:Lipsh}) we have
    \begin{align*}
		\frac{1}{\eta_z}\E{\sqn{\z^{k+1} - \z^*}} &\leq 
		\frac{1}{\eta_z}\E{\sqn{\z^k - \z^*}}  + \frac{\omega}{\eta_z} \E{\sqn{\w^k - \z^*}} - \frac{\omega}{\eta_z} \E{\sqn{\z^{k+1} - \w^k}}
		\\&\quad
		- \left(\frac{\omega}{\eta_z} - \nu \right)\E{\sqn{\z^k - \z^*}} -\frac{1/2-\omega}{\eta_z}\E{\sqn{\z^{k+1} - \z^k}}
		\\&\quad-
		(2\mu - \nu)\E{\sqn{\z^{k+1} - \z^*}}
		+2\E{\<y^{k+1} - y^*,\z^{k+1} - \z^*>}
		\\&\quad+
		2\E{\<\F(\z^{k+1}) - \F(\z^k) - (\y^{k+1} - \y^k), \z^{k+1} - \z^*>}
		\\&\quad-
		2\alpha\E{\<\F(\z^k) - \F(\z^{k-1}) - (\y^k - \y^{k-1}), \z^k - \z^*>}
		\\&\quad+
		2\alpha L \E{\|\z^k - \z^{k-1}\| \| \z^{k+1} - \z^k \|} + 2\alpha \E{\|\y^k - \y^{k-1}\| \| \z^{k+1} - \z^k\|}
		\\&\quad
		+ \frac{4\eta_z\Lavg^2}{b}\E{\sqn{\z^{k} - \w^{k-1}}} + \frac{4\eta_z\alpha^2\Lavg^2}{b} \E{\sqn{\z^k- \z^{k-1}}}
	\end{align*}	
Young's inequality we get		
	\begin{align*}	
		\frac{1}{\eta_z}\E{\sqn{\z^{k+1} - \z^*}} &\leq
		\frac{1}{\eta_z}\E{\sqn{\z^k - \z^*}}  + \frac{\omega}{\eta_z} \E{\sqn{\w^k - \z^*}} - \frac{\omega}{\eta_z} \E{\sqn{\z^{k+1} - \w^k}}
		\\&\quad
		- \left(\frac{\omega}{\eta_z} - \nu \right)\E{\sqn{\z^k - \z^*}} -\frac{1/2-\omega}{\eta_z}\E{\sqn{\z^{k+1} - \z^k}}
		\\&\quad-
		(2\mu - \nu)\E{\sqn{\z^{k+1} - \z^*}}
		+2\E{\<y^{k+1} - y^*,\z^{k+1} - \z^*>}
		\\&\quad+
		2\E{\<\F(\z^{k+1}) - \F(\z^k) - (y^{k+1} - y^k), \z^{k+1} - \z^*>}
		\\&\quad-
		2\alpha\E{\<\F(\z^k) - \F(\z^{k-1}) - (y^k - y^{k-1}), \z^k - \z^*>}
		\\&\quad+
		\alpha L \E{\sqn{\z^k - \z^{k-1}}} + \alpha L \E{\sqn{ \z^{k+1} - \z^k}} 
		\\&\quad
		+ \frac{\alpha}{2\eta_y} \E{\sqn{\y^k - \y^{k-1}}} +  2\alpha \eta_y \E{\sqn{ \z^{k+1} - \z^k}}
		\\&\quad
		+ \frac{4\eta_z\Lavg^2}{b}\E{\sqn{\z^{k} -\w^{k-1}}} + \frac{4\eta_z\alpha^2\Lavg^2}{b} \E{\sqn{\z^k- \z^{k-1}}}.
	\end{align*}
Using the assumption on $\eta_z$ we get that $L \leq \tfrac{1}{8 \eta_z}$ and $\eta_y \leq \tfrac{1}{32 \eta_z}$
    \begin{align*}
		\frac{1}{\eta_z}\E{\sqn{\z^{k+1} - \z^*}} &\leq
		\frac{1}{\eta_z}\E{\sqn{\z^k - \z^*}}  + \frac{\omega}{\eta_z} \E{\sqn{\w^k - \z^*} } - \frac{\omega}{\eta_z} \E{\sqn{\z^{k+1} - \w^k}}
		\\&\quad
		- \left(\frac{\omega}{\eta_z} - \nu \right)\E{\sqn{\z^k - \z^*}} -\frac{1/2-\omega}{\eta_z}\E{\sqn{\z^{k+1} - \z^k}}
		\\&\quad-
		(2\mu - \nu)\E{\sqn{\z^{k+1} - \z^*}}
		+2\E{\<\y^{k+1} - \y^*,\z^{k+1} - \z^*>}
		\\&\quad+
		2\E{\<\F(\z^{k+1}) - \F(\z^k) - (\y^{k+1} - \y^k), \z^{k+1} - \z^*>}
		\\&\quad-
		2\alpha\E{\<\F(\z^k) - \F(\z^{k-1}) - (\y^k - \y^{k-1}), \z^k - \z^*>}
		\\&\quad+
		\frac{\alpha}{8\eta_z} \E{\sqn{\z^k - \z^{k-1}}} + \frac{\alpha}{8\eta_z} \E{\sqn{ \z^{k+1} - \z^k}} 
		\\&\quad+
		\frac{\alpha}{2\eta_y} \E{\sqn{\y^k - \y^{k-1}}} +  \frac{\alpha}{16\eta_z} \E{\sqn{ \z^{k+1} - \z^k}}
		\\&\quad
		+ \frac{4\eta_z\Lavg^2}{b}\E{\sqn{\z^{k} - \w^{k-1}}} + \frac{4\eta_z\alpha^2\Lavg^2}{b} \E{\sqn{\z^k- \z^{k-1}}}.
	\end{align*}
With $\alpha \leq 1$ and $\eta_z \leq \tfrac{\sqrt{\alpha b}}{8\Lavg}$ we obtain
    \begin{align*}
		\frac{1}{\eta_z}\E{\sqn{\z^{k+1} - \z^*}} &\leq
		\frac{1}{\eta_z}\E{\sqn{\z^k - \z^*}}  + \frac{\omega}{\eta_z} \E{\sqn{\w^k - \z^*}} - \frac{\omega}{\eta_z} \E{\sqn{\z^{k+1} - \w^k}}
		\\&\quad
		- \left(\frac{\omega}{\eta_z} - \nu \right)\E{\sqn{\z^k - \z^*}} -\frac{1/2-\omega}{\eta_z}\E{\sqn{\z^{k+1} - \z^k}}
		\\&\quad-
		(2\mu - \nu)\E{\sqn{\z^{k+1} - \z^*}}
		+2\E{\<\y^{k+1} - \y^*,\z^{k+1} - \z^*>}
		\\&\quad+
		2\E{\<\F(\z^{k+1}) - \F(\z^k) - (\y^{k+1} - \y^k), \z^{k+1} - \z^*>}
		\\&\quad-
		2\alpha\E{\<\F(\z^k) - \F(\z^{k-1}) - (\y^k - \y^{k-1}), \z^k - \z^*>}
		\\&\quad+
		\frac{\alpha}{8\eta_z} \E{\sqn{\z^k - \z^{k-1}}} + \frac{1}{8\eta_z} \E{\sqn{ \z^{k+1} - \z^k}}
		\\&\quad + \frac{\alpha}{2\eta_y} \E{\sqn{\y^k - \y^{k-1}}} +  \frac{1}{16\eta_z} \E{\sqn{ \z^{k+1} - \z^k}}
		\\&\quad
		+ \frac{4\eta_z\Lavg^2}{b}\E{\sqn{\z^{k} - \w^{k-1}}} + \frac{\alpha}{16\eta_z} \E{\sqn{\z^k- \z^{k-1}}}
		\\&\leq
		\frac{1}{\eta_z}\E{\sqn{\z^k - \z^*}}  + \frac{\omega}{\eta_z} \E{\sqn{\w^k - \z^*}} -  \frac{\omega}{\eta_z}\E{\sqn{\z^{k+1} - \w^k}}
		\\&\quad
		- \left(\frac{\omega}{\eta_z} - \nu \right)\E{\sqn{\z^k - \z^*}} -\frac{5/16-\omega}{\eta_z}\E{\sqn{\z^{k+1} - \z^k}} 
		\\&\quad
		+ \frac{\alpha}{4\eta_z} \E{\sqn{\z^k - \z^{k-1}}}
		-
		(2\mu - \nu)\E{\sqn{\z^{k+1} - \z^*}}
		\\&\quad
		+2\E{\<\y^{k+1} - \y^*,\z^{k+1} - \z^*>}
		\\&\quad+
		2\E{\<\F(\z^{k+1}) - \F(\z^k) - (\y^{k+1} - \y^k), \z^{k+1} - \z^*>}
		\\&\quad-
		2\alpha\E{\<\F(\z^k) - \F(\z^{k-1}) - (\y^k - \y^{k-1}), \z^k - \z^*>}
		\\&\quad + \frac{\alpha}{2\eta_y} \E{\sqn{\y^k - \y^{k-1}}}
		+ \frac{4\eta_z\Lavg^2}{b}\E{\sqn{\z^{k} - \w^{k-1}}}.
	\end{align*}
Choice of $\omega \leq \tfrac{1}{16}$ and small rearrangement give
    \begin{align*}
		\frac{1}{\eta_z}&\E{\sqn{\z^{k+1} - \z^*}} + (2\mu - \nu)\E{\sqn{\z^{k+1} - \z^*}} + \frac{1}{4\eta_z}\E{\sqn{\z^{k+1} - \z^k}} 
		\\&\quad- 2\E{\<\F(\z^{k+1}) - \F(\z^k) - (\y^{k+1} - \y^k), \z^{k+1} - \z^*>}\\
		&\leq
		\frac{1}{\eta_z}\E{\sqn{\z^k - \z^*}}  - \left(\frac{\omega}{\eta_z} - \nu \right)\E{\sqn{\z^k - \z^*}}
		\\&\quad
		+ \alpha \cdot \frac{1}{4\eta_z} \E{\sqn{\z^k - \z^{k-1}}} - \alpha  \cdot 2\E{\<\F(\z^k) - \F(\z^{k-1}) - (\y^k - \y^{k-1}), \z^k - \z^*>}
		\\&\quad
		+ \frac{\omega}{\eta_z} \E{\sqn{\w^k - \z^*}} -  \frac{\omega}{\eta_z}\E{\sqn{\z^{k+1} - \w^k}}
		+2\E{\<\y^{k+1} - \y^*,\z^{k+1} - \z^*>}
		\\&\quad + \frac{\alpha}{2\eta_y} \E{\sqn{\y^k - \y^{k-1}}}
		+ \frac{4\eta_z\Lavg^2}{b}\E{\sqn{\z^{k} - \w^{k-1}}}.
	\end{align*}
{\bf Part 2.} Update for $m^{k+1}$ and Assumption \ref{ass:tv} on time-varying graph give
	\begin{align*}
		\sqn{m^{k+1}}_\mP
		&\leq
		(1-\chi^{-1}(T))\sqn{m^k + \eta_x \Delta_x^k}_\mP
		\\&\leq
		(1-\chi^{-1}(T))\left(
		\left(1 + (2\chi(T))^{-1}\right)\sqn{m^k}_\mP
		+\left(1 + 2\chi(T)\right)\sqn{\eta_x\Delta_x^k}_\mP
		\right)
		\\&\leq
		(1-(2\chi(T))^{-1})\sqn{m^k}_\mP
		+2\eta_x^2\chi(T)\sqn{\Delta_x^k}_\mP.
	\end{align*}
After rearranging we get
	\begin{equation}\label{dvi2:eq:m}
		\sqn{m^k}_\mP
		\leq
		(1-(4\chi(T))^{-1})4\chi(T)\sqn{m^k}_\mP
		-4\chi(T)\sqn{m^{k+1}}_\mP
		+8\eta_x^2\chi^2(T)\sqn{\Delta_x^k}_\mP.
	\end{equation}
	
{\bf Part 3.} Updates for $\x^{k+1}$ and $m^{k+1}$  of Algorithm~\ref{2dvi2:alg} imply $\hat \x^{k+1} = \hat \x^k-\eta_x \mP \Delta_x^k$.
By this together with update for $\y^{k+1}$ of Algorithm~\ref{2dvi2:alg} we obtain
	\begin{align*}
		\frac{1}{\eta_y}&\sqn{\y^{k+1} - \y^*}
		+\frac{1}{\eta_x}\sqn{\hat \x^{k+1} - \x^*} 
		\\&=
		\frac{1}{\eta_y}\sqn{\y^{k} - \y^*}
		+\frac{1}{\eta_x}\sqn{\hat \x^{k} - \x^*} 
		-\frac{1}{\eta_y}\sqn{\y^{k+1} - \y^k}
		+\frac{1}{\eta_x}\sqn{\hat \x^{k+1} - \hat \x^k} 
		\\&\quad
		+\frac{2}{\eta_y}\<\y^{k+1} - \y^{k}, \y^{k+1} - \y^*>
		+\frac{2}{\eta_x}\<\hat \x^{k+1} - \hat \x^k, \hat \x^{k} - \x^*>
		\\&\leq
		\frac{1}{\eta_y}\sqn{\y^{k} - \y^*}
		+\frac{1}{\eta_x}\sqn{\hat \x^{k} - \x^*} 
		-\frac{1}{\eta_y}\sqn{\y^{k+1} - \y^k}
		\\&\quad
		-2\<\Delta_y^k, \y^{k+1} - \y^*>
		-2\<\mP\Delta_x^k, \x^{k} - \x^*>
		+2\norm{\Delta_x^k}_\mP\norm{m^k}_\mP
		+\eta_x\sqn{\Delta_x^k}_\mP.
	\end{align*}
Using the definitions of $\Delta_y^k$ and $\Delta_x^k$  we get
	\begin{align*}
		\frac{1}{\eta_y}&\sqn{\y^{k+1} - \y^*}
		+\frac{1}{\eta_x}\sqn{\hat \x^{k+1} - \x^*} 
		\\&\leq
		\frac{1}{\eta_y}\sqn{\y^{k} - \y^*}
		+\frac{1}{\eta_x}\sqn{\hat \x^{k} - \x^*} 
		-\frac{1}{\eta_y}\sqn{\y^{k+1} - \y^k}
		+2\norm{\Delta_x^k}_\mP\norm{m^k}_\mP
		+\eta_x\sqn{\Delta_x^k}_\mP
		\\&\quad
		-2\<\nu^{-1} (\y_c^k + \x_c^k) + \z^{k+1} + \gamma (\y^k + \x^k + \nu \z^k), \y^{k+1} - \y^*>
		\\&\quad
		-2\<\nu^{-1} \mP(\y_c^k + \x_c^k) + \beta \mP (\x^k + \Delta^{k+1/2}), \x^{k} - \x^*>
		\\&=
		\frac{1}{\eta_y}\sqn{\y^{k} - \y^*}
		+\frac{1}{\eta_x}\sqn{\hat \x^{k} - \x^*} 
		-\frac{1}{\eta_y}\sqn{\y^{k+1} - \y^k}
		+2\norm{\Delta_x^k}_\mP\norm{m^k}_\mP
		+\eta_x\sqn{\Delta_x^k}_\mP
		\\&\quad
		-2\<\nu^{-1} (\y_c^k + \x_c^k) + \z^{k+1} + \gamma (\y^k + \x^k + \nu \z^k), \y^{k+1} - \y^*>
		\\&\quad
		-2\<\nu^{-1} \mP(\y_c^k + \x_c^k), \x^{k} - \x^*>
		-2\<\beta \mP (\x^k + \Delta^{k+1/2}), \x^{k} - \x^*>.
	\end{align*}
Definitions \eqref{eq:tv_ystar}, \eqref{eq:tv_xstar} gives $\mP\F(\z^*) + \x^* = 0$ $\mP(\y^* + \x^*) = 0$, $\nu^{-1}(\y^* + \x^*) + \z^* = 0$. Additionally, using $\mP \x^k = \x^k$, we get
    \begin{align*}
		\frac{1}{\eta_y}&\sqn{\y^{k+1} - \y^*}
		+\frac{1}{\eta_x}\sqn{\hat \x^{k+1} - \x^*} 
		\\&\leq
		\frac{1}{\eta_y}\sqn{\y^{k} - \y^*}
		+\frac{1}{\eta_x}\sqn{\hat \x^{k} - \x^*} 
		-\frac{1}{\eta_y}\sqn{\y^{k+1} - \y^k}
		+2\norm{\Delta_x^k}_\mP\norm{m^k}_\mP
		+\eta_x\sqn{\Delta_x^k}_\mP
		\\&\quad
		-2\<\nu^{-1} (\y_c^k + \x_c^k) + \z^{k+1} + \gamma (\y^k + \x^k + \nu \z^k), \y^{k+1} - \y^*>
		\\&\quad
		-2\<\nu^{-1} (\y_c^k + \x_c^k), \x^{k} - \x^*>
		-2\<\beta  (\x^k + \mP\Delta^{k+1/2}), \x^{k} - \x^*>
		\\&=
		\frac{1}{\eta_y}\sqn{\y^{k} - \y^*}
		+\frac{1}{\eta_x}\sqn{\hat \x^{k} - \x^*} 
		-\frac{1}{\eta_y}\sqn{\y^{k+1} - \y^k}
		+2\norm{\Delta_x^k}_\mP\norm{m^k}_\mP
		+\eta_x\sqn{\Delta_x^k}_\mP
		\\&\quad
		-2\<\nu^{-1} (\y_c^k + \x_c^k) + \z^{k+1} - \nu^{-1}(\y^* + \x^*) - \z^*, \y^{k+1} - \y^*> 
		\\&\quad
		-2\gamma \<\y^k + \x^k + \nu \z^k - (\y^* + \x^* + \nu\z^*), \y^{k+1} - \y^*>
		\\&\quad
		-2\<\nu^{-1} (\y_c^k + \x_c^k - \y^* - \x^*), \x^{k} - \x^*>
		\\&\quad
		-2\beta\<  \x^k + \mP\Delta^{k+1/2} -\mP\F(\z^*) - \x^*, \x^{k} - \x^*>.
	\end{align*}
And then
    \begin{align*}
		\frac{1}{\eta_y}&\sqn{\y^{k+1} - \y^*}
		+\frac{1}{\eta_x}\sqn{\hat \x^{k+1} - \x^*} 
		\\&\leq
		\frac{1}{\eta_y}\sqn{\y^{k} - \y^*}
		+\frac{1}{\eta_x}\sqn{\hat \x^{k} - \x^*} 
		-\frac{1}{\eta_y}\sqn{\y^{k+1} - \y^k}
		+2\norm{\Delta_x^k}_\mP\norm{m^k}_\mP
		+\eta_x\sqn{\Delta_x^k}_\mP
		\\&\quad
		-2\<\z^{k+1} - \z^*, \y^{k+1} - \y^*> -2\nu^{-1}\< \y_c^k + \x_c^k - \y^* - \x^*, \y^{k+1}+\x^k - \y^*-\x^*>
		\\&\quad
		-2\gamma\<  \y^k + \x^k + \nu \z^k - \y^* - \x^* - \nu\z^*, \y^{k+1} - \y^*>
		\\&\quad
		-2\beta\<\x^k + \mP\Delta^{k+1/2} - \x^* - \mP\F(\z^*), \x^{k} - \x^*>.
	\end{align*}
Using parallelogram rule we get
	\begin{align*}
		\frac{1}{\eta_y}&\sqn{\y^{k+1} - \y^*}
		+\frac{1}{\eta_x}\sqn{\hat \x^{k+1} - \x^*} 
		\\&\leq
		\frac{1}{\eta_y}\sqn{\y^{k} - \y^*}
		+\frac{1}{\eta_x}\sqn{\hat \x^{k} - \x^*} 
		-\frac{1}{\eta_y}\sqn{\y^{k+1} - \y^k} +2\norm{\Delta_x^k}_\mP\norm{m^k}_\mP
		+\eta_x\sqn{\Delta_x^k}_\mP
		\\&\quad
		-2\<\z^{k+1} - \z^*, \y^{k+1} - \y^*>
		-2\nu^{-1}\< \y_c^k + \x_c^k - \y^* - \x^*, \y^{k+1}+\x^k - \y^*-\x^*>
		\\&\quad
		-\gamma\left(
		\sqn{\y^k - \y^*}
		+\sqn{\y^{k+1} - \y^*} - \sqn{\y^{k+1} - \y^k}
		\right)
		\\&\quad
		+\gamma\left(
		\sqn{\x^k - \x^* + \nu(\z^k - \z^*)}
		+\sqn{\y^{k+1} - \y^*}
		\right)
		\\& \quad
		-2\beta\sqn{\x^k - \x^*} + \beta\sqn{\x^k - \x^*}
		+\beta\sqn{\Delta^{k+1/2} - \F(\z^*)}
		\\& \leq
		\frac{1}{\eta_y}\sqn{\y^{k} - \y^*}
		+\frac{1}{\eta_x}\sqn{\hat \x^{k} - \x^*} 
		-\left(\frac{1}{\eta_y} - \gamma\right)\sqn{\y^{k+1} - \y^k}
		\\&\quad
		+2\norm{\Delta_x^k}_\mP\norm{m^k}_\mP
		+\eta_x\sqn{\Delta_x^k}_\mP
		\\&\quad -2\<\z^{k+1} - \z^*, \y^{k+1} - \y^*>
		-2\nu^{-1}\< \y_c^k + \x_c^k - \y^* - \x^*, \y^{k+1}+\x^k - \y^*-\x^*>
		\\& \quad
		-\gamma\sqn{\y^k - \y^*}
		-(\beta - 2\gamma)\sqn{\x^k - \x^*}
		+2\gamma\nu^2\sqn{\z^k - \z^*}
		+\beta\sqn{\Delta^{k+1/2} - \F(\z^*)}.
	\end{align*}
The definition of $\beta = 5 \gamma$ gives
	\begin{align*}
		\frac{1}{\eta_y}&\sqn{\y^{k+1} - \y^*}
		+\frac{1}{\eta_x}\sqn{\hat \x^{k+1} - \x^*} 
		\\& \leq
		\frac{1}{\eta_y}\sqn{\y^{k} - \y^*}
		+\frac{1}{\eta_x}\sqn{\hat \x^{k} - \x^*} 
		-\left(\frac{1}{\eta_y} - \gamma\right)\sqn{\y^{k+1} - \y^k}
		\\&\quad
		+2\norm{\Delta_x^k}_\mP\norm{m^k}_\mP
		+\eta_x\sqn{\Delta_x^k}_\mP
		\\& \quad -2\<\z^{k+1} - \z^*, \y^{k+1} - \y^*>
		-2\nu^{-1}\< \y_c^k + \x_c^k - \y^* - \x^*, \y^{k+1}+\x^k - \y^*-\x^*>
		\\& \quad
		-\gamma\sqn{\y^k - \y^*}
		-3\gamma\sqn{\x^k - \x^*}
		+2\gamma\nu^2\sqn{\z^k - \z^*}
		+5\gamma\sqn{\Delta^{k+1/2} - \F(\z^*)}.
	\end{align*}
Using the definition \eqref{eq:hatz} of $\hat \x^k$, we get
	\begin{align*}
		\frac{1}{\eta_y}&\sqn{\y^{k+1} - \y^*}
		+\frac{1}{\eta_x}\sqn{\hat \x^{k+1} - \x^*} 
		\\& \leq
		\frac{1}{\eta_y}\sqn{\y^{k} - \y^*}
		+\frac{1}{\eta_x}\sqn{\hat \x^{k} - \x^*} 
		-\left(\frac{1}{\eta_y} - \gamma\right)\sqn{\y^{k+1} - \y^k}
		\\&\quad
		+2\norm{\Delta_x^k}_\mP\norm{m^k}_\mP
		+\eta_x\sqn{\Delta_x^k}_\mP
		\\& \quad -2\<\z^{k+1} - \z^*, \y^{k+1} - \y^*>
		-2\nu^{-1}\< \y_c^k + \x_c^k - \y^* - \x^*, \y^{k+1}+\x^k - \y^*-\x^*>
		\\& \quad
		-\gamma\sqn{\y^k - \y^*}
		-\gamma\sqn{\hat \x^k - \x^*}
		-\gamma\sqn{\x^k - \x^*}
		+2\gamma\sqn{m^k}_\mP
		+2\gamma\nu^2\sqn{\z^k - \z^*}
		\\&\quad
		+5\gamma\sqn{\Delta^{k+1/2} - \F(\z^*)}
		\\& =
		\left(\frac{1}{\eta_y} - \gamma\right)\sqn{\y^{k} - \y^*}
		+\left(\frac{1}{\eta_x} - \gamma\right)\sqn{\hat \x^{k} - \x^*} 
		-\left(\frac{1}{\eta_y} - \gamma\right)\sqn{\y^{k+1} - \y^k}
		\\& \quad
		+2\gamma\sqn{m^k}_\mP
		+2\norm{\Delta_x^k}_\mP\norm{m^k}_\mP
		+\eta_x\sqn{\Delta_x^k}_\mP
		-2\<\z^{k+1} - \z^*, \y^{k+1} - \y^*>
		\\& \quad
		-2\nu^{-1}\< \y_c^k + \x_c^k - \y^* - \x^*, \y^{k+1}+\x^k - \y^*-\x^*>
		\\& \quad
		-\gamma\sqn{\x^k - \x^*}
		+2\gamma\nu^2\sqn{\z^k - \z^*}
		+5\gamma\sqn{\Delta^{k+1/2} - \F(\z^*)}.
	\end{align*}
With updates of $\x_f^{k+1}$ and $\y_f^{k+1}$ from Algorithm~\ref{2dvi2:alg} we have
	\begin{align*}
		\frac{1}{\eta_y}&\sqn{\y^{k+1} - \y^*}
		+\frac{1}{\eta_x}\sqn{\hat \x^{k+1} - \x^*} 
		\\& \leq
		\left(\frac{1}{\eta_y} - \gamma\right)\sqn{\y^{k} - \y^*}
		+\left(\frac{1}{\eta_x} - \gamma\right)\sqn{\hat \x^{k} - \x^*} 
		-\left(\frac{1}{\eta_y} - \gamma\right)\sqn{\y^{k+1} - \y^k}
		\\& \quad
		+2\gamma\sqn{m^k}_\mP
		+2\norm{\Delta_x^k}_\mP\norm{m^k}_\mP
		+\eta_x\sqn{\Delta_x^k}_\mP
		-2\<\z^{k+1} - \z^*, \y^{k+1} - \y^*>
		\\& \quad
		-\gamma\sqn{\x^k - \x^*}
		+2\gamma\nu^2\sqn{\z^k - \z^*}
		+5\gamma\sqn{\Delta^{k+1/2} - \F(\z^*)}
		\\&\quad
		-\frac{2\nu^{-1}}{\tau}\< \y_c^k + \x_c^k - \y^* - \x^*, \y_f^{k+1} + \x_f^{k+1} - \y_c^k  - \x_c^k + \theta(\mW(k) \otimes \mI_d)(\y_c^k + \x_c^k)>
		\\& \quad
		-2\nu^{-1}\< \y_c^k + \x_c^k - \y^* - \x^*, \y^k+\x^k - \y^*-\x^*> 
		\\&=
		\left(\frac{1}{\eta_y} - \gamma\right)\sqn{\y^{k} - \y^*}
		+\left(\frac{1}{\eta_x} - \gamma\right)\sqn{\hat \x^{k} - \x^*} 
		-\left(\frac{1}{\eta_y} - \gamma\right)\sqn{\y^{k+1} - \y^k}
		\\& \quad
		+2\gamma\sqn{m^k}_\mP
		+2\norm{\Delta_x^k}_\mP\norm{m^k}_\mP
		+\eta_x\sqn{\Delta_x^k}_\mP
		-2\<\z^{k+1} - \z^*, \y^{k+1} - \y^*>
		\\& \quad
		-\gamma\sqn{\x^k - \x^*}
		+2\gamma\nu^2\sqn{\z^k - \z^*}
		+5\gamma\sqn{\Delta^{k+1/2} - \F(\z^*)}
		\\&\quad
		-\frac{2\theta\nu^{-1}}{\tau}\< \y_c^k + \x_c^k - \y^* - \x^*, (\mW(k) \otimes \mI_d)(\y_c^k + \x_c^k)>
		\\&\quad
		-\frac{2\nu^{-1}}{\tau}\< \y_c^k + \x_c^k - \y^* - \x^*, \y_f^{k+1} + \x_f^{k+1} - \y_c^k  - \x_c^k>
		\\& \quad
		-2\nu^{-1}\< \y_c^k + \x_c^k - \y^* - \x^*, \y^k+\x^k - \y^*-\x^*> 
	\end{align*}		
Using definitions \eqref{eq:tv_ystar}, \eqref{eq:tv_xstar} 	we get $\y^* + \x^* \in \cL$ and then	
	\begin{align*}
		\frac{1}{\eta_y}&\sqn{\y^{k+1} - \y^*}
		+\frac{1}{\eta_x}\sqn{\hat \x^{k+1} - \x^*} 
		\\& \leq
		\left(\frac{1}{\eta_y} - \gamma\right)\sqn{\y^{k} - \y^*}
		+\left(\frac{1}{\eta_x} - \gamma\right)\sqn{\hat \x^{k} - \x^*} 
		-\left(\frac{1}{\eta_y} - \gamma\right)\sqn{\y^{k+1} - \y^k}
		\\& \quad
		+2\gamma\sqn{m^k}_\mP
		+2\norm{\Delta_x^k}_\mP\norm{m^k}_\mP
		+\eta_x\sqn{\Delta_x^k}_\mP
		-2\<\z^{k+1} - \z^*, \y^{k+1} - \y^*>
		\\& \quad
		-\gamma\sqn{\x^k - \x^*}
		+2\gamma\nu^2\sqn{\z^k - \z^*}
		+5\gamma\sqn{\Delta^{k+1/2} - \F(\z^*)}
		\\&\quad
		-\frac{2\theta\nu^{-1}}{\tau}\< \y_c^k + \x_c^k, (\mW(k) \otimes \mI_d)(\y_c^k + \x_c^k)>
		\\&\quad
		-\frac{2\nu^{-1}}{\tau}\< \y_c^k + \x_c^k - \y^* - \x^*, \y_f^{k+1} + \x_f^{k+1} - \y_c^k  - \x_c^k>
		\\& \quad
		-2\nu^{-1}\< \y_c^k + \x_c^k - \y^* - \x^*, \y^k+\x^k - \y^*-\x^*> 
	    \\&=
		\left(\frac{1}{\eta_y} - \gamma\right)\sqn{\y^{k} - \y^*}
		+\left(\frac{1}{\eta_x} - \gamma\right)\sqn{\hat \x^{k} - \x^*} 
		-\left(\frac{1}{\eta_y} - \gamma\right)\sqn{\y^{k+1} - \y^k}
		\\& \quad
		+2\gamma\sqn{m^k}_\mP
		+2\norm{\Delta_x^k}_\mP\norm{m^k}_\mP
		+\eta_x\sqn{\Delta_x^k}_\mP
		-2\<\z^{k+1} - \z^*, \y^{k+1} - \y^*>
		\\& \quad
		-\gamma\sqn{\x^k - \x^*}
		+2\gamma\nu^2\sqn{\z^k - \z^*}
		+5\gamma\sqn{\Delta^{k+1/2} - \F(\z^*)}
		\\&\quad
		-\frac{2\theta\nu^{-1}}{\tau}\sqn{\y_c^k+\x_c^k}_{(\mW(k) \otimes \mI_d)}
		\\&\quad
		-\frac{2\nu^{-1}}{\tau}\< \y_c^k + \x_c^k - \y^* - \x^*, \y_f^{k+1} + \x_f^{k+1} - \y_c^k  - \x_c^k>
		\\& \quad
		-2\nu^{-1}\< \y_c^k + \x_c^k - \y^* - \x^*, \y^k+\x^k - \y^*-\x^*> 
	\end{align*}
By parallelogram rule we obtain	
	\begin{align*}		
		\frac{1}{\eta_y}&\sqn{\y^{k+1} - \y^*}
		+\frac{1}{\eta_x}\sqn{\hat \x^{k+1} - \x^*} 
		\\& \leq
		\left(\frac{1}{\eta_y} - \gamma\right)\sqn{\y^{k} - \y^*}
		+\left(\frac{1}{\eta_x} - \gamma\right)\sqn{\hat \x^{k} - \x^*} 
		-\left(\frac{1}{\eta_y} - \gamma\right)\sqn{\y^{k+1} - \y^k}
		\\& \quad
		+2\gamma\sqn{m^k}_\mP
		+2\norm{\Delta_x^k}_\mP\norm{m^k}_\mP
		+\eta_x\sqn{\Delta_x^k}_\mP
		-2\<\z^{k+1} - \z^*, \y^{k+1} - \y^*>
		\\& \quad
		-\gamma\sqn{\x^k - \x^*}
		+2\gamma\nu^2\sqn{\z^k - \z^*}
		+5\gamma\sqn{\Delta^{k+1/2} - \F(\z^*)}
		\\& \quad
		-\frac{\nu^{-1}}{\tau}\sqn{\y_f^{k+1} + \x_f^{k+1} - \y^* - \x^*}
		+\frac{\nu^{-1}}{\tau}\sqn{\y_c^{k} + \x_c^{k} - \y^* - \x^*}
		\\&\quad
		+\frac{\nu^{-1}}{\tau}\sqn{\y_f^{k+1} + \x_f^{k+1} - \y_c^k - \x_c^k}
		\\& \quad
		-\frac{2\theta\nu^{-1}}{\tau}\sqn{\y_c^k+\x_c^k}_{(\mW(k) \otimes \mI_d)}
		-2\nu^{-1}\< \y_c^k + \x_c^k - \y^* - \x^*, \y^k+\x^k - \y^*-\x^*>
		\\& \leq
		\left(\frac{1}{\eta_y} - \gamma\right)\sqn{\y^{k} - \y^*}
		+\left(\frac{1}{\eta_x} - \gamma\right)\sqn{\hat \x^{k} - \x^*} 
		-\left(\frac{1}{\eta_y} - \gamma\right)\sqn{\y^{k+1} - \y^k}
		\\& \quad
		+2\gamma\sqn{m^k}_\mP
		+2\norm{\Delta_x^k}_\mP\norm{m^k}_\mP
		+\eta_x\sqn{\Delta_x^k}_\mP
		-2\<\z^{k+1} - \z^*, \y^{k+1} - \y^*>
		\\& \quad
		-\gamma\sqn{\x^k - \x^*}
		+2\gamma\nu^2\sqn{\z^k - \z^*}
		+5\gamma\sqn{\Delta^{k+1/2} - \F(\z^*)}
		\\& \quad
		-\frac{\nu^{-1}}{\tau}\sqn{\y_f^{k+1} + \x_f^{k+1} - \y^* - \x^*}
		+\frac{\nu^{-1}}{\tau}\sqn{\y_c^{k} + \x_c^{k} - \y^* - \x^*}
		\\& \quad
		+\frac{2\nu^{-1}}{\tau}\sqn{\y_f^{k+1} - \y_c^k}
		+\frac{2\nu^{-1}}{\tau}\sqn{\x_f^{k+1} - \x_c^k}
		-\frac{2\theta\nu^{-1}}{\tau}\sqn{\y_c^k+\x_c^k}_{(\mW(k) \otimes \mI_d)}
		\\& \quad
		-2\nu^{-1}\< \y_c^k + \x_c^k - \y^* - \x^*, \y^k+\x^k - \y^*-\x^*>
	\end{align*}		
Again by expressions for $\x_f^{k+1}$ and $\y_f^{k+1}$
	\begin{align*}
	\frac{1}{\eta_y}&\sqn{\y^{k+1} - \y^*}
		+\frac{1}{\eta_x}\sqn{\hat \x^{k+1} - \x^*}
		\\& \leq
		\left(\frac{1}{\eta_y} - \gamma\right)\sqn{\y^{k} - \y^*}
		+\left(\frac{1}{\eta_x} - \gamma\right)\sqn{\hat \x^{k} - \x^*} 
		-\left(\frac{1}{\eta_y} - \gamma\right)\sqn{\y^{k+1} - \y^k}
		\\& \quad
		+2\gamma\sqn{m^k}_\mP
		+2\norm{\Delta_x^k}_\mP\norm{m^k}_\mP
		+\eta_x\sqn{\Delta_x^k}_\mP
		-2\<x^{k+1} - \x^*, \y^{k+1} - \y^*>
		\\& \quad
		-\gamma\sqn{\x^k - \x^*}
		+2\gamma\nu^2\sqn{\z^k - \z^*}
		+5\gamma\sqn{\Delta^{k+1/2} - \F(\x^*)}
		\\& \quad
		-\frac{\nu^{-1}}{\tau}\sqn{\y_f^{k+1} + \x_f^{k+1} - \y^* - \x^*}
		+\frac{\nu^{-1}}{\tau}\sqn{\y_c^{k} + \x_c^{k} - \y^* - \x^*}
		\\& \quad
		+ 2\nu^{-1}\tau\sqn{\y^{k+1} - \y^k}
		+\frac{2\theta^2\nu^{-1}}{\tau}\sqn{\y_c^k + \x_c^k}_{(\mW(k) \otimes \mI_d)^2}
		-\frac{2\theta\nu^{-1}}{\tau}\sqn{\y_c^k+\x_c^k}_{(\mW(k) \otimes \mI_d)}
		\\& \quad
		-2\nu^{-1}\< \y_c^k + \x_c^k - \y^* - \x^*, \y^k+\x^k - \y^*-\x^*>.
	\end{align*}
Using the contraction property of the gossip matrix and the definition of $\theta = \tfrac{1}{2}$ we get
	\begin{align*}
		\frac{1}{\eta_y}&\sqn{\y^{k+1} - \y^*}
		+\frac{1}{\eta_x}\sqn{\hat \x^{k+1} - \x^*} 
		\\& \leq
		\left(\frac{1}{\eta_y} - \gamma\right)\sqn{\y^{k} - \y^*}
		+\left(\frac{1}{\eta_x} - \gamma\right)\sqn{\hat \x^{k} - \x^*} 
		-\left(\frac{1}{\eta_y} - \gamma - 2\nu^{-1}\tau\right)\sqn{\y^{k+1} - \y^k}
		\\& \quad
		+2\gamma\sqn{m^k}_\mP
		+2\norm{\Delta_x^k}_\mP\norm{m^k}_\mP
		+\eta_x\sqn{\Delta_x^k}_\mP
		-\frac{\nu^{-1}}{2\tau\chi(T)}\sqn{\y_c^k+\x_c^k}_\mP
		\\& \quad
		-2\<\z^{k+1} - \z^*, \y^{k+1} - \y^*>
		-\gamma\sqn{\x^k - \x^*}
		+2\gamma\nu^2\sqn{\z^k - \z^*}
		+5\gamma\sqn{\Delta^{k+1/2} - \F(\z^*)}
		\\& \quad
		-\frac{\nu^{-1}}{\tau}\sqn{\y_f^{k+1} + \x_f^{k+1} - \y^* - \x^*}
		+\frac{\nu^{-1}}{\tau}\sqn{\y_c^{k} + \x_c^{k} - \y^* - \x^*}
		\\& \quad
		-2\nu^{-1}\< \y_c^k + \x_c^k - \y^* - \x^*, \y^k+\x^k - \y^*-\x^*>.
	\end{align*}
By updates for $\x^k_c$ and $\y^k_c$ we get
    \begin{align*}
		\frac{1}{\eta_y}&\sqn{\y^{k+1} - \y^*}
		+\frac{1}{\eta_x}\sqn{\hat \x^{k+1} - \x^*} 
		\\& \leq
		\left(\frac{1}{\eta_y} - \gamma\right)\sqn{\y^{k} - \y^*}
		+\left(\frac{1}{\eta_x} - \gamma\right)\sqn{\hat \x^{k} - \x^*} 
		-\left(\frac{1}{\eta_y} - \gamma - 2\nu^{-1}\tau\right)\sqn{\y^{k+1} - \y^k}
		\\& \quad
		+2\gamma\sqn{m^k}_\mP
		+2\norm{\Delta_x^k}_\mP\norm{m^k}_\mP
		+\eta_x\sqn{\Delta_x^k}_\mP
		-\frac{\nu^{-1}}{2\tau\chi(T)}\sqn{\y_c^k+\x_c^k}_\mP
		\\& \quad
		-2\<\z^{k+1} - \z^*, \y^{k+1} - \y^*>
		-\gamma\sqn{\x^k - \x^*}
		+2\gamma\nu^2\sqn{\z^k - \z^*}
		+5\gamma\sqn{\Delta^{k+1/2} - \F(\z^*)}
		\\& \quad
		-\frac{\nu^{-1}}{\tau}\sqn{\y_f^{k+1} + \x_f^{k+1} - \y^* - \x^*} +\frac{\nu^{-1}}{\tau}\sqn{\y_c^{k} + \x_c^{k} - \y^* - \x^*}
		\\& \quad
		-2\nu^{-1}\< \y_c^k + \x_c^k - \y^* - \x^*, \y^k_c+\x^k_c - \y^*-\x^*> 
		\\& \quad
		-2\nu^{-1}\< \y_c^k + \x_c^k - \y^* - \x^*, \y^k+\x^k - \y_c^k-\x_c^k>
		\\& =
		\left(\frac{1}{\eta_y} - \gamma\right)\sqn{\y^{k} - \y^*}
		+\left(\frac{1}{\eta_x} - \gamma\right)\sqn{\hat \x^{k} - \x^*} 
		-\left(\frac{1}{\eta_y} - \gamma - 2\nu^{-1}\tau\right)\sqn{\y^{k+1} - \y^k}
		\\& \quad
		+2\gamma\sqn{m^k}_\mP
		+2\norm{\Delta_x^k}_\mP\norm{m^k}_\mP
		+\eta_x\sqn{\Delta_x^k}_\mP
		-\frac{\nu^{-1}}{2\tau\chi(T)}\sqn{\y_c^k+\x_c^k}_\mP
		\\& \quad
		-2\<\z^{k+1} - \z^*, \y^{k+1} - \y^*>
		-\gamma\sqn{\x^k - \x^*}
		+2\gamma\nu^2\sqn{\z^k - \z^*}
		+5\gamma\sqn{\Delta^{k+1/2} - \F(\z^*)}
		\\& \quad
		-\frac{\nu^{-1}}{\tau}\sqn{\y_f^{k+1} + \x_f^{k+1} - \y^* - \x^*}
		+\frac{\nu^{-1}}{\tau}\sqn{\y_c^{k} + \x_c^{k} - \y^* - \x^*}
		\\& \quad
		-2\nu^{-1}\sqn{\y_c^{k} + \x_c^{k} - \y^* - \x^*}
		\\& \quad
		+\frac{2\nu^{-1}(1-\tau)}{\tau}\< \y_c^k + \x_c^k - \y^* - \x^*, \y^k_f+\x^k_f - \y_c^k-\x_c^k>.
	\end{align*}
Using parallelogram rule we obtain	
     \begin{align*}
		\frac{1}{\eta_y}&\sqn{\y^{k+1} - \y^*}
		+\frac{1}{\eta_x}\sqn{\hat \x^{k+1} - \x^*} 
		\\& \leq
		\left(\frac{1}{\eta_y} - \gamma\right)\sqn{\y^{k} - \y^*}
		+\left(\frac{1}{\eta_x} - \gamma\right)\sqn{\hat \x^{k} - \x^*} 
		-\left(\frac{1}{\eta_y} - \gamma - 2\nu^{-1}\tau\right)\sqn{\y^{k+1} - \y^k}
		\\& \quad
		+2\gamma\sqn{m^k}_\mP
		+2\norm{\Delta_x^k}_\mP\norm{m^k}_\mP
		+\eta_x\sqn{\Delta_x^k}_\mP
		-\frac{\nu^{-1}}{2\tau\chi(T)}\sqn{\y_c^k+\x_c^k}_\mP
		\\& \quad
		-2\<\z^{k+1} - \z^*, \y^{k+1} - \y^*>
		-\gamma\sqn{\x^k - \x^*}
		+2\gamma\nu^2\sqn{\z^k - \z^*}
		+5\gamma\sqn{\Delta^{k+1/2} - \F(\z^*)}
		\\& \quad
		-\frac{\nu^{-1}}{\tau}\sqn{\y_f^{k+1} + \x_f^{k+1} - \y^* - \x^*}
		+\frac{\nu^{-1}}{\tau}\sqn{\y_c^{k} + \x_c^{k} - \y^* - \x^*}
		\\& \quad
		-2\nu^{-1}\sqn{\y_c^{k} + \x_c^{k} - \y^* - \x^*}
		\\& \quad
		+\frac{\nu^{-1}(1-\tau)}{\tau}\sqn{\y^k_f+\x^k_f - \y^* - \x^*} - \frac{\nu^{-1}(1-\tau)}{\tau}\sqn{\y_c^{k} + \x_c^{k} - \y^* - \x^*}
		\\& \leq
		\left(\frac{1}{\eta_y} - \gamma\right)\sqn{\y^{k} - \y^*}
		+\left(\frac{1}{\eta_x} - \gamma\right)\sqn{\hat \x^{k} - \x^*} 
		-\left(\frac{1}{\eta_y} - \gamma - 2\nu^{-1}\tau\right)\sqn{\y^{k+1} - \y^k}
		\\& \quad
		+2\gamma\sqn{m^k}_\mP
		+2\norm{\Delta_x^k}_\mP\norm{m^k}_\mP
		+\eta_x\sqn{\Delta_x^k}_\mP
		-\frac{\nu^{-1}}{2\tau\chi(T)}\sqn{\y_c^k+\x_c^k}_\mP
		\\& \quad
		-2\<\z^{k+1} - \z^*, \y^{k+1} - \y^*>
		-\gamma\sqn{\x^k - \x^*}
		+2\gamma\nu^2\sqn{\z^k - \z^*}
		+5\gamma\sqn{\Delta^{k+1/2} - \F(\z^*)}
		\\& \quad
		-\frac{\nu^{-1}}{\tau}\sqn{\y_f^{k+1} + \x_f^{k+1} - \y^* - \x^*}
		+\frac{\nu^{-1}(1-\tau)}{\tau}\sqn{\y^k_f+\x^k_f - \y^* - \x^*} .
	\end{align*}
Using Young's inequality we get
    \begin{align*}
		\frac{1}{\eta_y}&\sqn{\y^{k+1} - \y^*}
		+\frac{1}{\eta_x}\sqn{\hat \x^{k+1} - \x^*} 
		\\& \leq
		\left(\frac{1}{\eta_y} - \gamma\right)\sqn{\y^{k} - \y^*}
		+\left(\frac{1}{\eta_x} - \gamma\right)\sqn{\hat \x^{k} - \x^*} 
		-\left(\frac{1}{\eta_y} - \gamma - 2\nu^{-1}\tau\right)\sqn{\y^{k+1} - \y^k}
		\\& \quad
		+2\gamma\sqn{m^k}_\mP + 4\eta_x\chi(T)\sqn{\Delta_x^k}_\mP
		+(4\eta_x\chi(T))^{-1}\sqn{m^k}_\mP
		+\eta_x\sqn{\Delta_x^k}_\mP
		\\& \quad
		-\frac{\nu^{-1}}{2\tau\chi(T)}\sqn{\y_c^k+\x_c^k}_\mP
		-2\<\z^{k+1} - \z^*, \y^{k+1} - \y^*>
		-\gamma\sqn{\x^k - \x^*}
		\\& \quad
		+2\gamma\nu^2\sqn{\z^k - \z^*}
		+5\gamma\sqn{\Delta^{k+1/2} - \F(\z^*)}
		-\frac{\nu^{-1}}{\tau}\sqn{\y_f^{k+1} + \x_f^{k+1} - \y^* - \x^*}
		\\& \quad
		+\frac{\nu^{-1}(1-\tau)}{\tau}\sqn{\y^k_f+\x^k_f - \y^* - \x^*} 
		\\& =
		\left(\frac{1}{\eta_y} - \gamma\right)\sqn{\y^{k} - \y^*}
		+\left(\frac{1}{\eta_x} - \gamma\right)\sqn{\hat \x^{k} - \x^*} 
		-\left(\frac{1}{\eta_y} - \gamma - 2\nu^{-1}\tau\right)\sqn{\y^{k+1} - \y^k}
		\\& \quad
		+2\gamma\sqn{m^k}_\mP + 4\eta_x\chi(T)\sqn{\Delta_x^k}_\mP
		+(4\eta_x\chi(T))^{-1}\sqn{m^k}_\mP
		+\eta_x\sqn{\Delta_x^k}_\mP
		\\& \quad
		-\frac{\nu^{-1}}{2\tau\chi(T)}\sqn{\y_c^k+\x_c^k}_\mP
		-2\<\z^{k+1} - \z^*, \y^{k+1} - \y^*>
		-\gamma\sqn{\x^k - \x^*}
		\\& \quad
		+2\gamma\nu^2\sqn{\z^k - \z^*}
		+5\gamma\sqn{\Delta^{k+1/2} - \F(\z^*)}
		-\frac{\nu^{-1}}{\tau}\sqn{\y_f^{k+1} + \x_f^{k+1} - \y^* - \x^*}
		\\& \quad
		+\frac{\nu^{-1}(1-\tau)}{\tau}\sqn{\y^k_f+\x^k_f - \y^* - \x^*} .
	\end{align*}
By the assumption on $\eta_x \leq \tfrac{1}{8\chi(T)\gamma}$ we get that $2\gamma \leq \tfrac{1}{4 \eta_x}$ and
    \begin{align*}
		\frac{1}{\eta_y}&\sqn{\y^{k+1} - \y^*}
		+\frac{1}{\eta_x}\sqn{\hat \x^{k+1} - \x^*} 
		\\& \leq
		\left(\frac{1}{\eta_y} - \gamma\right)\sqn{\y^{k} - \y^*}
		+\left(\frac{1}{\eta_x} - \gamma\right)\sqn{\hat \x^{k} - \x^*} 
		-\left(\frac{1}{\eta_y} - \gamma - 2\nu^{-1}\tau\right)\sqn{\y^{k+1} - \y^k}
		\\& \quad + 
		4\eta_x\chi(T)\sqn{\Delta_x^k}_\mP
		+(2\eta_x\chi(T))^{-1}\sqn{m^k}_\mP
		+\eta_x\sqn{\Delta_x^k}_\mP
		-\frac{\nu^{-1}}{2\tau\chi(T)}\sqn{\y_c^k+\x_c^k}_\mP
		\\& \quad
		-2\<\z^{k+1} - \z^*, \y^{k+1} - \y^*>
		-\gamma\sqn{\x^k - \x^*}
		+2\gamma\nu^2\sqn{\z^k - \z^*}
		\\& \quad
		+5\gamma\sqn{\Delta^{k+1/2} - \F(\z^*)}
		-\frac{\nu^{-1}}{\tau}\sqn{\y_f^{k+1} + \x_f^{k+1} - \y^* - \x^*}
		\\& \quad
		+\frac{\nu^{-1}(1-\tau)}{\tau}\sqn{\y^k_f+\x^k_f - \y^* - \x^*} .
	\end{align*}
Using \eqref{dvi2:eq:m} we get
	\begin{align*}
		\frac{1}{\eta_y}&\sqn{\y^{k+1} - \y^*}
		+\frac{1}{\eta_x}\sqn{\hat \x^{k+1} - \x^*} 
		\\& \leq
		\left(\frac{1}{\eta_y} - \gamma\right)\sqn{\y^{k} - \y^*}
		+\left(\frac{1}{\eta_x} - \gamma\right)\sqn{\hat \x^{k} - \x^*} 
		-\left(\frac{1}{\eta_y} - \gamma - 2\nu^{-1}\tau\right)\sqn{\y^{k+1} - \y^k}
		\\& \quad + 
		8\eta_x\chi(T)\sqn{\Delta_x^k}_\mP
		+(1-(4\chi(T))^{-1})\frac{2}{\eta_x}\sqn{m^k}_\mP -\frac{2}{\eta_x}\sqn{m^{k+1}}_\mP
		+\eta_x\sqn{\Delta_x^k}_\mP
		\\& \quad
		-\frac{\nu^{-1}}{2\tau\chi(T)}\sqn{\y_c^k+\x_c^k}_\mP -2\<\z^{k+1} - \z^*, \y^{k+1} - \y^*>
		-\gamma\sqn{\x^k - \x^*}
		+2\gamma\nu^2\sqn{\z^k - \z^*}
		\\& \quad
	    +5\gamma\sqn{\Delta^{k+1/2} - \F(\z^*)}-\frac{\nu^{-1}}{\tau}\sqn{\y_f^{k+1} + \x_f^{k+1} - \y^* - \x^*}
	    \\& \quad
	    +\frac{\nu^{-1}(1-\tau)}{\tau}\sqn{\y^k_f+\x^k_f - \y^* - \x^*} .
	\end{align*}
Using the definition of $\Delta_x^k$ with definition of $\beta = 5 \gamma$ we get
    \begin{align*}
		\frac{1}{\eta_y}&\sqn{\y^{k+1} - \y^*}
		+\frac{1}{\eta_x}\sqn{\hat \x^{k+1} - \x^*} 
		\\& \leq
		\left(\frac{1}{\eta_y} - \gamma\right)\sqn{\y^{k} - \y^*}
		+\left(\frac{1}{\eta_x} - \gamma\right)\sqn{\hat \x^{k} - \x^*} 
		-\left(\frac{1}{\eta_y} - \gamma - 2\nu^{-1}\tau\right)\sqn{\y^{k+1} - \y^k}
		\\& \quad + 
		9\eta_x\chi(T)\sqn{\nu^{-1} (\y_c^k + \x_c^k) + 5\gamma(\x^k + \Delta^{k+1/2})}_\mP
		+(1-(4\chi(T))^{-1})\frac{2}{\eta_x}\sqn{m^k}_\mP
		\\& \quad
		-\frac{2}{\eta_x}\sqn{m^{k+1}}_\mP -\frac{\nu^{-1}}{2\tau\chi(T)}\sqn{\y_c^k+\x_c^k}_\mP -2\<\z^{k+1} - \z^*, \y^{k+1} - \y^*>
		-\gamma\sqn{\x^k - \x^*}
		\\& \quad
		+2\gamma\nu^2\sqn{\z^k - \z^*}
		+5\gamma\sqn{\Delta^{k+1/2} - \F(\z^*)}
		-\frac{\nu^{-1}}{\tau}\sqn{\y_f^{k+1} + \x_f^{k+1} - \y^* - \x^*}
		\\& \quad
		+\frac{\nu^{-1}(1-\tau)}{\tau}\sqn{\y^k_f+\x^k_f - \y^* - \x^*} .
	\end{align*}
By definition $\mP\F(\z^*) + \x^* = 0$ we obtain
    \begin{align*}
		\frac{1}{\eta_y}&\sqn{\y^{k+1} - \y^*}
		+\frac{1}{\eta_x}\sqn{\hat \x^{k+1} - \x^*} 
		\\& \leq
		\left(\frac{1}{\eta_y} - \gamma\right)\sqn{\y^{k} - \y^*}
		+\left(\frac{1}{\eta_x} - \gamma\right)\sqn{\hat \x^{k} - \x^*} 
		-\left(\frac{1}{\eta_y} - \gamma - 2\nu^{-1}\tau\right)\sqn{\y^{k+1} - \y^k}
		\\& \quad + 
		18\eta_x \nu^{-2}\chi(T)\sqn{ \y_c^k + \x_c^k}_\mP +
		450\eta_x\chi(T) \gamma^2 \sqn{\x^k - \x^* - \mP\F(\z^*) + \Delta^{k+1/2}}_\mP
		\\& \quad
		+(1-(4\chi(T))^{-1})\frac{2}{\eta_x}\sqn{m^k}_\mP
		-\frac{2}{\eta_x}\sqn{m^{k+1}}_\mP
		-\frac{\nu^{-1}}{2\tau\chi(T)}\sqn{\y_c^k+\x_c^k}_\mP
		\\& \quad
		-2\<\z^{k+1} - \z^*, \y^{k+1} - \y^*>
		-\gamma\sqn{\x^k - \x^*}
		+2\gamma\nu^2\sqn{\z^k - \z^*}
		+5\gamma\sqn{\Delta^{k+1/2}  - \F(\z^*)}
		\\& \quad
		-\frac{\nu^{-1}}{\tau}\sqn{\y_f^{k+1} + \x_f^{k+1} - \y^* - \x^*} +\frac{\nu^{-1}(1-\tau)}{\tau}\sqn{\y^k_f+\x^k_f - \y^* - \x^*} 
		\\& \leq
		\left(\frac{1}{\eta_y} - \gamma\right)\sqn{\y^{k} - \y^*}
		+\left(\frac{1}{\eta_x} - \gamma\right)\sqn{\hat \x^{k} - \x^*} 
		-\left(\frac{1}{\eta_y} - \gamma - 2\nu^{-1}\tau\right)\sqn{\y^{k+1} - \y^k}
		\\& \quad + 
		18\eta_x \nu^{-2}\chi(T)\sqn{ \y_c^k + \x_c^k}_\mP +
		900\eta_x\chi(T) \gamma^2 \sqn{\x^k - \x^*}_\mP
		\\& \quad
		+
		900\eta_x\chi(T) \gamma^2 \sqn{  \Delta^{k+1/2} - \mP\F(\z^*)}_\mP +(1-(4\chi(T))^{-1})\frac{2}{\eta_x}\sqn{m^k}_\mP 
		\\& \quad
		-\frac{2}{\eta_x}\sqn{m^{k+1}}_\mP
		-\frac{\nu^{-1}}{2\tau\chi(T)}\sqn{\y_c^k+\x_c^k}_\mP 
		-2\<\z^{k+1} - \z^*, \y^{k+1} - \y^*> -\gamma\sqn{\x^k - \x^*}
		\\& \quad
		+2\gamma\nu^2\sqn{\z^k - \z^*} +5\gamma\sqn{\Delta^{k+1/2}  - \F(\z^*)} 
		\\& \quad
		-\frac{\nu^{-1}}{\tau}\sqn{\y_f^{k+1} + \x_f^{k+1} - \y^* - \x^*} +\frac{\nu^{-1}(1-\tau)}{\tau}\sqn{\y^k_f+\x^k_f - \y^* - \x^*} .
	\end{align*}
With the assumption on $\eta_x \leq \frac{1}{900\chi(T)\gamma}; \frac{\nu}{36\tau\chi^2(T)}$ we get
    \begin{align*}
		\frac{1}{\eta_y}&\sqn{\y^{k+1} - \y^*}
		+\frac{1}{\eta_x}\sqn{\hat \x^{k+1} - \x^*} 
		\\& \leq
		\left(\frac{1}{\eta_y} - \gamma\right)\sqn{\y^{k} - \y^*}
		+\left(\frac{1}{\eta_x} - \gamma\right)\sqn{\hat \x^{k} - \x^*} 
		-\left(\frac{1}{\eta_y} - \gamma - 2\nu^{-1}\tau\right)\sqn{\y^{k+1} - \y^k}
		\\& \quad + 
		\frac{\nu^{-1}}{2\tau\chi(T)}\sqn{ \y_c^k + \x_c^k}_\mP +
		\gamma \sqn{\x^k - \x^*}_\mP
		+
		\gamma\sqn{  \Delta^{k+1/2} - \mP\F(\z^*)}_\mP
		\\& \quad
		+(1-(4\chi(T))^{-1})\frac{2}{\eta_x}\sqn{m^k}_\mP -\frac{2}{\eta_x}\sqn{m^{k+1}}_\mP
		-\frac{\nu^{-1}}{2\tau\chi(T)}\sqn{\y_c^k+\x_c^k}_\mP 
		\\& \quad
		-2\<\z^{k+1} - \z^*, \y^{k+1} - \y^*> -\gamma\sqn{\x^k - \x^*}
		+2\gamma\nu^2\sqn{\z^k - \z^*} +5\gamma\sqn{\Delta^{k+1/2}  - \F(\z^*)} 
		\\& \quad
		-\frac{\nu^{-1}}{\tau}\sqn{\y_f^{k+1} + \x_f^{k+1} - \y^* - \x^*} +\frac{\nu^{-1}(1-\tau)}{\tau}\sqn{\y^k_f+\x^k_f - \y^* - \x^*} .
	\end{align*}
With property of the projector: $\mP \mP = \mP$, we get
    \begin{align*}
		\frac{1}{\eta_y}&\sqn{\y^{k+1} - \y^*}
		+\frac{1}{\eta_x}\sqn{\hat \x^{k+1} - \x^*} 
		\\& \leq
		\left(\frac{1}{\eta_y} - \gamma\right)\sqn{\y^{k} - \y^*}
		+\left(\frac{1}{\eta_x} - \gamma\right)\sqn{\hat \x^{k} - \x^*} 
		-\left(\frac{1}{\eta_y} - \gamma - 2\nu^{-1}\tau\right)\sqn{\y^{k+1} - \y^k}
		\\& \quad
		+(1-(4\chi(T))^{-1})\frac{2}{\eta_x}\sqn{m^k}_\mP -\frac{2}{\eta_x}\sqn{m^{k+1}}_\mP
		-2\<\z^{k+1} - \z^*, \y^{k+1} - \y^*>
		\\& \quad
		+2\gamma\nu^2\sqn{\z^k - \z^*} +6\gamma\sqn{\Delta^{k+1/2}  - \F(\z^*)} 
		\\& \quad
		-\frac{\nu^{-1}}{\tau}\sqn{\y_f^{k+1} + \x_f^{k+1} - \y^* - \x^*} +\frac{\nu^{-1}(1-\tau)}{\tau}\sqn{\y^k_f+\x^k_f - \y^* - \x^*} .
	\end{align*}
Taking full expectation and using \eqref{2vrvi:eq:2} we have
    \begin{align*}
		\frac{1}{\eta_y}&\E{\sqn{\y^{k+1} - \y^*}}
		+\frac{1}{\eta_x}\E{\sqn{\hat \x^{k+1} - \x^*}}
		\\& \leq
		\left(\frac{1}{\eta_y} - \gamma\right)\E{\sqn{\y^{k} - \y^*}}
		+\left(\frac{1}{\eta_x} - \gamma\right)\E{\sqn{\hat \x^{k} - \x^*}}
		\\& \quad
		-\left(\frac{1}{\eta_y} - \gamma - 2\nu^{-1}\tau\right)\E{\sqn{\y^{k+1} - \y^k}} +(1-(4\chi(T))^{-1})\frac{2}{\eta_x}\E{\sqn{m^k}_\mP}
		\\& \quad
		-\frac{2}{\eta_x}\E{\sqn{m^{k+1}}_\mP} -2\E{\<\z^{k+1} - \z^*, \y^{k+1} - \y^*>} +2\gamma\nu^2\E{\sqn{\z^k - \z^*}} 
		\\& \quad
		+ 12 \gamma L^2\E{\sqn{\z^{k+1} - \z^*}} + \frac{12\gamma \Lavg^2}{b} \E{\sqn{\z^{k+1} - \w^{k}}} 
		\\& \quad
		-\frac{\nu^{-1}}{\tau}\E{\sqn{\y_f^{k+1} + \x_f^{k+1} - \y^* - \x^*}} +\frac{\nu^{-1}(1-\tau)}{\tau}\E{\sqn{\y^k_f+\x^k_f - \y^* - \x^*}} .
	\end{align*}
Using the definition of $\eta_y$ we get that $\gamma \leq \frac{1}{4 \eta_y}$ and $2\nu^{-1}\tau \leq \frac{1}{4\eta_y}$
    \begin{align*}
		\frac{1}{\eta_y}&\E{\sqn{\y^{k+1} - \y^*}}
		+\frac{1}{\eta_x}\E{\sqn{\hat \x^{k+1} - \x^*}}
		\\& \leq
		\left(\frac{1}{\eta_y} - \gamma\right)\E{\sqn{\y^{k} - \y^*}}
		+\left(\frac{1}{\eta_x} - \gamma\right)\E{\sqn{\hat \x^{k} - \x^*}}
		-\frac{1}{2\eta_y}\E{\sqn{\y^{k+1} - \y^k}}
		\\& \quad
		+(1-(4\chi(T))^{-1})\frac{2}{\eta_x}\E{\sqn{m^k}_\mP}-\frac{2}{\eta_x}\E{\sqn{m^{k+1}}_\mP}
		-2\E{\<\z^{k+1} - \z^*, \y^{k+1} - \y^*>}
		\\& \quad
		+2\gamma\nu^2\E{\sqn{\z^k - \z^*}} + 12 \gamma L^2\E{\sqn{\z^{k+1} - \z^*}}+ \frac{12\gamma \Lavg^2}{b} \E{\sqn{\z^{k+1} - \w^{k}}}
		\\& \quad
		-\frac{\nu^{-1}}{\tau}\E{\sqn{\y_f^{k+1} + \x_f^{k+1} - \y^* - \x^*}} +\frac{\nu^{-1}(1-\tau)}{\tau}\E{\sqn{\y^k_f+\x^k_f - \y^* - \x^*}} .
	\end{align*}
{\bf Part 4.}
After combining parts~1 and~3 of this proof we get
    \begin{align*}
		\frac{1}{\eta_y}&\E{\sqn{\y^{k+1} - \y^*}}
		+\frac{1}{\eta_x}\E{\sqn{\hat \x^{k+1} - \x^*}} + \frac{1}{\eta_z}\E{\sqn{\z^{k+1} - \z^*}} + (2\mu - \nu)\E{\sqn{\z^{k+1} - \z^*}} 
		\\&\quad+ \frac{1}{4\eta_z}\E{\sqn{\z^{k+1} - \z^k}}  - 2\E{\<\F(\z^{k+1}) - \F(\z^k) - (\y^{k+1} - \y^k), \z^{k+1} - \z^*>}
		\\& \leq
		\left(\frac{1}{\eta_y} - \gamma\right)\E{\sqn{\y^{k} - \y^*}}
		+\left(\frac{1}{\eta_x} - \gamma\right)\E{\sqn{\hat \x^{k} - \x^*}}
		-\frac{1}{2\eta_y}\E{\sqn{\y^{k+1} - \y^k}}
		\\& \quad
		+(1-(4\chi(T))^{-1})\frac{2}{\eta_x}\E{\sqn{m^k}_\mP}-\frac{2}{\eta_x}\E{\sqn{m^{k+1}}_\mP}
		-2\E{\<\z^{k+1} - \z^*, \y^{k+1} - \y^*>}
		\\& \quad
		+2\gamma\nu^2\E{\sqn{\z^k - \z^*}} + 12 \gamma L^2\E{\sqn{\z^{k+1} - \z^*}}+ \frac{12\gamma \Lavg^2}{b} \E{\sqn{\z^{k+1} - \w^{k}}}
		\\& \quad
		-\frac{\nu^{-1}}{\tau}\E{\sqn{\y_f^{k+1} + \x_f^{k+1} - \y^* - \x^*}} +\frac{\nu^{-1}(1-\tau)}{\tau}\E{\sqn{\y^k_f+\x^k_f - \y^* - \x^*}} .
        \\& \quad
		+\frac{1}{\eta_z}\E{\sqn{\z^k - \z^*}}  - \left(\frac{\omega}{\eta_z} - \nu \right)\E{\sqn{\z^k - \z^*}}
		\\&\quad
		+ \alpha \cdot \frac{1}{4\eta_z} \E{\sqn{\z^k - \z^{k-1}}} - \alpha  \cdot 2\E{\<\F(\z^k) - \F(\z^{k-1}) - (\y^k - \y^{k-1}), \z^k - \z^*>}
		\\&\quad
		+ \frac{\omega}{\eta_z} \E{\sqn{\w^k - \z^*}} -  \frac{\omega}{\eta_z}\E{\sqn{\z^{k+1} - \w^k}}
		+2\E{\<\y^{k+1} - \y^*,\z^{k+1} - \z^*>}
		\\&\quad + \frac{\alpha}{2\eta_y} \E{\sqn{\y^k - \y^{k-1}}}
		+ \frac{4\eta_z\Lavg^2}{b}\E{\sqn{\z^{k} - \w^{k-1}}}.
	\end{align*}
Small rearrangement gives
	\begin{align*}
		\frac{1}{\eta_y}&\E{\sqn{\y^{k+1} - \y^*}}
		+\frac{1}{\eta_x}\E{\sqn{\hat \x^{k+1} - \x^*}} +\frac{1}{2\eta_y}\E{\sqn{\y^{k+1} - \y^k}} 
		\\&\quad  + \frac{1}{4\eta_z}\E{\sqn{\z^{k+1} - \z^k}} - 2\E{\<\F(\z^{k+1}) - \F(\z^k) - (\y^{k+1} - \y^k), \z^{k+1} - \z^*>}
		\\&\quad +\frac{2}{\eta_x}\E{\sqn{m^{k+1}}_\mP} + \frac{\nu^{-1}}{\tau}\E{\sqn{\y_f^{k+1} + \x_f^{k+1} - \y^* - \x^*}} 
		\\&\quad + \left(1- \frac{12\gamma \Lavg^2 \eta_z}{b\omega} \right)\frac{\omega}{\eta_z}\E{\sqn{\z^{k+1} - \w^k}}  + \left(\frac{1}{\eta_z} + 2\mu - \nu - 12 \gamma L^2\right)\E{\sqn{\z^{k+1} - \z^*}}
		\\& \leq (1 - \eta_y \gamma)\cdot \frac{1}{\eta_y}\E{\sqn{\y^{k} - \y^*}} +\left(1 - \eta_x \gamma\right)\cdot \frac{1}{\eta_x}\E{\sqn{\hat \x^{k} - \x^*}} + \alpha \cdot \frac{1}{2\eta_y} \E{\sqn{\y^k - \y^{k-1}}}
		\\&\quad + \alpha \cdot \frac{1}{4\eta_z} \E{\sqn{\z^k - \z^{k-1}}} - \alpha  \cdot 2\E{\<\F(\z^k) - \F(\z^{k-1}) - (\y^k - \y^{k-1}), \z^k - \z^*>}
		\\&\quad +(1-(4\chi(T))^{-1}) \cdot \frac{2}{\eta_x}\E{\sqn{m^k}_\mP} +(1-\tau) \cdot \frac{\nu^{-1}}{\tau}\E{\sqn{\y^k_f+\x^k_f - \y^* - \x^*}}
		\\&\quad + \frac{4\eta_z\Lavg^2}{b}\E{\sqn{\z^{k} - \w^{k-1}}} + 
		\left(\frac{1}{\eta_z} - \frac{\omega}{\eta_z} + \nu + 2\gamma\nu^2\right)\E{\sqn{\z^k - \z^*}}
		\\&\quad
		+ \frac{\omega}{\eta_z} \E{\sqn{\w^k - \z^*}}.
	\end{align*}
Now, we add $\tfrac{\gamma + \eta\nu}{p\eta_z}\E{\sqn{\w^{k+1} - \z^*}}$ to both sides and use update for $\w^{k+1}$
	\begin{align*}
	\frac{\omega + \eta_z\nu}{p\eta_z}\E{\mathbb{E}_{\w^{k+1}}{\sqn{\w^{k+1} - \z^*}}} = \frac{\omega + \eta_z\nu}{\eta_z}\E{\sqn{\z^{k} - \z^*}} + \frac{(\omega + \eta_z\nu)(1-p)}{\eta_z p}\E{\sqn{\w^{k} - \z^*}},
	\end{align*}
and then 
	\begin{align*}
		\frac{1}{\eta_y}&\E{\sqn{\y^{k+1} - \y^*}}
		+\frac{1}{\eta_x}\E{\sqn{\hat \x^{k+1} - \x^*}} +\frac{1}{2\eta_y}\E{\sqn{\y^{k+1} - \y^k}} 
		\\&\quad  + \frac{1}{4\eta_z}\E{\sqn{\z^{k+1} - \z^k}} - 2\E{\<\F(\z^{k+1}) - \F(\z^k) - (\y^{k+1} - \y^k), \z^{k+1} - \z^*>}
		\\&\quad +\frac{2}{\eta_x}\E{\sqn{m^{k+1}}_\mP} + \frac{\nu^{-1}}{\tau}\E{\sqn{\y_f^{k+1} + \x_f^{k+1} - \y^* - \x^*}} 
		\\&\quad + \left(1- \frac{12\gamma \Lavg^2 \eta_z}{b\omega} \right)\frac{\omega}{\eta_z}\E{\sqn{\z^{k+1} - \w^k}}  + \left(\frac{1}{\eta_z} + 2\mu - \nu - 12 \gamma L^2\right)\E{\sqn{\z^{k+1} - \z^*}}
		\\&\quad + 	\frac{\omega + \eta_z\nu}{p\eta_z}\E{\sqn{\w^{k+1} - \z^*}}
		\\& \leq (1 - \eta_y \gamma)\cdot \frac{1}{\eta_y}\E{\sqn{\y^{k} - \y^*}} +\left(1 - \eta_x \gamma\right)\cdot \frac{1}{\eta_x}\E{\sqn{\hat \x^{k} - \x^*}} + \alpha \cdot \frac{1}{2\eta_y} \E{\sqn{\y^k - \y^{k-1}}}
		\\&\quad + \alpha \cdot \frac{1}{4\eta_z} \E{\sqn{\z^k - \z^{k-1}}} - \alpha  \cdot 2\E{\<\F(\z^k) - \F(\z^{k-1}) - (\y^k - \y^{k-1}), \z^k - \z^*>}
		\\&\quad +(1-(4\chi(T))^{-1}) \cdot \frac{2}{\eta_x}\E{\sqn{m^k}_\mP} +(1-\tau) \cdot \frac{\nu^{-1}}{\tau}\E{\sqn{\y^k_f+\x^k_f - \y^* - \x^*}}
		\\&\quad + \frac{4\eta_z\Lavg^2}{b}\E{\sqn{\z^{k} - \w^{k-1}}} + 
		\left(\frac{1}{\eta_z}+ 2\nu + 2\gamma\nu^2\right)\E{\sqn{\z^k - \z^*}}
		\\&\quad
		+ \left(1 - \frac{ p\eta_z \nu }{\omega + \eta_z \nu}\right) \cdot \frac{\omega + \eta_z\nu}{p\eta_z} \E{\sqn{\w^k - \z^*}}.
	\end{align*}
With $\gamma \leq \tfrac{\mu}{48L^2}$ and $\nu \leq \tfrac{\mu}{4}$ we gat that 
$$
\left( 1+2\mu\eta_x - \nu \eta_x - 12 \gamma L^2 \eta_x\right) \geq 1 + \frac{3\mu \eta_x}{2},
$$
$$
\left(\frac{1}{\eta_x} + 2\nu + 2\gamma \nu^2 \right) \leq \left( 1 + \frac{\mu \eta_x}{2} + \frac{2 \mu^3 \eta_x}{16 \cdot 48 L^2} \right) \frac{1}{\eta_x} \leq \left( 1 + \mu \eta_x \right) \frac{1}{\eta_x}.
$$
Additionally, by 
$$
\left( 1 + \mu \eta_x \right) \leq \left(1 + \frac{3\mu \eta_x}{2}\right) \left(1 - \frac{\mu \eta_x}{8}\right)
$$
we obtain
	\begin{align*}
		\frac{1}{\eta_y}&\E{\sqn{\y^{k+1} - \y^*}}
		+\frac{1}{\eta_x}\E{\sqn{\hat \x^{k+1} - \x^*}} +\frac{1}{2\eta_y}\E{\sqn{\y^{k+1} - \y^k}} 
		\\&\quad  + \frac{1}{4\eta_z}\E{\sqn{\z^{k+1} - \z^k}} - 2\E{\<\F(\z^{k+1}) - \F(\z^k) - (\y^{k+1} - \y^k), \z^{k+1} - \z^*>}
		\\&\quad +\frac{2}{\eta_x}\E{\sqn{m^{k+1}}_\mP} + \frac{\nu^{-1}}{\tau}\E{\sqn{\y_f^{k+1} + \x_f^{k+1} - \y^* - \x^*}} 
		\\&\quad + \left(1- \frac{12\gamma \Lavg^2 \eta_z}{b\omega} \right)\frac{\omega}{\eta_z}\E{\sqn{\z^{k+1} - \w^k}}  + \left(1 + \frac{3\mu \eta_x}{2}\right)\E{\sqn{\z^{k+1} - \z^*}}
		\\&\quad + 	\frac{\omega + \eta_z\nu}{p\eta_z}\E{\sqn{\w^{k+1} - \z^*}}
		\\& \leq (1 - \eta_y \gamma)\cdot \frac{1}{\eta_y}\E{\sqn{\y^{k} - \y^*}} +\left(1 - \eta_x \gamma\right)\cdot \frac{1}{\eta_x}\E{\sqn{\hat \x^{k} - \x^*}} + \alpha \cdot \frac{1}{2\eta_y} \E{\sqn{\y^k - \y^{k-1}}}
		\\&\quad + \alpha \cdot \frac{1}{4\eta_z} \E{\sqn{\z^k - \z^{k-1}}} - \alpha  \cdot 2\E{\<\F(\z^k) - \F(\z^{k-1}) - (\y^k - \y^{k-1}), \z^k - \z^*>}
		\\&\quad +(1-(4\chi(T))^{-1}) \cdot \frac{2}{\eta_x}\E{\sqn{m^k}_\mP} +(1-\tau) \cdot \frac{\nu^{-1}}{\tau}\E{\sqn{\y^k_f+\x^k_f - \y^* - \x^*}}
		\\&\quad + \frac{4\eta_z\Lavg^2}{b}\E{\sqn{\z^{k} - \w^{k-1}}} + 
		\left(1 - \frac{\mu \eta_x}{8}\right) \cdot \left(1 + \frac{3\mu \eta_x}{2}\right) \E{\sqn{\z^k - \z^*}}
		\\&\quad
		+ \left(1 - \frac{ p\eta_z \nu }{\omega + \eta_z \nu}\right) \cdot \frac{\omega + \eta_z\nu}{p\eta_z} \E{\sqn{\w^k - \z^*}}.
	\end{align*}
The choice $\eta_z \leq \tfrac{\sqrt{\alpha \omega b}}{4\Lavg}$ and $\gamma \leq \frac{b \omega}{24 \Lavg^2 \eta_z}$ gives
	\begin{align*}
		\frac{1}{\eta_y}&\E{\sqn{\y^{k+1} - \y^*}}
		+\frac{1}{\eta_x}\E{\sqn{\hat \x^{k+1} - \x^*}} +\frac{1}{2\eta_y}\E{\sqn{\y^{k+1} - \y^k}} 
		\\&\quad  + \frac{1}{4\eta_z}\E{\sqn{\z^{k+1} - \z^k}} - 2\E{\<\F(\z^{k+1}) - \F(\z^k) - (\y^{k+1} - \y^k), \z^{k+1} - \z^*>}
		\\&\quad +\frac{2}{\eta_x}\E{\sqn{m^{k+1}}_\mP} + \frac{\nu^{-1}}{\tau}\E{\sqn{\y_f^{k+1} + \x_f^{k+1} - \y^* - \x^*}} 
		\\&\quad +\frac{\omega}{2\eta_z}\E{\sqn{\z^{k+1} - \w^k}}  + \left(1 + \frac{3\mu \eta_x}{2}\right)\E{\sqn{\z^{k+1} - \z^*}}
		\\&\quad + 	\frac{\omega + \eta_z\nu}{p\eta_z}\E{\sqn{\w^{k+1} - \z^*}}
		\\& \leq (1 - \eta_y \gamma)\cdot \frac{1}{\eta_y}\E{\sqn{\y^{k} - \y^*}} +\left(1 - \eta_x \gamma\right)\cdot \frac{1}{\eta_x}\E{\sqn{\hat \x^{k} - \x^*}} + \alpha \cdot \frac{1}{2\eta_y} \E{\sqn{\y^k - \y^{k-1}}}
		\\&\quad + \alpha \cdot \frac{1}{4\eta_z} \E{\sqn{\z^k - \z^{k-1}}} - \alpha  \cdot 2\E{\<\F(\z^k) - \F(\z^{k-1}) - (\y^k - \y^{k-1}), \z^k - \z^*>}
		\\&\quad +(1-(4\chi(T))^{-1}) \cdot \frac{2}{\eta_x}\E{\sqn{m^k}_\mP} +(1-\tau) \cdot \frac{\nu^{-1}}{\tau}\E{\sqn{\y^k_f+\x^k_f - \y^* - \x^*}}
		\\&\quad + \alpha \cdot \frac{\omega}{2\eta_z}\E{\sqn{\z^{k} - \w^{k-1}}} + 
		\left(1 - \frac{\mu \eta_x}{8}\right) \cdot \left(1 + \frac{3\mu \eta_x}{2}\right) \E{\sqn{\z^k - \z^*}}
		\\&\quad
		+ \left(1 - \frac{ p\eta_z \nu }{\omega + \eta_z \nu}\right) \cdot \frac{\omega + \eta_z\nu}{p\eta_z} \E{\sqn{\w^k - \z^*}}.
	\end{align*}
Putting $p = \omega$ and using definition of the Lyapunov function, we get
	\begin{align*}
		\EE[\Psi^{k+1}] &\leq \max\left[
		\left(1 - \frac{ p\eta_z \nu }{p + \eta_z \nu}\right);
		1 - \eta_y\gamma;
		1-\eta_x\gamma;
		1 - \frac{\mu \eta_z}{8};
		1-\tau;
		1-\frac{1}{4\chi(T)}
		\right]\cdot\EE[\Psi^k].
	\end{align*}
Finally,
	\begin{align*}
		\Psi^k
		&\geq
		\frac{1}{\eta_x}\sqn{x^{k} - \x^*}
		+\frac{1}{4\eta_x}\sqn{x^{k-1} - x^k}
		+\frac{1}{2\eta_y}\sqn{y^{k-1} - y^k} 
		\\&\quad -2\<\F(x^{k}) - \F(x^{k-1}) - (y^{k} - y^{k-1}), x^{k} - \x^*>
		\\&\geq
		\frac{1}{\eta_x}\sqn{x^{k} - \x^*}
		+\frac{1}{4\eta_x}\sqn{x^{k-1} - x^k}
		+\frac{1}{2\eta_y}\sqn{y^{k-1} - y^k}
		\\&\quad
		-\frac{1}{2\eta_y}\sqn{y^{k} - y^{k-1}} - 2\eta_y\sqn{x^k - \x^*}
		-\frac{1}{4\eta_x}\sqn{x^{k-1} - x^k}
		-4\eta_xL^2\sqn{x^k - \x^*}
		\\&=
		\left(1 - 2\eta_x\eta_y - 4\eta_x^2 L^2\right) \frac{1}{\eta_x}\sqn{x^{k} - \x^*}
		\\&\geq
		\frac{1}{4\eta_x}\sqn{x^{k} - \x^*}.
	\end{align*}
\end{proof}	

\begin{theorem}[Theorem \ref{th:Alg2_conv}]\label{th:app_tv}
	Consider the problem \eqref{eq:VI_new} (or \eqref{eq:VI} + \eqref{eq:fs}) under Assumptions~\ref{as:Lipsh} and \ref{as:strmon} over a sequence of time-varying graphs $\mathcal{G}(k)$ with gossip matrices $\mW(k)$. Let  $\{\z^k\}$ be the sequence generated by Algorithm~\ref{2dvi2:alg} with parameters  
$$
	T \geq B; \quad \omega = p \leq \frac{1}{16}; \quad \theta = \frac{1}{2}; \quad \beta = 5\gamma; \quad \nu = \frac{\mu}{4}; \quad \tau = \min\left\{\frac{\mu}{32L \chi(T)}; \frac{\mu \sqrt{bp}}{32 \Lavg} \right\};
$$
$$
    \eta_z = \min\left\{\frac{1}{8L\chi(T)}, \frac{1}{32\eta_y}, \frac{\sqrt{\alpha b \omega}}{8 \Lavg}\right\}; ~~ \eta_y = \min\left\{\frac{1}{4\gamma}, \frac{\nu}{8\tau}\right\}; ~~ \eta_x =\min\left\{ \frac{1}{900\chi(T)\gamma}, \frac{\nu}{36\tau\chi^2(T)}\right\};
$$ 
$$
\gamma = \min\left\{ \frac{\mu}{16L^2}; \frac{b \omega}{24 \Lavg^2 \eta_z}\right\},
$$
$$
\alpha = \max\left[
	\left(1 - \frac{ p\eta_z \nu }{p + \eta_z \nu}\right);
	1 - \eta_y\gamma;
	1-\eta_x\gamma;
	1 - \frac{\mu \eta_z}{8};
	1-\tau;
	1-\frac{1}{4\chi(T)}
	\right].
$$
Let the choice of $T$ guarantees contraction property (Assumption \ref{ass:tv} point 4) with $\chi(T)$.	Then, given $\varepsilon>0$, the number of iterations for 
$\EE[\|\z^k - \z^*\|^2] \leq \varepsilon$ is 
\begin{equation*}
    O\left( \left[\chi^2(T) + \frac{1}{p} + \chi(T)\frac{L}{\mu} + \frac{1}{\sqrt{bp}}\frac{\Lavg}{\mu}\right] \log \frac{1}{\varepsilon} \right).
\end{equation*}
\end{theorem}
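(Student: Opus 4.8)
The plan is to derive Theorem~\ref{th:app_tv} as an immediate corollary of the one-step contraction established in Lemma~\ref{lem:key_tv}. First I would verify that the explicit parameter assignments satisfy every hypothesis of Lemma~\ref{lem:key_tv}. The delicate point is that the definitions of $\gamma$, $\eta_z$ and $\eta_y$ are mutually dependent ($\gamma$ uses $\eta_z$, $\eta_z$ uses $\eta_y$, and $\eta_y$ uses $\gamma$); since each is set equal to the largest admissible value (a minimum of several bounds), I would argue that the induced system admits a consistent solution and determine, for each parameter, which branch of its defining minimum is active. Along the way one checks that $\alpha\ge\tfrac12$ (exactly as in the proof of Theorem~\ref{th:app_fixed}), so that the contraction factor is bounded away from $1$ and the $\alpha$-dependent stepsize bound $\eta_z\le\sqrt{\alpha b\omega}/(8\Lavg)$ is nondegenerate.

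Given this, Lemma~\ref{lem:key_tv} yields $\E{\tfrac{1}{4\eta_z}\sqn{\z^{k}-\z^*}}\le\rho^{k}\,\Psi^0$ with $\rho=\max\big[\,1-\tfrac{p\eta_z\nu}{p+\eta_z\nu};\ 1-\eta_y\gamma;\ 1-\eta_x\gamma;\ 1-\tfrac{\mu\eta_z}{8};\ 1-\tau;\ 1-\tfrac{1}{4\chi(T)}\,\big]$. Since $\Psi^0$ is finite and $\rho<1$, the number of iterations needed for $\E{\sqn{\z^k-\z^*}}\le\varepsilon$ is $O\!\big(\tfrac{1}{1-\rho}\log\tfrac{1}{\varepsilon}\big)$, so the whole task reduces to bounding $\tfrac{1}{1-\rho}$, which equals the maximum of the six reciprocal rates $4\chi(T)$, $\tfrac1p+\tfrac{1}{\eta_z\nu}$, $\tfrac{8}{\mu\eta_z}$, $\tfrac{1}{\eta_y\gamma}$, $\tfrac{1}{\eta_x\gamma}$ and $\tfrac{1}{\tau}$.

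Next I would substitute the parameter values into each of these six quantities. With $\nu=\tfrac{\mu}{4}$ and $\tfrac{1}{\eta_z}=\max\{8L\chi(T),\,32\eta_y,\,8\Lavg/\sqrt{\alpha b\omega}\}$ (using $\alpha\ge\tfrac12$ and $\omega=p$), the reciprocals $\tfrac{1}{\eta_z\nu}$ and $\tfrac{8}{\mu\eta_z}$ contribute the rates $\chi(T)\tfrac{L}{\mu}$ and $\tfrac{1}{\sqrt{bp}}\tfrac{\Lavg}{\mu}$; the probability term $\tfrac1p$ appears directly; and $\tfrac{1}{\tau}=\max\{32L\chi(T)/\mu,\,32\Lavg/(\mu\sqrt{bp})\}$ reproduces the same two rates. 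For the last two quantities I would expand the minima defining $\eta_y$ and $\eta_x$. The source of the $\chi^2(T)$ term is $\tfrac{1}{\eta_x\gamma}=\max\{900\chi(T),\,\tfrac{36\tau\chi^2(T)}{\nu\gamma}\}$: substituting the binding branches $\tau\le\tfrac{\mu\sqrt{bp}}{32\Lavg}$, $\gamma\le\tfrac{b\omega}{24\Lavg^2\eta_z}$ and $\eta_z\le\tfrac{\sqrt{\alpha b\omega}}{8\Lavg}$ makes $\tfrac{\tau}{\nu\gamma}=O(1)$, so this quantity is $O(\chi^2(T))$, while the $900\chi(T)$ branch only contributes a $\chi(T)$-order rate. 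Taking the maximum over all six quantities collapses to $\chi^2(T)+\tfrac1p+\chi(T)\tfrac{L}{\mu}+\tfrac{1}{\sqrt{bp}}\tfrac{\Lavg}{\mu}$, the claimed complexity.

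The main obstacle is precisely this branch-tracking. Each of the six reciprocal rates splits into several cases according to which branch of its defining minimum is active, and these choices are coupled through the circular definitions of $\gamma$, $\eta_z$ and $\eta_y$; one must exhibit a single globally consistent assignment of active branches and then check that the resulting maximum is exactly the four stated terms, with no surviving $\chi^3(T)$, $L^2/\mu^2$ or $\Lavg^2/(\mu^2 bp)$ factor. I expect the delicate part to be confirming that $\tfrac{\tau}{\nu\gamma}=O(1)$ under the binding branches (which is what prevents the $\eta_x$-rate from degrading to $\chi^2(T)\cdot\tfrac{L}{\mu}$ or worse) and verifying that the consistency of the $\gamma$--$\eta_z$--$\eta_y$ cycle does not force a smaller stepsize than the simplifications above assume.
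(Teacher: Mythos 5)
Your proposal is correct and follows essentially the same route as the paper's own proof: invoke Lemma~\ref{lem:key_tv} after checking $\alpha\ge\tfrac12$ and the parameter conditions, then bound $\tfrac{1}{1-\rho}$ by the maximum of the reciprocal rates and substitute the parameter choices to recover exactly the four claimed terms. The only organizational difference is that the paper sidesteps your ``branch-tracking'' obstacle entirely: instead of determining which branch of each minimum is active, it bounds the reciprocal of a minimum by the sum of the reciprocals of its branches (e.g. $\tfrac{1}{\gamma}\le\tfrac{16L^2}{\mu}+\tfrac{24\Lavg^2\eta_z}{bp}$), which produces the two terms $\tfrac{\tau\Lavg^2\eta_z\chi^2(T)}{\mu bp}=O(\chi^2(T))$ and $\tfrac{\tau L^2\chi^2(T)}{\mu^2}=O\bigl(\chi(T)\tfrac{L}{\mu}\bigr)$ (the latter killed by $\tau\le\tfrac{\mu}{32L\chi(T)}$, the case your sketch leaves implicit), so no globally consistent assignment of active branches is ever needed.
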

\begin{proof}
It is easy to check that $\alpha \geq \tfrac{1}{2}$, then also one can verify that the choice of $\omega$, $\theta$, $\beta$, $\nu$, $\tau$, $\eta_z$, $\eta_y$, $\eta_x$, $\gamma$, $\alpha$  satisfies the conditions of Lemma \ref{lem:key_tv}. We can get that the
iteration complexity of Algorithm~\ref{2dvi2:alg}:
\begin{align*}
O\left( \left[\chi(T) + \frac{1}{p} + \frac{1}{\eta_y \gamma} + \frac{1}{\eta_x \gamma} + \frac{1}{\eta_z \mu} + \frac{1}{\eta_z \nu} + \frac{1}{\tau} + \frac{1}{1-\alpha}\right] \log \frac{1}{\varepsilon} \right).  
\end{align*}
Substituting $\alpha, \nu, \omega$, we get
\begin{align*}
O\left( \left[\chi(T) + \frac{1}{p} + \frac{1}{\eta_y \gamma} + \frac{1}{\eta_x \gamma} + \frac{1}{\eta_z \mu} + \frac{1}{\tau}\right] \log \frac{1}{\varepsilon} \right). 
\end{align*}
Putting $\eta_z$
\begin{align*}
O\left( \left[\chi(T) + \frac{1}{p} + \chi(T)\frac{L}{\mu} + \frac{1}{\sqrt{bp}}\frac{\Lavg}{\mu} + \frac{1}{\eta_y \gamma} + \frac{1}{\eta_x \gamma} + \frac{\eta_y}{ \mu} + \frac{1}{\tau}\right] \log \frac{1}{\varepsilon} \right). 
\end{align*}
Substituting $\eta_y$ and $\eta_x$ 
\begin{align*}
O\left( \left[\chi(T) + \frac{1}{p} + \chi(T)\frac{L}{\mu} + \frac{1}{\sqrt{bp}}\frac{\Lavg}{\mu} + \frac{\tau \chi^2(T)}{\mu\gamma} + \frac{\eta_y}{ \mu} + \frac{1}{\tau}\right] \log \frac{1}{\varepsilon} \right). 
\end{align*}
Using $\eta_y \leq \tfrac{\nu}{8 \tau}$
\begin{align*}
O\left( \left[\chi(T) + \frac{1}{p} + \chi(T)\frac{L}{\mu} + \frac{1}{\sqrt{bp}}\frac{\Lavg}{\mu} + \frac{\tau \chi^2(T)}{\mu\gamma} + \frac{1}{\tau}\right] \log \frac{1}{\varepsilon} \right). 
\end{align*}
Substituting $\tau$ we get
\begin{align*}
O\left( \left[\chi(T) + \frac{1}{p} + \chi(T)\frac{L}{\mu} + \frac{1}{\sqrt{bp}}\frac{\Lavg}{\mu} + \frac{\tau \chi^2(T)}{\mu\gamma}\right] \log \frac{1}{\varepsilon} \right). 
\end{align*}
Putting $\gamma$ we obtain
\begin{align*}
O\left( \left[\chi(T) + \frac{1}{p} + \chi(T)\frac{L}{\mu} + \frac{1}{\sqrt{bp}}\frac{\Lavg}{\mu} + \frac{\tau \Lavg^2 \eta_z \chi^2(T)}{\mu bp} + \frac{\tau L^2\chi^2(T)}{\mu^2}\right] \log \frac{1}{\varepsilon} \right). 
\end{align*}
With $\eta_z \leq \tfrac{\sqrt{abp}}{8 \Lavg}$ and $\tau \leq \tfrac{\mu \sqrt{bp}}{\Lavg}$ we have
\begin{align*}
O\left( \left[\chi^2(T) + \frac{1}{p} + \chi(T)\frac{L}{\mu} + \frac{1}{\sqrt{bp}}\frac{\Lavg}{\mu} + \frac{\tau L^2\chi^2(T)}{\mu^2}\right] \log \frac{1}{\varepsilon} \right). 
\end{align*}
Finally, $\tau \leq \tfrac{1}{8 L \chi(T)}$
\begin{align*}
O\left( \left[\chi^2(T) + \frac{1}{p} + \chi(T)\frac{L}{\mu} + \frac{1}{\sqrt{bp}}\frac{\Lavg}{\mu}\right] \log \frac{1}{\varepsilon} \right). 
\end{align*}
\end{proof}

\newpage

\section{Additional experiments} \label{sec:add_exp}

\subsection{Variance reduction} \label{sec:app_vr_exp}

Here we give additional experiments for Section \ref{sec:vr_exp} on other matrices. 
\begin{figure}[h]
\centering
\captionof{figure}{Comparison epoch complexities of Algorithm~\ref{alg:vrvi}, \algname{EG-Alc-Alg1}, \algname{EG-Alc-Alg2} and \algname{EG-Car} on \eqref{bilinear} with matrices from \cite{nemirovski2009robust} (two upper lines correspond to test matrix 1, the two bottom lines -- to test matrix 2). Dashed lines give convergence with theoretical parameters, solid lines --  with tuned parameters.}
\vspace{-0.3cm}
\begin{minipage}[][][b]{\textwidth}
\centering
\includegraphics[width=0.35\textwidth]{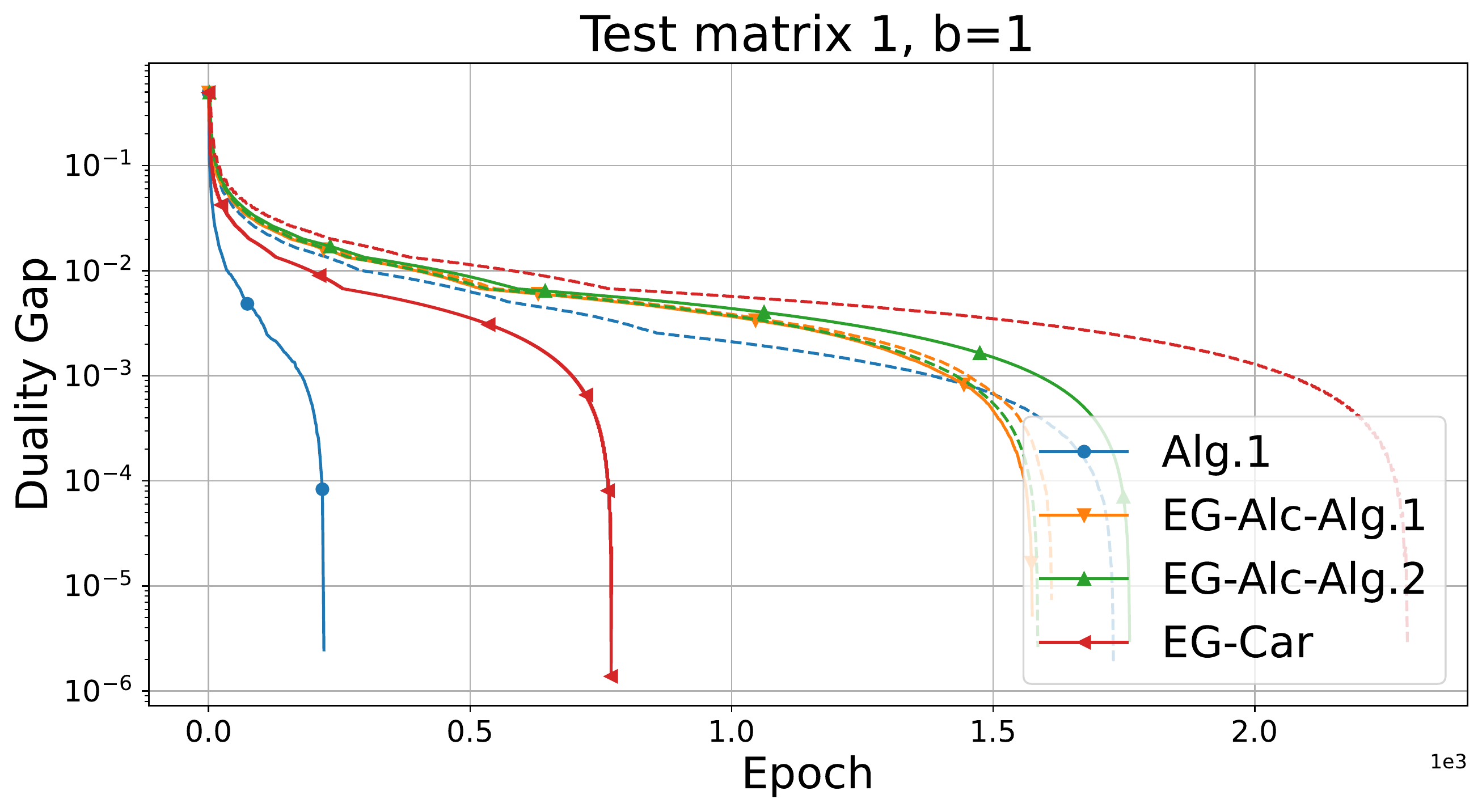}
\includegraphics[width=0.35\textwidth]{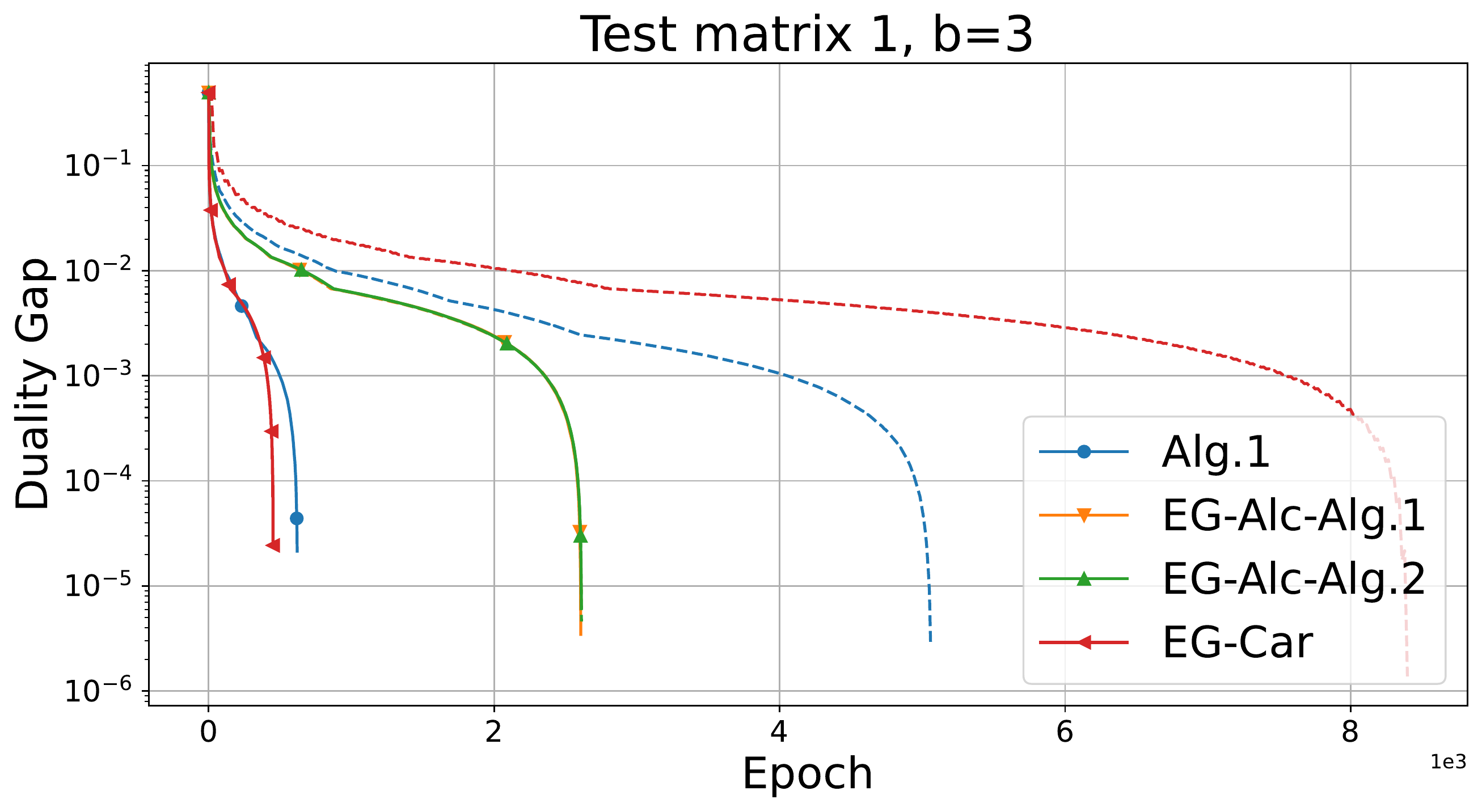}
\\
\includegraphics[width=0.35\textwidth]{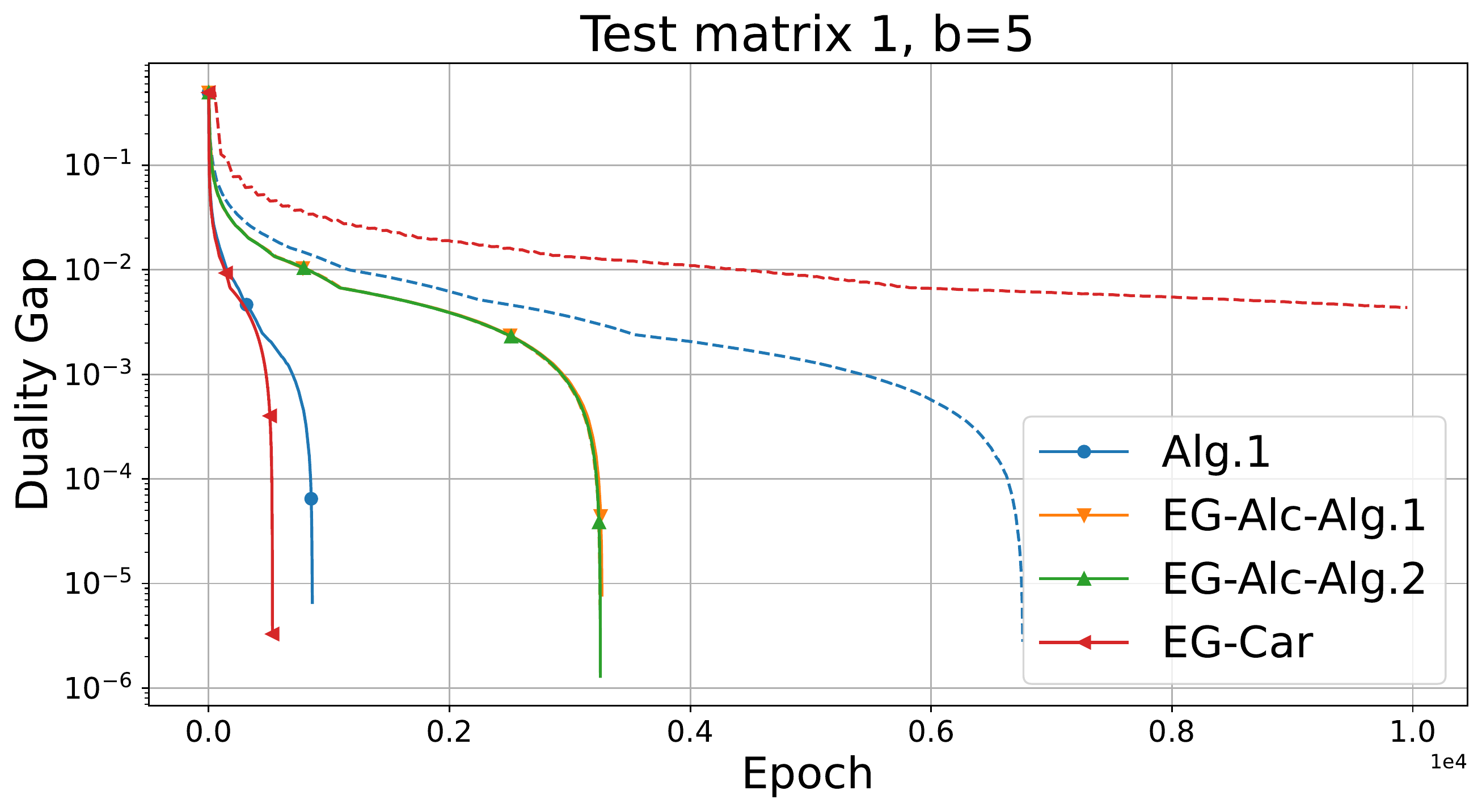}
\includegraphics[width=0.35\textwidth]{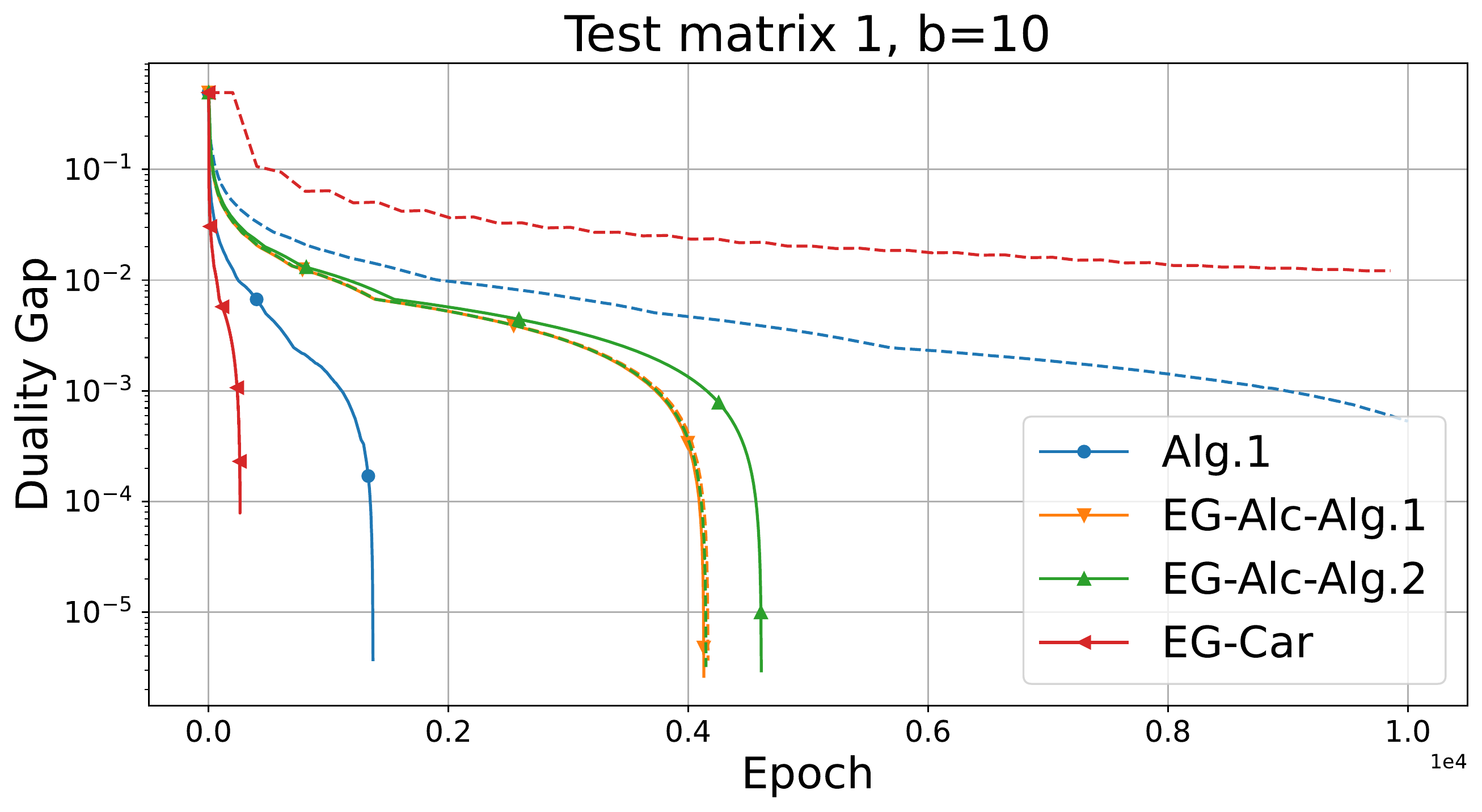}
\\
\includegraphics[width=0.35\textwidth]{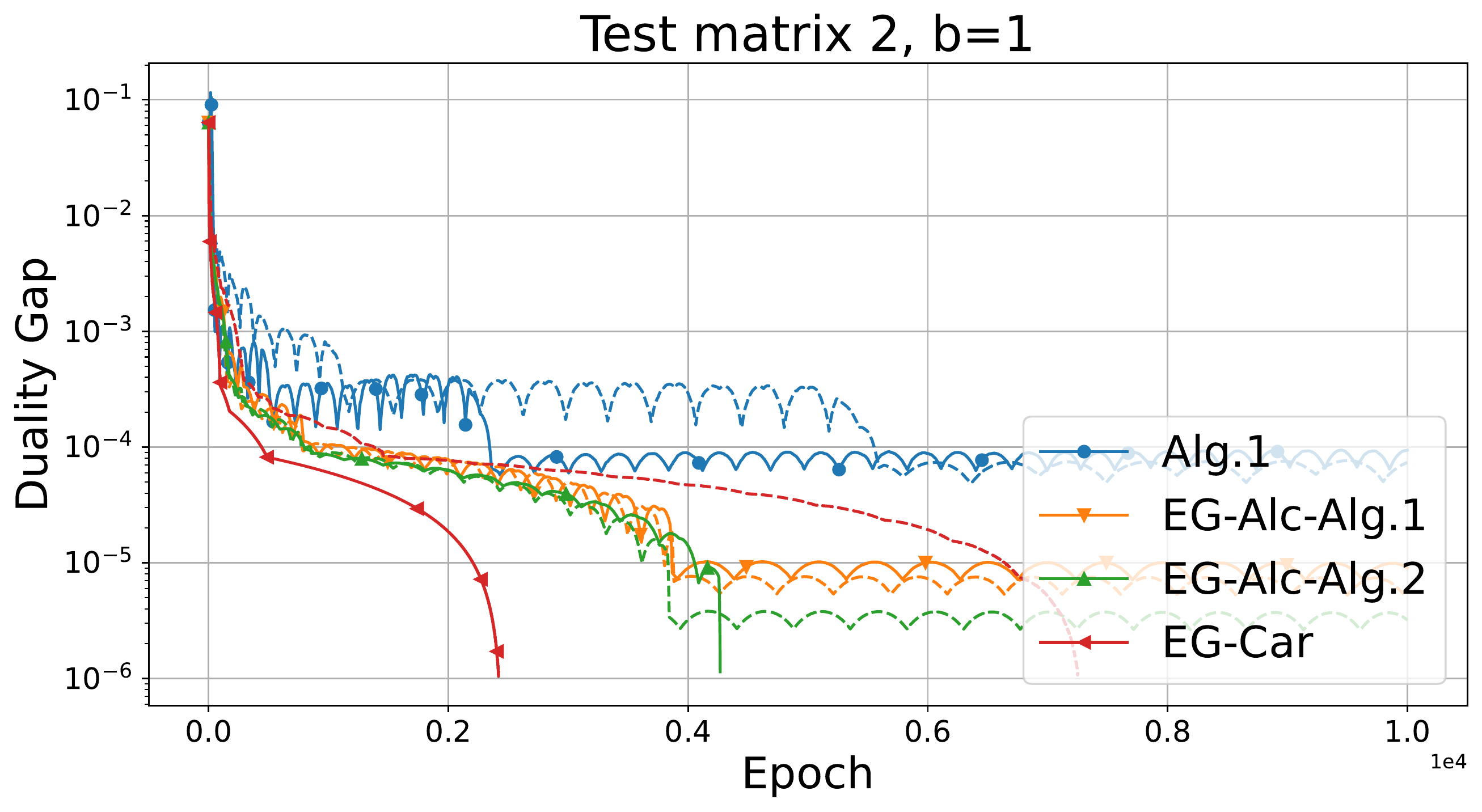}
\includegraphics[width=0.35\textwidth]{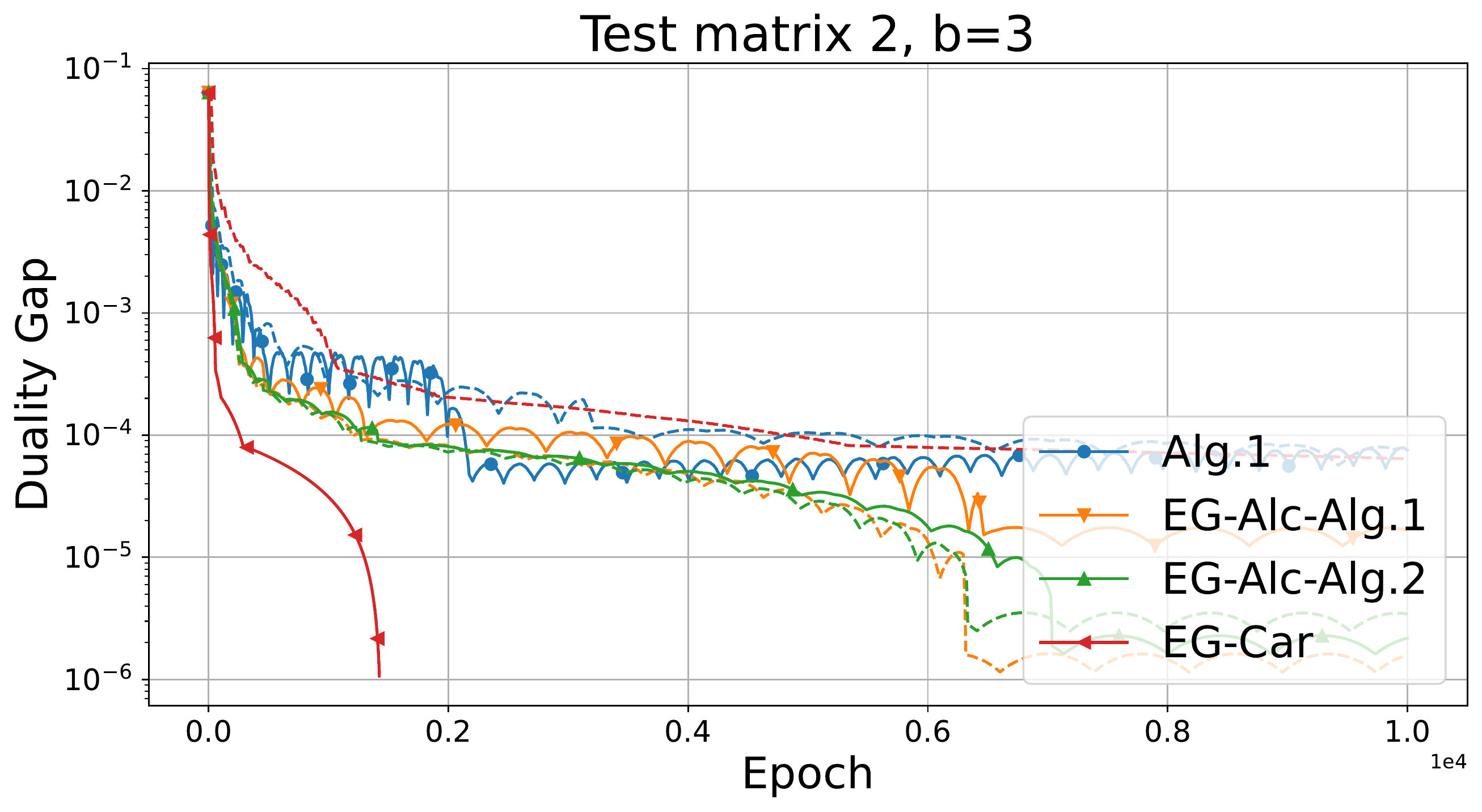}
\\
\includegraphics[width=0.35\textwidth]{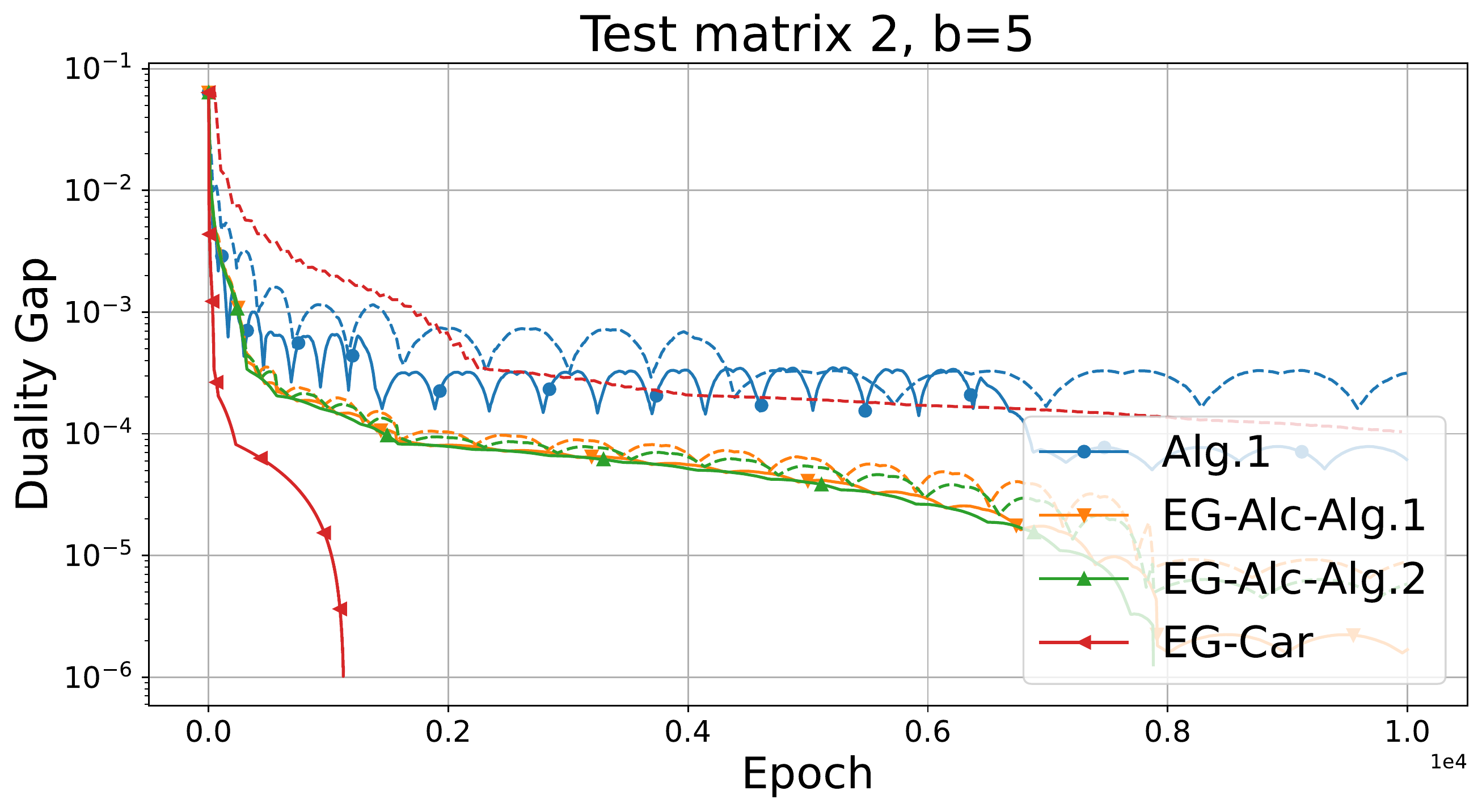}
\includegraphics[width=0.35\textwidth]{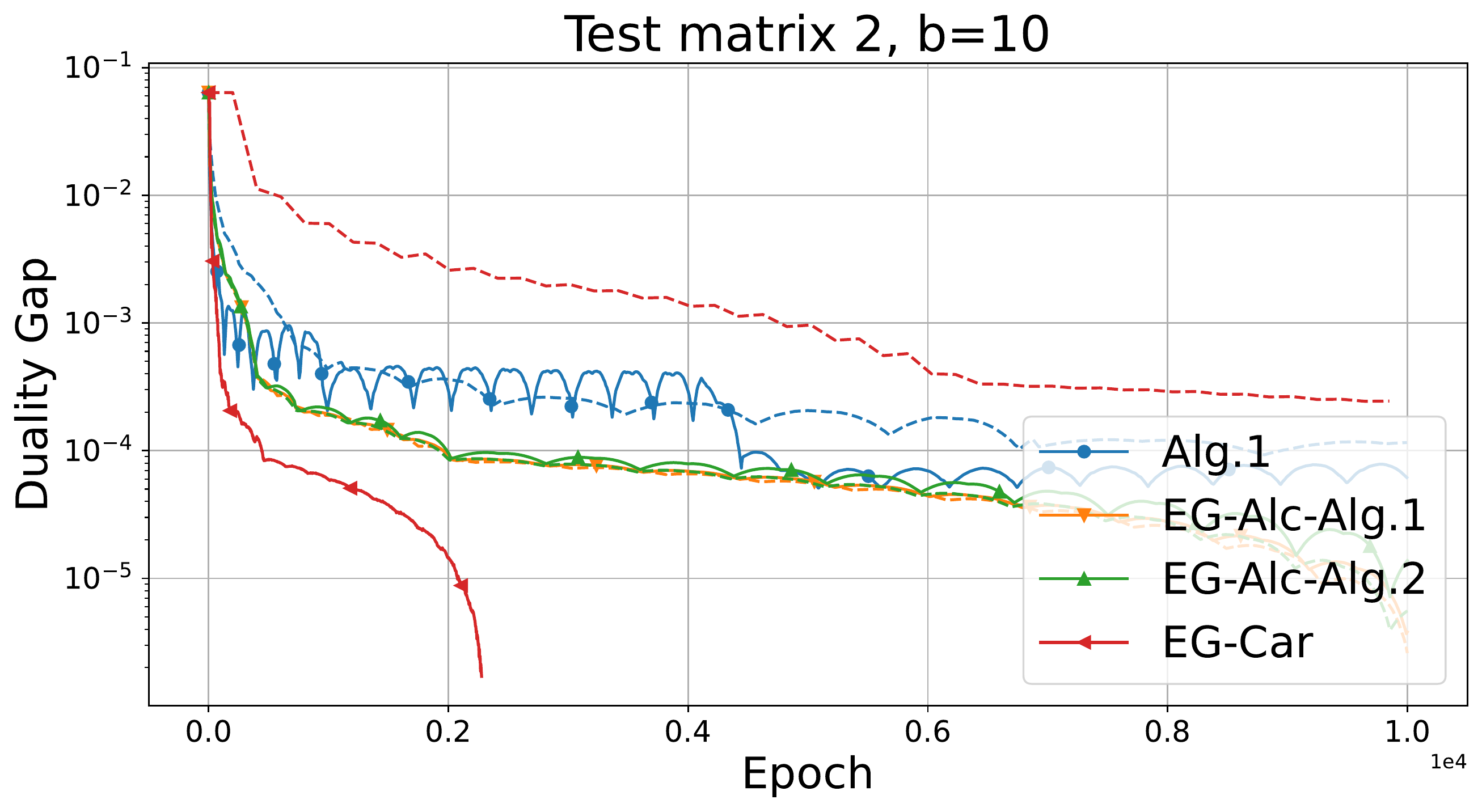}
\end{minipage}
\end{figure}

\subsection{Decentralized methods}

\subsubsection{Fixed networks} \label{sec:app_dec_f_exp}

Here we give additional experiments for Section \ref{sec:dec_f_exp}. 

\begin{figure}[h]
\centering
\captionof{figure}{Comparison communication complexities of Algorithm \ref{alg:vrvi}, \algname{EGD-GT}, \algname{EGD-Con} and \algname{Sliding} on \eqref{eq:regression} with over fixed grid networks.}
\vspace{-0.3cm}
\begin{minipage}[][][b]{\textwidth}
\centering
\includegraphics[width=0.35\textwidth]{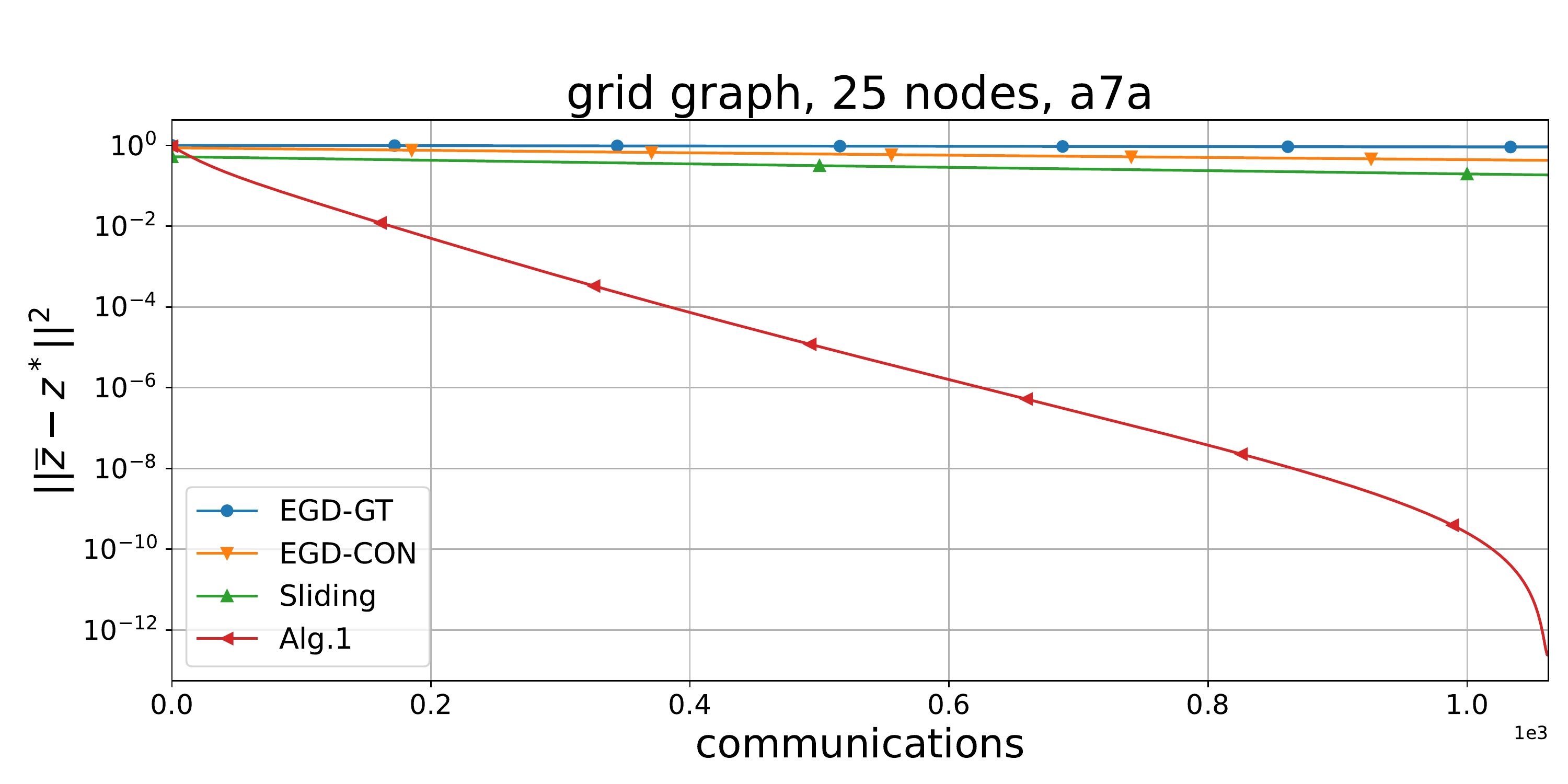}
\includegraphics[width=0.35\textwidth]{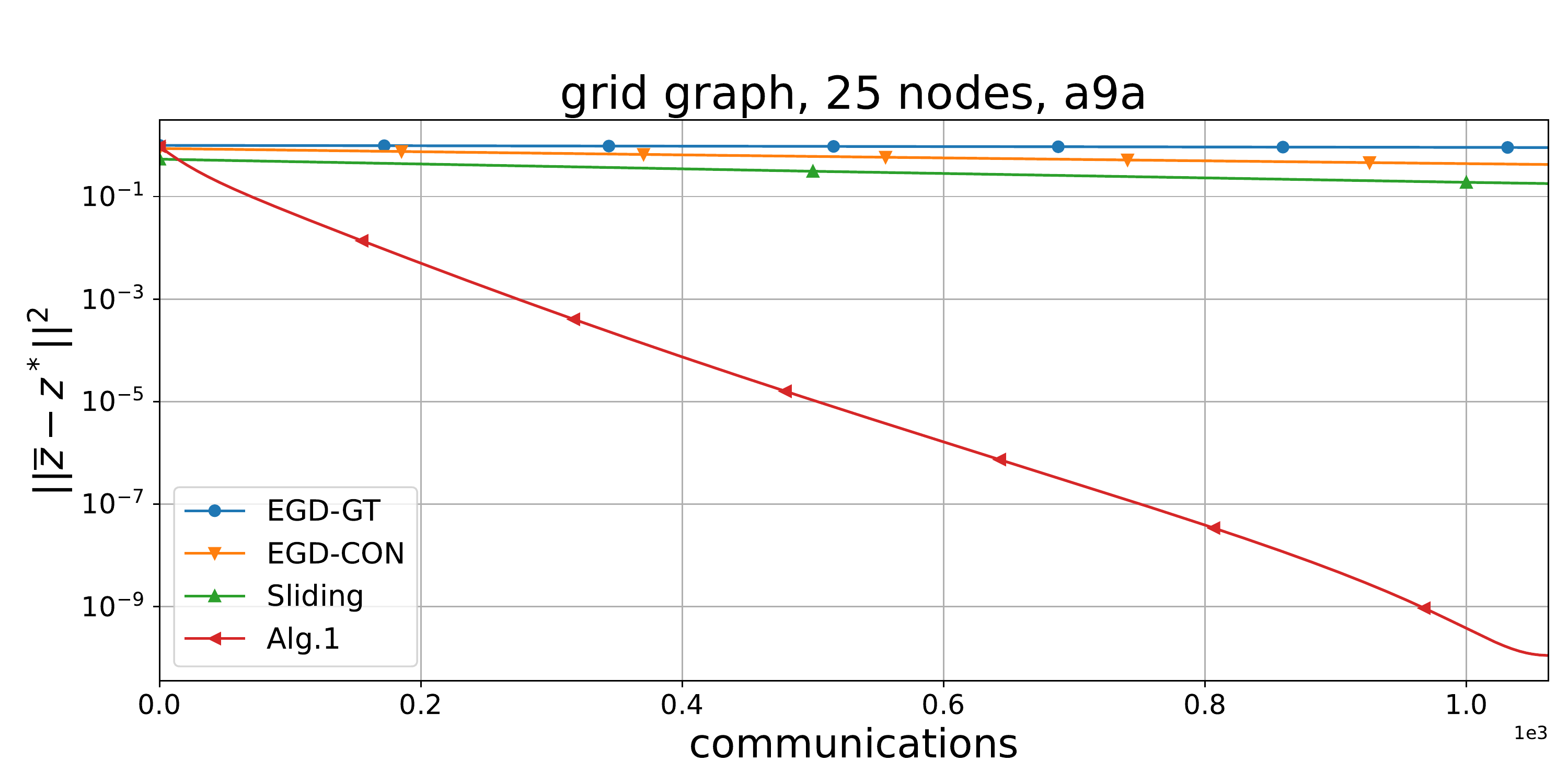}
\\
\includegraphics[width=0.35\textwidth]{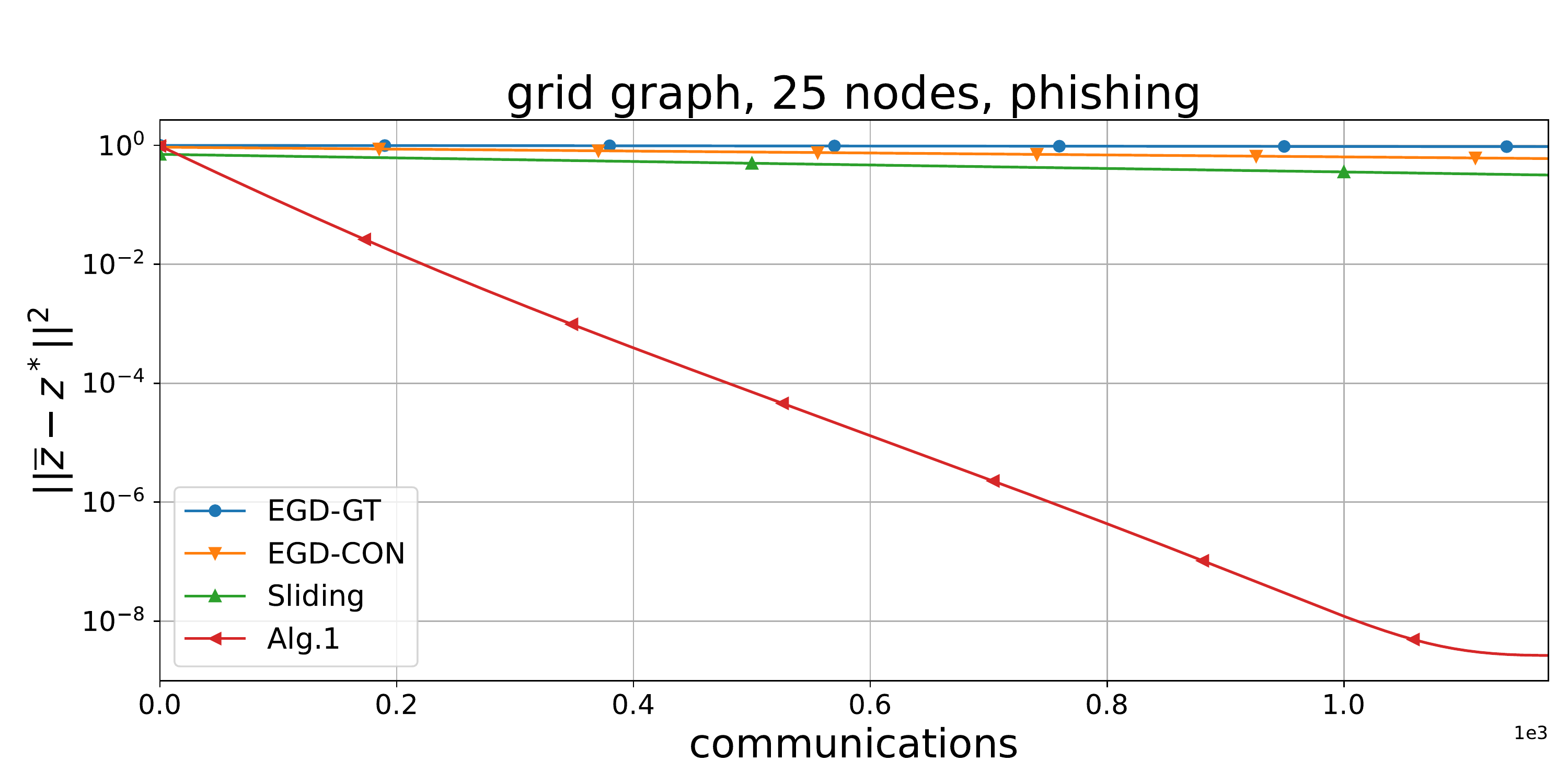}
\end{minipage}
\end{figure}
\newpage
\begin{figure}[h]
\centering
\captionof{figure}{Comparison communication complexities of Algorithm \ref{alg:vrvi}, \algname{EGD-GT}, \algname{EGD-Con} and \algname{Sliding} on \eqref{eq:regression} with over fixed ring networks.}
\vspace{-0.3cm}
\begin{minipage}[][][b]{\textwidth}
\centering
\includegraphics[width=0.35\textwidth]{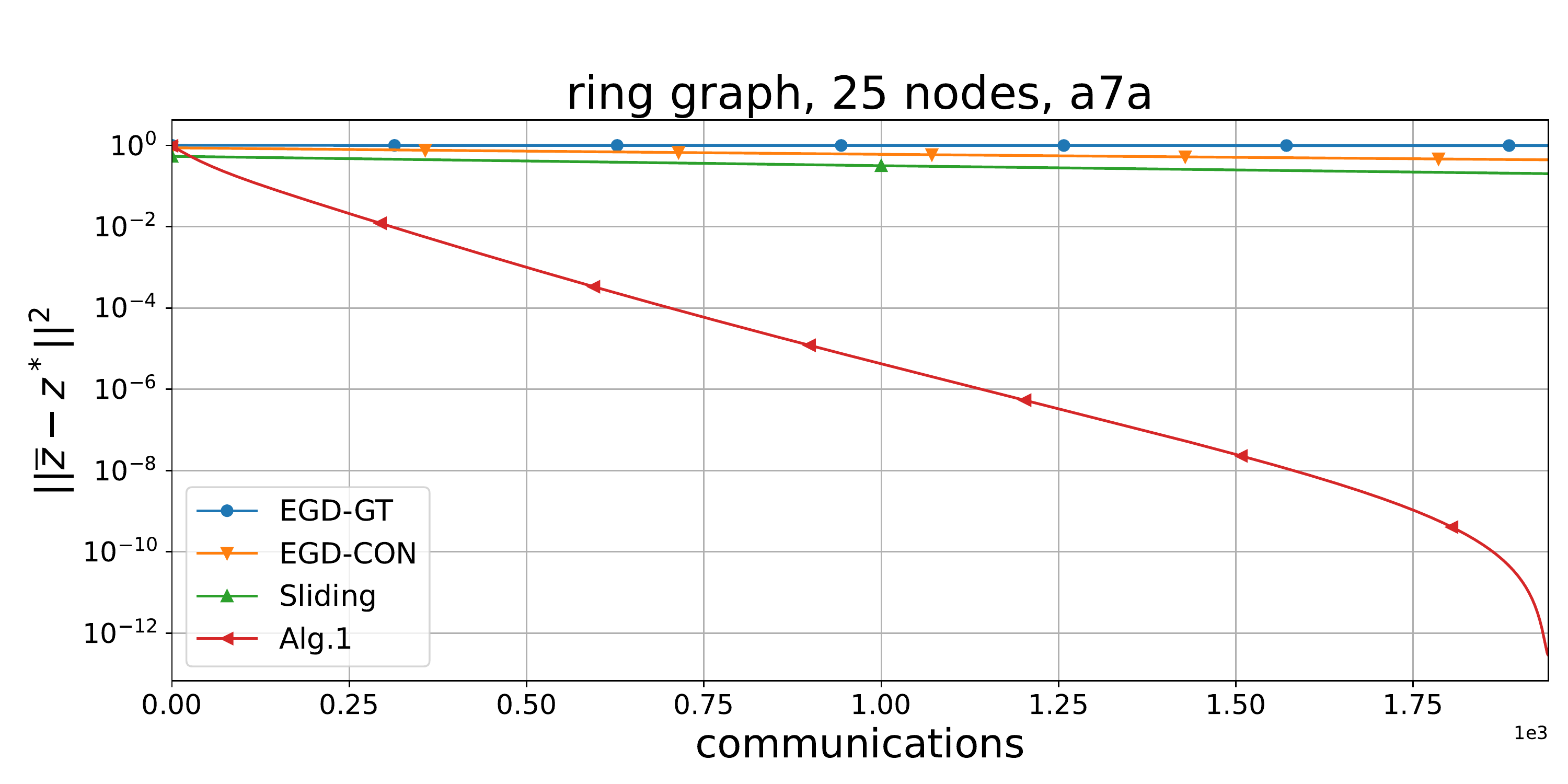}
\includegraphics[width=0.35\textwidth]{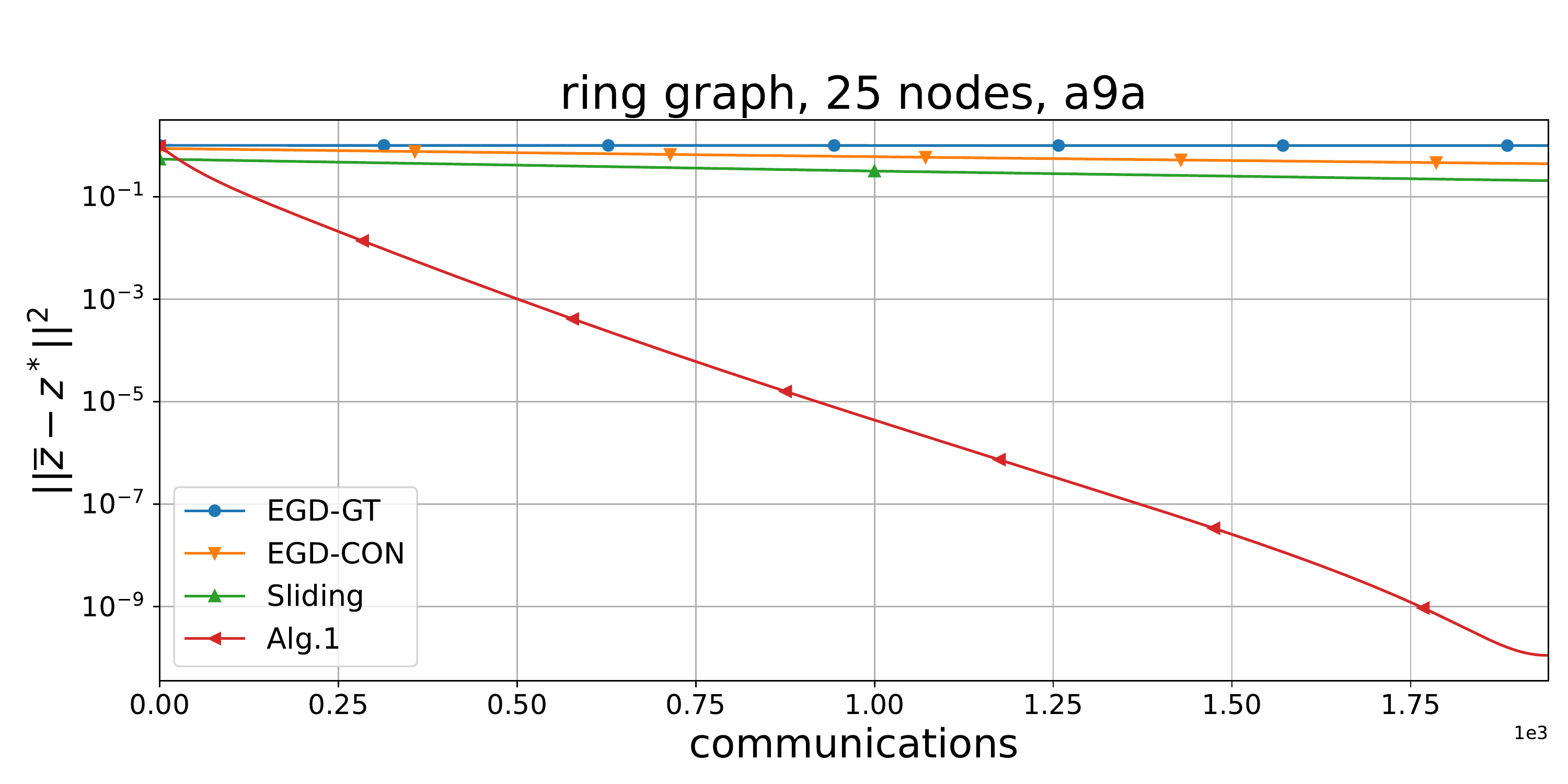}
\\
\includegraphics[width=0.35\textwidth]{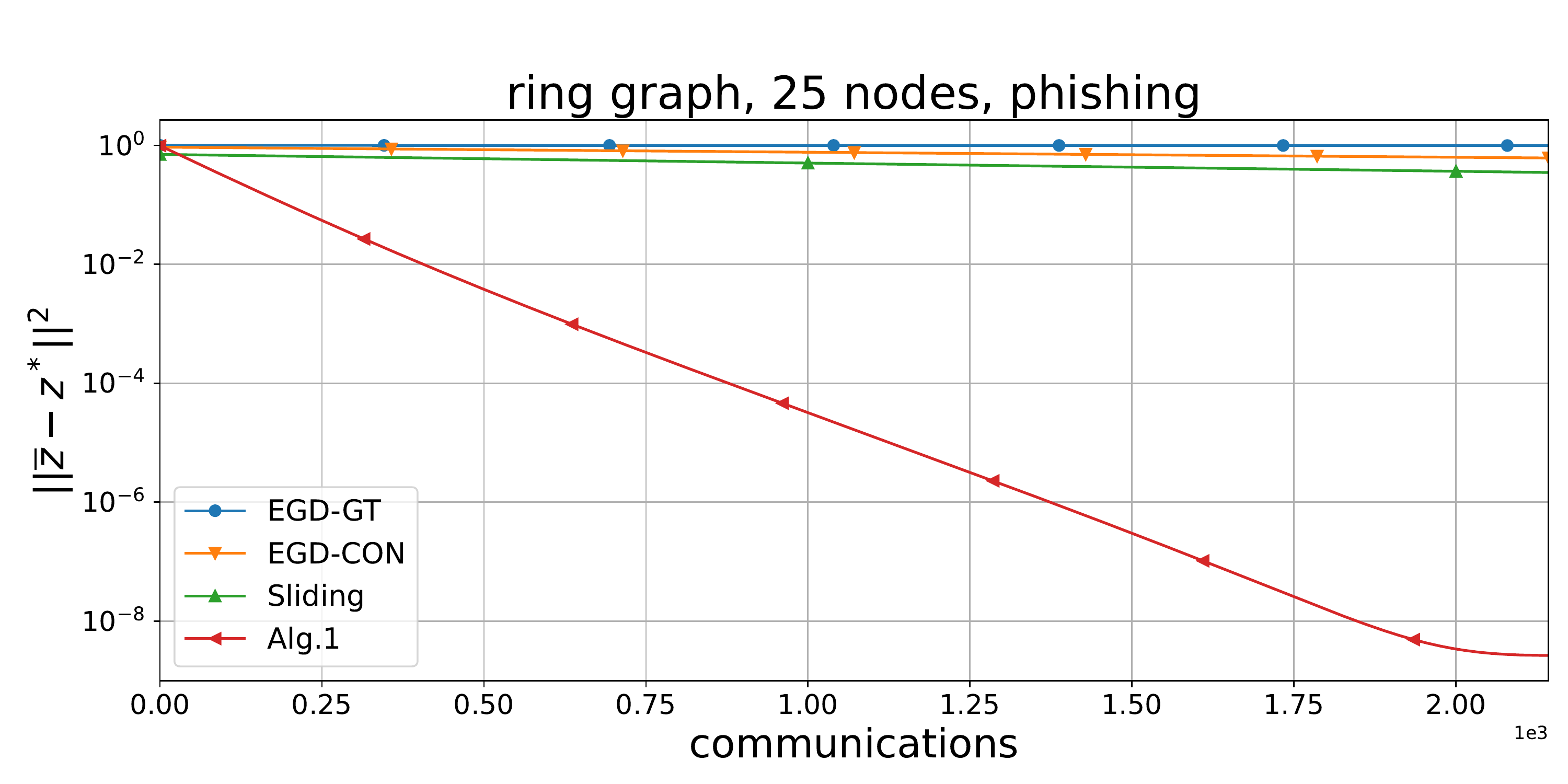}
\end{minipage}
\end{figure}

\begin{figure}[h]
\centering
\captionof{figure}{Comparison communication complexities of Algorithm \ref{alg:vrvi}, \algname{EGD-GT}, \algname{EGD-Con} and \algname{Sliding} on \eqref{eq:regression} with over fixed star networks.}
\vspace{-0.3cm}
\begin{minipage}[][][b]{\textwidth}
\centering
\includegraphics[width=0.35\textwidth]{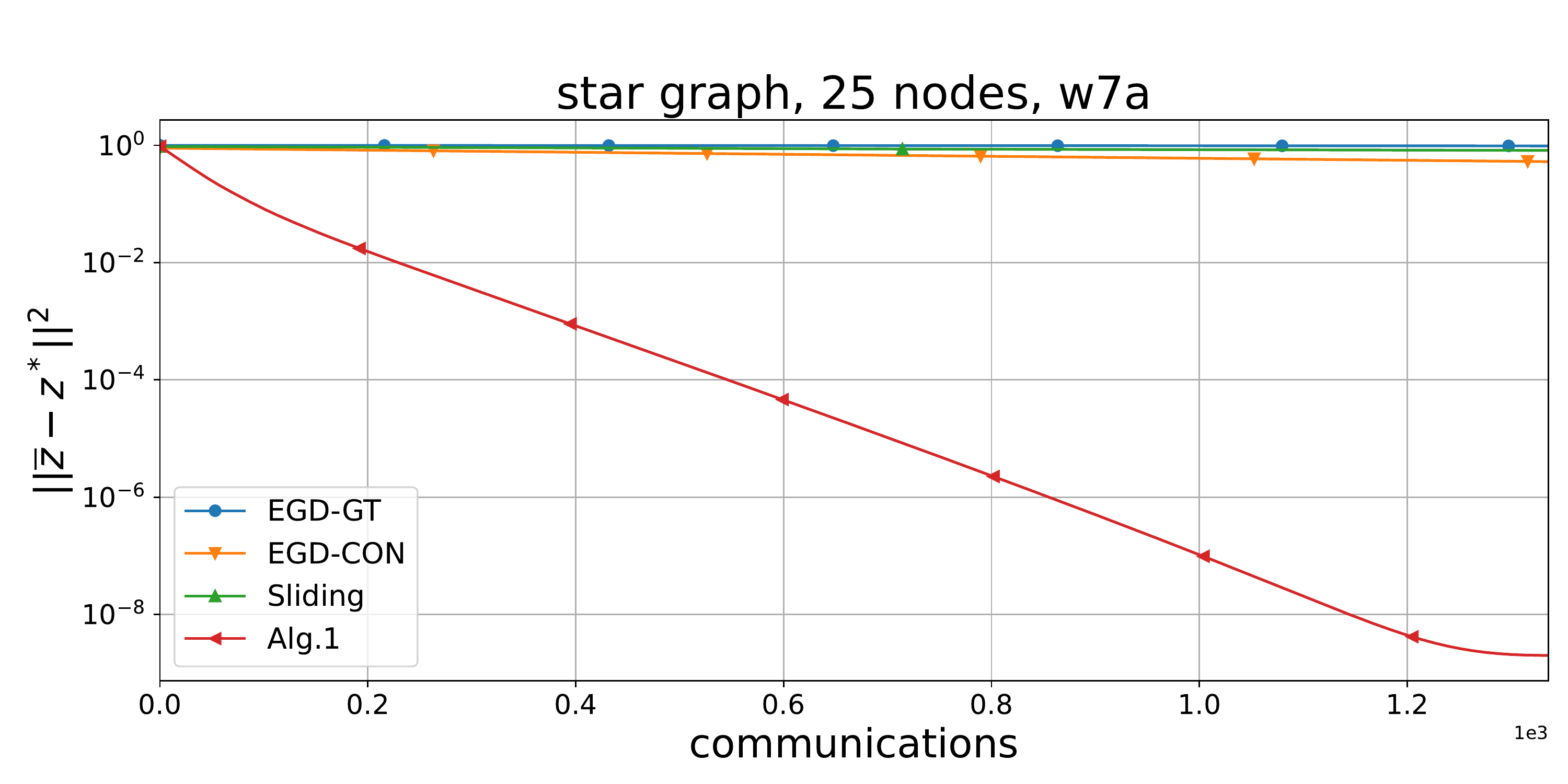}
\includegraphics[width=0.35\textwidth]{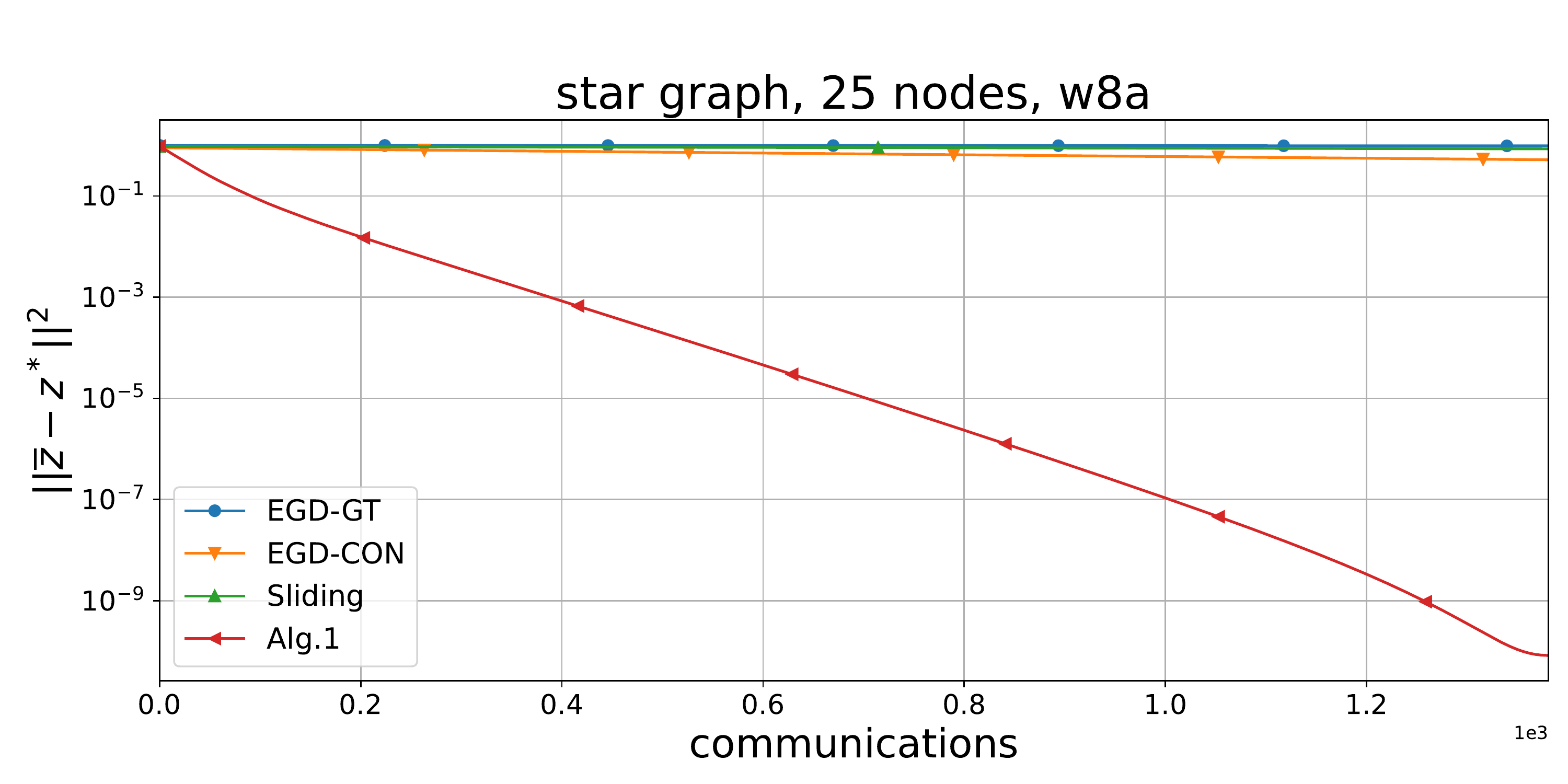}
\\
\includegraphics[width=0.35\textwidth]{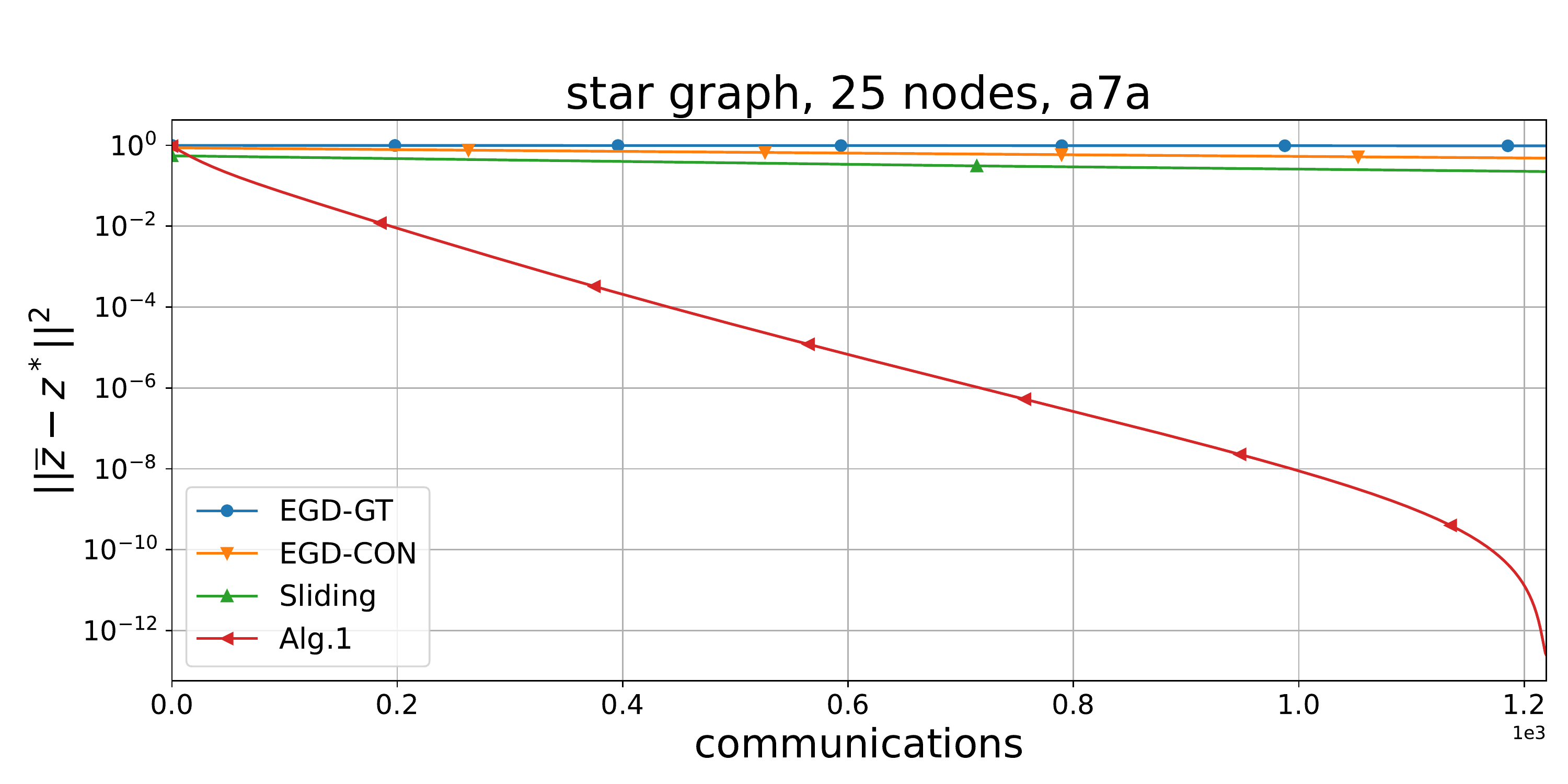}
\includegraphics[width=0.35\textwidth]{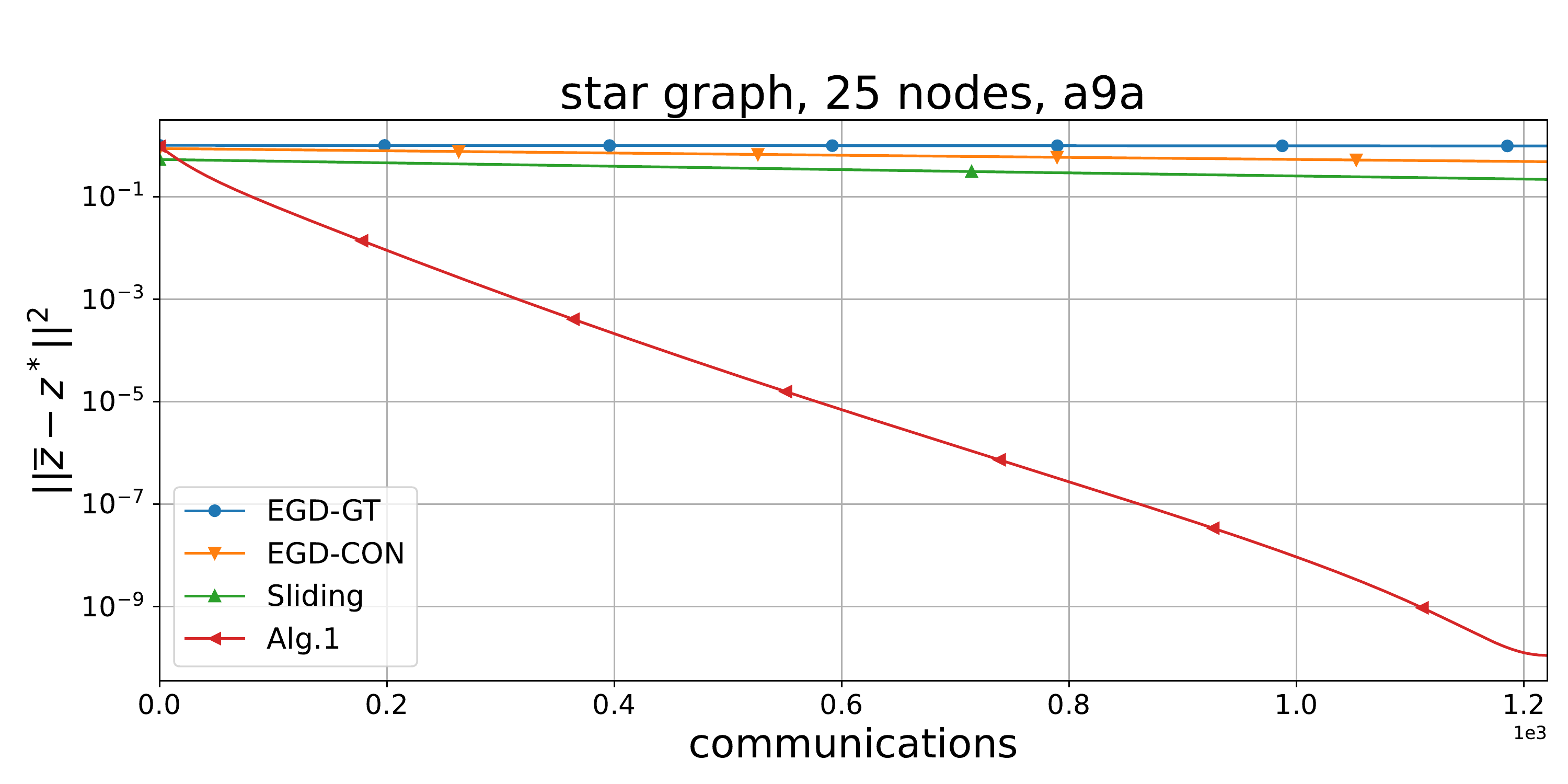}
\\
\includegraphics[width=0.35\textwidth]{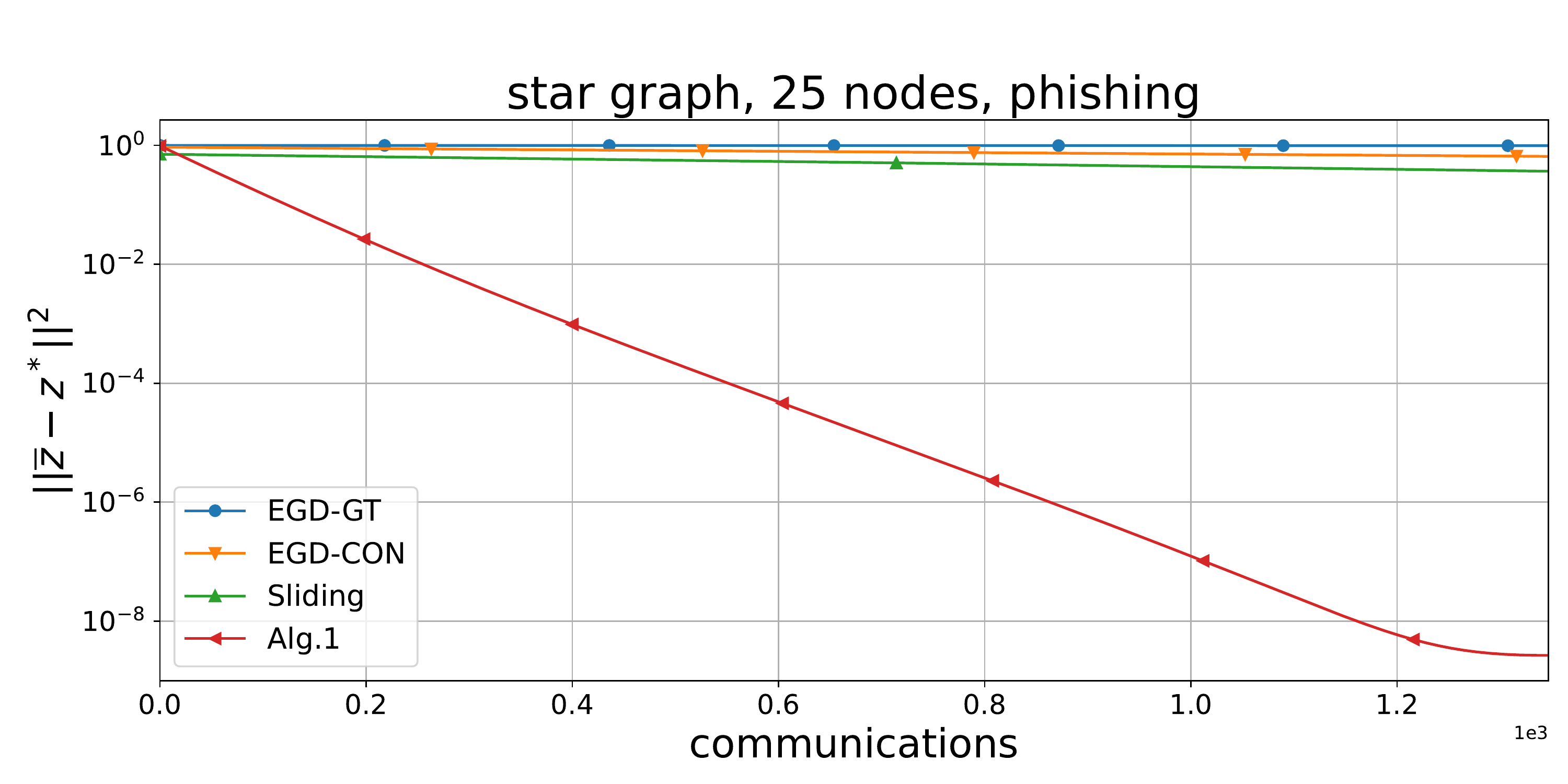}
\end{minipage}
\end{figure}

\newpage

\subsubsection{Time-varying networks} \label{sec:app_dec_tv_exp}

Here we give additional experiments for Section \ref{sec:dec_tv_exp}. 

\begin{figure}[h]
\centering
\captionof{figure}{Comparison communication complexities of Algorithm \ref{2dvi2:alg}, \algname{EGD-GT}, \algname{EGD-Con} and \algname{Sliding} on \eqref{eq:regression} over time-varying grid networks.}
\vspace{-0.3cm}
\begin{minipage}[][][b]{\textwidth}
\centering
\includegraphics[width=0.4\textwidth]{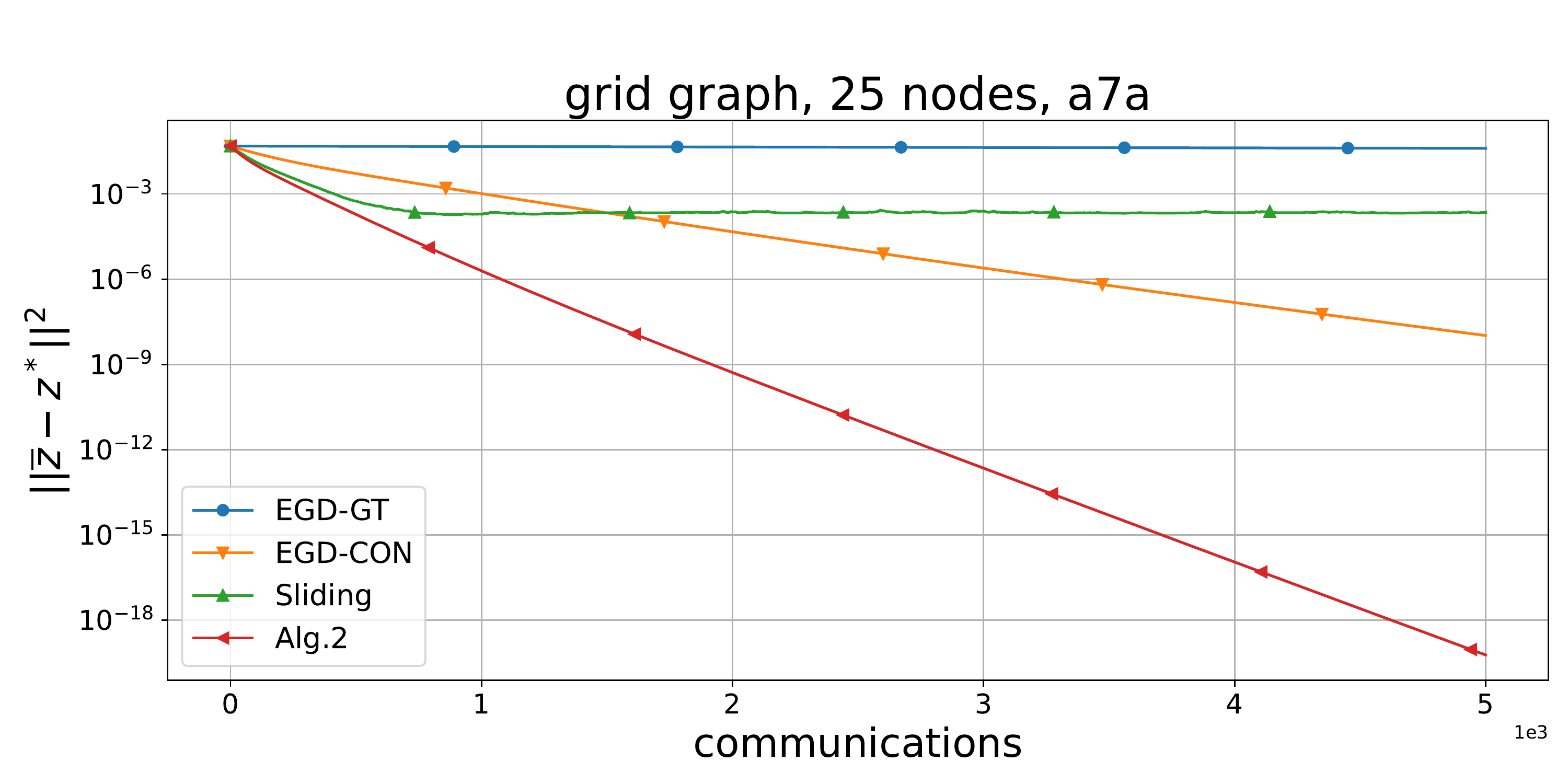}
\includegraphics[width=0.4\textwidth]{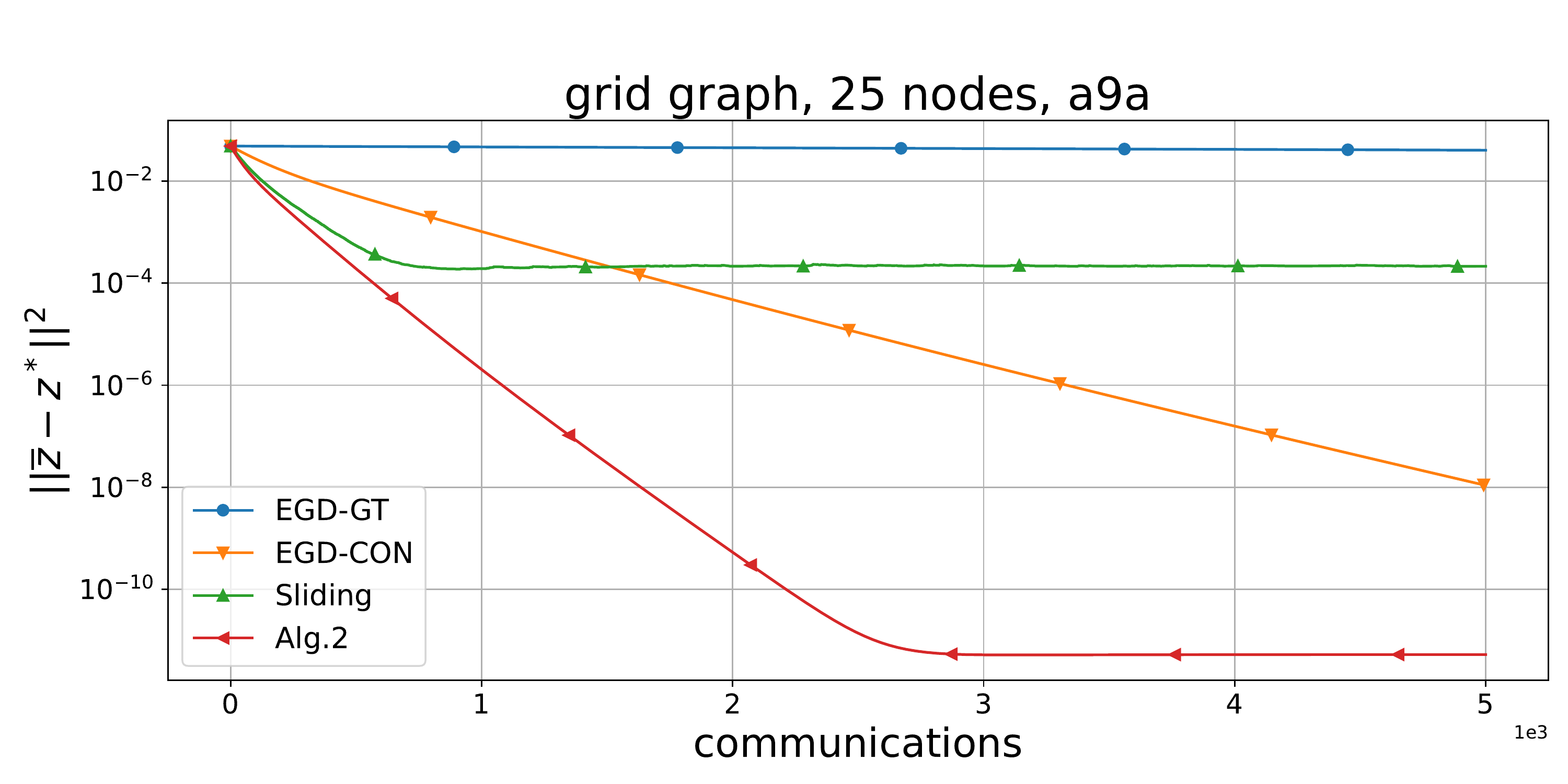}
\\
\includegraphics[width=0.4\textwidth]{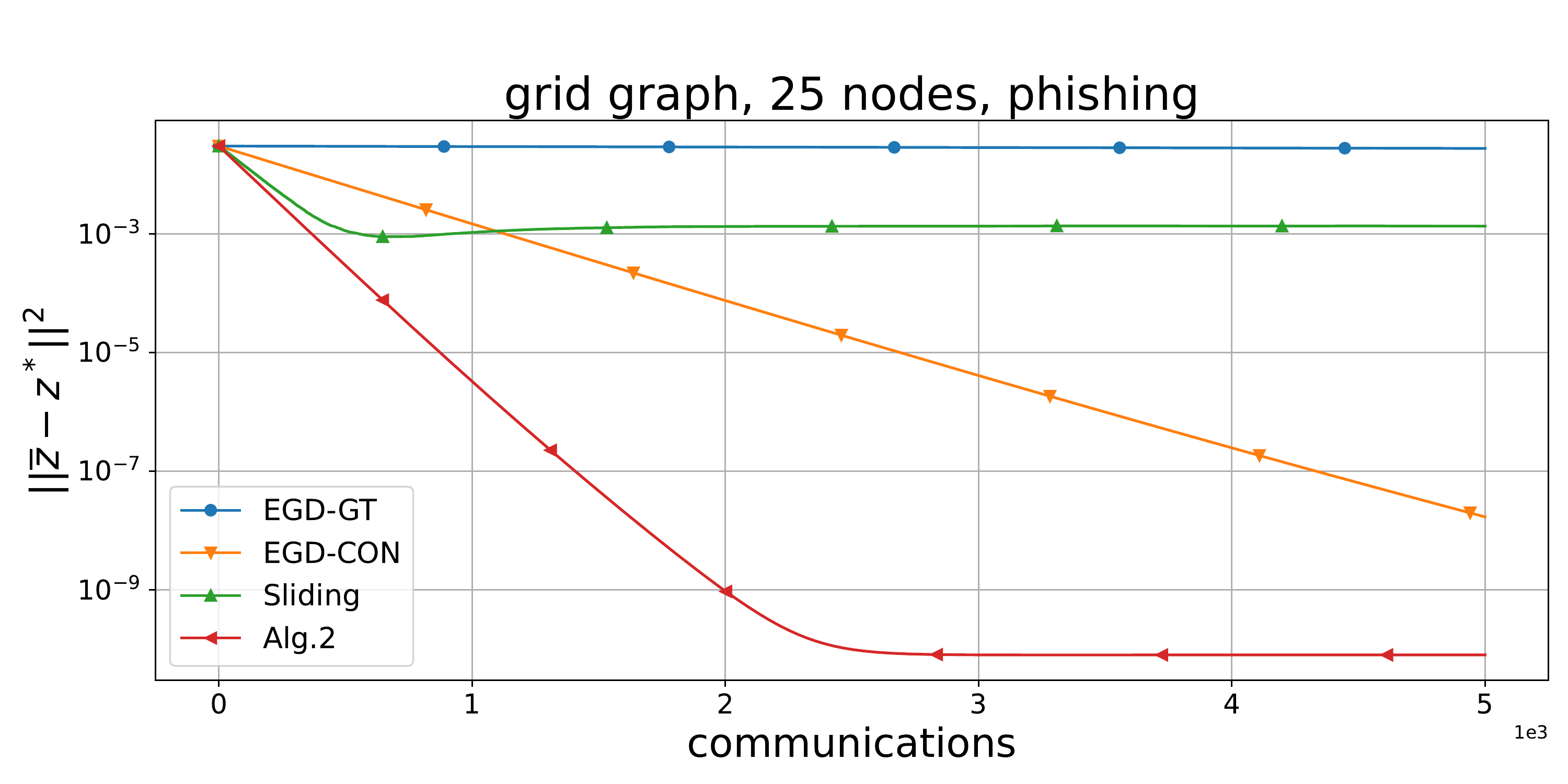}
\end{minipage}
\end{figure}

\begin{figure}[h]
\centering
\captionof{figure}{Comparison communication complexities of Algorithm \ref{alg:vrvi}, \algname{EGD-GT}, \algname{EGD-Con} and \algname{Sliding} on \eqref{eq:regression} with over fixed ring networks.}
\vspace{-0.3cm}
\begin{minipage}[][][b]{\textwidth}
\centering
\includegraphics[width=0.35\textwidth]{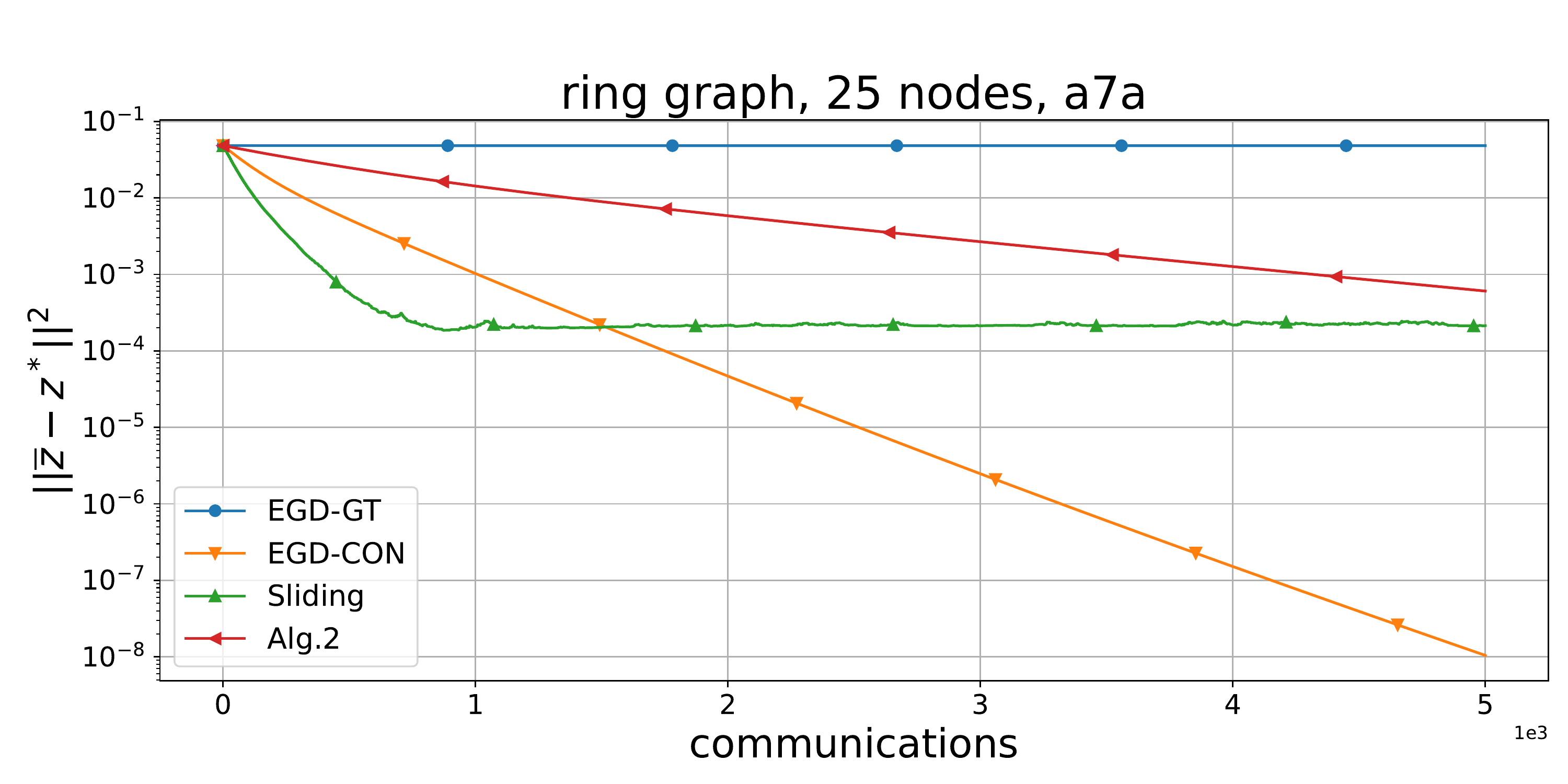}
\includegraphics[width=0.35\textwidth]{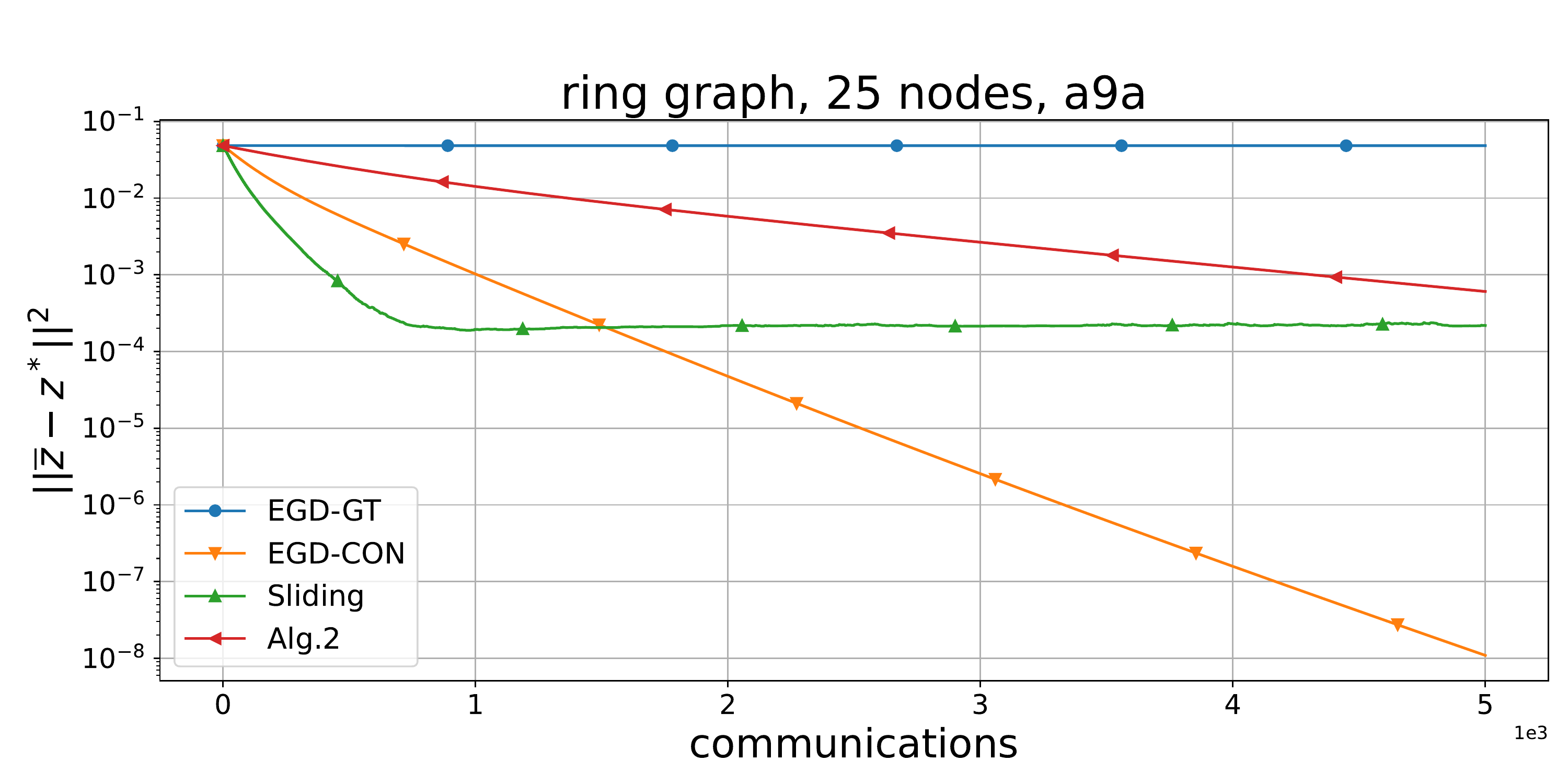}
\\
\includegraphics[width=0.35\textwidth]{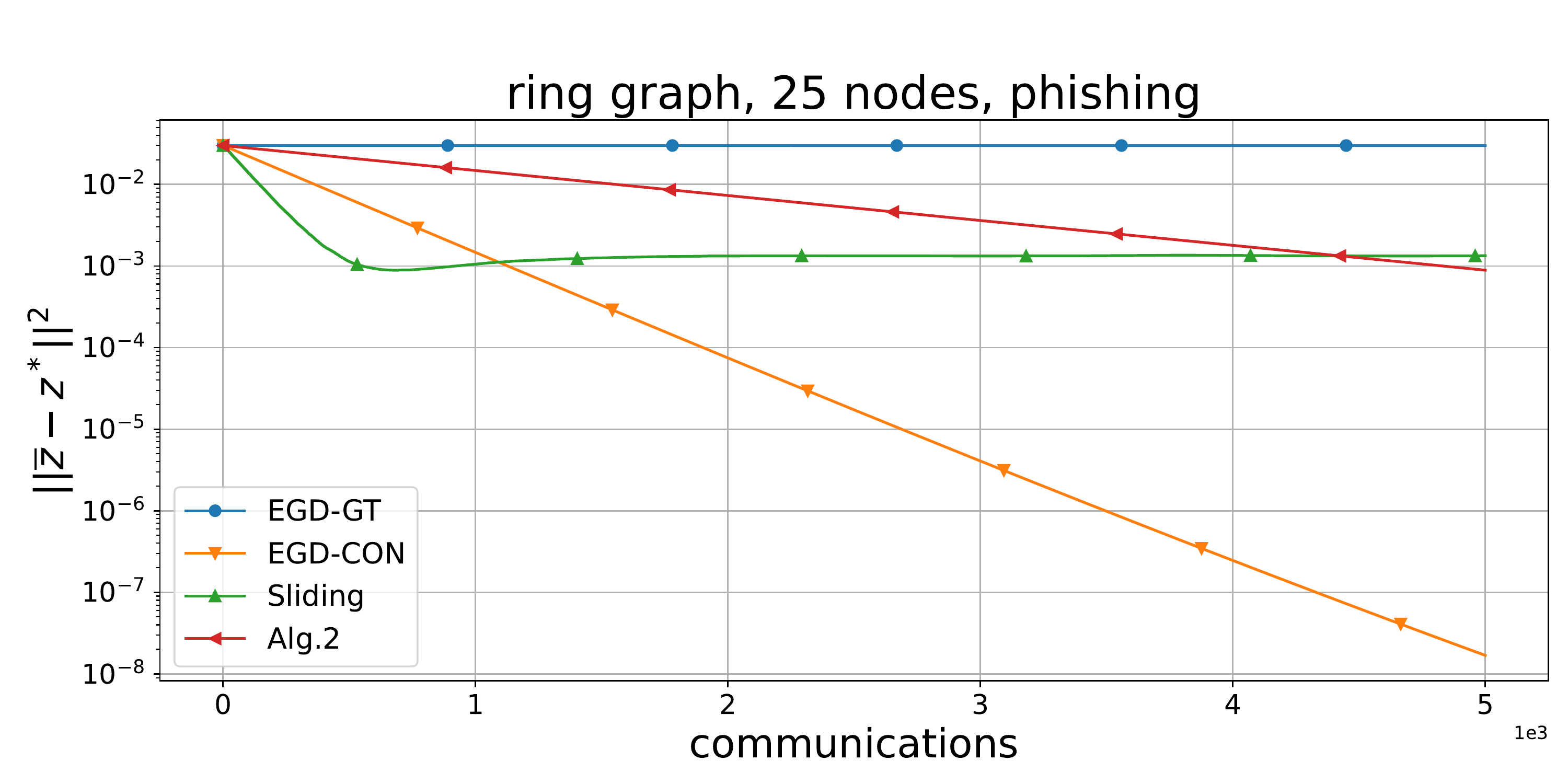}
\end{minipage}
\end{figure}

\newpage

\section{Tables} \label{sec:tables}

\renewcommand{\arraystretch}{2}
\begin{table*}[h]
    \centering
    \small
	\caption{Summary communication and local complexities for finding an $\varepsilon$-solution for strongly monotone \textbf{deterministic} \textbf{decentralized} variational inequality \eqref{eq:VI} on fixed and time-varying networks. Convergence is measured by the distance to the solution. {\em Notation:} $\mu$ = constant of strong monotonicity of the operator $F$, $L$ = Lipschitz constants for all $L_{m,i}$, $\chi$ = characteristic number of the network (see Assumptions \ref{ass:fixed} and \ref{ass:tv}). }
    \label{tab:comparison1}
    \resizebox{\linewidth}{!}{
  \begin{threeparttable}
    \begin{tabular}{|c|c|c|c|c|c|}
    \cline{3-6}
    \multicolumn{2}{c|}{}
     & \textbf{\quad\quad\quad\quad\quad\quad Reference \quad\quad\quad\quad\quad\quad} & \textbf{Communication complexity} & \textbf{Local complexity}  & \textbf{Weaknesses} \\
    \hline
    \multirow{8}{*}{\rotatebox[origin=c]{90}{\textbf{Fixed \quad\quad}}} & \multirow{6}{*}{\rotatebox[origin=c]{90}{\textbf{Upper}\quad\quad}}
     & Mukherjee and Chakraborty \cite{Mukherjee2020:decentralizedminmax} \tnote{{\color{blue}(1,2)}}  & $\mathcal{O} \left( \myred{\chi^{\frac{4}{3}}} \frac{L^{\myred{\frac{4}{3}}}}{\mu^{\myred{\frac{4}{3}}}} \log \frac{1}{\varepsilon} \right)$ &  $\mathcal{O} \left( \myred{\chi^{\frac{4}{3}}} \frac{L^{\myred{\frac{4}{3}}}}{\mu^{\myred{\frac{4}{3}}}} \log \frac{1}{\varepsilon} \right)$ & \makecell{{bad comm. rates} \\ {bad local rates}}
    \\ \cline{3-6}
    && Beznosikov et al. \cite{beznosikov2021distributed}  \tnote{{\color{blue}(1,2)}}  & $\mathcal{O} \left( \sqrt{\chi} \frac{L}{\mu} \log^{\myred{2}} \frac{1}{\varepsilon} \right)$ &  $\mathcal{O} \left(  \frac{L}{\mu} \myred{\log \frac{L+\mu}{\mu}} \log \frac{1}{\varepsilon}\right)$ & \makecell{{multiple gossip} \\ {no linear convergence}}
    \\ \cline{3-6}
    && Beznosikov et al. \cite{beznosikov2020distributed}  \tnote{{\color{blue}(1,3)}} & $\mathcal{O} \left( \sqrt{\chi} \frac{L}{\mu} \log^{\myred{2}} \frac{1}{\varepsilon} \right)$ &  $\mathcal{O} \left(  \frac{L}{\mu}\log \frac{1}{\varepsilon}\right)$ & \makecell{{multiple gossip} \\ {no linear convergence}}
    \\ \cline{3-6}
    && Rogozin et al. \cite{rogozin2021decentralized}  \tnote{{\color{blue}(1,2,4)}} & $\mathcal{O} \left( \sqrt{\chi} \frac{L}{\mu} \log \frac{1}{\varepsilon} \right)$ &  $\mathcal{O} \left( \myred{\sqrt{\chi}} \frac{L}{\mu}\log \frac{1}{\varepsilon}\right)$ & \makecell{{multiple gossip} \\ {bounded gradient}}
    \\ \cline{3-6}
    && \cellcolor{bgcolor2}{This paper}  & \cellcolor{bgcolor2}{$\mathcal{O} \left( \sqrt{\chi}\frac{L}{\mu} \log \frac{1}{\varepsilon} \right)$}  &  \cellcolor{bgcolor2}{$\mathcal{O} \left( \sqrt{\chi}\frac{L}{\mu} \log \frac{1}{\varepsilon} \right)$} & \cellcolor{bgcolor2}{}
    \\ \cline{3-6}
    && \cellcolor{bgcolor2}{This paper}  & \cellcolor{bgcolor2}{$\mathcal{O} \left( \sqrt{\chi} \frac{L}{\mu} \log \frac{1}{\varepsilon} \right)$}  &  \cellcolor{bgcolor2}{$\mathcal{O} \left( \frac{L}{\mu} \log \frac{1}{\varepsilon} \right)$} & \cellcolor{bgcolor2}{multiple gossip}
    \\ \cline{2-6} 
    & \multirow{2}{*}{\rotatebox[origin=c]{90}{\textbf{Lower}}} & Beznosikov et al. \cite{beznosikov2020distributed}  \tnote{{\color{blue}(3)}} & $\mathcal{O} \left( \sqrt{\chi} \frac{L}{\mu} \log \frac{1}{\varepsilon} \right)$  &  $\mathcal{O} \left( \frac{L}{\mu} \log \frac{1}{\varepsilon} \right)$ & 
    \\ \cline{3-6} 
    && \cellcolor{bgcolor2}{This paper} & \cellcolor{bgcolor2}{$\mathcal{O} \left( \sqrt{\chi} \frac{L}{\mu} \log \frac{1}{\varepsilon} \right)$}  &  \cellcolor{bgcolor2}{$\mathcal{O} \left( \frac{L}{\mu} \log \frac{1}{\varepsilon} \right)$} & \cellcolor{bgcolor2}{}
    \\\hline\hline 
    \multirow{6}{*}{\rotatebox[origin=c]{90}{\textbf{Time-varying}\quad}} & \multirow{4}{*}{\rotatebox[origin=c]{90}{\textbf{Upper}\quad \quad}} 
    & Beznosikov et al. \cite{beznosikov2021decentralized} \tnote{{\color{blue}(3)}} & $\mathcal{O} \left( \chi \frac{L}{\mu} \log \frac{1}{\varepsilon} + \myred{\chi \frac{L D }{\mu^2   \sqrt{\varepsilon}}} \right)$ \tnote{{\color{blue}(5)}} & $\mathcal{O} \left( \myred{\chi} \frac{L}{\mu} \log \frac{1}{\varepsilon} + \myred{\chi \frac{L D }{\mu^2   \sqrt{\varepsilon}} }\right)$ & \makecell{{$D$-homogeneity}  \\ {no linear convergence}}
    \\ \cline{3-6}
    && Beznosikov et al. \cite{beznosikov2021}  \tnote{{\color{blue}(1,2)}} & $\mathcal{O} \left( \chi \frac{L}{\mu} \log^{\myred{2}} \frac{1}{\varepsilon} \right)$  &  $\mathcal{O} \left( \frac{L}{\mu} \log \frac{1}{\varepsilon} \right)$ & \makecell{{multiple gossip} \\ {no linear convergence}}
    \\ \cline{3-6}
    && \cellcolor{bgcolor2}{This paper}  & \cellcolor{bgcolor2}{$\mathcal{O} \left( \chi \frac{L}{\mu} \log \frac{1}{\varepsilon} \right)$ \tnote{{\color{blue}(5)}}}  &  \cellcolor{bgcolor2}{$\mathcal{O} \left( \chi\frac{L}{\mu} \log \frac{1}{\varepsilon} \right)$}  & \cellcolor{bgcolor2}{}
    \\ \cline{3-6}
    && \cellcolor{bgcolor2}{This paper}  & \cellcolor{bgcolor2}{$\mathcal{O} \left( \chi \frac{L}{\mu} \log \frac{1}{\varepsilon} \right)$ \tnote{{\color{blue}(5)}}}  &  \cellcolor{bgcolor2}{$\mathcal{O} \left( \frac{L}{\mu} \log \frac{1}{\varepsilon} \right)$} & \cellcolor{bgcolor2}{multiple gossip}
    \\ \cline{2-6}
    & \multirow{2}{*}{\rotatebox[origin=c]{90}{\textbf{Lower}}} 
    & Beznosikov et al. \cite{beznosikov2021} \tnote{{\color{blue}(2)}}  & $\mathcal{O} \left( \chi \frac{L}{\mu} \log \frac{1}{\varepsilon} \right)$  &  $\mathcal{O} \left( \frac{L}{\mu} \log \frac{1}{\varepsilon} \right)$ & 
    \\ \cline{3-6}
    && \cellcolor{bgcolor2}{This paper}  & \cellcolor{bgcolor2}{$\mathcal{O} \left( \chi \frac{L}{\mu} \log \frac{1}{\varepsilon} \right)$ \tnote{{\color{blue}(5)}}}  &  \cellcolor{bgcolor2}{$\mathcal{O} \left( \frac{L}{\mu} \log \frac{1}{\varepsilon} \right)$} & \cellcolor{bgcolor2}{}
    \\\hline 
    \end{tabular}   
    \begin{tablenotes}
    {\small
    \item [{\color{blue}(1)}] for saddle point problems
    \item [{\color{blue}(2)}] deterministic
    \item [{\color{blue}(3)}] stochastic, but not finite sum
    \item [{\color{blue}(4)}] for convex-concave (monotone) case (we reanalyzed for strongly monotone case)
    \item [{\color{blue}(5)}] $B$-connected graphs are also considered. For simplicity in comparison with other works, we put $B = 1$. To get estimates for $B \neq 1$, one need to change $\chi$ to $B \chi$
}
\end{tablenotes}    
    \end{threeparttable}
    }
\end{table*}

\renewcommand{\arraystretch}{2}
\begin{table*}[h]
    \centering
    \small
	\caption{Summary complexities for finding an $\varepsilon$-solution for strongly monotone \textbf{ stochastic (finite-sum)} \textbf{non-distributed} variational inequality \eqref{eq:VI}. Convergence is measured by the distance to the solution. {\em Notation:} $\mu$ = constant of strong monotonicity of the operator $F$, $L$ = Lipschitz constants for all $L_{i}$, $n$ = the size of the local dataset. }
    \label{tab:comparison2}    
  \begin{threeparttable}
    \begin{tabular}{|c|c|c|}
    \hline
    \textbf{Reference } & \textbf{Complexity} & \textbf{Weaknesses} 
    \\\hline
    Palaniappan and Bach \cite{palaniappan2016stochastic} - \algname{SVRG} \tnote{{\color{blue}(1)}} & $\mathcal{O} \left( n + \myred{b}\frac{L^{\myred{2}}}{\mu^{\myred{2}}} \log \frac{1}{\varepsilon} \right)$ & \makecell{{bad complexity}  \\ {batching}} \\\hline
    Palaniappan and Bach \cite{palaniappan2016stochastic} - \algname{Acc-SVRG} \tnote{{\color{blue}(1)}} & $\mathcal{O} \left( n +  \sqrt{\myred{b}n}\frac{L}{\mu} \log^{\myred{2}} \frac{1}{\varepsilon} \right)$ & \makecell{{envelope acceleration}  \\ {batching}}\\\hline
    Chavdarova et al. \cite{chavdarova2019reducing} \tnote{{\color{blue}(2)}}&  $\mathcal{O} \left( n + \myred{b}\frac{L^{\myred{2}}}{\mu^{\myred{2}}} \log \frac{1}{\varepsilon} \right)$ & \makecell{{bad complexity}  \\ {batching}} \\\hline
    Carmon et al. \cite{carmon2019variance}\tnote{{\color{blue}(1)}} & $\mathcal{O} \left( n + \sqrt{\myred{b}n}\frac{L}{\mu} \log \frac{1}{\varepsilon} \right)$ & \makecell{{for games only}  \\ {batching}} \\\hline
    Carmon et al. \cite{carmon2019variance}\tnote{{\color{blue}(1)}} & $\mathcal{\tilde O} \left( n + \sqrt{\myred{b}n}\frac{L}{\mu} \log \frac{1}{\varepsilon} \right)$ & batching \\\hline
    Yang et al. \cite{yang2020global} \tnote{{\color{blue}(1,3)}} & $\mathcal{O}\left( \myred{b^{\frac{1}{3}}} n^{\myred{\frac{2}{3}}}\frac{L^\myred{3}}{\mu^\myred{3}}\log\frac{1}{\varepsilon}\right)$ & \makecell{{bad complexity}  \\ {batching}}  \\\hline
    Alacaoglu and Malitsky \cite{alacaoglu2021stochastic} & $\mathcal{O} \left( n + \sqrt{\myred{b}n}\frac{L}{\mu} \log \frac{1}{\varepsilon} \right)$ & batching  \\\hline
    Alacaoglu et al. \cite{Yura2021} & $\mathcal{O} \left( n + \myred{n}\frac{L}{\mu} \log \frac{1}{\varepsilon} \right)$ & bad rates  \\\hline
    Tominin et al. \cite{vladislav2021accelerated} \tnote{{\color{blue}(1)}} & $\mathcal{O} \left( n +  \sqrt{\myred{b}n}\frac{L}{\mu} \log^{\myred{2}} \frac{1}{\varepsilon} \right)$ & \makecell{{envelope acceleration}  \\ {batching}} \\\hline
    Beznosikov et al. \cite{beznosikov2022stochastic} \tnote{{\color{blue}(2)}}&  $\mathcal{O} \left( n + \myred{b}\frac{L^{\myred{2}}}{\mu^{\myred{2}}} \log \frac{1}{\varepsilon} \right)$ & \makecell{{bad complexity}  \\ {batching}} \\\hline
    \cellcolor{bgcolor2}{This paper} & \cellcolor{bgcolor2}{$\mathcal{O} \left( n + \sqrt{n}\frac{L}{\mu} \log \frac{1}{\varepsilon} \right)$} & \cellcolor{bgcolor2}{} \\\hline
    \end{tabular}   
    \begin{tablenotes}
    {\scriptsize
    \item [{\color{blue}(1)}] for saddle point problems
    \item [{\color{blue}(2)}] under $l$-cocoercivity assumption (in genral case $l = \frac{L^2}{\mu}$)
    \item [{\color{blue}(3)}] under PL - condition
}
\end{tablenotes}    
    \end{threeparttable}
\end{table*}

\begin{landscape}
\renewcommand{\arraystretch}{2}
\begin{table*}[h]
    \centering
    \small
	\caption{Summary communication and local complexities for finding an $\varepsilon$-solution for strongly monotone  \textbf{decentralized} variational inequality \eqref{eq:VI} on fixed and time-varying networks. Convergence is measured by the distance to the solution. {\em Notation:} $\mu$ = constant of strong monotonicity of the operator $F$, $L$ = Lipschitz constants for all $L_{m,i}$, $\chi$ = characteristic number of the network (see Assumptions \ref{ass:fixed} and \ref{ass:tv}). }
    \label{tab:comparison4}    
  \begin{threeparttable}
    \begin{tabular}{|c|c|c|c|c|c|}
    \cline{3-6}
    \multicolumn{2}{c|}{}
     & \textbf{\quad\quad\quad\quad\quad\quad Reference \quad\quad\quad\quad\quad\quad} & \textbf{Communication complexity} & \textbf{Local complexity}  & \textbf{Deter/Stoch?} \\
    \hline
    \multirow{8}{*}{\rotatebox[origin=c]{90}{\textbf{Fixed \quad\quad}}} & \multirow{6}{*}{\rotatebox[origin=c]{90}{\textbf{Upper}\quad\quad}}
     & Mukherjee and Chakraborty \cite{Mukherjee2020:decentralizedminmax} \tnote{{\color{blue}(1)}}  & $\mathcal{O} \left( \chi^{\frac{4}{3}} \frac{L^{\frac{4}{3}}}{\mu^{\frac{4}{3}}} \log \frac{1}{\varepsilon} \right)$ &  $\mathcal{O} \left( \chi^{\frac{4}{3}} \frac{L^{\frac{4}{3}}}{\mu^{\frac{4}{3}}} \log \frac{1}{\varepsilon} \right)$ & {deterministic}
    \\ \cline{3-6}
    && Beznosikov et al. \cite{beznosikov2021distributed}  \tnote{{\color{blue}(1)}}  & $\mathcal{O} \left( \sqrt{\chi} \frac{L}{\mu} \log^2 \frac{1}{\varepsilon} \right)$ &  $\mathcal{O} \left(  \frac{L}{\mu} \log \frac{L+\mu}{\mu} \log \frac{1}{\varepsilon}\right)$ & {deterministic}
    \\ \cline{3-6}
    && Beznosikov et al. \cite{beznosikov2020distributed}  \tnote{{\color{blue}(1)}} & $\mathcal{O} \left( \sqrt{\chi} \frac{L}{\mu} \log^2 \frac{1}{\varepsilon} \right)$ &  $\mathcal{O} \left(  \frac{L}{\mu}\log \frac{1}{\varepsilon} + \frac{\sigma^2}{\mu^2 M \varepsilon}\right)$ & \makecell{{stochastic} \\ {$\sigma^2$-bounded variance}}
    \\ \cline{3-6}
    && Rogozin et al. \cite{rogozin2021decentralized}  \tnote{{\color{blue}(1,2)}} & $\mathcal{O} \left( \sqrt{\chi} \frac{L}{\mu} \log \frac{1}{\varepsilon} \right)$ &  $\mathcal{O} \left( \sqrt{\chi} \frac{L}{\mu}\log \frac{1}{\varepsilon}\right)$ & deterministic
    \\ \cline{3-6}
    && This paper  & $\mathcal{O} \left( \max[\sqrt{n}; \sqrt{\chi}] \frac{L}{\mu} \log \frac{1}{\varepsilon} \right)$  &  $\mathcal{O} \left( \max[\sqrt{n}; \sqrt{\chi}]\frac{L}{\mu} \log \frac{1}{\varepsilon} \right)$ & \makecell{{stochastic} \\ {finite-sum}}
    \\ \cline{3-6}
    && This paper  & $\mathcal{O} \left( \sqrt{\chi} \frac{L}{\mu} \log \frac{1}{\varepsilon} \right)$  &  $\mathcal{O} \left( \sqrt{n}\frac{L}{\mu} \log \frac{1}{\varepsilon} \right)$ & \makecell{{stochastic} \\ {finite-sum}}
    \\ \cline{2-6} 
    & \multirow{2}{*}{\rotatebox[origin=c]{90}{\textbf{Lower}}} & Beznosikov et al. \cite{beznosikov2020distributed}  & $\mathcal{O} \left( \sqrt{\chi} \frac{L}{\mu} \log \frac{1}{\varepsilon} \right)$  &  $\mathcal{O} \left( \frac{L}{\mu} \log \frac{1}{\varepsilon} + \frac{\sigma^2}{\mu^2 M \varepsilon}\right)$ & \makecell{{stochastic} \\ {$\sigma^2$-bounded variance}}
    \\ \cline{3-6} 
    && This paper & $\mathcal{O} \left( \sqrt{\chi} \frac{L}{\mu} \log \frac{1}{\varepsilon} \right)$  &  $\mathcal{O} \left(\sqrt{n} \frac{L}{\mu} \log \frac{1}{\varepsilon} \right)$ & \makecell{{stochastic} \\ {finite-sum}}
    \\\hline\hline 
    \multirow{6}{*}{\rotatebox[origin=c]{90}{\textbf{Time-varying}\quad}} & \multirow{4}{*}{\rotatebox[origin=c]{90}{\textbf{Upper}\quad \quad}} 
    & Beznosikov et al. \cite{beznosikov2021decentralized}  & $\mathcal{O} \left( \chi \frac{L}{\mu} \log \frac{1}{\varepsilon} + \frac{L (\chi D +\sqrt{\chi}\sigma)}{\mu^2   \sqrt{\varepsilon}} + \frac{\sigma^2}{\mu^2 M \varepsilon}\right)$ \tnote{{\color{blue}(3)}} & $\mathcal{O} \left( \chi \frac{L}{\mu} \log \frac{1}{\varepsilon} + \frac{L (\chi D +\sqrt{\chi}\sigma)}{\mu^2   \sqrt{\varepsilon}} + \frac{\sigma^2}{\mu^2 M \varepsilon}\right)$ & \makecell{{$D$-homogeneity}  \\ {$\sigma^2$-bounded variance}}
    \\ \cline{3-6}
    && Beznosikov et al. \cite{beznosikov2021}  \tnote{{\color{blue}(1)}} & $\mathcal{O} \left( \chi \frac{L}{\mu} \log^2 \frac{1}{\varepsilon} \right)$  &  $\mathcal{O} \left( \frac{L}{\mu} \log \frac{1}{\varepsilon} \right)$ & deterministic
    \\ \cline{3-6}
    && This paper  & $\mathcal{O} \left( \chi \frac{L}{\mu} \log \frac{1}{\varepsilon} \right)$ \tnote{{\color{blue}(3)}}  &  $\mathcal{O} \left( \max[\sqrt{n};\chi]\frac{L}{\mu} \log \frac{1}{\varepsilon} \right)$  & \makecell{{stochastic} \\ {finite-sum}}
    \\ \cline{3-6}
    && This paper  & $\mathcal{O} \left( \chi \frac{L}{\mu} \log \frac{1}{\varepsilon} \right)$ \tnote{{\color{blue}(3)}}  &  $\mathcal{O} \left( \sqrt{n}\frac{L}{\mu} \log \frac{1}{\varepsilon} \right)$ & \makecell{{stochastic} \\ {finite-sum}}
    \\ \cline{2-6}
    & \multirow{2}{*}{\rotatebox[origin=c]{90}{\textbf{Lower}}} 
    & Beznosikov et al. \cite{beznosikov2021}  & $\mathcal{O} \left( \chi \frac{L}{\mu} \log \frac{1}{\varepsilon} \right)$  &  $\mathcal{O} \left( \frac{L}{\mu} \log \frac{1}{\varepsilon} \right)$ & \makecell{deterministic}
    \\ \cline{3-6}
    && This paper  & $\mathcal{O} \left( \chi \frac{L}{\mu} \log \frac{1}{\varepsilon} \right)$ \tnote{{\color{blue}(3)}}  &  $\mathcal{O} \left( \sqrt{n}\frac{L}{\mu} \log \frac{1}{\varepsilon} \right)$ & \makecell{{stochastic} \\ {finite-sum}}
    \\\hline 
    \end{tabular}   
    \begin{tablenotes}
    {\scriptsize
    \item [{\color{blue}(1)}] for saddle point problems
    \item [{\color{blue}(2)}] for convex-concave (monotone) case (we reanalyzed for strongly monotone case)
    \item [{\color{blue}(3)}] $B$-connected graphs are also considered. For simplicity in comparison with other works, we put $B = 1$. To get estimates for $B \neq 1$, one need to change $\chi$ to $B \chi$
}
\end{tablenotes}    
    \end{threeparttable}
\end{table*}
\end{landscape}


\end{document}